\DeclareRobustCommand{\greektext}{%
  \fontencoding{LGR}\selectfont\def\encodingdefault{LGR}}
\DeclareRobustCommand{\textgreek}[1]{\leavevmode{\greektext #1}}
\numberwithin{equation}{section}
\numberwithin{figure}{section}
\theoremstyle{plain}
\newtheorem{thm}{\protect\theoremname}
  \theoremstyle{definition}
  \newtheorem{example}[thm]{\protect\examplename}
  \theoremstyle{definition}
  \newtheorem{defn}[thm]{\protect\definitionname}
  \theoremstyle{remark}
  \newtheorem{rem}[thm]{\protect\remarkname}
  \theoremstyle{remark}
  \newtheorem{claim}[thm]{\protect\claimname}
  \theoremstyle{plain}
  \newtheorem{lem}[thm]{\protect\lemmaname}
  \theoremstyle{plain}
  \newtheorem{prop}[thm]{\protect\propositionname}
  \theoremstyle{plain}
  \newtheorem{cor}[thm]{\protect\corollaryname}
  \theoremstyle{remark}
  \newtheorem*{rem*}{\protect\remarkname}
  \theoremstyle{plain}
  \newtheorem*{prop*}{\protect\propositionname}
  \providecommand{\claimname}{Claim}
  \providecommand{\corollaryname}{Corollary}
  \providecommand{\definitionname}{Definition}
  \providecommand{\examplename}{Example}
  \providecommand{\lemmaname}{Lemma}
  \providecommand{\propositionname}{Proposition}
  \providecommand{\remarkname}{Remark}
\providecommand{\theoremname}{Theorem}
\begin{document}
\global\long\def\bbZ{\mathbb{Z}}

\global\long\def\RR{\mathbb{R}}

\global\long\def\bbN{\mathbb{N}}

\global\long\def\bbX{\mathbb{X}}

\global\long\def\MM{\mathbb{M}}

\global\long\def\cF{\mathcal{F}}

\global\long\def\cB{\mathcal{B}}

\global\long\def\fT{\mathfrak{\tau}}

\global\long\def\cCT{\mathcal{C}_{\fT-1}}

\global\long\def\cC{\mathcal{C}}

\global\long\def\RR{\mathbb{R}}

\global\long\def\BB{\mathcal{B}}

\global\long\def\P{\mathrm{P}}

\global\long\def\C#1{\mathcal{#1}}

\global\long\def\ZZ{\mathbb{Z}}

\global\long\def\RN#1{\left(T^{#1}\right)'}

\global\long\def\F#1{\mathcal{F}_{#1}}

\global\long\def\lloc{\ll^{loc}}

\global\long\def\PP{\mathbb{P}}

\global\long\def\ep{\underline{\epsilon}}

\global\long\def\p{\varphi}

\global\long\def\l{\lambda}

\global\long\def\NN{\mathbb{N}}

\global\long\def\M#1{\text{M}\left\{  #1\right\}  }

\global\long\def\P{{\rm P}}

\global\long\def\TT{\mathbb{T}}

\global\long\def\g{\mathfrak{g}}

\global\long\def\h{\mathfrak{h}}

\global\long\def\r{\varrho}

\global\long\def\qq{\mathtt{q}}

\global\long\def\rr{\mathtt{r}}

\title{Conservative Anosov diffeomorphisms of $\TT^{2}$ without an absolutely
continuous invariant measure. }

\author{Zemer Kosloff}

\address{Mathematics institute, University of Warwick, Gibbet Hill Road, Coventry,
CV47AL.}

\email{zemer.kosloff@mail.huji.ac.il}

\curraddr{Einstein institute of Mathematics, Hebrew University of Jerusalem,
Kiryat Safra campus, Jerusalem. }
\begin{abstract}
We construct examples of $C^{1}$ Anosov diffeomorphisms on $\TT^{2}$
which are of Krieger type ${\rm III}_{1}$ with respect to Lebesgue
measure. This shows that the Gurevic Oseledec phenomena that conservative
$C^{1+\alpha}$ Anosov diffeomorphisms have a smooth invariant measure
does not hold true in the $C^{1}$ setting. 
\end{abstract}

\maketitle

\section{Introduction}

This paper provides the first examples of Anosov diffeomorphisms of
$\TT^{2}$ which are conservative and ergodic yet there is no Lebesgue
absolutely continuous invariant measure.

Let $M$ be a compact, boundaryless smooth manifold and $f:M\to M$
be a diffeomorphism. A natural question which arises is whether $f$
preserves a measure which is absolutely continuous with respect to
the volume measure on $M$. In order to avoid confusion in what follows,
we would like to stress out that in this paper, the term \textbf{conservative}
means the definition from ergodic theory which is non existence of
wandering sets of positive measure. That is $f$ is conservative if
and only for every $W\subset M$ so that $\left\{ f^{n}W\right\} _{n\in\ZZ}$
are disjoint (modulo the volume measure), $vol(W)=0$. 

It follows from \cite{Livsic Sinai} that for a generic $C^{2}$ Anosov
diffeomorphism there exists no absolutely continuous invariant measure
(a.c.i.m.), \cite[p. 72, Corollary 4.15.]{Bowen Rufus}. Following
this result Sinai asked whether a generic Anosov diffeomorphism will
satisfy Poincare recurrence. This question was answered by Gurevic
and Oseledec \cite{Gurevic Oseledec} who proved that the set of conservative
(Poincare recurrent) $C^{2}$ Anosov diffeomorphism is meagre in the
$C^{2}$ topology (restricted to the open set of Anosov diffeomorphisms).
Indeed, they have proved that if $f$ is a conservative $C^{2}$ Anosov
(hyperbolic) diffeomorphism, then $f$ preserves a probability measure
in the measure class of the volume measure which combined with the
result of Livsic and Sinai proves the non-genericity result. The proof
in \cite{Gurevic Oseledec} uses the absolute continuity of the foliations
and existence of SRB measures to show that if the $SRB$ measure for
$f$ is not equal to the SRB measure for $f^{-1}$ then there exists
a continuous function $g:M\to\RR$ and a set $A\subset M$ of positive
volume so that, 
\[
\lim_{n\to\infty}\frac{1}{n}\sum_{k=0}^{n-1}g\left(f^{k}(x)\right)\neq\lim_{n\to\infty}\frac{1}{n}\sum_{k=0}^{n-1}g\left(f^{-k}(x)\right),\ \forall x\in A.
\]
It is then a straightforward argument to construct a set $B\subset A$
of positive volume measure so that for almost every $x\in B$, the
set $\left\{ k\in\NN:\ f^{k}x\in B\right\} $ is finite, in contradiction
with Halmos Recurrence Theorem \cite{Aar}.

This result remains true for $C^{1+\alpha},\ \alpha>0$ Anosov diffeomorphisms.
However, since there exists $C^{1}$ Anosov diffeomorphisms whose
stable and unstable foliations are not absolutely continuous \cite{Robinson Young},
this proof can not be generalized for the $C^{1}$ setting. This paper
is concerned with the question whether every conservative $C^{1}$-
Anosov diffeomorphism has an absolutely continuous invariant measure?

An easier version of this question was studied before in the context
of smooth expanding maps. Every $C^{2}$ expanding map of a manifold
has an absolutely continuous invariant measure \cite{Krz}. In contrast
to the higher regularity case, Avila and Bochi \cite{Avilla Bochi generic C^1 ...,Avila Bochi generic expanding (interval maps)},
extending previous results of Campbell and Quas \cite{Campbell Quas},
have shown that a generic $C^{1}$ expanding map has no a.c.i.m and
a generic $C^{1}$ expanding map of the circle is not recurrent \cite{Campbell Quas}.
It seems natural to argue that these generic statements for expanding
maps can be transferred to Anosov diffeormorphisms via the natural
extension. However there are several problems with this approach which
could be summarised into roughly two parts:
\begin{itemize}
\item The natural extension construction is an abstract theorem and in many
cases it is not clear if it has an Anosov model. See \cite{VYa} for
constructions of smooth natural extensions. 
\item In order that the natural extension will be conservative, the expanding
map has to be recurrent \cite[Th. 4.4]{S-T} and a generic $C^{1}$
expanding map is not recurrent. 
\end{itemize}
Another natural approach in finding $C^{1}$ examples with a certain
property is to prove that the property is generic in the $C^{1}$
topology, see for example \cite{Bonatti Crovisier Wilkinson,Avila Bochi generic expanding (interval maps),Avilla Bochi generic C^1 ...}.
However since by the result of Sinai and Livsic, a generic $C^{1}$
Anosov map is dissipative, it is not clear to us how to use this approach
to find a conservative example without an a.c.i.m. Nonetheless we
prove the following. 
\begin{thm}
There exists a $C^{1}$- Anosov diffeomorphism of the two torus $\mathbb{T}^{2}$
which is ergodic, conservative and there exists no $\sigma$-finite
invariant measure which is absolutely continuous with respect to the
Lebesgue measure on $\mathbb{T}^{2}.$
\end{thm}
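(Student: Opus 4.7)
The plan is to construct $f$ explicitly as a $C^{1}$-small perturbation of a hyperbolic linear toral automorphism $T_{0}:\TT^{2}\to\TT^{2}$. The key idea is to use a Markov partition for $T_{0}$ to code orbits symbolically and then to design the perturbation so that, through this coding, the Radon--Nikodym cocycle of $f$ with respect to Lebesgue measure reproduces a non-singular Bernoulli-like system of Krieger type ${\rm III}_{1}$, a regime the author has already developed techniques for in his earlier works on ${\rm III}_{1}$ Bernoulli shifts.

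Concretely, I would first fix $T_{0}$, say the cat map, together with a finite Markov partition $\{R_{1},\dots,R_{N}\}$, giving a H\"older semi-conjugacy to a two-sided subshift of finite type $(\Sigma,\sigma)$. I would then prescribe a continuous log-Jacobian $\psi:\TT^{2}\to\RR$ of the form $\psi=\sum_{n\ge1}\psi_{n}$, with each $\psi_{n}$ depending only on cylinders of depth $n$ in the Markov coding and $\|\psi_{n}\|_{\infty}$ decaying fast enough that $\psi$ is uniformly continuous but \emph{not} H\"older of any exponent. The blocks $\psi_{n}$ are engineered, in the spirit of the author's earlier ${\rm III}_{1}$ constructions, so that the corresponding reference measure on $\Sigma$ has a Radon--Nikodym cocycle under $\sigma$ whose ratio set is all of $[0,\infty)$.

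The heart of the argument, and the main obstacle, is to realise $\psi$ as the log-Jacobian of an honest $C^{1}$ Anosov diffeomorphism of $\TT^{2}$. I would set $f=\Phi\circ T_{0}$, where $\Phi:\TT^{2}\to\TT^{2}$ is a $C^{1}$-small diffeomorphism whose $\log$-Jacobian equals $\psi\circ T_{0}^{-1}$ away from the Markov boundaries. This requires constructing a continuous matrix-valued field close to the identity, with a prescribed combinatorially-defined determinant, that matches across the cell boundaries. The delicate balance is to keep $D\Phi$ close to the identity in $C^{0}$ (so that $f$ is Anosov by $C^{1}$ structural stability and the original Markov partition remains Markov for $f$), while allowing $D\Phi$ to be \emph{merely} continuous — anything $C^{1+\alpha}$ would force existence of a Lebesgue a.c.i.m.\ via the Livsic--Sinai--Gurevich--Oseledec mechanism recalled in the introduction. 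The whole construction is therefore tuned to exploit exactly the gap between $C^{1}$ and $C^{1+\alpha}$.

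With such an $f$ in hand, the verification is comparatively routine. The Markov coding identifies $(f,\mathrm{Leb})$, up to a null set, with the engineered non-singular shift $(\sigma,\mu)$ on $\Sigma$. Ergodicity and conservativity of $f$ then transfer from ergodicity and conservativity of the Bernoulli-like shift, which are automatic from the ${\rm III}_{1}$ construction (in fact the Maharam extension is ergodic). Finally, a $\sigma$-finite absolutely continuous $f$-invariant measure would make the Radon--Nikodym cocycle of $f$ a measurable coboundary, contradicting the Krieger type ${\rm III}_{1}$ property. Hence no such measure exists, completing the proof.
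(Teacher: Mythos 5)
Your overall strategy---code the dynamics by a Markov partition, import a type ${\rm III}_{1}$ non-singular shift, and exploit the gap between $C^{1}$ and $C^{1+\alpha}$---is the right one, and it is indeed what the paper does. But the way you propose to realise it contains a genuine gap at its central step.

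You set $f=\Phi\circ T_{0}$ with $\Phi$ a $C^{1}$-small diffeomorphism whose Jacobian is prescribed, and then assert that ``the Markov coding identifies $(f,\mathrm{Leb})$, up to a null set, with the engineered non-singular shift $(\sigma,\mu)$.'' This is the missing idea, not a routine verification. The symbolic coding of $f$ (via the continued Markov partition coming from structural stability) identifies $(f,\mathrm{Leb})$ with $(\sigma,\nu)$, where $\nu$ is the push-forward of Lebesgue measure under the inverse coding map of the \emph{perturbed} system; there is no reason for $\nu$ to be equivalent to your designed measure $\mu$, and computing $\nu$ requires precisely the information (conditional measures of Lebesgue on the rectangles, absolute continuity of the invariant foliations) that is unavailable in the $C^{1}$ category. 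Prescribing $\log|\det Df|$ does give you the Radon--Nikodym cocycle $(f^{n})'=\exp\bigl(\sum_{k}\psi\circ f^{k}\bigr)$, but the definition of the ratio set requires showing that sets of the form $A\cap f^{-n}A\cap\{|(f^{n})'-r|<\epsilon\}$ have positive \emph{Lebesgue} measure, and ergodicity and conservativity likewise do not ``transfer from the shift'' unless you already know what Lebesgue measure looks like symbolically. So both the type ${\rm III}_{1}$ verification and the conservativity/ergodicity step are unsupported. (A smaller point: the original Markov partition does not remain Markov for $f$; it must be deformed by the conjugating homeomorphism.)

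The paper resolves this by running the construction in the opposite direction: it first builds the type ${\rm III}_{1}$ inhomogeneous Markov measure $\mu$ on $\Sigma_{\mathbf A}$, pushes it to $\TT^{2}$ by the coding of the \emph{linear} map $\tilde f$, and then constructs an explicit homeomorphism $\mathcal H(x,y)=(\mathfrak h_{y}(x),y)$ (a skew product along the unstable direction) with $\mathcal H_{*}m\sim\Phi_{*}\mu$. The new diffeomorphism is $\mathfrak Z=\mathcal H\circ\tilde f\circ\mathcal H^{-1}$, so by construction $(\TT^{2},m,\mathfrak Z)$ is measure-theoretically isomorphic to $(\Sigma_{\mathbf A},\mu,\sigma)$ and inherits type ${\rm III}_{1}$, conservativity and ergodicity for free; all the work goes into an inductive choice of parameters making $\mathfrak Z$ of class $C^{1}$, with hyperbolicity verified directly through the triangular structure of $D_{\mathfrak Z}$ and a cocycle lemma, not through $C^{1}$ structural stability (the conjugacy $\mathcal H$ is nowhere near the identity in any smooth sense). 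If you want to salvage your version, you would need an independent argument identifying the Lebesgue measure class of your perturbed map in its own symbolic coordinates, and that is essentially equivalent to redoing the paper's construction.
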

In fact, the ergodic type ${\rm III}$ transformations ( a transformation
without an a.c.i.m.) can be further decomposed into the Krieger Araki-Woods
classes ${\rm III}_{\lambda},\ 0\leq\lambda\leq1$ \cite{Kri} , see
Section \ref{sec:Prelminary-definition-and}, and our examples are
of type ${\rm III}_{1}$.

The examples are constructed by modifying a linear Anosov diffeomorphism
to obtain a change of coordinates which takes the Lebesgue measure
to a measure which is equivalent to a type ${\rm III}$ Markovian
measure (on a Markov partition of the linear diffeomorphism). These
examples are greatly inspired by the ideas of Bruin and Hawkins \cite{Bruin Hawkins}
where they modify the map $f(x)=2xmod1$ using the push forward (with
respect to the dyadic representation) of a Hamachi product measure
on $\{0,1\}^{\NN}$ to the circle. Since by embedding a horseshoe
in a linear transformation one loses explicit formula for the Radon
Nykodym derivatives of the modified transformations we couldn't use
measures on a full shift space but rather measures supported on topological
Markov shifts. The measures which play the role of the Hamachi measures
in our construction are the type ${\rm III}_{1}$ (for the shift)
inhomogeneous Markov measures.

This paper is organized as follows. In Section \ref{sec:Prelminary-definition-and}
we start by introducing the definitions and background material from
nonsingular ergodic theory and smooth dynamics which are used in this
paper. We end this section with a discussion on the method of the
construction. Section \ref{sec:Type--Markov} presents the inductive
construction of the type ${\rm III}_{1}$ Markov shift examples. In
Section \ref{sec:Type III perturbations of the Golden Mean Shift}
we show how to use the one sided Markov measures from the previous
section to obtain a modification of the golden mean shift. In section
\ref{sec:Type III Anosov} we show how to embed and modify the one
dimensional perturbations of the previous sections to obtain homeomorphisms
of the two torus, which when applied as conjugation to a certain total
automorphism (the natural extension of the golden mean shift)  are
examples of type ${\rm III}_{1}$ Anosov diffeomorphisms. Finally
in the appendix we prove that these Markovian measures satisfy the
aforementioned properties (ergodic, conservative and type ${\rm III}_{1}$).

\section{\label{sec:Prelminary-definition-and}Preliminary definitions and
a discussion on the method of construction}

\subsection{Basics of nonsingular ergodic theory }

This subsection is a very short introduction to nonsingular ergodic
theory. For more details and explanations please see \cite{Aar}.

Let $\left(X,\BB,\mu\right)$ be a standard probability space. In
what follows equalities (and inclusions) of sets are modulo the measure
$\mu$ on the space. A measurable map $T:X\to X$ is \textit{nonsingular}
if $T_{*}\mu:=\mu\circ T^{-1}$ is equivalent to $\mu$ meaning that
they have the same collection of negligible sets. If $T$ is invertible
one has the Radon Nykodym derivatives 
\[
\left(T^{n}\right)'(x):=\frac{d\mu\circ T^{n}}{d\mu}(x):\ X\to\RR_{+},
\]
A set $W\subset X$ is \textit{wandering} if $\left\{ T^{n}W\right\} _{n\in\ZZ}$
are pairwise disjoint and as was stated before we say that $T$ is
\textit{conservative} if there exists no wandering set of positive
measure. By the Halmos' Recurrence Theorem a transformation is conservative
if and only if it satisfies Poincare recurrence, that is given a set
of positive measure $A\in\BB$, almost every $x\in A$ returns to
itself infinitely often. A transformation $T$ is \textit{ergodic}
if there are no non trivial $T$ invariant sets. That is $T^{-1}A=A$
implies $A\in\{\emptyset,X\}$. 

We end this subsection with the definition of the \textit{Krieger
ratio set} $R(T)$. We say that $r\geq0$ is in $R(T)$ if for every
$A\in\BB$ of positive $\mu$ measure and for every $\epsilon>0$
there exists an $n\in\ZZ$ such that 
\[
\mu\left(A\cap T^{-n}A\cap\left\{ x\in X:\left|\left(T^{n}\right)'(x)-r\right|<\epsilon\right\} \right)>0.
\]
The ratio set of an ergodic measure preserving transformation is a
closed multiplicative subgroup of $[0,\infty)$ and hence it is of
the form $\{0\},\{1\},\{0,1\},\{0\}\cup\left\{ \lambda^{n}:n\in\ZZ\right\} $
for $0<\lambda<1$ or $[0,\infty)$. Several ergodic theoretic properties
can be seen from the ratio set. One of them is that $0\in R(T)$ if
and only if there exists no $\sigma$-finite $T$-invariant $\mu$-
a.c.i.m. Another interesting relation is that $1\in R(T)$ if and
only if $T$ is conservative (Maharams Theorem). If $R(T)=[0,\infty)$
we say that $T$ is of type ${\rm III}_{1}$. 

\subsection{Anosov diffeomorphisms and topological Markov shifts}

Smooth dynamics deals with the case where $M$ is a Riemannian manifold
and $f:M\to M$ is a diffeomorphism on $M$. In this paper we would
only talk about the class of Anosov (uniformly hyperbolic) automorphisms.
A diffeomorphism $f$ is Anosov if for every $x\in M$ there is a
decomposition of the tangent bundle at $x$, $T_{x}M=E_{x}^{s}\oplus E_{x}^{u}$,
such that 
\begin{itemize}
\item The decomposition is $D_{f}$- equivariant, here $D_{f}$ denotes
the differential of $f$. That is $\left(D_{f}\right)_{x}\left(E_{x}^{s}\right)=E_{f(x)}^{s}$
and $\left(D_{f}\right)_{x}\left(E_{x}^{u}\right)=E_{f(x)}^{u}$. 
\item There exists $0<\lambda<1$ and $C>0$ so that 
\[
\left\Vert \left(D_{f^{n}}\right)_{x}v\right\Vert \leq C\lambda^{n}\left\Vert v\right\Vert ,\ \text{for every }v\in E_{x}^{s},\ n\geq0
\]
and 
\[
\left\Vert \left(D_{f^{-n}}\right)_{x}u\right\Vert \leq C\lambda^{n}\|u\|,\ \text{for every }u\in E_{x}^{u},\ n\geq0.
\]
\end{itemize}
A \textit{topological Markov shift} (TMS) on $S$ is the shift on
a shift invariant subset $\Sigma\subset S^{\mathbb{Z}}$ of the form 

\[
\Sigma_{A}:=\left\{ x\in S^{\ZZ}:A_{x_{i},x_{i+1}}=1\right\} ,
\]
where $A=\left\{ A_{s,t}\right\} _{s,t\in S}$ is a $\{0,1\}$ valued
matrix on $S$. A TMS is mixing if there exists $n\in\NN$ such that
$A_{s,t}^{n}>0$ for every $s,t\in S$. 

Markov partitions of the manifold $M$ as in \cite{Adler Weiss,Si,Bowen Rufus,Adler}
are an important tool in the study of $C^{1+\alpha}$ Anosov diffeomorphisms.
They provide a semiconjugacy between a TMS and the Anosov transformation
$f$. One of the important contributions of this paper is that it
uses a connection between inhomogeneous Markov chains supported on
a TMS to the Anosov diffeomorphism with the push forward of the Markov
measure. 
\begin{example}
\label{Example of a TMS}Consider $f:\mathbb{T}^{2}\to\mathbb{T}^{2}$
the Toral automorphism defined by 
\[
f(x,y)=(\{x+y\},x)=\left(\begin{array}{cc}
1 & 1\\
1 & 0
\end{array}\right)\binom{x}{y}\ \mod1,
\]
where $\{t\}$ is the fractional part of $t$. Since $\left|\det\left(\begin{array}{cc}
1 & 1\\
1 & 0
\end{array}\right)\right|=1$, $f$ preserves the Lebesgue measure on $\mathbb{T}^{2}$. In addition
for every $z\in\mathbb{T}^{2}$, the tangent space can be decomposed
as $\text{span}\left\{ v_{s}\right\} \oplus{\rm span}\left\{ v_{u}\right\} $
where $v_{u}=\left(1,1/\varphi\right)$ and $v_{s}=\left(1,-\varphi\right).$
Here and throughout the paper $\p$ denotes the golden mean ($\p:=\frac{1+\sqrt{5}}{2}$).
\end{example}
For every $w\in V_{u}:=\text{span}\left\{ v_{u}\right\} $ and $z\in\mathbb{T}^{2}$
\[
D_{f}(z)w=\left(\begin{array}{cc}
1 & 1\\
1 & 0
\end{array}\right)w=\varphi w,
\]
 For every $u\in V_{s}:=\text{span}\left\{ v_{s}\right\} $ and $z\in\mathbb{T}^{2},$
$D_{f}(z)u=\left(-\frac{1}{\varphi}\right)u$. These facts can be
used \cite{Adler,Adler Weiss} to construct the Markov partition for
$f$ with three elements $\{R_{1},R_{2},R_{3}\}$, see figure \ref{fig:The-construction-of}.

\begin{figure}[h]
\includegraphics[scale=0.5]{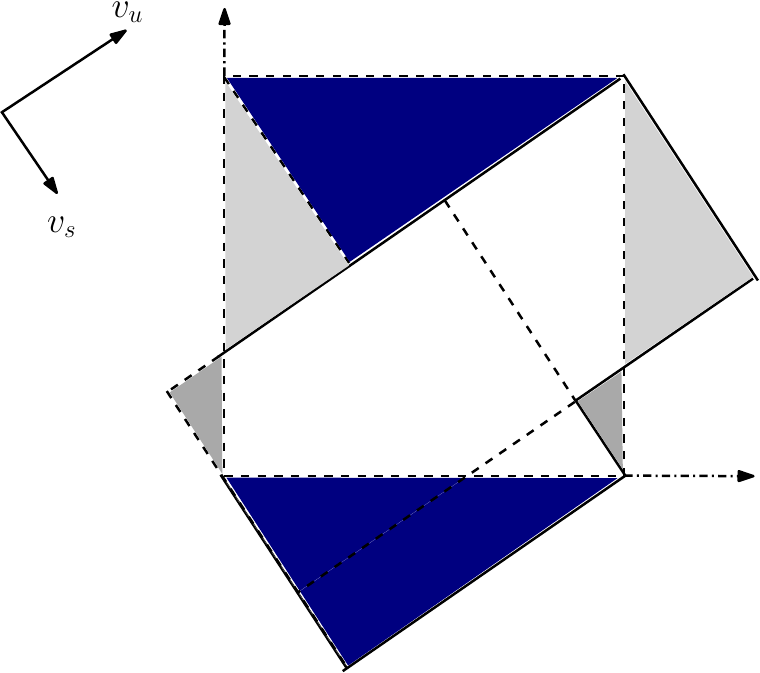}\qquad{}\includegraphics[scale=0.65]{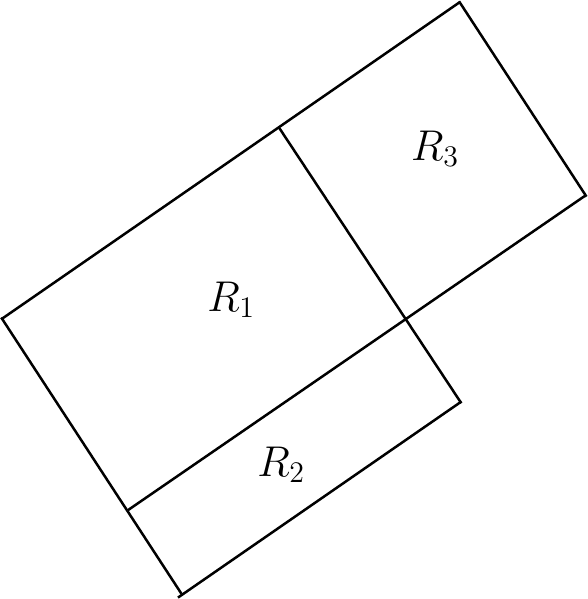}\caption{The construction of t\label{fig:The-construction-of}he Markov partition}
\end{figure}

The adjacency Matrix of the Markov partition is then defined by $A_{i,j}=1$
if and only if ${\rm int}R_{i}\cap f^{-1}\left({\rm int}R_{j}\right)\neq\emptyset$.
Here the adjacency Matrix is 
\[
{\bf A}=\left(\begin{array}{ccc}
1 & 0 & 1\\
1 & 0 & 1\\
0 & 1 & 0
\end{array}\right).
\]
Let $\Phi:\Sigma_{A}\to\mathbb{T}^{2}$ be the map defined by $\Phi(x):=\bigcap_{n=-\infty}^{\infty}\overline{f^{-n}R_{x_{n}}}$.
Note that by the Baire Category Theorem since $\left\{ \cap_{n=-N}^{N}\overline{f^{-n}R_{x_{n}}}\right\} _{N=1}^{\infty}$
is a decreasing sequence of compact sets, $\Phi(x)$ is well defined.
The map $\Phi:\Sigma_{{\bf A}}\to\mathbb{T}^{2}$ is continuous, finite
to one, and for every $x\in\Sigma_{{\bf A}}$, 
\[
\Phi\circ T(x)=f\circ\Phi(x).
\]
In other words, $\Phi$ is a semi-conjugacy (topological factor map)
between $\left(\Sigma_{{\bf A}},T\right)$ to $\left(\mathbb{T}^{2},f\right)$.
In addition, for every $x\in\TT^{2}\backslash\cup_{n\in\ZZ}\cup_{i=1}^{3}f^{-n}\left(\partial R_{i}\right)$
there exists a unique $w\in\Sigma_{{\bf A}}$ so that $\Phi(w)=x$
. The Lebesgue measure $m_{\mathbb{T}^{2}}$ on $\mathbb{T}^{2}$
is invariant under $f$. One can check that $m_{\mathbb{T}^{2}}\left(\cup_{i=1}^{3}\partial R_{i}\right)=0$
and thus $\Phi^{-1}$ defines an isomorphism between $\left(\mathbb{T}^{2},m_{\mathbb{T}^{2}},f\right)$
and $\left(\Sigma_{{\bf A}},\mu_{\pi_{{\bf Q}},{\bf Q}},T\right)$
where $\mu_{\pi_{{\bf Q}},{\bf Q}}$ is the stationary Markov measure
with 
\begin{equation}
P_{j}\equiv\mathbf{Q}:=\left(\begin{array}{ccc}
\frac{\varphi}{1+\varphi} & 0 & \frac{1}{1+\varphi}\\
\frac{\varphi}{1+\varphi} & 0 & \frac{1}{1+\varphi}\\
0 & 1 & 0
\end{array}\right)\label{eq: Lebesgue Transition function}
\end{equation}
and 
\begin{equation}
\pi_{j}=\mathbf{{\bf \pi_{Q}}}:=\left(1/\sqrt{5},\ 1/\p\sqrt{5},\ 1/\p\sqrt{5}\right)=\left(m_{\mathbb{T}^{2}}\left(R_{1}\right),m_{\mathbb{T}^{2}}\left(R_{2}\right),m_{\mathbb{T}^{2}}\left(R_{3}\right)\right).\label{eq:Lebesgue stationary dist.}
\end{equation}

\vspace{-3.5mm}

\subsubsection{Nonsingular Markov shifts: }

Let $\left\{ P_{n}\right\} _{n=-\infty}^{\infty}\subset M_{S\times S}$
be a sequence of aperiodic and irreducible stochastic matrices on
$S.$ In addition let $\left\{ \pi_{n}\right\} _{n=-\infty}^{\infty}$
be a sequence of probability distributions on $S$ so that for every
$s\in S$ and $n\in\ZZ$,
\begin{equation}
\sum_{t\in S}\pi_{n-1}(t)\cdot P_{n}\left(t,s\right)=\pi_{n}(s).\label{eq: Kolmogorov's consistency criteria}
\end{equation}
Then one can define a measure on the collection of cylinder sets,
\[
[b]_{k}^{l}:=\left\{ x\in S^{\ZZ}:\ x_{j}=b_{j}\ \forall j\in[k,l]\cap\ZZ\right\} 
\]
by 
\[
\mu\left(\left[b\right]_{k}^{l}\right):=\pi_{k}\left(b_{k}\right)\prod_{j=k}^{l-1}P_{j}\left(b_{j},b_{j+1}\right).
\]
Since the equation (\ref{eq: Kolmogorov's consistency criteria})
is satisfied, $\mu$ satisfies the consistency condition and therefore
by Kolmogorov's extension theorem $\mu$ defines a measure on $S^{\ZZ}$.
In this case we say that $\mu$ is the Markov measure generated by
$\left\{ \pi_{n},P_{n}\right\} _{n\in\ZZ}$ and denote $\mu=\text{M}\left\{ \pi_{n},P_{n}:\ n\in\ZZ\right\} $.
By $\text{M}\left\{ \pi,P\right\} $ we mean the measure generated
by $P_{n}\equiv P$ and $\pi_{n}\equiv\pi$. We say that $\mu$ is
nonsingular for the shift $T$ on $S^{\ZZ}$ if $T_{*}\mu\sim\mu$. 

\subsection{\label{subsec:Non-singular-Markov shifts} Local absolute continuity }

Let $(X,\BB)$ be a measure space and $\mathcal{F}_{n}\subset\BB$
be a filtration of $X$. That is an increasing sequence of $\sigma$-algebras
such that $\mathcal{F}_{n}\uparrow\BB$. The method in \cite{Kabanos-Lipzer-Sh,Shi}
uses ideas from Martingale theory in order to determine whether two
Borel probability measures $\mu,\nu$ are absolutely continuous. 
\begin{defn}
Given a filtration $\left\{ \F n\right\} $, we say that $\nu\ll^{loc}\mu$
($\nu$ is locally absolutely continuous with respect to $\mu$) if
for every $n\in\mathbb{N}$, $\nu_{n}\ll\mu_{n}$ where $\nu_{n}=\nu|_{\F n}.$ 
\end{defn}
Suppose that $\nu\lloc\mu$ w.r.t $\left\{ \F n\right\} $, set $z_{n}:=\frac{d\nu_{n}}{d\mu_{n}}$.
The sequence $z_{n}$ is a nonnegative martingale with respect to
$\mathcal{F}_{n}$ and thus by the martingale convergence theorem
there exists a $[0,\infty]$ valued random variable $z_{\infty}$
such that $\lim_{n\to\infty}z_{n}=z_{\infty}$ a.s. It follows that
if $\nu\lloc\mu$ then $\nu\ll\mu$ if and only if $z_{n}\xrightarrow[n\to\infty]{}z_{\infty}$
in $L^{1}\left(\mu\right)$. The latter holds if and only if the sequence
$\left\{ z_{n}\right\} _{n=1}^{\infty}$ is uniformly integrable meaning
that for all $\epsilon>0$ there exists $M>0$ such that for all $n\in\mathbb{N}$,
$\int z_{n}1_{\left[z_{n}>M\right]}d\mu<\epsilon$. 

\subsection{Sections Overview and explanation of the method of construction }

The idea is as follows, let $f(x,y)=(x+y,x){\rm mod}1$, $\left\{ R_{1},R_{2},R_{3}\right\} $
be the corresponding Markov partition for $f$, $\Sigma_{A}$ the
resulting topological Markov shift and $\Phi:\ \Sigma_{{\bf A}}\to\mathbb{T}^{2}$
the topological semiconjugacy with the shift. In addition $\mathbf{Q}$
will always denote the transition matrix corresponding to the Lebesgue
measure.
\begin{itemize}
\item In Section \ref{sec:Type--Markov} we present an inductive construction
which produces a family of nonatomic inhomogeneous Markov measures
which are fully supported on $\Sigma_{A}\subset\{1,2,3\}^{\mathbb{Z}}$
and are of type ${\rm III}_{1}$.
\item Let $\mu$ be such a Markov measure generated by $\left\{ \pi_{k},P_{k}:\ k\in\mathbb{Z}\right\} $.
Since $\mu$ is conservative $\Phi_{*}\mu$ gives zero measure to
the images of the boundaries of the rectangles of the Markov partition.
The latter property implies that $\Phi$ is an isomorphism of $\left(\TT^{2},\Phi_{*}\mu,f\right)$
and $\left(\Sigma_{{\bf A}},\mu,Shift\right)$ and thus $\left(\TT^{2},\Phi_{*}\mu,f\right)$
is a type ${\rm III}_{1}$ dynamical system. 
\end{itemize}
The type ${\rm III}_{1}$, inhomogeneous Markov measures for the shift
on $\Sigma_{{\bf A}}$ have the additional property that for every
$k\leq0$ the transition matrices of $\mu$ at $k$ are the same as
the ones arising from the Lebesgue measure ($\forall k\leq0,\ P_{k}={\bf Q}$).
This implies that (after a rotation of the coordinates to the $v_{u},v_{s}$
coordinates) with $\Phi:\Sigma_{{\bf A}}\to\TT$ being the semiconjugacy
map arising from the Markov partition we have 
\[
d\Phi_{*}\mu(x,y)=d\nu^{+}(x)dy.
\]
Here $\nu^{+}$ is the image by the push forward on the stable manifold
of the Markov measure on $\{1,2,3\}^{\NN}$ given by $\left\{ \pi_{k},P_{k}\right\} _{k=1}^{\infty}$.
This property will be used (see Subsection \ref{subsec: NE of phix mod1})
to show that there exists an homeomorphism $G$ of $\mathbb{T}^{2}$
such that $m_{\TT^{2}}\circ G=\Phi_{*}\mu$ and the transformation
$G\circ f\circ G^{-1}:\left(\mathbb{T}^{2},m_{\TT^{2}}\right)\to\left(\mathbb{T}^{2},m_{\TT^{2}}\right)$
is measure theoretically isomorphic to $\left(\mathbb{T}^{2},\Phi_{*}\mu,f\right)$\footnote{The isomorphism $\left(\mathbb{T}^{2},m_{\TT^{2}}\circ G=\Phi_{*}\mu,f\right)\xrightarrow{\pi}\left(\mathbb{T}^{2},m_{\TT^{2}},G\circ f\circ G^{-1}\right)$
is clearly $\pi=G$. Indeed $G$ is a homeomorphism hence measurable
and inverible (and $G^{-1}$ is measurable), $G\circ f=\left(GfG^{-1}\right)\circ G$
and $\left(m_{\mathbb{T}^{2}}\circ G\right)\circ G^{-1}=m_{\mathbb{T}^{2}}$. }, hence a type ${\rm III}_{1}$ system. 

The harder part in the proof of this theorem is to construct a homeomorphism
$H:\mathbb{T}^{2}\to\mathbb{T}^{2}$ so that 
\begin{enumerate}
\item $m\circ H\sim\Phi_{*}\mu=m\circ G$. Consequently the system $\left(\mathbb{T}^{2},\mathcal{B}_{\mathbb{T}^{2}},m_{\mathbb{T}^{2}},H\circ f\circ H^{-1}\right)$
is of type ${\rm III}_{1}$ because it is measure theoretically isomorphic
to $\left(\mathbb{T}^{2},\mathcal{B}_{\mathbb{T}^{2}},m_{\mathbb{T}^{2}}\circ H,f\right)$
and the fact that the type ${\rm III}_{1}$ property is invariant
upon changing the measure to an equivalent measure. 
\item $H\circ f\circ H{}^{-1}$ is $C^{1}$ and Anosov. 
\end{enumerate}
In order to obtain this goal and to explain the definition of $G$
it is easier for us to build $f$ as the natural extension of (the
non invertible) golden mean shift $Sx=\varphi x{\rm mod}1$. 

\subsection{The map $f$ as the natural extension of the golden mean shift\label{subsec: NE of phix mod1}}

The partition $\left\{ J_{1}=[0,1/\p^{2}],\ J_{2}=[1/\p,1],\ J_{3}=\left[1/\p^{2},1/\p\right]\right\} $
is a Markov partition for the golden mean shift with ${\bf A}$ (the
same matrix as the one for $f$) as its adjacency matrix. See figure
\ref{fig:The-Markov-partition for the golden mean shift}. 

\begin{figure}[h]
\includegraphics[scale=0.5]{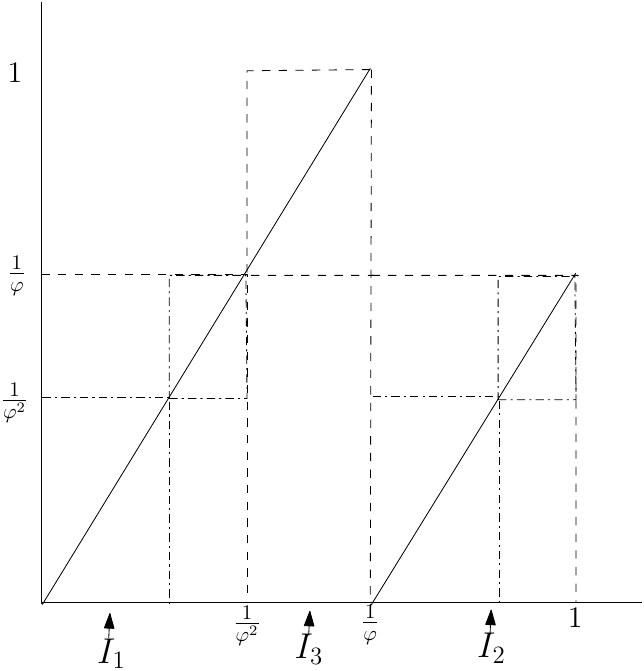}

\caption{T\label{fig:The-Markov-partition for the golden mean shift}he Markov
partition of $\protect\p x\mod1$}
\end{figure}

Denote by $\sigma$ the one sided shift on $\Sigma_{{\bf A}}^{+}$.
It can be verified that $\left(\Sigma_{{\bf A}}^{+},\nu_{\pi_{{\bf Q}},{\bf Q}},\sigma\right)$
is isomorphic to $\left(\mathbb{T},m_{{\rm \TT}},S\right)$ where
$m_{\TT}$ is the Lebesgue measure on $\mathbb{T}$. The natural extension
of $\left(\Sigma_{{\bf A}}^{+},\nu_{\pi_{{\bf Q}},{\bf Q}},\sigma\right)$
is $\left(\Sigma_{{\bf A}},\M{\pi_{{\bf Q}},{\bf Q}},\sigma\right)$
which is isomorphic to $\left(\mathbb{T}^{2},m_{{\rm Leb}},f\right)$.
This shows that $f$ is indeed the natural extension of the Golden
mean shift. To see the geometric picture of how $S$ and $f$ are
related one can look at the Markov partitions and move to the $V_{u},V_{s}$
coordinates. On those coordinates $f$ acts almost as 
\[
(u,v)\mapsto\left(\p u\mod1,-\p^{-1}v\right)=\left(Su,-\p^{-1}v\right),
\]
where the mistake is in the second coordinate. To make it precise
let $\MM=\left[0,1/\p\right]\times\left[-\p/\left(\p+2\right),\p^{2}/\left(\p+2\right)\right]\bigcup\left[1/\p,1\right]\times\left[-\p/\left(\p+2\right),1/\left(\p+2\right)\right]$.
Define $\tilde{f:}\MM\to\MM$ by 
\[
\tilde{f}(x,y)=\begin{cases}
\left(\p x,-\p^{-1}y\right), & 0\leq x\leq1/\p\\
\left(\p x-1,-\p^{-1}\left(y-\frac{\p^{2}}{\p+2}\right)\right), & 1/\p\leq x\leq1
\end{cases}.
\]
See Figure \ref{fig:Action-of-} for the way $\tilde{f}$ maps its
3 rectangles, as can be seen by this picture the action of $\tilde{f}$
is the same as how $f$ acts on its Markov partition.

\begin{figure}[h]
\includegraphics[scale=0.7]{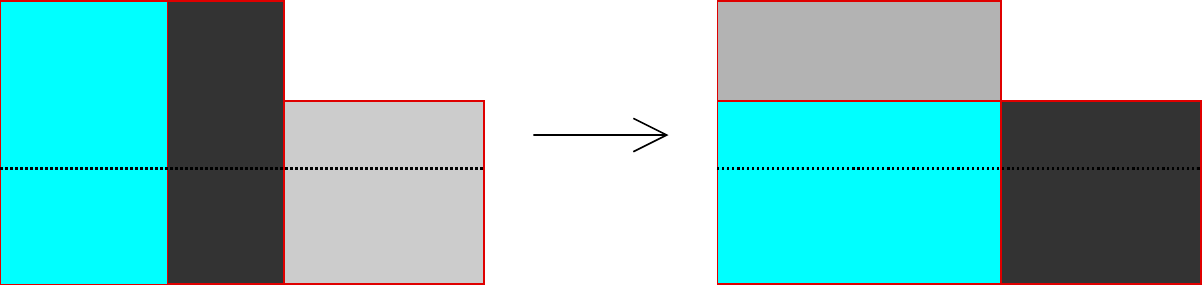}

\caption{\label{fig:Action-of-}Action of $\tilde{f}$ on it's (soon to be)
Markov partition}
\end{figure}

In order that $\tilde{f}$ will be the same as $f$, we identify by
orientation preserving piecewise translations the following intervals
(for a geometric understanding one can see that this identification
comes from the way the Markov partition of $f$ tiles the plane):
\begin{eqnarray*}
\{0\}\times\left[0,\p^{2}/\left(\p+2\right)\right] & \simeq & \{1\}\times\left[-\p/\left(\p+2\right),1/\left(\p+2\right)\right]\\
\{0\}\times\left[-\p/\left(\p+2\right),0\right] & \simeq & \left\{ 1/\p\right\} \times\left[1/\left(\p+2\right),\p^{2}/\left(\p+2\right)\right]\\
\left[0,1/\p^{3}\right]\times\left\{ \p^{2}/\left(\p+2\right)\right\}  & \simeq & \left[1/\p^{2},1/\p\right]\times\left\{ -\p/\left(\p+2\right)\right\} \\
\left[1/\p^{3},1/\p\right]\times\left\{ \p^{2}/\left(\p+2\right)\right\}  & \simeq & \left[1/\p,1\right]\times\left\{ -\p/\left(\p+2\right)\right\} \\
\left[0,1/\p^{2}\right]\times\left\{ -\p/\left(\p+2\right)\right\}  & \simeq & \left[1/\p,1\right]\times\left\{ 1/\left(\p+2\right)\right\} .
\end{eqnarray*}
The resulting Manifold (which is $\TT^{2}$) will be denoted by $\MM_{\sim}$
in order to remind the reader of this change of coordinates and the
geometric relation between $f$ and $S$. 

In $\MM_{\sim}$, $d\Phi_{*}\mu(x,y)=d\nu^{+}(x)dy$ where $\nu^{+}$
is a non atomic measure on $\TT$. This means that the circle homeomorphism
$h(x)=\nu^{+}[0,x]$ takes the Lebesgue measure on $\TT$ to $\nu^{+}$
and $h\left(1/\p\right)=\mu\left(x_{1}\neq2\right)=\frac{1}{\varphi}$.
The homeomorphism of $\MM_{\sim}$ defined by $G(x,y)=(h(x),y)$ takes
Lebesgue measure of $\MM_{\sim}$ to $\Phi_{*}\mu$. The perturbed
homeomorphism $H:\MM_{\sim}\to\MM_{\sim}$ which will be constructed
is of the form $H(x,y)=\left(h_{y}(x),y\right)$, where for $y\in\left[-\p/\left(\p+2\right),\p^{2}/\left(\p+2\right)\right]$,
$h_{y}:\TT\to\TT$ is a circle homeomorphism such that $m_{\TT}\circ h_{y}\sim\nu^{+}$.
This construction is carried out by the following steps: 
\begin{itemize}
\item the first step is to work on the action of $f$ on the unstable manifold
which is the Golden mean shift and to construct a circle homeomorphism
$\tilde{h}$ such that $\tilde{h}\circ S\circ\tilde{h}^{-1}$ is $C^{1}$
expanding and $m_{\TT}\circ\tilde{h}\sim\nu^{+}$. A further important
property of the homeomorphisms which we construct is that $\tilde{h}\left(J_{i}\right)=J_{i}$
for all elements of the Markov partition of $S$. This will imply
for example that $\left(\tilde{h}(x),y\right)$ is an homeomorphism
of $\MM_{\sim}$. This step involves adding another parameter for
the inductive construction of the measure $\mu=\M{P_{k},\pi_{k}:\ k\in\ZZ}$
and is carried out in Section \ref{sec:Type III perturbations of the Golden Mean Shift}.
\item in Section \ref{sec:Type III Anosov} we modify construction of these
homeomorphisms $\tilde{h}$ in order to construct the functions $h_{y}$
in the definition of $H$. A major challenge in this step is to ensure
that $\frac{\partial H\circ\tilde{f}\circ H^{-1}}{\partial y}$ is
defined and continuous. 
\end{itemize}

\section{\label{sec:Type--Markov}The type ${\rm {\rm III}_{1}}$ Markov shifts
supported on $\Sigma_{A}$}

Here we present the inductive construction of the inhomogeneous Markov
measures. 

\subsection{\label{subsec:Markov-Chains}Markov Chains}

\subsubsection{Basics of Stationary (homogenous) Chains}

Let $S$ be a finite set which we regard as the state space of the
chain, $\pi=\left\{ \pi(s)\right\} _{s\in S}$ a probability vector
on $S$ and ${\rm P}=\left(P_{s,t}\right)_{s,t\in S}$ a stochastic
matrix. The vector $\pi$ and $\P$ define a Markov chain $\left\{ X_{n}\right\} $
on $S$ by 
\[
\forall n\in\ZZ,\ \ \PP_{\pi}\left(X_{n}=t\right)\pi(t)\ \ \text{\ and\ \ }\ \PP\left(X_{n}=s\left|X_{1},..,X_{n-1}\right.\right):=P_{X_{n-1},s}.
\]
${\rm P}$ is \textit{irreducible} if for every $s,t\in S$, there
exists $n\in\NN$ such that $P_{s,t}^{n}>0$ and ${\rm P}$ is \textit{aperiodic}
if for every $s\in S$, ${\rm gcd}\left\{ n:\ P_{s,s}^{n}>0\right\} =1$.
Given an irreducible and aperiodic ${\rm P}$, there exists a unique
stationary probability $\pi_{{\rm P}}$ (that is $\pi_{P}\P=\pi_{P}$).
In addition for every $s,t\in S$, $P_{s,t}^{n}\xrightarrow[n\to\infty]{}\pi_{P}(t)$.
Since $S$ is a finite state space, it follows that for any initial
distribution $\pi$ on $S$, 
\[
\PP_{\pi}\left(X_{n}=t\right)=\sum_{s\in S}\pi(s)P_{s,t}^{n}\xrightarrow[n\to\infty]{}\pi_{P}(t).
\]
An important fact which will be used in the sequel is that the stationary
distribution is continuous with respect to the stochastic matrix.
That is if $\left\{ {\rm P_{n}}\right\} _{n=1}^{\infty}$ is a sequence
of irreducible and aperiodic stochastic matrices such that 
\[
\|{\rm P}_{n}-{\rm P\|_{\infty}:=}\max_{s,t\in S}\left|\left(\P_{n}\right)_{s,t}-\P_{s,t}\right|\xrightarrow[n\to\infty]{}0
\]
and ${\rm P}$ is irreducible and aperiodic then $\left\Vert \pi_{{\rm P}_{n}}-\pi_{{\rm P}}\right\Vert _{\infty}\to0$. 

\subsection{\label{subsec:Type III_1 Markov shifts}Type ${\rm III_{1}}$ Markov
Shifts}

In this subsection, let $\Omega:=\Sigma_{{\bf A}}$, $\mathcal{B}:=\mathcal{B}_{\Sigma_{{\bf A}}}$
and $T$ is the two sided shift on $\Omega$. For two integers $k<l$,
write $\mathcal{F}\left(k,l\right)$ for the algebra of sets generated
by cylinders of the form $[b]_{k}^{l},\ b\in\{1,2,3\}^{l-k}$. That
is the smallest $\sigma$-algebra which makes the coordinate mappings
$\left\{ w_{i}(x):=x_{i}:i\in[k,l]\right\} $ measurable. 

\subsubsection{Idea of the construction of the type ${\rm III}$ Markov measure. }

The construction uses the ideas in \cite{Kos1}. For every $j\leq0$
\[
P_{j}\equiv\mathbf{Q}\ {\rm and}\ \pi_{j}\equiv\pi_{\mathbf{Q}},
\]
where $\mathbf{Q}$ and $\pi_{\mathbf{Q}}$ are as in (\ref{eq: Lebesgue Transition function})
and (\ref{eq:Lebesgue stationary dist.}) respectively. On the positive
axis one defines on larger and larger chunks the stochastic matrices
which depend on a distortion parameter $\lambda_{k}\geq1$ where $1$
means no distortion. Now a cylinder set $[b]_{-n}^{n}$ fixes the
values of the first $n$ terms in the product form of the Radon Nykodym
derivatives. We would like to be able to correct the values in order
that we can enforce a given number to be in the ratio set. This corresponds
to a lattice condition on $\lambda_{k}$ which is less straightforward
then the one in \cite{Kos1}. However this is not enough for a Markov
measure, since the states are not independent, this forces us to utilize
both the convergence to the stationary distribution and the mixing
property for stationary chains. 

Another difficulty is that the measure of the set $[b]_{-n}^{n}\cap T^{-N}[b]_{-n}^{n}\cap\left\{ \left(T^{N}\right)'\approx a\right\} $
could be of very small measure with respect to $\mu\left([b]_{-n}^{n}\right)$.
To remedy this problem, and enable approximation of general sets,
we look for many approximately independent such events so that their
union covers at least a fixed proportion of $[b]_{-n}^{n}$. 

More specifically the construction goes as follows. We define inductively
$5$ sequences $\left\{ \lambda_{j}\right\} $, $\left\{ m_{j}\right\} $,
$\left\{ n_{j}\right\} $, $\left\{ N_{j}\right\} $ and $\left\{ M_{j}\right\} $
where 
\begin{eqnarray*}
M_{0} & = & 1\\
N_{j} & := & N_{j-1}+n_{j}\\
M_{j} & := & N_{j}+m_{j}.
\end{eqnarray*}
 This defines a partition of $\mathbb{N}$ into segments $\left\{ \left[M_{j-1},N_{j}\right),\ \left[N_{j},M_{j}\right)\right\} _{j=1}^{\infty}$.
The sequence $\left\{ P_{n}\right\} $ equals $\mathbf{Q}$ on the
$\left[N_{j},M_{j}\right)$ segments while on the $\left[M_{j-1},N_{j}\right)$
segments we have $P_{n}\equiv\mathbf{Q}_{\lambda_{j}}$, the $\lambda_{j}$
perturbed stochastic matrix. The $\mathbf{Q}$ segments facilitate
the form of some of the Radon Nykodym derivatives while the perturbed
segments come to ensure that $\mu\perp\M{\pi_{{\bf Q}},{\bf Q}}$
and that the ratio set condition is satisfied for cylinder sets.

Notation: By $x=a\pm b$ we mean $a-b\leq x\leq a+b$. 

\subsubsection{The construction.\label{subsec:The-construction.}}

For $\lambda\geq1$ let 
\[
\mathbf{Q_{\lambda}}:=\left(\begin{array}{ccc}
\frac{\varphi\lambda}{1+\varphi\lambda} & 0 & \frac{1}{1+\varphi\lambda}\\
\frac{\varphi}{1+\varphi} & 0 & \frac{1}{1+\varphi}\\
0 & 1 & 0
\end{array}\right).
\]

Choice of the base of induction: Let $M_{0}=1,\ \lambda_{1}>1$, $n_{1}=2$,
$N_{1}=3$ and ${\bf Q}_{1}:={\bf Q}_{\lambda_{1}}$ be the $\lambda_{1}$
perturbed matrix. Set $P_{1}=P_{2}=\mathbf{Q}_{\mathbf{1}}$ and $\pi_{0}=\pi_{\mathbf{Q}}$.
The measures $\pi_{1},\pi_{2}$ are then defined by equation (\ref{eq: Kolmogorov's consistency criteria}).
Let $m_{1}=3$ and thus $M_{1}=6.$ Set $P_{j}=\mathbf{Q}$ for $j\in\left[N_{1},M_{1}\right)=\left[3,6\right)$
and $\pi_{3},\pi_{4},\pi_{5}$ be defined by equation (\ref{eq: Kolmogorov's consistency criteria}). 

Assume that $\left\{ \lambda_{j},m_{j},n_{j},N_{j},M_{j}\right\} _{j=1}^{l-1}$
have been chosen. 

\textbf{Choice of $\lambda_{l}$:} Notice that the function $f(x):=x\frac{1+\varphi}{1+\varphi x}$
is monotone increasing and continuous in the segment $[1,\infty)$.
Therefore we can choose $\lambda_{l}>1$ which satisfies the following
three conditions:
\begin{enumerate}
\item Finite approximation of the Radon-Nykodym derivatives condition: 
\begin{equation}
\left(\lambda_{l}\right)^{2m_{l-1}}<e^{\frac{1}{2^{l}}}.\label{eq:condition on Lambda}
\end{equation}
This condition ensures an approximation of the derivatives by a finite
product. 
\item Lattice condition: 
\begin{equation}
\lambda_{l-1}\cdot\frac{1+\varphi}{1+\varphi\lambda_{l-1}}\in\left(\lambda_{l}\cdot\frac{1+\varphi}{1+\varphi\lambda_{l}}\right)^{\NN},\label{eq: Lattice condition on lambda}
\end{equation}
where $a^{\NN}:=\left\{ a^{n}:n\in\NN\right\} $.
\item Let ${\bf Q}_{l}:={\bf Q}_{\lambda_{l}}$ and $\pi_{\mathbf{Q_{l}}}$
be its unique stationary probability. Notice that when $\lambda_{l}$
is close to $1$, then $\mathbf{Q_{l}}$ is close to $\mathbf{Q}$
in the $L_{\infty}$ sense. Therefore by continuity of the stationary
distribution we can demand that 
\begin{equation}
\|\pi_{\mathbf{Q}}-\pi_{\mathbf{Q_{l}}}\|_{\infty}<\frac{1}{2^{l}}.\label{eq: lambda is small so stationary dist. are close}
\end{equation}
\end{enumerate}
\textbf{Choice of $n_{l}$:} It follows from the Lattice condition,
equation (\ref{eq: Lattice condition on lambda}), that for each $k\leq l-1$,
\[
\left(\lambda_{k}\cdot\frac{1+\varphi}{1+\varphi\lambda_{k}}\right)\in\left(\lambda_{l}\cdot\frac{1+\varphi}{1+\varphi\lambda_{l}}\right)^{\NN}.
\]
 Choose $n_{l}$ large enough so that for every $k\leq l-1$ (notice
that the demand on $k=1$ is enough) there exists $\NN\ni p=p(k,l)\leq\frac{n_{l}}{20}$
so that 
\begin{equation}
\left(\lambda_{l}\cdot\frac{1+\varphi}{1+\varphi\lambda_{l}}\right)^{p}=\left(\lambda_{k}\cdot\frac{1+\varphi}{1+\varphi\lambda_{k}}\right).\label{eq: n_t is very large w.r.t lattic condition}
\end{equation}

Till now we have defined $\left\{ P_{j},\pi_{j}\right\} _{j=-\infty}^{M_{l-1}}$.
By the mean ergodic theorem for Markov chains \cite[Th. 4.16]{Levin Peres Wilmer Mixing Times}
and (\ref{eq: lambda is small so stationary dist. are close}), one
can demand by enlarging $n_{l}$ if necessary that in addition 
\begin{equation}
\nu_{\pi_{M_{l-1}},\mathbf{Q_{l}}}\left(x:\ \frac{1}{n_{l}}\sum_{j=1}^{n_{l}}{\bf 1}_{\left[x_{j}=1\right]}=\frac{1}{\sqrt{5}}\pm2^{-l}\right)>1-\frac{1}{l},\label{Ergodic theorem cond 1}
\end{equation}
and 
\begin{equation}
\nu_{\pi_{M_{l-1}},\mathbf{Q_{l}}}\left(x:\ \frac{1}{n_{l}}\sum_{j=1}^{n_{l}}{\bf 1}_{\left[x_{j}=2,x_{j+1}=3\right]}>\frac{1}{15}\right)>1-\frac{1}{l},\label{eq:Ergodic theorem cond 2}
\end{equation}
where $\nu$ is the Markov measure on $\{1,2,3\}^{\NN}$ defined by
and ${\rm \mathbf{Q_{l}}}$ and $\pi_{M_{t-1}}$. The numbers inside
the set were chosen so that
\[
\left|\pi_{\mathbf{Q_{l}}}(1)-1/\sqrt{5}\right|<2^{-l},
\]
and similarly for $l$ large enough 
\begin{eqnarray*}
\int{\bf 1}_{\left[x_{0}=2,x_{1}=3\right]}(x)d\nu_{\pi_{\mathbf{Q_{l}}},\mathbf{Q_{l}}} & = & \pi_{\mathbf{Q_{l}}}(2)\left(\mathbf{Q_{l}}\right)_{2,3}=\left(\frac{1}{\varphi\sqrt{5}}\pm\frac{1}{2^{l}}\right)\frac{1}{\varphi+1}>\frac{1}{15}.
\end{eqnarray*}
\textbf{Choice of $N_{l}$:} Let $N_{l}:=M_{l-1}+n_{l}$. Now set
for all $j\in\left[M_{l-1},N_{l}\right)$, 
\[
P_{j}=\mathbf{Q_{l}}
\]
 and $\left\{ \pi_{j}\right\} _{j=M_{l-1}+1}^{N_{l}}$ be defined
by equation (\ref{eq: Kolmogorov's consistency criteria}). \\
\textbf{Choice of $m_{l}$:} Let $k_{l}$ be the $\left(1\pm\left(\frac{1}{3}\right)^{3N_{l}}\right)$
mixing time of $\mathbf{Q}$. That is for every ${\bf n}>k_{l}$,
$j\in\mathbb{N}$, $A\in\F{}\left(0,j\right)$, $B\in\mathcal{F}\left(j+{\bf n},\infty\right)$
and initial distribution $\tilde{\pi}$, 
\begin{equation}
\nu_{\tilde{\pi},\mathbf{Q}}\left(A\cap B\right)=\left(1\pm3^{-3N_{l}}\right)\nu_{\tilde{\pi},\mathbf{Q}}\left(A\right)\nu_{\pi_{\mathbf{Q}},\mathbf{Q}}\left(T^{{\bf n}+l}B\right).\label{eq: Mixing time}
\end{equation}
Demand in addition that $k_{l}>N_{l}$. Let $m_{l}$ be large enough
so that 
\begin{equation}
\left(1-9^{-3N_{l}}\right)^{m_{l}/4k_{l}}\leq\frac{1}{l},\label{m_l large so that the sum of bad events is small}
\end{equation}
and 
\begin{equation}
\left(m_{l}-N_{l}\right)\lambda_{1}^{-2N_{l}}\geq1.\label{m_l is large enough for shift conservative}
\end{equation}

To summarize the construction. We have defined inductively sequences
$\left\{ n_{l}\right\} ,\ \left\{ N_{l}\right\} ,\ \left\{ m_{l}\right\} ,\ \left\{ M_{l}\right\} $
of integers which satisfy 
\[
M_{l}<N_{l+1}=M_{l}+n_{l}<M_{l+1}=N_{l+1}+m_{l+1}.
\]
In addition we have defined a monotone decreasing sequence $\left\{ \lambda_{l}\right\} $
which decreases to $1$ and using that sequence we defined new stochastic
matrices $\left\{ \mathbf{Q_{l}}\right\} $. Now we set 
\begin{equation}
P_{j}:=\begin{cases}
\mathbf{Q}, & j\leq0\\
\mathbf{Q_{l}}, & M_{l-1}\leq j<N_{l}\\
\mathbf{Q}, & N_{l}\leq j<M_{l}
\end{cases},\label{eq: def of P's}
\end{equation}
and $\pi_{j}=\pi_{\P}$ for $j\leq0$. The rest of the $\pi_{j}$'s
are defined by the consistency condition, equation (\ref{eq: Kolmogorov's consistency criteria}).
Finally let $\mu$ be the Markovian measure on $\left\{ 1,2,3\right\} ^{\mathbb{Z}}$
defined by $\left\{ \pi_{j},P_{j}\right\} _{j=-\infty}^{\infty}$. 

Notice that for all $j\in\NN$,  ${\rm supp}P_{j}\equiv{\rm supp}A={\rm supp}\mathbf{Q}$. 
\begin{thm}
\label{thm: lambdas are...}The shift $\left(\left\{ 1,2,3\right\} ^{\mathbb{Z}},\mu,T\right)$
is nonsingular,conservative, ergodic and of type ${\rm III}_{1}$. 
\end{thm}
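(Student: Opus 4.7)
The plan is to verify the four properties in sequence, with the bulk of the work going into a single ratio-set calculation that yields conservativity (via Maharam's theorem) and type ${\rm III}_1$ simultaneously. \emph{Non-singularity} is immediate from the construction, since each $P_j$ has exactly the same sign pattern as ${\bf Q}$ (hence as the adjacency matrix ${\bf A}$): every admissible cylinder carries strictly positive $\mu$-mass, and the restriction of $(T)'$ to $\cF(-L,L)$ is the explicit finite product of ratios of the form $\pi_{j+1}(\cdot)/\pi_{j}(\cdot)$ and $P_{j+1}(\cdot,\cdot)/P_{j}(\cdot,\cdot)$. For \emph{ergodicity} I would show that the tail $\sigma$-algebra $\bigcap_{n}\cF(n,\infty)$ is $\mu$-trivial: infinitely many blocks $[N_l,M_l)$ carry the unperturbed homogeneous ${\bf Q}$-chain with the mixing bound (\ref{eq: Mixing time}), and the fact that $m_l\gg k_l$ (built into (\ref{m_l large so that the sum of bad events is small})) lets one approximate any tail event by an $\cF(N_l,M_l-1)$-event asymptotically independent of every cylinder in $\cF(-\infty,0)$, forcing triviality.

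The core step is to show $R(T)=[0,\infty)$. Since $R(T)\cap(0,\infty)$ is a closed multiplicative subgroup of $(0,\infty)$, it suffices to find a dense subset. Set $r_k:=\lambda_k(1+\p)/(1+\p\lambda_k)$. Because $\lambda_k\downarrow 1$ we have $r_k\downarrow 1$, and the lattice condition (\ref{eq: Lattice condition on lambda}) gives $r_{k-1}\in r_k^{\NN}$ for every $k$; hence the multiplicative group generated by $\{r_k\}$ is dense in $(0,\infty)$, and it is enough to prove $r_k\in R(T)$ for each $k\geq 2$.

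To verify $r_k\in R(T)$, fix a positive-measure cylinder $A=[b]_{-M}^{M}$ and $\epsilon>0$. For each large $l$, condition (\ref{eq: n_t is very large w.r.t lattic condition}) gives an integer $p=p(k,l)\leq n_l/20$ with $r_l^{p}=r_k$. I would then choose $N=N(l)$ so that the shifted window $[-M+N,M+N]$ sits deep inside the unperturbed block $[N_l,M_l)$, and restrict to the sub-event of $A$ on which the perturbed block $[M_{l-1},N_l)$ of $x$ contains precisely the combinatorial configuration (the right count of $1\to 1$ transitions, with the remaining $1$'s mapping to $3$) that makes the telescoped product of $(Q_{\lambda_l}/Q)(x_j,x_{j+1})$ across that block equal $r_k$ up to $\epsilon$. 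The ergodic-theorem conditions (\ref{Ergodic theorem cond 1})--(\ref{eq:Ergodic theorem cond 2}) guarantee that such sub-events have measure comparable to $\mu(A)$; the mixing estimate (\ref{eq: Mixing time}) applied across the long ${\bf Q}$-segment $[N_l,M_l)$ decouples the constraints from $A$ and from $T^{-N}A$; and (\ref{eq:condition on Lambda}) bounds by $e^{1/2^{l}}$ the multiplicative error coming from all other perturbed blocks. Together these yield $\mu(A\cap T^{-N}A\cap\{|(T^{N})'-r_k|<\epsilon\})>0$, so $r_k\in R(T)$. Once $R(T)=[0,\infty)$ is established, $1\in R(T)$ gives conservativity by Maharam's theorem, and $R(T)=[0,\infty)$ is the definition of type ${\rm III}_1$.

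The main obstacle is this ratio-set computation: one must pin down the \emph{exact} accumulated factor $r_k$ on a set of uniformly positive measure, rather than merely some element of the dense group. The lattice condition (\ref{eq: Lattice condition on lambda}), the choice of $n_l$ via (\ref{eq: n_t is very large w.r.t lattic condition}) and the ergodic-theorem inputs (\ref{Ergodic theorem cond 1})--(\ref{eq:Ergodic theorem cond 2}) are set up precisely for this, but the combinatorial bookkeeping linking $N$, $p$, and the sub-cylinder event, while simultaneously invoking the mixing bound to absorb the $A$-constraints and (\ref{eq:condition on Lambda}) to neglect the other perturbed blocks, is the technically delicate portion of the argument.
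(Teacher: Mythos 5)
The paper does not actually prove this theorem here: Section \ref{sec:Type--Markov} only reproduces the inductive construction and refers to \cite{Kosloff TMS} for the proof. Judged against the evident design of the construction, your outline is the intended strategy (lattice condition plus the ergodic-theorem conditions and the mixing bound to force the value $r_k$ of the cocycle on a uniformly positive proportion of each cylinder; tail triviality for ergodicity). However, there are two logical points that need repair. First, your route to conservativity is circular as written: the statement that $R(T)\cap(0,\infty)$ is a closed multiplicative \emph{subgroup} is a structure theorem valid for conservative ergodic nonsingular transformations, yet you invoke it to upgrade $\{r_k\}\subset R(T)$ to density and only afterwards extract conservativity from $1\in R(T)$. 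The fix is cheap but must be made explicit: $R(T)$ is closed directly from its definition (no conservativity needed), and since $r_k\downarrow 1$ you get $1\in R(T)$ and hence conservativity by Maharam before appealing to the group structure. Alternatively (and this is what the construction itself signals), conservativity is meant to be proved directly via Hopf's criterion $\sum_n\left(T^{n}\right)'(x)=\infty$ a.e., using the long unperturbed ${\bf Q}$-blocks; condition (\ref{m_l is large enough for shift conservative}) is labelled precisely ``$m_l$ is large enough for shift conservative'' and your proposal never uses it.

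Second, non-singularity is not ``immediate.'' Positivity of all admissible cylinders and the finite-product formula for the derivative only give \emph{local} absolute continuity of $T_{*}\mu$ with respect to $\mu$ relative to the filtration $\cF(-L,L)$; equivalence of the two measures is an infinite-product (Kakutani-type) statement and requires uniform integrability of the martingale of finite-dimensional densities. This is exactly where the decay $\lambda_l\downarrow1$ quantified in (\ref{eq:condition on Lambda}) enters (the transition matrices of $\mu$ and $T_{*}\mu$ disagree at only boundedly many indices per block, with discrepancy $O(\lambda_l-1)$, and one needs the resulting product to converge a.e.\ to a positive finite limit). Finally, a smaller point: the ratio-set condition must hold for \emph{every} positive-measure set $A$, not just cylinders, so the estimate you sketch has to be of the uniform form $\mu\left(A\cap T^{-N}A\cap\{\left|\left(T^{N}\right)'-r_k\right|<\epsilon\}\right)\geq c\,\mu(A)$ with $c>0$ independent of the cylinder, after which a density argument transfers it to general $A$; your phrase ``measure comparable to $\mu(A)$'' is the right target but is the part that carries the combinatorial weight.
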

The proof of Theorem \ref{thm: lambdas are...} is given in the appendix. 

\section{Type ${\rm III}$ perturbation of the Golden Mean Shift arising from
Markovian measures \label{sec:Type III perturbations of the Golden Mean Shift}}

\subsection{\label{subsec:A-perturbation-of}A perturbation of the golden mean
shift:}

Let $\nu=\M{\pi_{k},\P_{k}}_{k=-\infty}^{\infty}$ be the type ${\rm III_{1}}$
(for the shift on $\left\{ 1,2,3\right\} ^{\ZZ}$) Markov measure
from Section \ref{sec:Type--Markov} for the two sided shift. It follows
from \cite[Thm. 4.4.]{S-T} that the one sided Markov measure $\nu^{+}=\M{\pi_{k},\P_{k}}_{k=1}^{\infty}$
on $\left\{ 1,2,3\right\} ^{\NN}$ is a type ${\rm III}$ measure
for the (one sided) shift.

Let $Sx=\p x\mod1$ and $J_{1}:=\left(0,1/\p^{2}\right)$, $J_{2}:=\left(1/\p,1\right)$
and $J_{3}:=\left(1/\p^{2},1/\p\right)$ be a Markov partition for
$S$. Denote by 
\[
{\rm Bd}\left(S\right):=\bigcup_{n=0}^{\infty}\bigcup_{i=1}^{3}\partial\left(S^{-n}J_{i}\right).
\]
 The map $\Theta:\Sigma_{{\bf A}}^{+}\to[0,1]$, $\Theta\left(w\right)=\bigcap_{n=0}^{\infty}\overline{S^{-n}J_{w_{n}}}$
is a semiconjugacy of $\left(\Sigma_{{\bf A}}^{+},\sigma\right)$
and $\left(\TT,S\right)$ and for each $x\notin{\rm Bd}\left(S\right)$,
$\Theta^{-1}(x)$ consists of one point (point of uniqueness for the
$\Theta$ representation). Since the support of $\nu^{+}$ is contained
in ${\rm G}\left(\sigma\right):=\Theta^{-1}\left(\TT\backslash{\rm Bd}\left(S\right)\right)$,
the map ${\rm \Theta}$ is a metric isomorphism between $\left(\Sigma_{{\bf A}}^{+},\nu^{+},\sigma\right)$
and $\left(\TT,\Theta_{*}\left(\nu^{+}\right),S\right)$ and therefore
the measure $\mu^{+}:=\Theta_{*}\left(\nu^{+}\right)$ is a type ${\rm III}$
measure for $S$. Since $\mu^{+}$ is a continuous measure, its cumulative
distribution function $\h(x)=\mu^{+}\left(\left[0,x\right]\right)$
is a homeomorphism of $\TT$ such that $\mu^{+}\circ\h^{-1}$ is Lebesgue
measure on $\mathbb{T}$. It follows that the map $\left(\TT,m_{\TT},\h\circ S\circ\h^{-1}\right)$
is a type ${\rm III}$ transformation, where $m_{\text{\ensuremath{\TT}}}$
denotes the Lebesgue measure. The problem is that $\h\circ S\circ\h^{-1}$
is not necessarily smooth, so we construct $\h_{\epsilon}$, as in
the idea of the examples of Bruin and Hawkins, close to $\h$ in the
$C^{0}$ norm such that
\begin{itemize}
\item $\h_{\epsilon}\circ S\circ\h_{\epsilon}^{-1}$ is $C^{1}$ and uniformly
expanding.
\item $m_{\text{\ensuremath{\TT}}}\circ\h_{\epsilon}\sim\mu^{+}$.
\item We will have in addition that $\h_{\mathbb{\epsilon}}J_{i}=J_{i}$
for every $i\in\{1,2,3\}$, this extra property is crucial for the
extension to two dimensions. 
\end{itemize}
Before we go through the construction we would like the reader to
recall that the Lebesgue measure on $\TT$ is the measure arising
from $\M{\pi,{\bf Q}}$. The main idea is to approximate the change
of measure between Lebesgue measure and $\mu^{+}$ on the semi algebras
\[
\mathcal{R}(n):=\left\{ C_{\left[w\right]_{1}^{n}}:=\bigcap_{k=0}^{n-1}S^{-k}J_{w_{k}}:x\in\Sigma_{{\bf A}}\right\} =\left\{ C_{w}:\ w\in\Sigma_{{\bf A}}(n)\right\} .
\]

The construction goes as follows: We first assume that we are given
a type ${\rm III}$ Markovian measure defined by $\left\{ \lambda_{k},M_{k},N_{k}\right\} _{k=1}^{\infty}$.
Then we would like to choose inductively, mostly by continuity arguments
a sequence $\ep=\left\{ \epsilon_{k}\right\} $ that will give us
the perturbation. However in the end we arrive at a problem that we
need that the size of $M_{k}$ is relatively large with respect to
$1/\epsilon_{k-1}$. This problem will be solved by modifying the
induction process of Section \ref{sec:Type--Markov} and adding the
choice of the sequence $\ep$ to the induction. The new induction
will be explained in Subsection \ref{subsec:The-modified-induction}. 
\begin{rem}
Before we continue with the construction we would like to remind the
reader that at each stage in the inductive construction of the Markovian
measure in Section \ref{sec:Type--Markov} we can take $\lambda_{t}$
to be as close to $1$ as we like and $n_{t},\ M_{t}/N_{t}$ to be
as large as we want. This is because the conditions on $\lambda_{t}$
((\ref{eq:condition on Lambda}), (\ref{eq: Lattice condition on lambda})
and (\ref{eq: lambda is small so stationary dist. are close})) are
that $\lambda_{t}$ is small enough whilst the conditions on $n_{t}$
((\ref{eq: n_t is very large w.r.t lattic condition}), (\ref{Ergodic theorem cond 1})
and (\ref{eq:Ergodic theorem cond 2})) and $M_{t}/n_{t}\sim m_{t}/n_{t}$
((\ref{m_l large so that the sum of bad events is small}) and (\ref{m_l is large enough for shift conservative}))
are to be large enough. 
\end{rem}

\subsubsection*{Special interpolation functions: }

Given $\alpha>0$ we would like to define a Lipschitz function $g_{\alpha}$
so that $g_{\alpha}(0)=0$, $g'_{\alpha}(0)=1$, $g_{\alpha}(1)=\int_{0}^{1}g_{\alpha}'(x)dx=\alpha$
and $g_{\alpha}'(1)=\alpha$. We will use the functions $g_{\alpha}:[0,1]\to[0,\alpha]$
defined by $g_{\alpha}(0)=0$ and 
\[
g'_{\alpha}(x)=\begin{cases}
1+3x\cdot\frac{5\alpha-5}{4} & 0\leq x\leq\frac{1}{3}\\
\frac{5\alpha-1}{4}, & \frac{1}{3}\leq x\leq\frac{2}{3}\\
\frac{5\alpha-1}{4}-(3x-2)\frac{\alpha-1}{4}, & \frac{2}{3}\leq x\leq1
\end{cases}
\]
which have the additional property that if $\alpha>1$ then 
\[
1=\inf_{x\in[0,1]}g'_{\alpha}(x)<\sup_{x\in[0,1]}g'_{\alpha}(x)=\frac{5\alpha-1}{4}<\alpha^{2}
\]
and if $\frac{1}{4}\leq\alpha<1$ then 
\[
\alpha^{2}\leq\frac{5\alpha-1}{4}=\inf_{x\in[0,1]}g'_{\alpha}(x)<\sup_{x\in[0,1]}g'_{\alpha}(x)=1.
\]

\begin{rem}
\label{rem: relation on g in the construction of psi}For all $\alpha,\epsilon>0$,
\[
\int_{0}^{\epsilon}g_{\alpha}^{'}\left(\frac{x}{\epsilon}\right)dx=\epsilon\int_{0}^{1}g'_{\alpha}(x)dx=\epsilon\alpha
\]
and for all $u>\epsilon$, 
\[
\int_{u-\epsilon}^{u}g_{\alpha}^{'}\left(\frac{u-x}{\epsilon}\right)dx=\epsilon\alpha
\]
\end{rem}

\subsection{Realization of the homeomorphism of change of measures}

For $0<\epsilon<\frac{1}{\varphi}$ and $\lambda>1$, let $\psi_{\epsilon,\lambda}:\left[0,1\right]\circlearrowleft$
be the function defined by $\psi_{\epsilon,\lambda}(0)=0$ and
\[
\psi_{\epsilon,\lambda}^{'}(x):=\begin{cases}
g_{\left(\frac{\lambda\p^{2}}{1+\lambda\p}\right)}'\left(\frac{x}{\epsilon}\right), & 0\leq x\leq\epsilon\\
\frac{\lambda\p^{2}}{1+\l\p}, & \epsilon<x\leq\frac{1}{\p}-\epsilon\\
g_{\left(\frac{\lambda\p^{2}}{1+\lambda\p}\right)}'\left(\frac{1/\p-x}{\epsilon}\right), & \frac{1}{\p}-\epsilon<x\leq\frac{1}{\p}\\
g_{\left(\frac{\p^{2}}{1+\lambda\p}\right)}'\left(\frac{x-1/\p}{\epsilon}\right), & \frac{1}{\p}\leq x\leq\frac{1}{\p}+\epsilon\\
\frac{\p^{2}}{1+\l\p}, & \frac{1}{\p}+\epsilon<x\leq1-\epsilon\\
g_{\left(\frac{\p^{2}}{1+\lambda\p}\right)}'\left(\frac{1-x}{\epsilon}\right), & 1-\epsilon<x\leq1.
\end{cases}.
\]

\begin{figure}[h]
\includegraphics[scale=0.5]{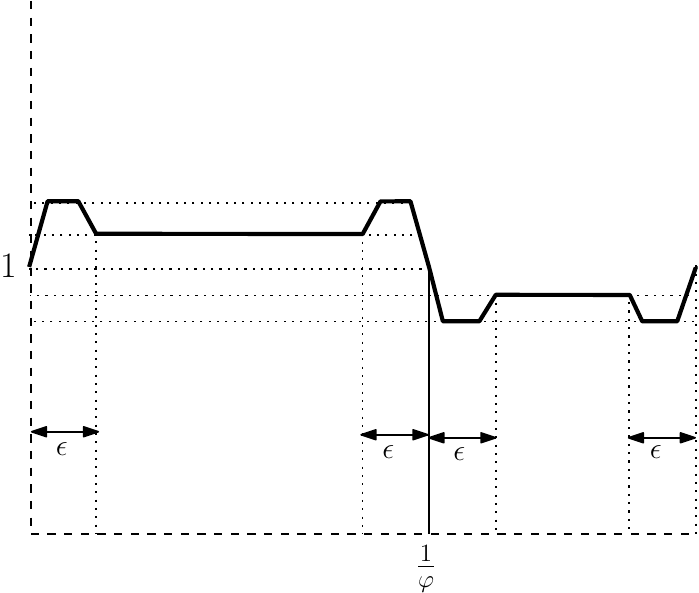}\caption{The graph of $\psi'_{\epsilon,\lambda}$}

\end{figure}

If $\epsilon=0$ then by a rescaling procedure one can use these functions
to define the cumulative distribution function of $\Theta_{*}\left(\nu_{\pi_{{\bf Q}_{\lambda}},{\bf Q}_{\lambda}}\right)$.
The function $\psi_{\epsilon,\lambda}$ is basically an interpolation
of a piecewise constant function in order to make it continuous and
that the following properties hold:
\begin{enumerate}
\item $\psi_{\epsilon,\lambda}'(0)=\psi_{\epsilon,\lambda}'(1)=1$. This
is needed in order to glue $\psi_{\epsilon,\lambda}$ with the identity
function and still have a $C^{1}$ function. 
\item For every $\epsilon,\lambda$, by Remark \ref{rem: relation on g in the construction of psi},
\begin{align*}
\psi_{\epsilon,\lambda}\left(\frac{1}{\varphi}\right) & =\left[\int_{0}^{\epsilon}g_{\left(\frac{\lambda\p^{2}}{1+\lambda\p}\right)}'\left(\frac{x}{\epsilon}\right)dx+\frac{\lambda\p^{2}}{1+\l\p}\left(\frac{1}{\varphi}-2\epsilon\right)\right.\\
\  & \left.\ \ \ +\int_{\frac{1}{\varphi}-\epsilon}^{\frac{1}{\varphi}}g_{\left(\frac{\lambda\p^{2}}{1+\lambda\p}\right)}'\left(\frac{1/\varphi-x}{\epsilon}\right)dx\right]=\frac{\lambda\p}{1+\lambda\varphi}.
\end{align*}
Similarly $\psi_{\epsilon,\lambda}(1)=\psi_{\epsilon,\lambda}\left(\frac{1}{\varphi}\right)+\left(\psi_{\epsilon,\lambda}\left(1\right)-\psi_{\epsilon,\lambda}\left(\frac{1}{\varphi}\right)\right)=1$. 
\item By Remark \ref{rem: relation on g in the construction of psi}, $\psi_{\epsilon,\lambda}(\epsilon)=\frac{\lambda\varphi^{2}}{1+\lambda\varphi}\cdot\epsilon$.
Thus for every $\epsilon<x<\frac{1}{\p}-\epsilon$, 
\[
\psi_{\epsilon,\lambda}(x)=\left(\psi_{\epsilon,\lambda}(x)-\psi_{\epsilon,\lambda}(\epsilon)\right)+\psi_{\epsilon,\lambda}(\epsilon)=\frac{\lambda\varphi^{2}}{1+\lambda\varphi}x,
\]
and
\[
\frac{\psi_{\epsilon,\lambda}(x)}{\psi_{\epsilon,\lambda}(1/\p)}=\p x.
\]
Similarly, $\psi_{\epsilon,\lambda}\left(\frac{1}{\varphi}+\epsilon\right)-\psi_{\epsilon,\lambda}\left(\frac{1}{\varphi}\right)=\frac{\varphi^{2}}{1+\lambda\varphi}\epsilon$,
thus for every $\frac{1}{\p}+\epsilon<x<1-\epsilon$, 
\[
\frac{\psi_{\epsilon,\lambda}(x)-\psi_{\epsilon,\lambda}(1/\p)}{\psi_{\epsilon,\lambda}(1)-\psi_{\epsilon,\lambda}(1/\p)}=\varphi^{2}\left(x-1/\varphi\right)=\frac{x-1/\p}{1-1/\p}.
\]
\item $\psi'_{\epsilon,\lambda}$ is Lipschitz with Lipschitz constant of
the order $1/\epsilon$ when $\epsilon\to0$ and for every $x\in\TT$,
\begin{equation}
\lambda^{-2}\leq\psi_{\epsilon,\lambda}'(x)<\lambda^{2}.\label{eq: bound on derivative}
\end{equation}
\end{enumerate}
Given two sequences $\epsilon_{k}\geq0$ and $\lambda_{k}\geq1$,
denote by $\psi_{k}=\psi_{\lambda_{k},\epsilon_{k}}$. 

Define an order on $\Sigma_{A}^{+}$ in the following way. For $w,z\in\Sigma_{A}^{+}$,
let 
\[
j(w,z):=\inf\left\{ n\in\NN:\ w_{n}\neq z_{n}\right\} .
\]
Then $w\prec z$ if either $w_{j(w,z)}=1$ or $w_{j(w,z)}=3$ and
$z_{j(w,z)}=2$ (notice that in the latter case $j(w,z)=1$). This
order has the following property. If $[w]_{1}^{n}\neq[y]_{1}^{n}$
for some $n\in\NN$, then $C_{[w]_{1}^{n}}$ is to the left of $C_{[y]_{1}^{n}}$
if and only if $w\prec y$.

In addition for $n\in\NN$ we write $\bar{x}_{n},\underline{x}_{n}:\Sigma_{{\bf A}}\to\TT$
to be defined by 
\[
C_{[w]_{1}^{n}}:=\left[\underline{x}_{n}(w),\bar{x}_{n}(w)\right).
\]
For $n\in\mathbb{N},$ denote by $\Sigma_{{\bf A}}(n)$ the collection
of words $w=w_{1}w_{2}\cdots w_{n}$ with $[w]_{1}^{n}\subset\Sigma_{{\bf A}}$. 

We will define inductively a sequence $\left\{ h_{n}\right\} _{n=1}^{\infty}$
of diffeomorphisms of $\TT$. Since $\TT=\bigcup_{w\in\Sigma_{{\bf A}}(n)}C_{[w]_{1}^{n}}$
and each $h_{k},\ k<n$ is onto $\TT$, 
\[
\TT=\bigcup_{w\in\Sigma_{{\bf A}}(n)}H_{n-1}\left(C_{[w]_{1}^{n}}\right),
\]
 where $H_{n-1}:=h_{n-1}\circ h_{n-2}\circ\cdots\circ h_{1}$. 
\begin{itemize}
\item If $N_{t}<n<M_{t}$ for some $t\in\NN$, then $h_{n}$ is the identity. 
\item If $M_{t-1}<n\leq N_{t}$ for some $t\in\NN$, then $h_{n}$ is made
from $\#\Sigma_{{\bf A}}(n)$ scalings of $\psi_{t}$ or the identity.
Let $w(n,1),..,w(n,\#\Sigma_{{\bf A}}(n))$ be an enumeration of $\Sigma_{{\bf A}}(n)$
with respect to $\prec$. Set $h_{n}(0)=0$. Assume we have defined
$h_{n}$ on $\bigcup_{k=1}^{l-1}H_{n-1}\left(C_{w(n,k)}\right)$,
we will now define $h_{n}$ on $H_{n-1}\left(C_{w(n,l)}\right)$.
\\

\begin{itemize}
\item If $w(n,l)_{n}=1$, we define for $z\in H_{n-1}\left(C_{w(n,l)}\right)$,
\[
h_{n}(z):=H_{n-1}\left(\underline{x}_{n}(w)\right)+\mathfrak{l}(n,w)\psi_{t}\left(\frac{z-H_{n-1}\left(\underline{x}_{n}(w)\right)}{\mathfrak{l}(n,w)}\right),
\]
where $w=w(n,l)$ and 
\[
\mathfrak{l}(n,w):=m_{\TT}\left(H_{n-1}\left(w\right)\right)=H_{n-1}\left(\bar{x}_{n}(w)\right)-H_{n-1}\left(\underline{x}_{n}(w)\right).
\]
\item If $w(n,l)_{n}\neq1$ then for all $z\in H_{n-1}\left(C_{w(n,l)}\right)$,
\[
h_{n}(z)=z.
\]
\end{itemize}
\item Note that since $\psi_{t}(1)=1$ for all $t\in\mathbb{N}$, it follows
that $h_{n}\left(H_{n-1}\left(C_{w(n,l)}\right)\right)=H_{n-1}\left(C_{w(n,l)}\right)$
for all $n$ and $l$. Consequently, $h_{n}$ is continuous. The differentiability
of $h_{n}$ at points $\left\{ H_{n-1}\left(\underline{x}_{n}(w)\right):w\in\Sigma_{{\bf A}}(n)\right\} $
follows from $\psi_{t}'(0)=\psi_{t}'(1)=1$. 
\end{itemize}
We need to define $h_{n}$ for all $n\in\left\{ M_{t}\right\} _{t=1}^{\infty}$.
Here we apply a statistical correction procedure which we will now
proceed to describe. In what follows we assume that $\epsilon_{1}$
is small enough so that 
\[
m_{{\rm \TT}}\left(\psi_{1}\left(C_{[w]_{1}^{N_{1}}}\right)\right)=m_{{\rm \TT}}\left(\psi_{1}\left(C_{[w]_{1}^{2}}\right)\right)m_{{\rm \TT}}\left(\left.C_{[w]_{3}^{N_{1}}}\right|C_{[w]_{1}^{2}}\right).
\]
The first equality follows from property $3$ of $\psi_{t}$ provided
that $\epsilon_{1}$ is small enough so that for every $w\in\Sigma_{{\bf A}}^{+},$
the end points of $C_{[w]_{1}^{N_{1}}}$ are in $\left[\epsilon_{1},\p^{-1}-\epsilon_{1}\right]\cup\left[\p^{-1}+\epsilon_{1},1-\epsilon_{1}\right]\cup\{0,\p^{-1},1\}$.
The equality then follows from $\psi_{1}(1/\p)=\frac{\lambda_{1}\p}{1+\lambda_{1}\p}$.
This relation gives for example that
\[
m_{{\rm \TT}}\left(H_{N_{1}}\left(C_{[w]_{1}^{N_{1}}}\right)\right)=\mu^{+}\left(C_{[w]_{1}^{N_{1}}}\right),
\]
and we have good knowledge of where the point in $\frac{1}{\p}$ proportion
in $H_{n-1}\left(C_{[w]_{1}^{n}}\right)$ travels. However, since
$M_{t}$ is generally much larger then $N_{t}$ we loose this control
and the useful equality 
\begin{equation}
m_{{\rm \TT}}\left(H_{M_{t}}\left(C_{[w]_{1}^{N_{t+1}}}\right)\right)=m_{\TT}\left(H_{M_{t}}\left(C_{[w]_{1}^{M_{t}}}\right)\right)m_{{\rm \TT}}\left(\left.C_{\left[w\right]{}_{M_{t}+1}^{N_{t+1}}}\right|C_{[w]_{1}^{M_{t}}}\right)\label{eq: the important equality of H_M_t}
\end{equation}
need no longer hold true. The role of $h_{M_{t}}$ is to take care
that equality (\ref{eq: the important equality of H_M_t}) holds true. 

The function $H_{N_{t}}^{'}$ being a product of bounded Lipschitz
functions, is a bounded Lipschitz function. Therefore if $M_{t}$
is large enough with respect to $N_{t}$, then (here we use the fact
that $h_{n}=Id$ for $N_{t}<n<M_{t}$) $H'_{N_{t}}=H'_{M_{t}-1}$
is almost constant on $H_{M_{t}-1}\left(C_{[w]_{1}^{M_{t}}}\right)$.
That means that for every $0\neq x\in H_{M_{t}-1}\left(C_{[w]_{1}^{M_{t}}}\right)$
in the interior of $H_{M_{t}-1}\left(C_{[w]_{1}^{M_{t}}}\right)$,
\[
\left|\frac{m_{{\rm \TT}}\left(C_{[w]_{1}^{M_{t}}}\right)}{\intop_{C_{[w]_{1}^{M_{t}}}}H_{M_{t}-1}'(s)ds}\cdot H_{M_{t-1}}^{'}(x)-1\right|\ll1.
\]
By using a similar idea as in the construction of $\psi$ with the
$g_{\alpha}$ we define $h_{M_{t}}$ restricted to $H_{M_{t}-1}\left(C_{[w]_{1}^{M_{t}}}\right)$
so that equality (\ref{eq: the important equality of H_M_t}) holds.
This is done as follows: For $\alpha_{1},\alpha_{2}\in\RR$, let $G_{\alpha_{1},\alpha_{2}}:[0,1]\to\left[0,\alpha_{2}\right]$
be defined by $G_{\alpha_{1},\alpha_{2}}(0)=0$ and 
\begin{equation}
G'_{\alpha_{1},\alpha_{2}}(x):=\begin{cases}
\alpha_{1}+\frac{15\left(\alpha_{2}-\alpha_{1}\right)}{4}x, & 0\leq x\leq1/3\\
\frac{5\alpha_{2}-\alpha_{1}}{4}, & 1/3\leq x\leq2/3\\
\frac{5\alpha_{2}-\alpha_{1}}{4}+\frac{\alpha_{2}-\alpha_{1}}{4}(3x-1) & 2/3\leq x\leq1
\end{cases}.\label{eq: Definition of G_alpha_beta}
\end{equation}
This function is a $C^{1}$ function which satisfies $\ G'_{\alpha_{1},\alpha_{2}}(0)=\alpha_{1}$
and $G'_{\alpha_{1},\alpha_{2}}(1)=G_{\alpha_{1},\alpha_{2}}(1)=\alpha_{2}$. 

Define $\alpha:\NN\times\Sigma_{{\bf A}}\to(0,\infty)$ by
\[
\alpha(t,w):=\frac{1}{m_{{\rm \TT}}\left(C_{[w]_{1}^{M_{t}}}\right)}\intop_{C_{[w]_{1}^{M_{t}}}}H_{M_{t}-1}'(s)ds=\frac{m_{\TT}\left(H_{M_{t}-1}\left(C_{[w]_{1}^{M_{t}}}\right)\right)}{m_{\TT}\left(C_{[w]_{1}^{M_{t}}}\right)}.
\]
In addition for a finite word $w\in\Sigma_{{\bf A}}\left(M_{t}\right)$
we denote by $w^{-}$ the predecessor of $w$ with respect to $\prec$
restricted on $\Sigma_{{\bf A}}\left(M_{t}\right)$. We define $h_{M_{t}}'\circ H_{M_{t}-1}(x)$
on $C_{[w]_{1}^{M_{t}}}$ to be equal to $\frac{\alpha(t,w)}{H_{M_{t}-1}'(x)}$
off an $\epsilon_{t+1}m_{{\rm Leb}}\left(C_{[w]_{1}^{M_{t}}}\right)$
neighborhood of the left endpoint of the segment $C_{[w]_{1}^{M_{t}}}$,
$\frac{\alpha\left(t,w^{-}\right)}{H'_{M_{t-1}}(x)}$ on the left
endpoint (which is in the boundary of $C_{[w]_{1}^{M_{t}}}$) and
an interpolation in between by using $G_{\alpha_{1},\alpha_{2}}$
for an appropriately chosen $\alpha_{1},\alpha_{2}$. Here $\epsilon_{t+1}$
has to be small enough so that the end points of $\left\{ H_{M_{t}-1}\left(C_{[w]_{1}^{N_{t+1}}}\right):\ [w]_{1}^{N_{t+1}}\in\Sigma_{{\bf A}}\left(N_{t+1}\right)\right\} $
are not in an $\epsilon_{t+1}$ neighborhood of the left end point
of $H_{M_{t}-1}\left(C_{[w]_{1}^{M_{t}}}\right)$. Formally $\left.h_{M_{t}}\circ H_{M_{t}-1}\right|_{C_{[w]_{1}^{M_{t}}}}$
is defined by $h_{M_{t}}\circ H_{M_{t}-1}\left(\underline{x}_{M_{t}}(w)\right)=H_{M_{t}-1}\left(\underline{x}_{M_{t}}(w)\right)$
and 
\[
h'_{M_{t}}\circ H_{M_{t}-1}(x)=\frac{1}{H'_{M_{t}-1}(x)}\cdot\begin{cases}
G'_{\alpha\left(t,w^{-}\right),\alpha(t,w)}\left(\frac{x-\underline{x}_{M_{t}}(w)}{\epsilon_{t+\text{1 }}m_{\TT}\left(C_{[w]_{1}^{M_{t}}}\right)}\right), & \underline{x}_{M_{t}}(w)\leq x<\hat{x}_{M_{t}}(w)\\
\alpha(t,w), & \hat{x}_{M_{t}}(w)\leq x<\bar{x}_{M_{t}}(w)
\end{cases}.
\]
where $\hat{x}_{M_{t}}(w)=\underline{x}_{M_{t}}(w)+\epsilon_{t+1}m_{{\rm \TT}}\left(C_{[w]_{1}^{M_{t}}}\right)$.
It follows from the chain rule that for $x\in C_{[w]_{1}^{M_{t}}}$,
\[
H_{M_{t}}^{'}(x)=\begin{cases}
G'_{\alpha\left(t,w^{-}\right),\alpha(t,w)}\left(\frac{x-\underline{x}_{M_{t}}(w)}{\epsilon_{t+\text{1 }}m_{\TT}\left(C_{[w]_{1}^{M_{t}}}\right)}\right), & \underline{x}_{M_{t}}(w)\leq x<\hat{x}_{M_{t}}(w)\\
\alpha(t,w), & \hat{x}_{M_{t}}(w)\leq x<\bar{x}_{M_{t}}(w)
\end{cases}.
\]

\begin{figure}[h]
\includegraphics[scale=0.5]{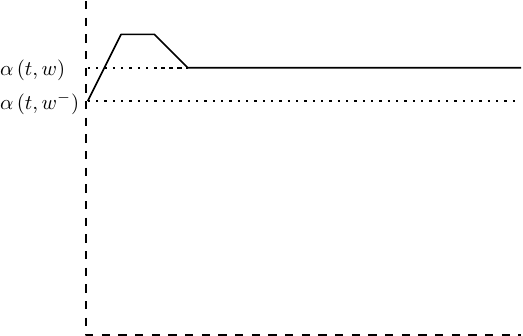}\caption{\label{fig:The-graph-of}{\small{}The graph of $H'_{M_{t}}$ restricted
to $C_{[w]_{1}^{M_{t}}}$ when $\alpha(t,w)>\alpha\left(t,w^{-}\right)$}}
\end{figure}

\begin{claim}
\label{claim: H_M_t rules}There exists $\delta_{t+1}$ such that
if $\epsilon_{t+1}<\delta_{t+1}$ then:

(a) for all $w\in\Sigma_{{\bf A}}$ and $M_{t}\leq n\leq N_{t+1}$
\[
H_{M_{t}}\left(C_{[w]_{1}^{M_{t}}}\right)=H_{M_{t}-1}\left(C_{[w]_{1}^{M_{t}}}\right)=H_{N_{t}}\left(C_{[w]_{1}^{M_{t}}}\right).
\]

(b) Equation (\ref{eq: the important equality of H_M_t}) holds. 
\end{claim}
\begin{proof}
Let $\delta_{t+1}$ be small enough so that the end points of 
\[
\left\{ H_{M_{t}-1}\left(C_{[w]_{1}^{N_{t+1}}}\right):\ [w]_{1}^{N_{t+1}}\in\Sigma_{{\bf A}}\left(N_{t+1}\right)\right\} 
\]
 are not in a $\delta_{t+1}$ neighborhood of $\left\{ H_{M_{t}-1}\left(\underline{x}_{M_{t}}\left(\tilde{w}\right)\right):\ \tilde{w}\in\Sigma_{{\bf A}}\left(M_{t}\right)\right\} $.
As a consequence $H_{M_{t}}'(x)=\alpha(t,w)$ for all $w\in\Sigma_{A}$
and $x\in\left\{ \underline{x}_{N_{t+1}}(w),\bar{x}_{N_{t+1}}(w)\right\} $.

Fix $w\in\Sigma_{{\bf A}}$ and $\epsilon_{t+1}\leq\delta_{t+1}$,
we first prove (a). Write for convenience $\underline{x}=\underline{x}_{M_{t}}(w)$,
$\bar{x}=\bar{x}_{M_{t}}(w)$ and $\hat{x}=\underline{x}+\epsilon_{t+1}\left(\bar{x}-\underline{x}\right)=\underline{x}_{M_{t}}(w)+\epsilon_{t+1}m_{{\rm \TT}}\left(C_{[w]_{1}^{M_{t}}}\right)$.
In this notation $\hat{x}-\underline{x}=\epsilon_{t+1}m_{\TT}\left(C_{[w]_{1}^{M_{t}}}\right)$
and \footnote{For an interval $I$ and a point $x$, $x+I=\left\{ x+y:\ y\in I\right\} .$}
we have 
\[
H_{M_{t}-1}\left(C_{[w]_{1}^{M_{t}}}\right)=\left(H_{M_{t}-1}\left(\underline{x}\right),H_{M_{t}-1}\left(\bar{x}\right)\right)=H_{M_{t}-1}\left(\underline{x}\right)+\left(0,H_{M_{t}-1}\left(\bar{x}\right)-H_{M_{t}-1}\left(\underline{x}\right)\right).
\]
In addition, since $h_{M_{t}}\circ H_{M_{t-1}}\left(\underline{x}\right)=H_{M_{t-1}}\left(\underline{x}\right)$,
then
\begin{align*}
H_{M_{t}}\left(C_{[w]_{1}^{M_{t}}}\right) & =H_{M_{t}}\left(\underline{x}\right)+\left(0,H_{M_{t}}\left(\bar{x}\right)-H_{M_{t}}\left(\underline{x}\right)\right)\\
 & =H_{M_{t}-1}\left(\underline{x}\right)+\left(0,H_{M_{t}}\left(\bar{x}\right)-H_{M_{t}}\left(\underline{x}\right)\right).
\end{align*}
This shows that (a) is equivalent to showing that 
\[
H_{M_{t}}\left(\bar{x}\right)-H_{M_{t}}\left(\underline{x}\right)=H_{M_{t}-1}\left(\bar{x}\right)-H_{M_{t}-1}\left(\underline{x}\right).
\]

Now 
\begin{align*}
H_{M_{t}}\left(\bar{x}\right)-H_{M_{t}}\left(\underline{x}\right) & =\int_{\underline{x}}^{\bar{x}}H_{M_{t}}'(s)ds\\
 & =\int_{\underline{x}}^{\hat{x}}G'_{\alpha\left(t,w^{-}\right),\alpha(t,w)}\left(\frac{s-\underline{x}}{\hat{x}-\underline{x}}\right)ds+\alpha(t,w)\left(\bar{x}-\hat{x}\right)\\
 & =\int_{0}^{\hat{x}-\underline{x}}G'_{\alpha\left(t,w^{-}\right),\alpha(t,w)}\left(\frac{s}{\hat{x}-\underline{x}}\right)ds+\alpha(t,w)\left(\bar{x}-\hat{x}\right).
\end{align*}

For all $\alpha_{1},\alpha_{2},\delta>0$, $\int_{0}^{\delta}G_{\alpha_{1},\alpha_{2}}'\left(\frac{x}{\delta}\right)dx=\delta\alpha_{2}$.
Whence 
\begin{align*}
H_{M_{t}}\left(\bar{x}\right)-H_{M_{t}}\left(\underline{x}\right) & =\alpha(t,w)\left(\hat{x}-\underline{x}\right)+\alpha(t,w)\left(\bar{x}-\hat{x}\right)\\
 & =\alpha(t,w)m_{\TT}\left(C_{[w]_{1}^{M_{t}}}\right)\\
 & =H_{M_{t}-1}\left(\bar{x}\right)-H_{M_{t}-1}\left(\underline{x}\right),
\end{align*}
we have finished the proof of part (a). 

To see part (b) notice if $\underline{x}\notin C_{[w]_{1}^{N_{t+1}}}$
then $H_{M_{t}}$ restricted to $C_{[w]_{1}^{N_{t+1}}}$ is linear
with slope $\alpha(t,w)$. This shows that 
\begin{align*}
m_{\TT}\left(H_{M_{t}}\left(C_{[w]_{1}^{M_{t}}}\right)\right) & =\alpha(t,w)m_{\TT}\left(C_{[w]_{1}^{N_{t+1}}}\right)\\
 & =m_{\TT}\left(H_{M_{t}}\left(C_{[w]_{1}^{M_{t}}}\right)\right)\frac{m_{\TT}\left(C_{[w]_{1}^{N_{t+1}}}\right)}{m_{\TT}\left(C_{[w]_{1}^{M_{t}}}\right)}\\
 & =m_{\TT}\left(H_{M_{t}}\left(C_{[w]_{1}^{M_{t}}}\right)\right)m_{\TT}\left(\left.C_{[w]_{M_{t}+1}^{N_{t+1}}}\right\lfloor C_{[w]_{1}^{M_{t}}}\right),
\end{align*}
as required. If $\underline{x}\in C_{[w]_{1}^{N_{t+1}}}$ then $C_{[w]_{1}^{N_{t+1}}}=\left[\underline{x},\overline{x}_{N_{t+1}}(w)\right)$
and thus as in the proof of part (a)
\begin{align*}
m_{\TT}\left(H_{M_{t}}\left(C_{[w]_{1}^{N_{t+1}}}\right)\right) & =H_{M_{t}}\left(\bar{x}_{N_{t+1}}(w)\right)-H_{M_{t}}\left(\underline{x}\right)\\
 & =\int_{0}^{\hat{x}-\underline{x}}G'_{\alpha\left(t,w^{-}\right),\alpha(t,w)}\left(\frac{s}{\hat{x}-\underline{x}}\right)ds+\alpha(t,w)\left(\bar{x}_{N_{t+1}}(w)-\hat{x}\right)\\
 & =\alpha(t,w)\left(\bar{x}_{N_{t+1}(w)}-\underline{x}\right)=\alpha(t,w)m_{\TT}\left(C_{[w]_{1}^{N_{t+1}}}\right).
\end{align*}
Continuing as in the case $\underline{x}\notin C_{[w]_{1}^{N_{t+1}}}$
one arrives at the conclusion. 
\end{proof}
\begin{rem}
\label{RK: An important feature of the perturbation}An important
feature of this construction that will be used in the extension to
two dimensions is that for any $1\leq l\leq\#\Sigma_{{\bf A}}(n)$,
\begin{equation}
h_{n}\left(H_{n-1}\left(C_{w(n,l)}\right)\right)=H_{n-1}\left(C_{w(n,l)}\right).\label{eq: h_n keeps the former Markov partition}
\end{equation}
This in turn implies that for every $n\in\mathbb{N}$, $\mathcal{H}_{n}(x,y):=(H_{n}(x),y)$
is a diffeomorphism of $\MM{}_{\sim}$ and the Markov partition $\left\{ R_{1},R_{2},R_{3}\right\} $
for $\tilde{f}$ defined by 
\[
R_{i}:=\begin{cases}
J_{i}\times\left[-\frac{\p}{\p+2},\frac{\p^{2}}{\p+2}\right], & i\in\{1,3\}\\
J_{2}\times\left[-\frac{\p}{\p+2},\frac{1}{\p+2}\right], & i=2
\end{cases}.
\]
is preserved by $\mathcal{H}_{n}$.
\end{rem}
\begin{thm}
\label{thm: the one dimensional perturbation}There exists a choice
of $\lambda_{k}\downarrow1$, $\left\{ n_{k},m_{k},N_{k},M_{k}\right\} _{k\in\NN}\subset\NN$
and $\ep=\left\{ \epsilon_{k}\right\} _{k\in\NN}$ so that:

(i). The Markov measure from the construction of Section \ref{sec:Type--Markov}
is a type ${\rm III}_{1}$ measure for the shift on $\Sigma_{{\bf A}}$. 

(ii). The function $\mathfrak{h}_{\ep}$ is a circle homeomorphism
and $m_{{\rm \TT}}\circ\mathfrak{h}_{\ep}\sim\mu^{+}$ where $\mu^{+}=\Theta_{*}\M{P_{k},\pi_{k}}_{k=1}^{\infty}$.

(iii) The function $\mathfrak{g}=\mathfrak{h_{\ep}}\circ S\circ\mathfrak{h_{\ep}^{-1}}$
is $C^{1}$, and for every $x\in\TT$, 
\[
1.6\leq\mathfrak{g'}(x)\leq1.7.
\]
\end{thm}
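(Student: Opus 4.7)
The plan is to merge the choice of the sequence $\ep = \{\epsilon_t\}$ into the induction of Section \ref{sec:Type--Markov}, exploiting the remark at the top of the section that $\lambda_t,n_t,m_t$ may be taken respectively as close to $1$ and as large as we please. At stage $t$, with $\{\lambda_j,n_j,m_j,\epsilon_j\}_{j<t}$ and $H_{M_{t-1}}$ already fixed, first choose $\lambda_t$ close enough to $1$ for (\ref{eq:condition on Lambda})--(\ref{eq: lambda is small so stationary dist. are close}) together with $\lambda_t^{2}\leq 1+2^{-t}$; then pick $n_t$ large enough for (\ref{eq: n_t is very large w.r.t lattic condition})--(\ref{eq:Ergodic theorem cond 2}); then pick $\epsilon_t>0$ small enough that every endpoint of every cylinder $C_{[w]_1^{n_t}}$ lies in $[\epsilon_t,\p^{-1}-\epsilon_t]\cup[\p^{-1}+\epsilon_t,1-\epsilon_t]\cup\{0,\p^{-1},1\}$, securing the displayed identity preceding (\ref{eq: the important equality of H_M_t}); finally pick $m_t$ satisfying (\ref{m_l large so that the sum of bad events is small})--(\ref{m_l is large enough for shift conservative}) and, in addition, so large that the already-fixed bounded Lipschitz function $H'_{M_t-1}=H'_{N_t}$ is $(1\pm2^{-t})$-constant on every cylinder $C_{[w]_1^{M_t}}$. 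This last demand is feasible because $\mathrm{Lip}(H'_{N_t})$ depends only on $\{\epsilon_j\}_{j\leq t}$, while the cylinders $C_{[w]_1^{M_t}}$ have diameter $O(\p^{-M_t})$. With this schedule, claim (i) is immediate from Theorem \ref{thm: lambdas are...}.

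For (ii), Remark \ref{RK: An important feature of the perturbation} gives $h_n(H_{n-1}(C_{w}))=H_{n-1}(C_{w})$ for every $w\in\Sigma_{{\bf A}}(n)$, so $\|H_{n+1}-H_n\|_\infty$ is bounded by $\max_{w}m_{\mathrm{Leb}}(H_{n-1}(C_{[w]_1^n}))$, which decays exponentially. Hence $\mathfrak{h}_\ep=\lim H_n$ exists as an orientation-preserving circle homeomorphism fixing $\{0,\p^{-1},1\}$. The correction step $h_{M_t}$ was designed so that (\ref{eq: the important equality of H_M_t}) holds; iterating, we obtain $m_{\mathrm{Leb}}(\mathfrak{h}_\ep(C_{[w]_1^{N_t}}))=\mu^+(C_{[w]_1^{N_t}})$ for every $w$ and every $t$. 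Thus, on the filtration $\F n:=\Theta^{-1}\sigma(C_{[w]_1^n}:w\in\Sigma_{{\bf A}}(n))$, the martingale $z_n:=d(\mathfrak{h}_{\ep\,*}m_{\mathrm{Leb}})_n/d\mu^+_n$ equals $1$ identically along the subsequence $n=N_t$, and between $N_t$ and $N_{t+1}$ is pinched by bounds coming from $\psi'_{t+1}\in[\lambda_{t+1}^{-2},\lambda_{t+1}^{2}]$. Uniform two-sided bounds on $\{z_n\}$ imply uniform integrability in both directions, so the local absolute continuity criterion of Subsection \ref{sub:Non-singular-Markov shifts} gives $m_{\mathrm{Leb}}\circ\mathfrak{h}_\ep\sim\mu^+$.

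For (iii), I would show that the sequence $H'_n$ is uniformly Cauchy. Each factor in the product $H'_n(x)$ is either a $\psi'_t$ factor lying in $[\lambda_t^{-2},\lambda_t^{2}]$ or a correction $h'_{M_t}\circ H_{M_t-1}$ factor lying within $(1\pm 2^{-t})$ of $1$; taking logarithms and summing, the inductive choice $\lambda_t^{2}\leq 1+2^{-t}$ gives
\[
\sup_x\bigl|\log H'_n(x)-\log H'_m(x)\bigr|\ \leq\ \sum_{t\,:\,M_{t-1}\geq m}\bigl(2\log\lambda_t+\log(1+2^{-t})\bigr)\ \xrightarrow[m\to\infty]{}0.
\]
Thus $H'_n\to\mathfrak{h}'_\ep$ uniformly and $\mathfrak{h}_\ep\in C^1(\TT)$. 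Since $S'\equiv\p$,
\[
\mathfrak{g}'(y)=\p\cdot\frac{\mathfrak{h}'_\ep(S\mathfrak{h}_\ep^{-1}(y))}{\mathfrak{h}'_\ep(\mathfrak{h}_\ep^{-1}(y))},
\]
and by taking $\lambda_1$ small enough from the start the multiplicative oscillation of $\mathfrak{h}'_\ep$ is arbitrarily close to $1$; since $\p\approx 1.618$ sits comfortably inside $[1.6,1.7]$, the bound follows. The main obstacle—and the one that forces $\ep$ into the induction rather than being chosen afterwards—is the self-referential loop: $\epsilon_t$ controls $\mathrm{Lip}(H'_{N_t})$ which controls how large $m_t$ must be for $H'_{M_t-1}$ to be almost constant on $M_t$-cylinders, while the original conditions on $m_t$ from Section \ref{sec:Type--Markov} must simultaneously be preserved under this enlargement. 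The freedom recorded in the remark at the top of Section \ref{sec:Type III perturbations of the Golden Mean Shift} is exactly what lets all these requirements be satisfied at once.
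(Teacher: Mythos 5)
Your induction schedule (choose $\lambda_{t+1}$, then $n_{t+1},N_{t+1}$, then $\epsilon_{t+1}$, then $m_{t+1},M_{t+1}$) and your diagnosis of the self-referential loop match the paper, and part (i) is fine. But part (iii) contains a fatal error: you claim $H_n'$ is uniformly Cauchy, so that $\h_{\ep}\in C^{1}(\TT)$ with derivative of small multiplicative oscillation. This cannot be true: it would give $m_{{\rm Leb}}\circ\h_{\ep}\sim m_{{\rm Leb}}$, which combined with (ii) forces $\mu^{+}\sim m_{{\rm Leb}}$, contradicting the fact that $\mu^{+}$ is a type ${\rm III}$ measure for $S$ while $m_{{\rm Leb}}$ is $S$-invariant. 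The computational slip is that your sum $\sum_{t}2\log\lambda_{t}$ counts one $\psi_{t}'$ factor per block, whereas the block $\left[M_{t-1},N_{t}\right)$ contributes $n_{t}$ such factors, so the relevant quantity is $\lambda_{t}^{2n_{t}}$; and $n_{t}$ is chosen \emph{after} $\lambda_{t}$ and is forced to be enormous by the lattice and ergodic-theorem conditions, so $\lambda_{t}^{n_{t}}$ is not close to $1$ (it must not be, or $\mu^{+}$ would be equivalent to Lebesgue). The condition the induction can impose is on $\lambda_{t+1}^{M_{t}}$, not on $\lambda_{t+1}^{n_{t+1}}$. The paper's actual argument (Lemma \ref{lem: Differentiablility of g}) shows that the quotient $\g_{N_{t+1}}'/\g_{N_{t}}'$ converges even though $H_{N_{t}}'$ does not: writing $\g_{N_{t}}'(z)=\p H_{N_{t}}'(Sy)/H_{N_{t}}'(y)$, the factor $h_{j-1}'\left(H_{j-2}(Sy)\right)$ cancels against $h_{j}'\left(H_{j-1}(y)\right)$ because the construction is shift-compatible at the cylinder level — this is precisely what the preservation of the $1/\p$-proportion reference point (Lemma \ref{lem: bound on h_M_t derivative}.(ii)) guarantees — leaving only $O(1)$ uncancelled factors per block ($\lambda_{t+1}^{\pm2}$ at the block edges and $e^{\pm2^{-N_{t}}}$ from $h_{M_{t}}$), together with a delicate analysis of the ``bad sets'' where points fall into interpolation zones and the cancellation is only approximate. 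None of this telescoping structure appears in your proposal, and without it (iii) does not follow.

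Part (ii) has a related gap. The martingale $z_{t}$ is neither identically $1$ along $n=N_{t}$ nor uniformly bounded: exact equality of cylinder measures holds only for cylinders whose endpoints avoid the interpolation zones of $\psi_{\epsilon_{j},\lambda_{j}}$ at every earlier scale, and on the exceptional set the density ratio over the $k$-th block can be as large as $\lambda_{k}^{n_{k}}$ (this is exactly the bound \eqref{eq: the bound on the frac of rho's}), which is unbounded in $k$. This is why the paper must prove uniform integrability by an Egorov-type argument (the sets $A_{k}$ in Lemma \ref{Lem:  the pertubation of the beta shift}), trading the measure of the bad set against the worst-case bound $\lambda_{k}^{n_{k}}$. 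Your ``uniform two-sided bounds'' on $\left\{ z_{n}\right\} $ do not exist, so the absolute continuity in (ii) is not established as written.
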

The proof of this Theorem is by showing that we can realise smoothly
a the inductive construction of Section \ref{sec:Type--Markov} (with
three extra conditions) and include a new sequence $\left\{ \epsilon_{k}\right\} $
in it so that the following properties hold:
\begin{enumerate}
\item $\h_{\epsilon}:=\lim_{n\to\infty}H_{n}$ is a homeomorphism of $\TT$. 
\item $\left\{ \g_{n}\right\} $ is a convergent subsequence in the $C^{1}$
topology, here $\mathfrak{g}_{n}:=H_{n}\circ S\circ H_{n}^{-1}$. 
\item The limit function $\mathfrak{g}=\lim_{n\to\infty}\g_{n}=\h_{\epsilon}\circ S\circ\h_{\epsilon}^{-1}$
satisfies $1.6\leq\mathfrak{g}'(x)\leq1.7$. 
\item $m_{\TT}\circ\h_{\epsilon}\sim\mu^{+}$. 
\end{enumerate}

\subsubsection{The inductive choice of $\left\{ \epsilon_{l}\right\} _{l=1}^{\infty}$. }

Before we continue we would like to set up some notation which will
be used. 
\begin{itemize}
\item Given $\ep=\left\{ \epsilon_{k}\right\} _{k=1}^{t}$ and $n\leq N_{t}$
we denote by $h_{\ep,n}$ the function in the construction with the
sequence $\ep$ at level $n$. 
\item For $j\leq N_{t}$, $H_{\ep,j}:=h_{\ep,j}\circ h_{\ep,j-1}\circ\cdots\circ h_{\ep,1}$.
The function $H_{\ep,j}$ only depends on $\left\{ \epsilon_{s}\right\} _{s=1}^{t}$
with $j\leq N_{t}$. 
\item $H_{0,j}$ will denote the function with $\underbar{\ensuremath{\epsilon}}=\underbar{0}$.
\end{itemize}
\begin{lem}
\label{lem: bound on h_M_t derivative}Assume that $\left\{ \epsilon_{s}\right\} _{s=1}^{t}$
were chosen so that for all $s<t$ and $x\in\TT$, $h_{\ep,M_{s}}^{'}(x)=e^{\pm2^{-N_{s}}}$.
If $M_{t}$ is sufficiently large with respect to $N_{t}$ and $\epsilon_{t+1}$
is small enough then the following two properties hold:

(i) For all $x\in\TT$, 
\[
h_{\ep,M_{t}}'(x)=e^{\pm2^{-N_{t}}}.
\]

(ii) Let \textup{$w\in\Sigma_{{\bf A}}$ and $M_{t}<n\leq N_{t+1}$.
Denote by $\xi(n,w)=\underline{x}_{n}(w)+\frac{1}{\p}\left(\overline{x}_{n}(w)-\underline{x}_{n}(w)\right)$
the point in $\frac{1}{\p}$ proportion in $C_{[w]_{1}^{n}}$. Then
\[
\frac{H_{\ep,n-1}\left(\xi_{n}\left(w\right)\right)-H_{\ep,n-1}\left(\underline{x}_{n}(w)\right)}{H_{\ep,n-1}\left(\bar{x}_{n}\left(w\right)\right)-H_{\ep,n-1}\left(\underline{x}_{n}(w)\right)}=\frac{1}{\p}.
\]
That is the (reference) point in $\frac{1}{\p}$ proportion in $C_{[w]_{1}^{n}}$
travels under $H_{\ep,n-1}$ to the reference point in $H_{\ep,n-1}\left(C_{[w]_{1}^{n}}\right)$. }
\end{lem}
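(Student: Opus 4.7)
My plan exploits two structural features of the construction: (a) $h_j = \mathrm{Id}$ for $N_t < j < M_t$, so $H_{M_t-1} \equiv H_{N_t}$; and (b) after each correction step the map $H_{M_{t-1}}$ is exactly affine on the bulk of each level-$M_{t-1}$ cylinder, since the defining relation $h'_{M_{t-1}} \cdot H'_{M_{t-1}-1} = \alpha(t-1,w)$ forces $H'_{M_{t-1}}$ to equal the constant $\alpha(t-1,w)$ on the bulk of $C_{[w]_1^{M_{t-1}}}$.

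For part (i), the derivative $H'_{N_t}$ is a finite product of Lipschitz factors---derivatives of $\psi_{\epsilon_j,\lambda_j}$-scalings and of the previous corrections $h_{M_s}$ ($s<t$)---hence itself Lipschitz with some constant $L_t = L_t(N_t,\{\lambda_j,\epsilon_j\}_{j\le t})$. Combined with the uniform bound $\psi'_j \ge \lambda_j^{-2}$ and the inductive hypothesis $\inf h'_{M_s} > 0$ for $s<t$, this gives $c_t := \inf H'_{N_t} > 0$. Since level-$M_t$ cylinders of the golden mean shift have Lebesgue length at most $\varphi^{-M_t}$, the oscillation of $H'_{N_t}$ over $C_{[w]_1^{M_t}}$ is at most $L_t\varphi^{-M_t}$, whence on the bulk
\[
\left|h'_{M_t}(y) - 1\right| = \left|\frac{\alpha(t,w)}{H'_{N_t}(x)} - 1\right| \le \frac{L_t\varphi^{-M_t}}{c_t},
\]
which drops below $2^{-N_t}$ once $M_t$ is large enough. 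In the interpolation window near the left endpoint of each $H_{M_t-1}(C_{[w]_1^{M_t}})$, the same oscillation estimate applied to $C_{[w]_1^{M_t}}$ and to the adjacent cylinder $C_{[w^-]_1^{M_t}}$ (whose right endpoint is $\underline{x}_{M_t}(w)$) shows that $\underline\beta$ and $\overline\beta$ are both $(1\pm L_t\varphi^{-M_t}/c_t)$-close to $1$. The explicit piecewise-linear formula for $G'_{\underline\beta,\overline\beta}$ in (\ref{eq: Definition of G_alpha_beta}) keeps $h'_{M_t}$ inside the same envelope throughout, yielding $h'_{M_t}(y) = e^{\pm 2^{-N_t}}$.

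For part (ii), I would induct on $k$ (with $1 \le k < n$) showing that each $h_k$ acts affinely on $H_{k-1}(C_{[w]_1^n})$. Corrections $h_{M_s}$, $s<t$, are handled by observation (b): they make $H_{M_s}$ affine on the bulk of every level-$M_s$ cylinder, and any level-$n$ cylinder lies in such a bulk once the preceding $\epsilon$-parameters are small enough. The $\psi$-scaling steps are handled by property $3$ of $\psi_{\epsilon_t,\lambda_t}$, which gives exact linearity on $[\epsilon_t, 1/\varphi - \epsilon_t]$ and $[1/\varphi + \epsilon_t, 1-\epsilon_t]$ in the rescaled coordinates of $H_{k-1}(C_{w(k,l)})$; taking $\epsilon_t$ small enough that $H_{k-1}(C_{[w]_1^n})$ sits inside one of these linear pieces closes the induction. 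Since affine maps preserve proportions, $H_{n-1}(\xi_n(w))$ lands at the $1/\varphi$-proportion of $H_{n-1}(C_{[w]_1^n})$.

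The main obstacle---and the reason this lemma cannot simply sit inside the construction of Section \ref{sec:Type--Markov}---is a circular dependence between (i) and (ii): (i) needs $M_t \gg L_t$, but $L_t$ grows with $1/\epsilon_t$; (ii) needs $\epsilon_t$ small compared with the minimal length of level-$N_t$ cylinders seen in $H_{M_{t-1}}$-coordinates, which in turn depends on $M_{t-1}$. This is precisely why the choice of $\{\epsilon_k\}$ has to be interleaved with $\{\lambda_k, M_k, N_k\}$ in the modified inductive scheme of Subsection \ref{sub:The-modified-induction}: $\epsilon_{t+1}$ is chosen only after $M_t$ has been pinned down, so both (i) and (ii) can be arranged simultaneously.
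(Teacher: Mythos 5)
Your proof follows essentially the same route as the paper's: for (i), the Lipschitz bound and positive lower bound on $H'_{N_{t}}=H'_{M_{t}-1}$, combined with the exponentially small length of level-$M_{t}$ cylinders, force $\alpha(t,w)/H'_{M_{t}-1}$ (and the interpolated values governed by $\underline{\beta},\overline{\beta}$) into $e^{\pm2^{-N_{t}}}$ once $M_{t}$ is large; for (ii), property 3 of $\psi$ together with the placement of the correction windows yields the multiplicative relation \eqref{eq: distribution of an interval inside an interval}, i.e.\ proportion preservation, exactly as in the paper. One minor imprecision: $h_{k}$ need not be affine on all of $H_{k-1}\left(C_{[w]_{1}^{n}}\right)$ (the interpolation windows may meet that interval), but it agrees with the affine map at every cylinder endpoint and reference point, which is all that part (ii) actually requires.
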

\begin{proof}
In the course of the proof we write for $n\leq N_{t+1}$, $h_{n}=h_{\ep,n}$
and $H_{\ep,n}=H_{n}$. Let $\delta>0$. Since $h_{n}$ is the identity
for $N_{t}<n<M_{t}$, then $H_{M_{t}-1}=H_{N_{t}}$. The function
$H_{N_{t}}^{'}$ is a product of $N_{t}$ bounded Lipschitz functions
and $\inf_{t\in[0,1]}H'_{N_{t}}(x)>0$. Therefore there exists $K(t)>1$,
which depends only on $\left\{ \lambda_{s},N_{s},M_{s},\epsilon_{s}\right\} _{s=1}^{t-1}$
and $\left\{ N_{t},\lambda_{t},\epsilon_{t}\right\} $, such that
for every $x,y\in\TT$, 
\[
\left|H_{M_{t}-1}'(x)-H_{M_{t}-1}'(y)\right|=\left|H_{N_{t}}'(x)-H_{N_{t}}'(y)\right|\leq K(t)|x-y|
\]
and for every $x\in\TT$, 
\begin{equation}
K(t)^{-1}\leq\left|H'_{M_{t}-1}(x)\right|<K(t).\label{eq:bound on H__M_t '}
\end{equation}
By uniform expansion of $S$, if $M_{t}$ is sufficiently large then
\[
\sup_{w\in\Sigma_{{\bf A}}}m_{{\rm \TT}}\left(H_{M_{t}-1}\left(C_{[w]_{1}^{M_{t}}}\right)\right)\leq\p^{-\left(M_{t}-1\right)}K(t)<\frac{\delta}{K(t)^{2}}.
\]
This implies that for every $w\in\Sigma_{{\bf A}}$ and $x,y\in H_{M_{t}-1}\left(C_{[w]_{1}^{M_{t}}}\right)$,
\[
\left|H_{M_{t}-1}'(x)-H_{M_{t}-1}'(y)\right|\leq K(t)|x-y|\leq K(t)m_{{\rm \TT}}\left(H_{M_{t}-1}\left(C_{[w]_{1}^{M_{t}}}\right)\right)<\delta/K(t).
\]
Averaging this inequality over all $y\in H_{M_{t}-1}\left(C_{[w]_{1}^{M_{t}}}\right)$,
for every $x\in H_{M_{t}-1}\left(C_{[w]_{1}^{M_{t}}}\right)$, 
\begin{align*}
\left|H_{M_{t}-1}'(x)-\alpha(t,w)\right| & =\left|H_{M_{t}-1}'(x)-\frac{1}{m_{\TT}\left(C_{[w]_{1}^{M_{t}}}\right)}\int_{C_{[w]_{1}^{M_{t}}}}H_{M_{t}-1}'(y)dy\right|\\
 & \leq K(t)^{-1}\delta.
\end{align*}
 If follows from this and the lower bound in (\ref{eq:bound on H__M_t '})
that for every $x\in C_{[w]_{1}^{M_{t}}}$, 
\[
\left|\frac{\alpha(t,w)}{H'_{M_{t}-1}(x)}-1\right|<\delta,
\]

A consequence of the latter inequality which is proved by fixing ${\bf x}(w)=\underline{x}_{M_{t}}(w)=\overline{x}_{M_{t}}\left(w^{-}\right)$
once on $w$ and once on $w^{-},$ is that 
\[
\forall w\in\Sigma_{{\bf A}}\left(M_{t}\right),\ \left|\frac{\alpha(t,w)}{H'_{M_{t}-1}\left({\bf x}(w)\right)}-\frac{\alpha\left(t,w^{-}\right)}{H'_{M_{t}-1}\left({\bf x}(w)\right)}\right|<2\delta.
\]
Part $(i)$ follows by choosing an appropriate $\delta$ and the definition
of $h_{M_{t}}$. 

(ii) By the definition of $h_{M_{t}}$, if $\epsilon_{t+1}$ is small
enough then equation (\ref{eq: the important equality of H_M_t})
holds. Using property 3 of $\psi_{\epsilon_{t},\lambda_{t}}$, a proof
by induction shows that that for all $M_{t}<n<J\leq N_{t+1}$, 
\begin{equation}
m_{\TT}\left(H_{n}\left(C_{[w]_{1}^{J}}\right)\right)=m_{\TT}\left(H_{n}\left(C_{[w]_{1}^{n+1}}\right)\right)m_{{\rm \TT}}\left(\left.C_{\left[w\right]{}_{n+1}^{J}}\right|C_{[w]_{1}^{n+1}}\right).\label{eq: distribution of an interval inside an interval}
\end{equation}
The conclusion follows since if $w_{n+1}\in\{1,2\}$ then $C_{[w1]_{1}^{n+2}}=\left[\underline{x}_{n+1}(w),\xi_{n+1}(w)\right)$
\begin{eqnarray*}
\frac{H_{n}\left(\xi_{n+1}(w)\right)-H_{n}\left(\underline{x}_{n+1}(w)\right)}{H_{n}\left(\overline{x}_{n+1}(w)\right)-H_{n}\left(\underline{x}_{n+1}(w)\right)} & = & \frac{m_{{\rm \TT}}\left(H_{n}\left(C_{[w1]_{1}^{n+2}}\right)\right)}{m_{\TT}\left(H_{n}\left(C_{[w]_{1}^{n+1}}\right)\right)}\\
 & = & m_{\TT}\left(\left.C_{\left[w1\right]_{n+2}^{n+2}}\right|C_{[w]_{1}^{n+1}}\right)\\
 & = & m_{\TT}\left(\left.w_{2}=1\right|w_{1}=1\right)=\frac{1}{\p}.
\end{eqnarray*}
If $w_{n+1}=3$ then $C_{[w21]_{1}^{n+3}}=\left[\underline{x}_{n+1}(w),\xi_{n+1}(w)\right)$
and then 
\[
\frac{H_{n}\left(\xi_{n+1}(w)\right)-H_{n}\left(\underline{x}_{n+1}(w)\right)}{H_{n}\left(\overline{x}_{n+1}(w)\right)-H_{n}\left(\underline{x}_{n+1}(w)\right)}=m_{\TT}\left(\left.w_{2}=2,w_{3}=1\right|w_{1}=3\right)=\frac{1}{\varphi}.
\]
\end{proof}
By part (i) of the previous Lemma we can choose sequences $\left\{ \lambda_{t},n_{t},N_{t},M_{t},\epsilon_{t}\right\} _{t\in\mathbb{N}}$
so that $\sup_{x\in\TT}h'_{\ep,M_{t}}(x)\leq e^{2^{-N_{t}}}$ for
all $t\in\mathbb{N}$. 
\begin{prop}
\label{Cor: A consequence on the bound of eta and chi} Assume $\varphi/\lambda_{1}^{2}>1.6$,
assume that for all $t\in\mathbb{N}$, $\sup_{x\in\TT}h'_{\ep,M_{t}}(x)\leq e^{2^{-N_{t}}}$,
then
\[
\sup_{n\in\NN}\left|h_{\ep,n}(x)-x\right|\leq e(1.6)^{-n}
\]
and consequently $\lim_{n\to\infty}H_{\ep,n}(x)=\mathfrak{h}_{\ep}(x)$
is a homeomorphism of $\TT$. 
\end{prop}
\begin{proof}
If for some $\tau\leq T+1$, $M_{\tau}<k<M_{\tau+1}$ then, 
\begin{equation}
\sup_{x\in\TT}h_{\ep,k}^{'}(x)=\sup_{s\in\TT}\left|\psi_{k}(s)\right|\overset{\eqref{eq: bound on derivative}}{\leq}\lambda_{\tau}^{2}\leq\lambda_{1}^{2}.\label{eq: bound on derivative II}
\end{equation}
Therefore for every $n\leq M_{T+1}$, 
\begin{align*}
m_{\text{\ensuremath{\TT}}}\left(H_{\epsilon,n-1}\left(C_{[w]_{1}^{n}}\right)\right) & \leq\left(\prod_{k=1}^{T}\sup_{x\in\TT}\left|h'_{\ep,M_{k}}(x)\right|\right)\lambda_{1}^{2n}m_{\text{\ensuremath{\TT}}}\left(C_{[x]_{1}^{n}}\right)\\
 & \leq\exp\left(\sum_{k=1}^{T}2^{-N_{k}}\right)\left(\frac{\lambda_{1}}{\varphi}\right)^{n}\leq e(1.6)^{-n}.
\end{align*}
The invariance of $H_{\underline{\epsilon},n-1}\left(C_{[w]_{1}^{n}}\right)$
under $h_{\underline{\epsilon},n}$ implies that
\[
\sup_{x\in\TT}\left|h_{\underline{\epsilon},n}(x)-x\right|\leq\sup_{w\in\Sigma_{{\bf A}}}m_{\text{\ensuremath{\TT}}}\left(H_{\underline{\epsilon},n-1}\left(C_{[w]_{1}^{n}}\right)\right)\leq e(1.6)^{-n}.
\]

Consequently for every $n<m$, 
\begin{eqnarray*}
\left|H_{\ep,m}(z)-H_{\ep,n}(z)\right| & \leq & \sum_{k=n}^{m}\left|H_{\ep,k+1}(z)-H_{\ep,k}(z)\right|\\
 & = & \sum_{k=n}^{m}\left|h_{\ep,k+1}\left(H_{\ep,k}(z)\right)-H_{\ep,k}(z)\right|\\
 & \leq & e\sum_{k=n}^{m}\sup_{z\in\TT}\left|h_{\ep,k+1}(z)-z\right|\leq e\sum_{k=n}^{m}(1.6)^{-k}.
\end{eqnarray*}
This shows that $\left\{ H_{\ep,m}\right\} _{m=1}^{\infty}$ is a
Cauchy sequence in $C(\TT)$, it's limit being a continuous and strictly
increasing function is a homeomorphism of $\TT$. 
\end{proof}
\begin{lem}
\label{lem: Differentiablility of g}Assume $\left\{ \epsilon_{k}\right\} _{k=1}^{t}$
are already chosen so that for all $s<t$ and $x\in\TT$, $h_{\ep,M_{s}}^{'}(x)=e^{\pm2^{-N_{s}}}$.
If $M_{t}$ is large enough with respect to $N_{t}$ then there exists
$\tilde{\delta}_{t+1}>0$ so that for all $\epsilon_{t+1}<\tilde{\delta}_{t+1}$
\begin{equation}
\g_{N_{t+1}}'(x)=\lambda_{t+1}^{\pm M_{t}}e^{\pm2^{-N_{t}+2}}\g_{N_{t}}'(x)\label{eq: relation between the derivatives in beta shift}
\end{equation}

Here $\g_{N_{t}}=H_{\ep,N_{t}}\circ S\circ H_{\ep,N_{t}}^{-1}$. 
\end{lem}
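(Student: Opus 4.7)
The plan is to exploit the conjugation identity
\[
\g_{N_{t+1}} = K \circ \g_{N_t} \circ K^{-1}, \qquad K := h_{N_{t+1}} \circ h_{N_{t+1}-1} \circ \cdots \circ h_{M_t+1} \circ h_{M_t},
\]
which holds because $h_k$ is the identity for $N_t < k < M_t$, so $H_{N_{t+1}} = K \circ H_{N_t}$. Differentiating by the chain rule,
\[
\g_{N_{t+1}}'(x) = \frac{K'\bigl(\g_{N_t}(K^{-1}(x))\bigr)}{K'(K^{-1}(x))} \cdot \g_{N_t}'(K^{-1}(x)).
\]
The task thus splits into two independent estimates: (a) bounding the ratio of $K'$-values, and (b) comparing $\g_{N_t}'(K^{-1}(x))$ with $\g_{N_t}'(x)$.

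For (a), the map $K$ is a composition of $n_{t+1}+1$ functions. By property (4) of $\psi_{t+1}$ (and the fact that $h_k$ is either a rescaled copy of $\psi_{t+1}$ or the identity), each $h_k'$ with $M_t < k \leq N_{t+1}$ lies in $[\lambda_{t+1}^{-2}, \lambda_{t+1}^{2}]$, while $h_{M_t}' \in [e^{-2^{-N_t}}, e^{2^{-N_t}}]$ by Lemma \ref{lem: bound on h_M_t derivative}(i). The chain rule then gives $K'(\cdot) \in \lambda_{t+1}^{\pm 2 n_{t+1}} \cdot e^{\pm 2^{-N_t}}$, so the ratio $K'(\cdot)/K'(\cdot)$ lies in $\lambda_{t+1}^{\pm 4 n_{t+1}} e^{\pm 2^{-N_t+1}}$. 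Ensuring $4 n_{t+1} \leq M_t$---always possible by enlarging $m_t$ in the inductive construction of Section \ref{sub:The-construction.}---yields the bound $\lambda_{t+1}^{\pm M_t} e^{\pm 2^{-N_t+1}}$.

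For (b), the crucial ingredient is Remark \ref{RK: An important feature of the perturbation}: each $h_k$ preserves the cells of the Markov partition, hence $K^{-1}(x)$ and $x$ lie in the same element of $\{[0,1/\p], [1/\p,1]\}$, and $\g_{N_t}$ is $C^1$ on a neighborhood of the segment between them. Now $\g_{N_t}'$ is determined entirely by the data $\lambda_1,\ldots,\lambda_t,\epsilon_1,\ldots,\epsilon_t$, so it admits a Lipschitz bound $L$ on each piece and a uniform positive lower bound, both fixed prior to stage $t+1$. Proposition \ref{Cor: A consequence on the bound of eta and chi} yields $\sup_x |K(x) - x| = O\bigl((1.5)^{-M_t}\bigr)$, so combining these gives $\g_{N_t}'(K^{-1}(x))/\g_{N_t}'(x) \in e^{\pm 2^{-N_t+1}}$, provided $\epsilon_{t+1} < \tilde{\delta}_{t+1}$ for a suitably small $\tilde{\delta}_{t+1}$ guaranteeing that the new $h_k$'s are sufficiently close to the identity. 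Multiplying (a) and (b) yields the claim.

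The main obstacle is the delicate interplay between the Lipschitz constant of $\g_{N_t}'$ (which accumulates over all the $\psi_k$ with $k \leq t$ and can grow with $N_t$) and the slack $2^{-N_t+2}$: the hypothesis ``$M_t$ large enough'' must be interpreted as $M_t$ exceeding an explicit quantity depending on $L$ and $N_t$ so that $L \cdot (1.5)^{-M_t} \ll 2^{-N_t}$. This is always achievable thanks to the freedom to enlarge $m_t$ in the inductive construction, and is the reason the current lemma is phrased as a conditional statement to be invoked during the simultaneous inductive choice of the sequences $\{\lambda_k\}, \{m_k\}, \{n_k\}, \{\epsilon_k\}$.
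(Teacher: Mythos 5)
Your decomposition $\g_{N_{t+1}}=K\circ\g_{N_t}\circ K^{-1}$ with $K=h_{N_{t+1}}\circ\cdots\circ h_{M_t}$ is algebraically the same as the paper's starting point: writing $y=H_{N_{t+1}}^{-1}(x)$, your two $K'$-values are exactly the products $\prod_{k=M_t}^{N_{t+1}}h_k'\left(H_{k-1}(Sy)\right)$ and $\prod_{k=M_t}^{N_{t+1}}h_k'\left(H_{k-1}(y)\right)$. The fatal step is (a), where you bound each product separately by $\lambda_{t+1}^{\pm2n_{t+1}}e^{\pm2^{-N_t}}$ and then try to absorb $\lambda_{t+1}^{\pm4n_{t+1}}$ into $\lambda_{t+1}^{\pm M_t}$ by "ensuring $4n_{t+1}\leq M_t$." This is not achievable: in the induction $M_t$ is fixed \emph{before} $\lambda_{t+1}$ and $n_{t+1}$, the condition $\lambda_{t+1}^{2M_t}\leq e^{2^{-N_t}}$ forces $\lambda_{t+1}-1\lesssim 2^{-N_t}/M_t$, and the lattice condition \eqref{eq: Lattice condition on lambda}--\eqref{eq: n_t is very large w.r.t lattic condition} then forces $n_{t+1}\geq 20\,p(1,t+1)\gtrsim M_t 2^{N_t}\gg M_t$; enlarging $m_t$ only makes this worse. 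More fundamentally, any bound of the form $\lambda_{t+1}^{\pm c\,n_{t+1}}$ is useless here, because the type ${\rm III}_1$ property of the Markov measure requires $\lambda_t^{n_t}$ to stay bounded away from $1$ (the perturbation on each block of length $n_t$ must have macroscopic effect — see e.g.\ the appearance of $\prod_r\lambda_r^{n_r}$ in the proof of Lemma \ref{Lem:  the pertubation of the beta shift}), so $\prod_t\lambda_t^{cn_t}$ diverges and the derivatives $\g_{N_t}'$ would not converge.

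The missing idea is the term-by-term cancellation that is the actual content of the lemma: after the index shift one has, for $M_t+2\leq j\leq N_{t+1}$ and $\epsilon_{t+1}=0$, the exact identity $h_{j-1}'\left(H_{j-2}(Sy)\right)=h_j'\left(H_{j-1}(y)\right)$, because $H_{j-1}(y)$ lies in $H_{j-1}\left(C_{[w]_1^j}\right)$ iff $H_{j-2}(Sy)$ lies in $H_{j-2}\left(C_{[w_2\cdots w_j]}\right)$, and by Lemma \ref{lem: bound on h_M_t derivative}(ii) the two points sit on the same side of the respective $1/\p$-reference points, where the rescaled $\psi_{0,\lambda_{t+1}}'$ takes the same constant value. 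The quotient of the two products therefore telescopes down to $O(1)$ boundary factors of size $\lambda_{t+1}^{\pm2}$ times the $h_{M_t}$ contribution $e^{\pm2^{-N_t+1}}$; the exponent $M_t$ in $\lambda_{t+1}^{\pm M_t}$ then arises only from the bad-set analysis needed when $\epsilon_{t+1}>0$, where the cancellation fails on the interpolation intervals. Your step (b) — comparing $\g_{N_t}'$ at $K^{-1}(x)$ and at $x$ via a Lipschitz bound fixed at stage $t$ and the estimate $\sup|K(x)-x|=O\left((1.5)^{-M_t}\right)$ — is a reasonable way to handle the base-point discrepancy, but it cannot compensate for the loss in (a).
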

\begin{proof}
Assume first that $\epsilon_{t+1}=0$ and since we are not going to
vary $\ep$ we write $H_{n}$ and $h_{n}$ to denote $H_{\ep,n}$
and $h_{\ep,n}$. Since $\epsilon_{t+1}=0$, by Lemma \ref{lem: bound on h_M_t derivative}
if $M_{t}$ is large enough then $h'_{M_{t}}(x)=e^{\pm2^{-N_{t}}}$
for all $x\in\TT$. We assume that $M_{t}$ is large enough for this
to hold. 

Let $z\in\TT$, there exists a unique $y=y(z)$ such that $z=H_{N_{t+1}}(y)$.
By the chain and differentiation of inverse functions, if $\mathfrak{g}_{N_{t+1}}$
is differentiable at $z$, 
\[
\g_{N_{t+1}}'(z)=\p\frac{H_{N_{t+1}}'\left(Sy\right)}{H'_{N_{t+1}}(y)}.
\]
Therefore since $h_{k}=id.$ for all $N_{t}<k<M_{t}$, $H_{N_{t}}=H_{M_{t}-1}$
and 
\begin{eqnarray*}
\frac{\g_{N_{t+1}}'(z)}{\g_{N_{t}}'(z)} & = & \frac{H_{N_{t+1}}'\left(Sy\right)}{H'_{N_{t}}(Sy)}\cdot\frac{H_{N_{t}}'\left(y\right)}{H'_{N_{t+1}}(y)}\\
 & = & \left(\prod_{k=M_{t}}^{N_{t+1}}h'_{k}\left(H_{k-1}\left(Sy\right)\right)\right)\left(\prod_{k=M_{t}}^{N_{t+1}}h'_{k}\left(H_{k-1}\left(y\right)\right)\right)^{-1}
\end{eqnarray*}

Fix $j\in\left[M_{t},N_{t+1}\right)$. Notice that $H_{j-1}(y)\in H_{j-1}\left(C_{[w]_{1}^{j}}\right)$
if and only if 
\[
H_{j-2}\left(Sy\right)\in H_{j-2}\left(C_{\left[w_{2}\cdots w_{j}\right]}\right),
\]
and that by Lemma \ref{lem: bound on h_M_t derivative}.(ii), $H_{j-1}(y)$
and $H_{j-2}\left(Sy\right)$ are to the right of $\xi\left(C_{\left[w\right]_{1}^{j}}\right)$
and $\xi\left(C_{\left[w_{2}\cdots w_{j}\right]}\right)$ respectively
if and only if $y$ is to the right of the reference point in $C_{[w]_{1}^{j}}$.
Thus under the assumption that $\epsilon_{t+1}=0$ for all $j\in\left(M_{t}+1,N_{t+1}\right]$,
\[
\frac{h_{j-1}'\left(H_{j-2}\left(Sy\right)\right)}{h_{j}'\left(H_{j-1}(y)\right)}=1.
\]
The last equality together with Lemma \ref{lem: bound on h_M_t derivative}(i)
implies that if $M_{t}$ is large enough then, 

\begin{eqnarray*}
\frac{\g_{N_{t+1}}'(z)}{\g_{N_{t}}'(z)} & = & \frac{h_{M_{t}}'\left(H_{M_{t}-1}\left(Sy\right)\right)}{h_{M_{t}}'\left(H_{M_{t}-1}(y)\right)}\frac{h_{N_{t+1}}'\left(H_{N_{t+1}-1}(Sy)\right)}{h_{M_{t}+1}'\left(H_{M_{t}}(y)\right)}\underset{=1}{\underbrace{\prod_{j=M_{t}+2}^{N_{t+1}}\frac{h_{j-1}'\left(H_{j-2}\left(Sy\right)\right)}{h_{j}'\left(H_{j-1}(y)\right)}}}\\
 & = & \left(\lambda_{t+1}^{8}e^{2^{-N_{t}+1}}\right)^{\pm1}
\end{eqnarray*}
 The last inequality makes use of the fact that for $l\in\left\{ M_{t}+1,N_{t+1}\right\} $
and $z\in\TT$, $\left|h'_{l}(z)\right|=\lambda_{t+1}^{\pm2}$. 

In \cite{Bruin Hawkins} they argue that the estimate on the derivative
is continuous (uniformly) with respect to $\epsilon_{t+1}$ since
$\psi_{\epsilon,\lambda_{t+1}}^{'}$ converges pointwise to $\psi_{0,\lambda_{t+1}}^{'}$
when $\epsilon\to0$. However this convergence is not uniform (and
it can't be as it converges to a step function) and therefore their
argument is not sufficient for convergence in the $C^{1}$ norm.

We proceed as follows. For $n\in\left(M_{t},N_{t+1}\right]$ and $w\in\Sigma_{{\bf A}}$
with $w_{n}=1$ denote by ${\rm BS}(n,w)$, the \textit{Bad Set} at
stage $n$ for $w$, to be the following set 
\[
\left\{ y\in C_{[w]_{1}^{n}}:\ \forall\delta>0,\ \exists z\in(y-\delta,y+\delta),\ h_{n}'\circ H_{n-1}(z)\notin\left\{ \frac{\lambda_{t+1}\p^{2}}{1+\lambda_{t+1}\p},\frac{\p^{2}}{1+\lambda_{t+1}\p}\right\} \right\} 
\]
  This set, which is a union of four small intervals, is the set of
all $y\in C_{[w]_{1}^{n}}$ where the derivative of $h_{n}^{'}\circ H_{n-1}$
is not constant on a neighborhood of $y$. 

First we demand that $\delta_{t+1}$ is small enough so that the conclusion
of Lemma \ref{lem: bound on h_M_t derivative} and equation (\ref{eq: distribution of an interval inside an interval})
hold for all $\epsilon_{t+1}<\delta_{t+1}$. 

Secondly we demand that $\delta_{t+1}$ is small enough so that for
$M_{t}<n<m\leq N_{t+1}$, if ${\rm BS}(m,w)\cap{\rm BS}\left(n,w\right)\neq\emptyset$
then one of the end points of $H_{m-1}\left(C_{[w]_{1}^{m}}\right)$
is either an end point of $H_{n-1}\left(C_{[w]_{1}^{n}}\right)$ or
the point in $\frac{1}{\p}$ proportion in $H_{n-1}\left(C_{[w]_{1}^{n}}\right)$. 

\begin{figure}[h]
\includegraphics[scale=0.55]{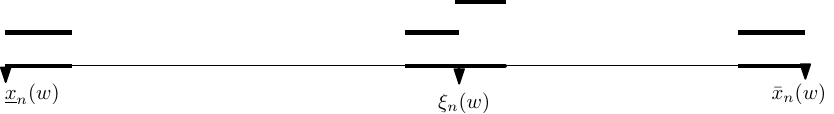}\label{fig: 1222}

\textit{\footnotesize{}The small intervals demonstrate the possibilities
of locations of $BS(m,w)$. }{\footnotesize \par}
\end{figure}

To understand why we choose these points, notice that in those marked
endpoints 
\[
h_{n}'(x)=1.
\]
This can be done if for example\footnote{Here notice that $\sup_{n\leq k\leq N_{t+1},x\in\TT}\frac{1}{h_{\ep,k}'(x)}\leq\lambda_{t+1}^{-2}$
irrespectible of the choice of $\ep$.} 
\[
\frac{m_{{\rm \TT}}\left(H_{\ep,m-1}\left(C_{[w]_{1}^{m}}\right)\right)}{m_{\TT}\left(H_{\ep,n-1}\left(C_{[w]_{1}^{n}}\right)\right)}\geq\left(\frac{1}{\p^{2}\lambda_{t+1}^{2}}\right)^{n-m}\geq\frac{1}{5^{-n_{t+1}}}\gg\delta_{t+1},
\]
Indeed, if $\epsilon_{t+1}\leq\delta_{t+1}$, then $BS(n,w)$ is union
4 subintervals of $H_{\ep,n-1}\left(C_{[w]_{1}^{n}}\right)$ considerably
smaller length then $H_{\ep,m-1}\left(C_{[w]_{1}^{m}}\right)$ and
thus their bad sets can only intersect in a unique interval if either
$\bar{x}_{m}(w)\in\left\{ \bar{x}_{n}(w),\xi_{n}(w)\right\} $ or
$\underline{x}_{m}(w)\in\left\{ \underline{x}_{m}(w),\xi_{n}(w)\right\} $. 

In fact with such a choice of $\delta_{t+1}$ one has that for all
$w\in\Sigma_{{\bf A}}$ and $M_{t}<n<m\leq N_{t+1}$, ${\rm BS}(n,w)\cap{\rm BS}(m,w)$
is always one interval for which one of its end points satisfies 
\begin{equation}
h_{n}'(H_{n-1}(x))=1.\label{eq: Bad set anal. good point}
\end{equation}
In addition, 
\begin{eqnarray}
\frac{m_{{\rm \TT}}\left({\rm BS}(n,w)\cap{\rm BS}(m,w)\right)}{m_{{\rm \TT}}\left({\rm BS}(n,w)\right)} & \leq & \sup_{x\in\TT}\left(h_{n}^{-1}\circ\cdots h_{m}^{-1}\right)'(x)\frac{m_{\TT}\left(C_{[w]_{1}^{m}}\right)}{m_{\TT}\left(C_{[w]_{1}^{n}}\right)}\label{eq: Size of Bsets inside eachother}\\
 & \leq & \left(\frac{\lambda_{1}^{2}}{\p}\right)^{n-m}\leq(1.6)^{n-m}.\nonumber 
\end{eqnarray}
By the definition of $h_{\ep,n}$, $h_{\ep,n}'\circ H_{n-1}$ is a
Lipschitz function with a Lipschitz constant of order $Const./m_{{\rm \TT}}\left({\rm BS}(n,w)\right)$. 

It follows from (\ref{eq: Bad set anal. good point}) and (\ref{eq: Size of Bsets inside eachother})
that there exists a constant $B>0$ such that for all $y\in{\rm BS}(n,w)\cap{\rm BS}(m,w)$,
\[
h_{n}'(H_{n}(y))=e^{\pm B(1.6)^{n-m}}.
\]

The final argument is as follows: Given $x\in\TT$ there is a unique
$y\in\TT$ such that $x=H_{N_{t+1}}(y)$. Let $w$ be such that $y\in C_{[w]_{1}^{N_{t+1}}}$.
If $y\notin\cup_{n=M_{t}+1}^{N_{t+1}}{\rm BS}(n,w)$ then a similar
analysis as in the case $\epsilon_{t+1}=0$ yields the conclusion.
Otherwise there exists a maximal $M_{t}<{\bf J}=J(y)\leq N_{t+1}$
such that $y\in{\rm BS}({\bf J},w)$. A similar argument as in the
case $\epsilon_{t+1}=0$ yields 
\begin{equation}
g_{N_{t+1}}'(z)=\prod_{k={\bf J}}^{N_{t+1}}\frac{h_{k}\left(H_{k-1}\left(Sy\right)\right)}{h_{k}\left(H_{k-1}(y)\right)}=\lambda_{t+1}^{\pm4}g_{{\bf J}}'\circ H_{{\bf J}-1}(y).\label{eq: derivative of g till BS is hit}
\end{equation}
For $M_{t}+2\leq n<{\bf J}-M_{t}/4$, either $y\notin{\rm BS}(k,w)$
for all $k\leq n$ and then we proceed as in the case $\epsilon_{t+1}=0$
or $y\in{\rm BS}(n,w)\cap{\rm BS}\left({\bf J},w\right)$ and then,
\[
h_{n}'\circ H_{n-1}(y)=e^{\pm B(1.6)^{n-{\bf J}}}.
\]
In addition, $S\left({\rm BS}(n,w)\cap{\rm BS}\left({\bf J},w\right)\right)$
is an interval of size $\p m_{{\rm \TT}}\left({\rm BS}(n,w)\cap{\rm BS}({\bf J},w)\right)$
with one point $x$ for which\footnote{$x$ is either an end point or the point in $\frac{1}{\p}$ proportion
in $C_{[w_{2},...,w_{n+1}]}$} $h_{n}'\circ H_{n-1}(x)=1$. Therefore as before, 
\[
h_{n}'\circ H_{n-1}(Sy)=e^{\pm\p B(1.6)^{m-{\bf J}}.}
\]
Thus, using that for all $\ensuremath{M_{t}\leq k\leq N_{t+1},}\frac{h_{k}'\left(H_{k-1}(Sy)\right)}{h_{j}'\left(H_{k-1}(y)\right)}\leq\lambda_{t+1}^{4}$
\begin{eqnarray*}
g_{{\bf J}}'\circ H_{J-1}(y) & \leq & \g_{N_{t}}'\left(H_{M_{t}-1}(y)\right)\frac{h_{M_{t}}'\left(H_{M_{t}-1}\left(Sy\right)\right)}{h_{M_{t}}'\left(H_{M_{t}-1}(y)\right)}\prod_{n=M_{t}+1}^{{\bf J}-M_{t}/4}e^{3B(1.6)^{n-{\bf J}}}\prod_{k=J-M_{t}/4}^{J}\frac{h_{k}'\left(H_{k-1}(Sy)\right)}{h_{j}'\left(H_{k-1}(y)\right)}\\
 & \leq & \left(\g_{N_{t}}'\left(z\right)e^{2^{-N_{t}+2}}\right)e^{C(1.6)^{-M_{t}/4}}\lambda_{t+1}^{M_{t}}.
\end{eqnarray*}
The upper bound follows from the last equation together with (\ref{eq: derivative of g till BS is hit})
since $M_{t}/4\gg N_{t}$. The lower bound is similar. 
\end{proof}
A consequence Lemma \ref{lem: Differentiablility of g} is that we
can choose $\ep=\left\{ \epsilon_{k}\right\} _{k=1}^{\infty}$ so
that $\g_{N_{t}}$ and $D_{\g_{N_{t}}}$ converge uniformly to a map
$\mathfrak{g}$ with 
\begin{equation}
D_{\g}(x)=\p\cdot\left(\prod_{t\in\NN}\lambda_{t}^{\pm M_{t-1}}\right)\cdot e^{\sum_{t=1}^{\infty}2^{-N_{t}+4},}.\label{eq: derivative of g with epsilon}
\end{equation}
By taking care that for each $t\in\NN$, $\lambda_{t}^{M_{t-1}}$
is small enough and the $N_{t}$ are large enough, 
\[
1.6\leq\p\cdot\left(\prod_{t\in\NN}\lambda_{t}^{\pm M_{t-1}}\right)\cdot\exp\left(\sum_{t=1}^{\infty}2^{-N_{t}+4}\right)\leq1.7,
\]
thus the limiting transformation $\mathfrak{g}$ is uniformly expanding.
What remains to be shown before we can explain the modified inductive
construction of $\left\{ \lambda_{k},M_{k},N_{k},\epsilon_{k}\right\} _{k=1}^{\infty}$
is that we can choose $\ep$ so that $\mbox{\ensuremath{m_{{\rm \TT}}\circ\mathfrak{h}_{\ep}\sim\mu^{+}}.}$ 
\begin{lem}
\label{Lem:  the pertubation of the beta shift}Assume that $\mu^{+}$
is a push forward via $\Theta$ of the Markovian type ${\rm III}_{1}$
measure for the shift defined by $\left\{ \lambda_{k},m_{k},n_{k},M_{k},N_{k}\right\} _{k=1}^{\infty}$
. Then there exists a sequence $\underline{\varepsilon}=\left\{ \varepsilon_{k}\right\} _{k=1}^{\infty}$
such that for every $\ep=\left\{ \epsilon_{k}\right\} _{k=1}^{\infty}$
which satisfies $\forall k\in\NN,\ \epsilon_{k}\leq\varepsilon_{k}$,
the function $\mathfrak{h}_{\ep}$ defined previously satisfies 
\[
m_{\text{\ensuremath{\TT}}}\circ\mathfrak{h}{}_{\ep}\sim\mu^{+}.
\]
\end{lem}
\begin{proof}
The proof of the Lemma will be done by applying the theory of local
absolute continuity of Shiryaev with $\cF_{t}$ the sigma algebra
generated by $\left\{ C_{[w]_{1}^{N_{t}}}:\ w\in\Sigma_{{\bf A}}\right\} $.
For $\ep=\left\{ \epsilon_{k}\right\} _{k=1}^{\infty}$, we will use
the notation $\r_{\ep,n}(x):=h_{\ep,n}'\left(H_{\ep,n-1}(x)\right)$. 

Given $\ep=\left\{ \epsilon_{k}\right\} _{k=1}^{t}$, 
\[
\left(m_{\text{\ensuremath{\TT}}}\circ\h_{\ep}\right)_{t}:=\left.m_{\text{\ensuremath{\TT}}}\circ\h_{\ep}\right|_{\cF_{t}}=m_{\TT}\circ H_{\ep,N_{t}},
\]
and 
\[
\left(\mu^{+}\right)_{t}:=\left.\mu\right|_{\cF_{t}}=m_{\TT}\circ H_{0,N_{t}}.
\]

A calculation shows that 
\[
z_{t}(x):=\frac{d\left(m_{\text{\ensuremath{\TT}}}\circ\h_{\ep}\right)_{t}}{d\left(\mu^{+}\right)_{t}}(x)=\frac{H_{\underbar{\ensuremath{\epsilon}},N_{t}}^{'}(x)}{H_{0,N_{t}}'(x)}.
\]

Writing $\tilde{H}_{\ep,k,t}$ for the function $H_{\delta(\ep,k),N_{t}}$
with 
\[
\delta\left(\ep,k\right)_{j}:=\begin{cases}
\epsilon_{j}, & 1\leq j\leq k\\
0, & j>k
\end{cases},
\]
and noticing that $\tilde{H}_{\delta\left(\ep,0\right),N_{t}}=H_{0,N_{t}}$
we get 
\begin{equation}
z_{t}(x)=\prod_{k=1}^{t}\frac{\tilde{H}_{\ep,k,t}'(x)}{\tilde{H}_{\ep,k-1,t}'(x)}.\label{eq: the first formula for z_t}
\end{equation}
By \cite[p. 527 remark 2]{Shi} it remains to show that we can choose
$\underline{\varepsilon}$ such that if for all $k\in\NN$, $\epsilon_{k}<\varepsilon_{k}$,
then $\left\{ z_{t}\right\} _{t=1}^{\infty}$ is uniformly integrable
with respect to $\mu$. We proceed to show how to choose $\underbar{\ensuremath{\varepsilon}}$.
Let $x\in\TT\backslash{\rm Bd}\left(S\right)$. 

Fix $k\in\NN$. By the chain rule and the fact that $H_{\delta\left(\ep,k\right),M_{k-1}}=H_{\delta\left(\ep,k-1\right),M_{k-1}}$
one sees that 
\[
\frac{\tilde{H}_{\ep,k,t}'(x)}{\tilde{H}_{\ep,k-1,t}'(x)}=\left(\prod_{l=M_{k-1}+1}^{N_{k}}\frac{\rho_{\delta\left(\ep,k\right),l}(x)}{\rho_{\delta\left(\ep,k-1\right),l}(x)}\right)\cdot\left(\prod_{l=N_{k}}^{N_{t}}\frac{\rho_{\delta\left(\ep,k\right),l}(x)}{\rho_{\delta\left(\ep,k-1\right),l}(x)}\right).
\]
First we will want to prove that if $\epsilon_{k}$ is small enough,
then 
\begin{equation}
\prod_{l=N_{k}}^{N_{t}}\frac{\rho_{\delta\left(\ep,k\right),l}(x)}{\rho_{\delta\left(\ep,k-1\right),l}(x)}\leq e^{3(1.6)^{-N_{k}}}.\label{eq: the almost markov property for the derivatives}
\end{equation}
To see (\ref{eq: the almost markov property for the derivatives}),
first notice that since for every $s\geq k$ and $N_{s}<n<M_{s}$,
\[
h_{\delta\left(\ep,k\right),n}=h_{\delta\left(\ep,k-1\right),n}=id,
\]
then for all $k\leq s\leq t-1$, 
\[
\prod_{l=N_{s}}^{M_{s}-1}\frac{\rho_{\delta\left(\ep,k\right),l}(x)}{\rho_{\delta\left(\ep,k-1\right),l}(x)}=1.
\]
Secondly, for $s\geq k$ and $M_{s}<n\leq N_{s+1}$ there exists $w\in\Sigma_{{\bf A}}$
such that $x\in C_{[w]_{1}^{n}}$. If $w_{n}\neq1$ then $\rho_{\delta\left(\ep,k\right),l}(x)=\rho_{\delta\left(\ep,k\right),l}(x)=1$.
Otherwise notice that for $\eta\in\{\delta\left(\ep,k\right),\delta\left(\ep,k-1\right)\}$,
$\eta_{s+1}=0$ and $H_{\eta,n-1}(x)$ is to the right of the point
in $\frac{1}{\p}$ proportion in $H_{\eta,n-1}\left(C_{[w]_{1}^{n}}\right)$
if and only if $x$ is to the right of the point in $\frac{1}{\p}$
in $C_{[w]_{1}^{n}}$. Therefore for all $s\geq k$ and $M_{s}<l\leq N_{s+1}$,
$\rho_{\delta\left(\ep,k\right),l}(x)=\rho_{\delta\left(\ep,k-1\right),l}(x)$
and by Lemma \ref{lem: bound on h_M_t derivative}.(i), 
\begin{eqnarray*}
\prod_{s=k}^{t-1}\prod_{l=M_{s}}^{N_{s+1}}\frac{\rho_{\delta\left(\ep,k\right),l}(x)}{\rho_{\delta\left(\ep,k-1\right),l}(x)} & = & \prod_{s=k}^{t-1}\frac{\rho_{\delta\left(\ep,k\right),M_{s}}(x)}{\rho_{\delta\left(\ep,k-1\right),M_{s}}(x)}\\
 & \leq & \prod_{s=k}^{t-1}\frac{e^{(1.6)^{-N_{t}}}}{e^{-(1.6)^{-N_{t}}}}\leq e^{3(1.6)^{-N_{k}}}\ \ \ \square_{\eqref{eq: the almost markov property for the derivatives}}
\end{eqnarray*}
We remark here that similarly one can get that 
\[
\prod_{s=k}^{t-1}\prod_{l=M_{s}}^{N_{s+1}}\frac{\rho_{\delta\left(\ep,k\right),l}(x)}{\rho_{\delta\left(\ep,k-1\right),l}(x)}\geq e^{-3(1.6)^{-N_{k}}},
\]
which in turn shows that there exists $c>1$ such that 
\begin{equation}
z_{t}(x)=c^{\pm1}\prod_{k=1}^{t}\prod_{l=M_{k-1}}^{N_{k}}\frac{\rho_{\delta\left(\ep,k\right),l}(x)}{\rho_{\delta\left(\ep,k-1\right),l}(x)},\label{eq: z_t is almost as for Markov chains}
\end{equation}
If for every $t<k$, we have chosen $M_{t}$ to be large enough so
that Lemma \ref{lem: bound on h_M_t derivative}.(i) holds then there
exists $c>0$ such that 
\[
z_{t}(x)=c^{\pm1}\prod_{k=1}^{t}\prod_{l=M_{k-1}+1}^{N_{k}}\frac{\rho_{\delta\left(\ep,k\right),l}(x)}{\rho_{\delta\left(\ep,k-1\right),l}(x)}.
\]

As $\epsilon_{k}\to0$,
\[
\psi'_{k}(x):=\psi'_{\epsilon_{k},\lambda_{k}}(x)\xrightarrow{}\psi'_{0,\lambda_{k}}(x)\ \mu\text{ a.e. }x,
\]
It follows that 
\[
\prod_{l=M_{k-1}+1}^{N_{k}}\frac{\rho_{\delta\left(\ep,k\right),l}(x)}{\rho_{\delta\left(\ep,k-1\right),l}(x)}\xrightarrow[\epsilon_{k}\to0]{}1\ \mu\ a.e.\ x.
\]
By Egorov's Theorem there exists $A_{k}\in\BB_{\TT}$, with $\mu\left(A_{k}\right)>1-\frac{1}{2^{k}\prod_{r=1}^{k}\left(\lambda_{r}\right)^{4n_{r}}}$
such that 
\[
\prod_{l=M_{k-1}+1}^{N_{k}}\frac{\rho_{\delta\left(\ep,k\right),l}(x)}{\rho_{\delta\left(\ep,k-1\right),l}(x)}\xrightarrow[\epsilon_{k}\to0]{}1,\ \text{uniformly in\ }x\in A_{k}.
\]
The lower bound on the measure of $A_{k}$ is chosen because for every
$\epsilon_{k}>0$ 
\begin{equation}
\max_{x,y\in\TT}\prod_{l=M_{k-1}+1}^{N_{k}}\frac{\rho_{\delta\left(\ep,k\right),l}(x)}{\rho_{\delta\left(\ep,k-1\right),l}(y)}\lessapprox\left(\max_{x,y\in\TT}\frac{\psi'_{\epsilon_{k},\lambda_{k}}(x)}{\psi'_{0,\lambda_{k}}(y)}\right)^{n_{k}}=\left(\lambda_{k}\right)^{4n_{k}}.\label{eq: the bound on the frac of rho's}
\end{equation}
Now we are finally in a position to define the sequence $\underbar{\ensuremath{\varepsilon}}$.
Let $\varepsilon_{k}$ be small enough so that for every $\ep$ with
$\epsilon_{k}<\varepsilon_{k}$ and $x\in A_{k}$, 
\[
1-\frac{1}{k^{2}}\leq\prod_{l=M_{k-1}+1}^{N_{k}}\frac{\rho_{\delta\left(\ep,k\right),l}(x)}{\rho_{\delta\left(\ep,k-1\right),l}(x)}\leq1+\frac{1}{k^{2}}.
\]
Let $\ep$ which satisfies for every $k\in\NN$, $\epsilon_{k}<\varepsilon_{k}$.
For large $M$, if for some $n\in\NN$ and $x\in\TT$, $z_{n}(x)>M$,
then there exists $q=q(M)\le n$ such that $x\in\cup_{r=q}^{n}A_{r}^{c}$.
Therefore by (\ref{eq: z_t is almost as for Markov chains}) and decomposing
the set $\left[z_{n}>M\right]$ by the last $r\leq n$ for which $x\in A_{r}^{c}$,
\begin{eqnarray*}
\int_{\left[z_{n}>M\right]}z_{n}(x)d\mu(x) & \leq & c\int_{\left[z_{n}>M\right]}\left(\prod_{k=1}^{n}\prod_{l=M_{k-1}+1}^{N_{k}}\frac{\rho_{\delta\left(\ep,k\right),l}(x)}{\rho_{\delta\left(\ep,k-1\right),l}(x)}\right)d\mu(x)\\
 & \leq & c\sum_{r=q(M)}^{n}\int_{\left[A_{r}^{c}\cap\left(\cap_{j=r+1}^{n}A_{j}\right)\right]}\left(\prod_{k=1}^{n}\prod_{l=M_{k-1}+1}^{N_{k}}\frac{\rho_{\delta\left(\ep,k\right),l}(x)}{\rho_{\delta\left(\ep,k-1\right),l}(x)}\right)d\mu(x).
\end{eqnarray*}
Since for $x\in A_{j}$, $\prod_{l=M_{j-1}+1}^{N_{j}}\frac{\rho_{\delta\left(\ep,k\right),l}(x)}{\rho_{\delta\left(\ep,k-1\right),l}(x)}\leq1+\frac{1}{j^{2}}$,
\begin{eqnarray*}
\sum_{r=q(M)}^{n}\int_{\left[A_{r}^{c}\cap\left(\cap_{j=r+1}^{n}A_{j}\right)\right]}\left(\prod_{k=1}^{n}\prod_{l=M_{k-1}+1}^{N_{k}}\frac{\rho_{\delta\left(\ep,k\right),l}(x)}{\rho_{\delta\left(\ep,k-1\right),l}(x)}\right)d\mu(x) & \leq\\
\sum_{r=q(M)}^{n}\prod_{j=r+1}^{n}\left(1+\frac{1}{j^{2}}\right)\int_{A_{r}^{c}}\left(\prod_{k=1}^{r}\prod_{l=M_{k-1}+1}^{N_{k}}\frac{\rho_{\delta\left(\ep,k\right),l}(x)}{\rho_{\delta\left(\ep,k-1\right),l}(x)}\right)d\mu\left(x\right) & \leq\\
\left[\prod_{j=1}^{\infty}\left(1+\frac{1}{j^{2}}\right)\right]\sum_{r=q(M)}^{n}\mu\left(A_{r}^{c}\right)\max_{x\in\TT}\left(\prod_{k=1}^{r}\prod_{l=M_{k-1}+1}^{N_{k}}\frac{\rho_{\delta\left(\ep,k\right),l}(x)}{\rho_{\delta\left(\ep,k-1\right),l}(x)}\right) & \lessapprox & \ \ \text{by \eqref{eq: the bound on the frac of rho's} }\\
\left[\prod_{j=1}^{\infty}\left(1+\frac{1}{j^{2}}\right)\right]\sum_{r=q(M)}^{n}\frac{1}{2^{r}} & \lessapprox & 2^{-q(M)}\prod_{j=1}^{\infty}\left(1+\frac{1}{j^{2}}\right).
\end{eqnarray*}
When $M\to\infty$ then $q(M)\to\infty$ and therefore 
\[
\sup_{n\in\NN}\int_{\left[z_{n}>M\right]}z_{n}(x)d\mu(x)\lessapprox2^{-q(M)}\prod_{j=1}^{\infty}\left(1+\frac{1}{j^{2}}\right)\to0\ \text{as}\ M\to\infty.
\]
 This shows that $\left\{ z_{n}\right\} $ is uniformly integrable
and hence $m_{{\rm \TT}}\circ\mathfrak{h}_{\ep}\sim\mu^{+}$.
\end{proof}

\subsubsection{The modified induction process for choosing $\left\{ \lambda_{k},N_{k},M_{k},n_{k},m_{k},\epsilon_{k}\right\} $
\label{subsec:The-modified-induction} and the proof of Theorem \ref{thm: the one dimensional perturbation}.}

In the course of the construction here we arrived at two conditions
on $\{\epsilon_{k}\}$ and two extra conditions on $\left\{ \lambda_{k},M_{k}\right\} $.
In order to show the existence of these sequences one has to modify
the induction process of Section \ref{sec:Type--Markov} as follows
and insert the choice of $\left\{ \epsilon_{t}\right\} $ in the induction. 

In the proof of the previous Lemmas we have an extra condition on
the size of $M_{t}$ (or $m_{t}=M_{t}-N_{t}$) which is determined
by $\left\{ N_{s},\lambda_{s},M_{s-1},\epsilon_{s}\right\} _{s=1}^{t}$. 

The choice of $\varepsilon_{t+1}$ in Lemma \ref{Lem:  the pertubation of the beta shift},
$\tilde{\delta}_{t+1}$ in Lemma \ref{lem: Differentiablility of g}
and $\epsilon_{t+1}$ in Proposition \ref{lem: bound on h_M_t derivative}
is determined by $\left\{ N_{s},\lambda_{s},M_{s-1},\epsilon_{s}\right\} _{s=1}^{t}$
and $\left\{ N_{t+1},M_{t}\right\} $. We also need to take care that
\[
1.6\leq\p\cdot\left(\prod_{t\in\NN}\lambda_{t}^{\pm2M_{t-1}}\right)\cdot\exp\left(\pm\sum_{t=1}^{\infty}2^{-N_{t}+4}\right)\leq1.7.
\]
 This shows now that the order of choice in the induction is as follows
\[
\left\{ \lambda_{s},n_{s},N_{s},m_{s},M_{s},\epsilon_{s}\right\} _{s=1}^{t}\Rightarrow\lambda_{t+1}\Rightarrow\left\{ n_{t+1},N_{t+1}\right\} \Rightarrow\epsilon_{t+1}\Rightarrow\left\{ m_{t+1},M_{t+1}\right\} .
\]
 The modifications needed to be done in the inductive construction
are: First change the condition (\ref{eq:condition on Lambda}) on
$\lambda_{t+1}$ with the condition 
\[
\lambda_{t+1}^{2M_{t}}\leq\exp\left(2^{-N_{t}}\right),
\]
as this involves making $\lambda_{t+1}$ smaller this choice is valid.
This gives that
\[
\p\cdot\left(\prod_{t\in\NN}\lambda_{t}^{\pm2M_{t-1}}\right)\cdot\exp\left(\pm\sum_{t=1}^{\infty}2^{-N_{t}+4}\right)=\p\cdot\exp\left(\pm\sum_{t=1}^{\infty}2^{-N_{t}+5}\right).
\]
By demanding now that $N_{1}>20$, we get 
\[
\p\cdot\left(\prod_{t\in\NN}\lambda_{t}^{\pm2M_{t-1}}\right)\cdot\exp\left(\pm\sum_{t=1}^{\infty}2^{-N_{t}+4}\right)=\p\cdot\exp\left(\pm\sum_{t=1}^{\infty}2^{-N_{t}+5}\right)\in(1.6,1.7)
\]
as we required. There is no further change in the inductive choice
of $\lambda_{t},n_{t},N_{t}$ as they will not depend on $\underbar{\ensuremath{\epsilon}}$. 

Given $\left\{ \lambda_{s},n_{s},N_{s},m_{s},M_{s},\epsilon_{s}\right\} _{s=1}^{t}$
and $N_{t+1}$ we choose $\epsilon_{t+1}$ to be small enough so that
the conclusions of Lemma \ref{lem: bound on h_M_t derivative}.(ii),
Lemma \ref{lem: Differentiablility of g} and Lemma \ref{Lem:  the pertubation of the beta shift}
hold true. 

Then we choose $m_{t+1}$ based on the original constraints from Section
\ref{sec:Type--Markov} together with the restriction that $M_{t+1}=m_{t+1}+N_{t+1}$
is large enough so that the conclusion of Lemma \ref{lem: bound on h_M_t derivative}.(i)
is true. Since this involves perhaps enlarging $m_{t+1}$ it is consistent
with the other constraints of the induction. 

\begin{proof}[Proof of Theorem \ref{thm: the one dimensional perturbation}]

Choose $\left\{ \lambda_{k},N_{k},M_{k},n_{k},m_{k},\epsilon_{k}\right\} _{k=1}^{\infty}$
as in the inductive construction. Build the Markovian measure $\eta=\M{\P_{k},\pi_{k}:k\in\ZZ}$
determined by $\left\{ \lambda_{k},N_{k},M_{k},n_{k},m_{k}\right\} _{k=1}^{\infty}$,
$\mu:=\Phi_{*}\left(\eta\right)$ and $\mu^{+}=\Theta_{*}\left(\M{\P_{k},\pi_{k}:\ k\in\NN}\right)$.

Part (i) follows from Theorem \ref{thm: lambdas are...} since $\left\{ \lambda_{k},N_{k},M_{k},n_{k},m_{k}\right\} _{k=1}^{\infty}$
satisfy the constraints of the inductive construction in Section \ref{sec:Type--Markov}
hence it is a type ${\rm III}_{1}$ measure for the shift. 

(ii) and (iii): Since we chose $\ep=\left\{ \epsilon_{k}\right\} $
so that the conclusion of Lemma \ref{Lem:  the pertubation of the beta shift}
holds, it follows that $m_{\TT}\circ\mathfrak{h}_{\ep}\sim\mu^{+}$.
As we chose the sequences so that the conditions of Lemma \ref{lem: Differentiablility of g}
hold, for all $t\in\NN$ and $x\in\TT$, 
\[
\mathfrak{g}_{N_{t+1}}^{'}(x)=\exp\left(\pm2^{-N_{t}+4}\right)\lambda_{t+1}^{\pm2M_{t}}\mathfrak{g}_{N_{t}}'(x).
\]
Therefore $\left\{ \mathfrak{g}_{N_{t}}'\right\} _{t=1}^{\infty}$
is a Cauchy sequence in $C\left(\TT\right)$, it's limit function
satisfies 
\[
1.6\leq\mathfrak{g}'(x)=\p\cdot\left(\prod_{t\in\NN}\lambda_{t}^{\pm2M_{t-1}}\right)\cdot\exp\left(\pm\sum_{t=1}^{\infty}2^{-N_{t}+4}\right)\leq1.7.
\]

\end{proof}

\section{Type ${\rm III}_{1}$ Anosov Diffeomorphisms\label{sec:Type III Anosov}}

Let $\left\{ \lambda_{k},m_{k},n_{k},M_{k},N_{k}\right\} _{k=1}^{\infty}$
and $\epsilon=\left\{ \epsilon_{k}\right\} _{k=1}^{\infty}$ as in
Theorem \ref{thm: the one dimensional perturbation} and let $\h_{\epsilon}$
be the resulting function. Set $\mathfrak{H}_{\epsilon}(x,y):=(\h_{\epsilon}(x),y)$
and 
\[
\mathfrak{G}(x,y):=\mathfrak{H}_{\epsilon}\circ\tilde{f}\circ\mathfrak{H}_{\epsilon}^{-1}(x,y)=\begin{cases}
\left(\mathfrak{g}\left(x\right),-\p^{-1}y\right), & 0\leq x\leq1/\p\\
\left(\mathfrak{g}\left(x\right),-\p^{-1}\left(y-\frac{\p^{2}}{\p+2}\right)\right), & 1/\p\leq x\leq1
\end{cases}
\]
In the construction of Section \ref{sec:Type--Markov}, $P_{k}={\bf Q}$
for all $k<0$. Writing ${\bf m}_{\MM}$ for the Lebesgue measure
on $\MM{}_{\sim}$ one then has 
\[
d{\bf m}_{\MM}\circ\mathfrak{H}_{\underbar{0}}(x,y)=d\mu^{+}(x)dy=d\mu(x,y),
\]
or in other words ${\bf m}_{\MM}\circ\mathfrak{H}_{\underbar{0}}=\Phi_{*}\M{\P_{k},\pi_{k}:\ k\in\ZZ}$.
Therefore since $m_{{\rm \TT}}\circ\h_{\ep}\sim\mu^{+}$, 
\[
\mu:=\Phi_{*}\M{\P_{k},\pi_{k}:\ k\in\ZZ}\sim{\bf m}_{\MM}\circ\mathfrak{H}_{\ep}.
\]
Consequently $\left(\MM_{\sim},\BB_{\MM_{\sim}},{\bf m}_{\MM},\mathfrak{G}\right)$
is a type ${\rm III}_{1}$ transformation. This is because $\left(\MM_{\sim},\BB_{\sim},{\bf m}_{\MM},\mathfrak{G}\right)$
is measure theoretically equivalent to $\left(\MM_{\sim},\BB_{\sim},{\bf m}_{\MM}\circ\mathfrak{H}_{\ep},\tilde{f}\right)$
which is orbit equivalent to $\left(\MM_{\sim},\BB_{\sim},\mu,\tilde{f}\right)$. 

By Remark \ref{RK: An important feature of the perturbation}, $\mathfrak{G}{}_{\epsilon}$
is one to one and onto. In addition, for every $\left(x,y\right)\notin\partial\MM$,
$\mathfrak{G}$ is differentiable in a neighborhood of $(x,y)$ as
all the partial derivatives are continuous in $\MM\backslash\partial\MM$,
and 
\[
D_{\mathfrak{G}}(x,y)=\left(\begin{array}{cc}
\mathfrak{g}'\left(x\right) & 0\\
0 & -\p^{-1}
\end{array}\right).
\]
The problem is that $\mathfrak{G}$ when viewed as a transformation
of $\MM_{\sim}$ is not even continuous on the horizontal lines of
$\partial\MM$. 

We define a sequence of functions $\rr_{n}(x,y):\TT\times\left[-\p/(\p+2),\p^{2}/(\p+2)\right]\to\TT,n\in\NN$
using the construction of the previous section. This defines a sequence
$h_{n,y}(\cdot):=\rr_{n}\left(\cdot,y\right):(\TT\ {\rm or}\ [0,1/\p])\to\TT$
and 
\[
\mathfrak{h}_{y}(x):=\lim_{n\to\infty}h_{n,y}\circ h_{n-1,y}\circ\cdots h_{1,y}(x),
\]
where we will take care that the limit exists. The new examples will
then be of the form 
\[
\mathfrak{Z}(x,y):=\begin{cases}
\left(\mathfrak{h}_{-y/\varphi}\circ S\circ\mathfrak{h}_{y}^{-1}(x),-y/\p\right), & x\leq1/\p\\
\left(\mathfrak{h}_{-y/\varphi+\frac{\varphi}{\varphi+2}}\circ S\circ\mathfrak{h}_{y}^{-1}(x),-y/\p+\frac{\p}{\p+2}\right), & 1/\p\leq x\leq1
\end{cases}:\ \MM_{\sim}\to\MM_{\sim}.
\]

Particular care in the definition of $\mathfrak{h}_{y}$ is taken
in order to ensure that if $(x,y)\sim\left(\hat{x},\hat{y}\right)$
then $\mathfrak{h}_{y}(x)=\mathfrak{h}_{\hat{y}}\left(\hat{x}\right)$
as this is needed for the continuity of $\mathfrak{Z}$ on $\partial\mathbb{M}$. 

\subsection{Definition of the coupling time on the horizontal boundary of $\protect\MM$}

Denote by 
\begin{eqnarray*}
U_{1}: & = & \left([0,1/\p]\times\left(\frac{\p^{2}}{\p+2}-\frac{1}{\p^{10}},\frac{\p^{2}}{\p+2}\right]\right)\bigcup\left([1/\p,1]\times\left[-\frac{\p}{\p+2},-\frac{\p}{\p+2}+\frac{1}{\p^{10}}\right)\right),\\
U_{2} & := & \left((1/\p,1)\times\left(\frac{1}{\p+2}-\frac{1}{\p^{10}},\frac{1}{\p+2}\right]\right)\bigcup\left([1/\p,1]\times\left[-\frac{\p}{\p+2},-\frac{\p}{\p+2}+\frac{1}{\p^{10}}\right)\right)
\end{eqnarray*}
and $\mathbb{M}\backslash U:=U_{1}\cup U_{2}$. Then $\MM\backslash U$
is a neighborhood of the horizontal lines of $\partial\MM$. 

\begin{figure}[h]
\includegraphics[width=9cm,height=4cm]{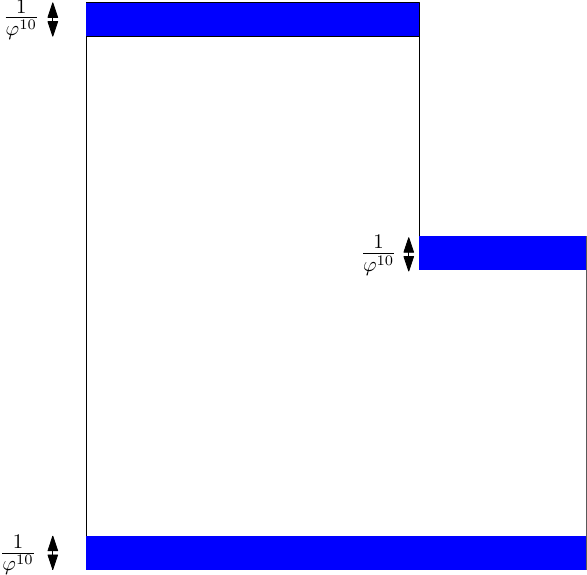}

\caption{The bands are $\protect\MM\backslash U$}
\end{figure}

In our construction for any $(x,y)\in U$, 
\[
\rr_{n}(x,y)=h_{n}(x),
\]
with $h_{n}$ the functions in the one dimensional example in Section
\ref{sec:Type III perturbations of the Golden Mean Shift}. This means
that for any $(x,y)\in U,$
\[
\mathfrak{h}_{y}(x)=\mathfrak{h}_{\ep}(x).
\]
We now will proceed to specify the construction of $\mathfrak{h}_{y}(x)$
for $(x,y)\in\MM\backslash U$.

The first step is to do a coupling on the horizontal lines which roughly
tells us where is the break that the problem that although $\left(x,y\right)$
is the same point in $\MM_{\sim}$ as $\left(\hat{x},\hat{y}\right)$,
\[
\left(\mathfrak{h}_{\ep}(x),y\right)\neq\left(\mathfrak{h}_{\ep}\left(\hat{x}\right),\hat{y}\right).
\]
For example consider the case $x=1/\p^{3}$, $y=\frac{\p^{2}}{\p+2}$
and $\hat{x}=1/\p$, $\hat{y}=-\frac{\p}{\p+2}$. The point $\frac{1}{\p}$
is a fixed point for $\mathfrak{h}_{\ep}$ meaning $\mathfrak{h}_{\ep}\left(1/\p\right)=1/\p$.
Since $\mathfrak{h}_{\ep}\left(1/\p^{3}\right)=\frac{\lambda_{1}}{\p\left(1+\lambda_{1}\p\right)}\neq\frac{1}{\p^{3}}$
we get 
\[
\left(\mathfrak{h}_{\ep}(x),y\right)\neq\left(\mathfrak{h}_{\ep}\left(\hat{x}\right),\hat{y}\right)=\left(\hat{x},\hat{y}\right).
\]
However if we took care that $\frac{1}{\p^{3}}$ is a fixed point
for $\mathfrak{h}_{y}$ then we will have the desired equality. It
turns out that the correct way to do this will be by setting $h_{1,y}|_{\left[0,1/\p^{3}\right)}=h_{2,y}|_{\left[0,1/\p^{3}\right)}=Id|_{\left[0,1/\p^{3}\right)}$
and to start perturbing (similarly as in the definition of $h_{n}$
from the previous Section) from $n\geq3$. In general we will have
a decomposition of the horizontal lines of $\partial\MM$ to $\left\{ V_{i}\right\} _{i=1}^{\infty}$
and we will start perturbing at $V_{i}$ from $n\geq i+1$. 

To be more precise the Horizontal boundary consists of the lines $[0,1/\p)\times\left\{ \p^{2}/\left(\p+2\right)\right\} $,
$[1/\p,1)\times\left\{ 1/\left(\p+2\right)\right\} $ and $\TT\times\left\{ -\p/\left(\p+2\right)\right\} $.
We look at a countable partition of the horizontal lines $\partial\MM$
which are identified by $\sim$ and couple them in a time $T\in\NN$
such that in the symbolic space on $\TT$, the move $w(T)_{T}\to1$
is possible for both pieces identified. 

\subsubsection{The horizontal subsegments of $\partial\mathbb{M}$ and their coupling
time: }
\begin{enumerate}
\item $V_{1}:=\left[0,1/\p^{2}\right)\times\left\{ -\frac{\p}{\p+2}\right\} \sim\left[1/\p,1\right)\times\left\{ \frac{1}{\p+2}\right\} $.
In this case $[0,1/\p^{2})=C_{[1]_{1}^{1}}$ and $\left[1/\p,1\right)=C_{[2]_{1}^{1}}$
and $T\left(V_{1}\right)=2$. 
\item $V_{2}:=\left[0,1/\p^{3}\right)\times\left\{ \frac{\p^{2}}{\p+2}\right\} \sim\left[1/\p^{2},1/\p\right)\times\left\{ -\frac{\p}{\p+2}\right\} $.
Here $[0,1/\p^{3})=C_{[11]_{1}^{2}}$ and $\left[1/\p^{2},1/\p\right)=C_{[32]_{1}^{2}}$
and $T\left(V_{2}\right)=3$. 
\item $V_{3}:=\left[1/\p^{3},1/\p^{2}\right)\times\left\{ \frac{\p^{2}}{\p+2}\right\} \sim\left[1/\p,1/\p+1/\p^{4}\right)\times\left\{ -\frac{\p}{\p+2}\right\} $.
Here $[1/\p,1/\p^{2})=C_{[132]_{1}^{3}}$ and $\left[1/\p,1/\p+1/\p^{4}\right)=C_{[211]_{1}^{3}}$
and $T\left(V_{3}\right)=4$.
\item $V_{4}:=\left[1/\p^{2},1/\p^{2}+1/\p^{5}\right)\times\left\{ \frac{\p^{2}}{\p+2}\right\} \sim\left[1/\p+1/\p^{4},1/\p+1/\p^{3}\right)\times\left\{ -\frac{\p}{\p+2}\right\} $.
Here $C_{[3211]}=\left[1/\p^{2},1/\p^{2}+1/\p^{5}\right)$ and $C_{[2132]}=\left[1/\p+1/\p^{4},1/\p+1/\p^{3}\right)$. 
\item For general $j>4$, $V_{j}:=C_{\left[{\bf w}(j)\right]_{1}^{j}}\times\left\{ \frac{\p^{2}}{\p+2}\right\} \sim C_{\left[\tilde{{\bf w}}(j)\right]_{1}^{j}}\times\left\{ -\frac{\p}{\p+2}\right\} $
where ${\bf w}(j),\tilde{{\bf w}}(j)$ are the following words of
length $j$, 
\begin{eqnarray*}
{\bf w}(j) & = & \begin{cases}
32\cdots32132 & j\ \text{odd}\\
32\cdots3211, & j\ {\rm \text{even}}
\end{cases}\\
\tilde{{\bf w}}(j) & = & \begin{cases}
23\cdots23211, & j\ \text{odd}\\
23\cdots232132 & j\ \text{even}.
\end{cases}
\end{eqnarray*}
As is expected for all $j\geq2$, $T\left(V_{j}\right)=j-1$. The
following is immediate from the definition.
\end{enumerate}
\begin{claim}
\label{Cl: how the boundary decomposition is moved by f} For any
$j\geq2$, 
\[
\tilde{f}\left(V_{j}\right)=V_{j-1},
\]
and 
\[
\tilde{f}\left(V_{1}\right)=[0,1/\p)\times\{\frac{1}{\varphi+2}\}\subset U.
\]
\end{claim}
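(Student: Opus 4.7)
The plan is to verify the claim by direct computation from the definitions, using the explicit two-branch formula for $\tilde{f}$ together with the symbolic (Markov cylinder) descriptions of the pieces comprising each $V_j$. For any $j \geq 2$ I will write $V_j$ as the union of its two identified representatives
\[
V_j^{\mathrm{top}} := C_{[{\bf w}(j)]_1^j} \times \{\p^2/(\p+2)\}, \qquad V_j^{\mathrm{bot}} := C_{[\tilde{{\bf w}}(j)]_1^j} \times \{-\p/(\p+2)\},
\]
and apply $\tilde{f}$ to each representative separately. Using $\p^2 = \p+1$, the $y$-action of the first branch $y \mapsto -y/\p$ sends $\p^2/(\p+2) \mapsto -\p/(\p+2)$, while the second branch $y \mapsto -y/\p + \p/(\p+2)$ sends $-\p/(\p+2) \mapsto \p^2/(\p+2)$; this is the coordinate swap that matches pairs across the identification.

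In the generic range $j \geq 3$, the first letter of ${\bf w}(j)$ lies in $\{1,3\}$ (so $V_j^{\mathrm{top}} \subset [0, 1/\p] \times \TT$ and the first branch applies), while the first letter of $\tilde{{\bf w}}(j)$ is $2$ (so $V_j^{\mathrm{bot}} \subset [1/\p, 1] \times \TT$ and the second branch applies). On any cylinder $C_{[u]_1^n}$ with $u_1 \in \{1,3\}$ the map $x \mapsto \p x$ is a bijection onto $C_{[\sigma u]_1^{n-1}}$, and similarly $x \mapsto \p x - 1$ on $C_{[v]_1^n}$ with $v_1 = 2$. Combined with the recursive identities ${\bf w}(j) = 3 \cdot \tilde{{\bf w}}(j-1)$ for $j \geq 4$ and $\tilde{{\bf w}}(j) = 2 \cdot {\bf w}(j-1)$ for $j \geq 3$ (with the base case $j = 3$ checked by hand using $\sigma(132) = 32 = \tilde{{\bf w}}(2)$ and $\sigma(211) = 11 = {\bf w}(2)$), this yields $\tilde{f}(V_j^{\mathrm{top}}) = V_{j-1}^{\mathrm{bot}}$ and $\tilde{f}(V_j^{\mathrm{bot}}) = V_{j-1}^{\mathrm{top}}$, whence $\tilde{f}(V_j) = V_{j-1}$.

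The cases $j = 2$ and $j = 1$ need dedicated computations, because the representatives of $V_1$ sit at $y$-values $\{-\p/(\p+2),\,1/(\p+2)\}$ rather than $\{-\p/(\p+2),\,\p^2/(\p+2)\}$, and because $\tilde{{\bf w}}(2) = 32$ begins with $3$ rather than $2$. For $j = 2$, both representatives have $x \leq 1/\p$, so the first branch applies in each case, and direct evaluation gives images $C_{[1]} \times \{-\p/(\p+2)\}$ and $C_{[2]} \times \{1/(\p+2)\}$, which are exactly the two representatives of $V_1$. For $V_1$ itself, applying the appropriate branch to each of its two pieces shows both land on the common segment $[0, 1/\p] \times \{1/(\p+2)\}$ inside $\MM$. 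The only real obstacle is the symbolic bookkeeping needed to verify the shift identities $\sigma {\bf w}(j) = \tilde{{\bf w}}(j-1)$ and $\sigma \tilde{{\bf w}}(j) = {\bf w}(j-1)$; once the recursive structure of the words is established, the remaining arithmetic (including the identity $-\p^{-1}(-\p/(\p+2) - \p^2/(\p+2)) = \p^2/(\p+2)$ that underlies the branch-$2$ $y$-action) is routine.
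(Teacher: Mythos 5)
Your verification is correct and is exactly the direct computation the paper leaves implicit (it states the claim as ``immediate from the definition''): split each $V_j$ into its two identified representatives, note which branch of $\tilde{f}$ applies to each via the first letter of ${\bf w}(j)$ resp.\ $\tilde{{\bf w}}(j)$, and use the shift identities $\sigma{\bf w}(j)=\tilde{{\bf w}}(j-1)$, $\sigma\tilde{{\bf w}}(j)={\bf w}(j-1)$ together with the $y$-coordinate swap $\p^{2}/(\p+2)\leftrightarrow-\p/(\p+2)$. Note that your computation of $\tilde{f}(V_{1})$ as $[0,1/\p]\times\{1/(\p+2)\}$ is the right answer and shows the claim's printed ``$[0,1/\p]\times\{1\}$'' is a typo, since $y=1$ does not even lie in $\MM$.
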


\subsubsection{Definition of the perturbation maps $h_{n,y}$:}

For $w\in\Sigma_{{\bf A}}$ and $n\in\NN$, we write again $C_{[w]_{1}^{n}}:=\left[\underline{x}_{n}(w),\bar{x}_{n}(w)\right)$.
Let 
\[
{\bf u}(x,y):=\begin{cases}
\min\left\{ \frac{\p^{2}}{\p+2}-y,y+\frac{\p}{\p+2}\right\} , & 0\leq x\leq\frac{1}{\p}\\
\min\left\{ \frac{1}{\p+2}-y,y+\frac{\p}{\p+2}\right\}  & \frac{1}{\p}\leq x\leq1
\end{cases}
\]
be the minimal distance of $(x,y)$ to the horizontal lines of $\partial\MM$.
In addition we will write ${\bf {\bf y}}(x,y):\MM\to\left\{ -\frac{\p}{\p+2},\frac{1}{\p+2},\frac{\p^{2}}{\p+2}\right\} $
to be the value so that 
\[
{\bf u}(x,y)=\left|{\bf y}(x,y)-y\right|.
\]
Under that notation $(x,{\bf y}(x,y))$ is the closest point to $(x,y)$
in the horizontal boundary. Let $(x,y)\in\MM_{\sim}$.

\textbf{Case 1} $(x,y)\in U$: we do the regular construction as in
Section \ref{sec:Type III perturbations of the Golden Mean Shift}.
That is for any $N_{t}<n<M_{t}$, $h_{n}$ is the identity. For any
$M_{t}<n\leq N_{t}$, if $w_{n}=1$ then $h_{n}|_{H_{n-1}\left(C_{[w]_{1}^{n}}\right)}$
is a rescaling of $\psi_{t}$ to the interval $H_{n-1}\left(C_{[w]_{1}^{n}}\right)$
and if $w_{n}\neq1$ then $h_{n}|_{H_{n-1}\left(C_{[w]_{1}^{n}}\right)}$
is the identity. If for some $t$, $n=M_{t}$ then $\left.h_{M_{t}}\circ H_{M_{t}-1}\right|_{C_{[w]_{1}^{M_{t}}}}$
is the distribution correction function in the construction. Finally
we set 
\[
\rr_{n}(x,y)=h_{n,y}(x):=h_{n}(x)
\]
and 
\[
K_{n,y}(x):=h_{n,y}\circ h_{n-1,y}\circ\cdots\circ h_{1,y}(x)=H_{n}(x).
\]

\textbf{Case 2}, $(x,y)\notin U$: In this case ${\bf u}\left(x,y\right)<\frac{1}{\p^{10}}$.
Let $(x,{\bf y}(x,y))\in\partial\MM$ be the closest point on the
horizontal lines of $\partial\MM$ to $(x,y)$. Let ${\bf j}(x,y)\in\NN$
be the integer so that 
\[
\left(x,{\bf y}(x,y)\right)\in V_{{\bf j}(x,y)}.
\]
 This means that either $x\in C_{\left[{\bf w}(j)\right]_{1}^{j}}$
(if $x\leq1/\p)$ or $x\in C_{\left[\tilde{{\bf w}}(j)\right]_{1}^{j}}$
($1/\p\leq x<1$). We will define the construction for $x\in C_{\left[{\bf w}(j)\right]_{1}^{j}}$,
the other case being similar. First we define for any $x\in C_{\left[{\bf w}(j)\right]_{1}^{j}}$,
\[
K_{{\bf j}(x,y),{\bf y}(x,y)}(x)=x.
\]

Then for any $(x,y)\in\MM\backslash U$ such that $x\in C_{\left[{\bf w}({\bf j})\right]_{1}^{{\bf j}}}$
we set 
\begin{eqnarray*}
K_{{\bf j},y}(x) & := & \left(H_{{\bf J}}(x)-x\right)\left[3\p^{20}\left({\bf u}(x,y)\right)^{2}-2\varphi^{30}{\bf u}(x,y)^{3}\right]+x
\end{eqnarray*}
For $n>{\bf j}(x,y)$, assume we have defined for all ${\bf j}<k<n$,
$h_{k,y}:=\rr_{k}\left(\cdot,y\right)$ and $x\in C_{[w]_{1}^{n}}\subset C_{\left[{\bf w}({\bf j})\right]_{1}^{{\bf j}}}$.
We set 
\[
K_{n-1,y}|_{C_{[w]_{1}^{n}}}=h_{n-1,y}\circ\cdots h_{{\bf j}+1,y}\circ K_{{\bf j},y}(x),
\]
and 
\[
\mathfrak{l}_{n}(y,w):=m_{\TT}\left(K_{n-1,y}\left(C_{[w]_{1}^{n}}\right)\right)=K_{n-1,y}\left(\bar{x}_{n}\left(w)\right)\right)-K_{n-1,y}\left(\underline{x}_{n}\left(w)\right)\right).
\]
If $w_{n}\neq1$ or $N_{t}<n<M_{t}$ for some $t\in\mathbb{N}$, then
for all $x\in K_{n-1,y}\left(C_{[w]_{1}^{n}}\right)$, $\rr_{n}(x,y):=x$.
If $w_{n}=1$ and $M_{t}<n\leq N_{t}$ and ${\bf j}<n$ then 
\[
\rr_{n}(x,y):=K_{n-1,y}\left(\underline{x}_{n}(w)\right)+\mathfrak{l}_{n}(y,w)\psi_{t}\left(\frac{x-K_{n-1,y}\left(\underline{x}_{n}(w)\right)}{\mathfrak{l}_{n}(y,w)}\right).
\]
Finally if $j\leq M_{t}=n$ then $r_{M_{t},y}$ is the distribution
correction function with $H_{M_{t}-1}$ replaced by $K_{M_{t}-1,y}$. 
\begin{rem}
\label{Rk: properties of q_n}The 2 variable function 
\[
\mathtt{q}_{{\bf J}}(x,{\bf u}):=\left(H_{{\bf J}}(x)-x\right)\left[3\p^{20}{\bf u}^{2}-2\p^{30}{\bf u}^{3}\right]+x
\]
 was chosen because of it's following properties:
\end{rem}
\begin{enumerate}
\item $\mathtt{q}_{{\bf J}}\left(x,\p^{-10}\right)=H_{{\bf J}}\left(x\right)$
and $\qq_{{\bf J}}(x,0)=x$. This means that for $y\in\partial\MM$,
$\rr_{n}(x,y)=x$ and therefore $K_{{\bf j},y}$ interpolates between
the identity map and $H_{{\bf J}}|_{C_{\left[{\bf w}({\bf j)}\right]_{1}^{{\bf j}}}}$. 
\item A consequence of the previous property is that $\frac{\partial\qq_{{\bf j}}}{\partial x}\left(x,0\right)=1$
and $\frac{\partial\qq_{{\bf J}}}{\partial x}\left(x,\p^{-10}\right)=\frac{\partial H_{{\bf J}}}{\partial x}\left(x\right)$.
This is needed in order that $\frac{\partial K_{{\bf J},y}}{\partial x}$
will be continuous in $y$. 
\item $\frac{\partial\qq_{{\bf J}}}{\partial u}\left(x,0\right)=\frac{\partial\qq_{{\bf J}}}{\partial u}\left(x,\p^{-10}\right)=0$
which is necessary for continuity of $\frac{\partial K_{{\bf j},y}}{\partial y}$.
\item $\sup_{0\leq z\leq\p^{-10}}\left|\frac{\partial\qq_{{\bf j}}}{\partial u}(x,z)\right|=\frac{3}{2}\p^{10}\left|H_{{\bf j}}(x)-x\right|$.
We will show that the right hand side is uniformly exponentially small
when ${\bf j\to\infty}$. The control of the derivatives in the $y$
direction is to our opinion the hardest part in this section. 
\end{enumerate}
The idea behind this construction can be summarized as follows: For
a fixed $(x,y)$ which is close enough to the horizontal segment on
the boundary we first look at the coupling time of the interval which
contains the point closest to $(x,y)$ on the boundary. On the boundary
we start to apply the rescaling after the coupling time to ensure
that the resulting map will be a map of $\MM_{\sim}$ (respects the
equivalence relation). Inside $U$ we just start perturbing from the
start and in what remains we do an interpolation using $q_{{\bf J}}$,
of the map on the boundary and the map on $U$. 

\subsubsection{Definition of $\mathfrak{Z}_{N}$ and the new examples of Anosov
diffeomorphisms:}

Define $\mathfrak{Z}_{N}:\MM_{\sim}\to\MM_{\sim}$, 
\[
\mathfrak{Z}_{N}(x,y)=\begin{cases}
\left(K_{N,-y/\p}\circ S\circ K_{N,y}^{-1}(x),-y/\p\right), & (x,y)\in R_{1}\cup R_{3}\\
\left(K_{N,-y/\p+\frac{\p}{\p+2}}\circ S\circ K_{N,y}^{-1}(x),-y/\p+\frac{\p}{\p+2}\right), & (x,y)\in R_{2}
\end{cases}.
\]

\begin{rem}
In the construction of the previous subsection for every $n\in\NN$,
$x\in\left\{ 0,1/\p^{2},1/\p\right\} $ are fixed points for $h_{\ep,n}$
(Remark \ref{RK: An important feature of the perturbation}). This
remains true for $\mathtt{r}_{n}$ in the sense that for all $y$
and $x\in\left\{ 0,1/\p^{2},1/\p\right\} $ , $\mathtt{r}_{n}\left(x,y\right)=\hat{r}_{n}(x,y)=x.$
This shows that $\mathfrak{Z}_{N}$ is continuous. In addition if
$x$ is an endpoint of the segment $K_{n,y}\left(C_{[w]_{1}^{n}}\right)$
for some $w\in\Sigma_{{\bf A}}$ and $y$, then $\frac{\partial\rr_{n}}{\partial x}(x,y)=1.$
This gives that $\mathfrak{Z}_{N}$ is $C^{1}$. The invariance of
the Markov partition $\left\{ J_{1},J_{2},J_{3}\right\} $ of $S$
under $K_{y,n}$ gives that $\mathfrak{Z}_{N}$ is one to one and
onto and 
\[
\mathfrak{Z}_{n}^{-1}(x,y)=\begin{cases}
\left(K_{n,-\p y}\circ\left(\left.S\right|_{J_{1}\cup J_{3}}\right)^{-1}\circ K_{n,y}^{-1}(x),-\p y\right), & -\frac{\p}{\p+2}\leq y\leq\frac{1}{\p+2}\\
\left(K_{n,-\p y+\p^{2}/(\p+2)}\circ\left(\left.S\right|_{J_{2}}\right)^{-1}\circ K_{n,y}^{-1}(x),-\p y+\frac{\p^{2}}{\p+2}\right), & \frac{1}{\p+2}<y\leq\frac{\p^{2}}{\p+2}
\end{cases}.
\]
Here $\left(S|_{J_{1}\cup J_{3}}\right)^{-1}(x):=\frac{x}{\p}:[0,1]\to\left[1,1/\p\right]$
is the inverse branch of $S$ to the segment $[0,1/\p]$ and $\left(\left.S\right|_{J_{2}}\right)^{-1}(x)=\frac{x+1}{\p}:[0,1/\p]\to[1/\p,1]$
is the inverse branch of $S$ to the segment $[1/\p,1]$. Since $\mathfrak{Z}_{n}^{-1}$
is $C^{1}$, $\mathfrak{Z}_{n}$ is a diffeomorphism. 
\end{rem}
\begin{thm}
\label{thm: The proof of convergence of y derivative} The sequence
$\mathfrak{Z}_{N_{t}}$ converges in the $C^{1}$ topology to a type
${\rm III}_{1}$ Anosov diffeomorphism. 
\end{thm}
The proof of this Theorem of this paper is by a series of Lemma's.
The first step is to show that $K_{N_{t},y}(x)$ converges uniformly
in $\MM$ as $t\to\infty$. 
\begin{lem}
\label{lem: P_n is exponentionally close to x in intersection points}
If $x\in C_{\left[{\bf w}(J)\right]_{1}^{J}}\cup C_{\left[\tilde{{\bf w}}(J)\right]_{1}^{J}}$
and $J\in\left[M_{t-1},N_{t}\right)$, 
\[
\left|H_{J}(x)-x\right|\leq m_{\TT}\left(C_{[w(x)]_{1}^{J-3}}\right)\leq\p^{-(J-3)},
\]
where $w(x)\in\left\{ {\bf w}(J),\tilde{{\bf w}}(J)\right\} $ is
such that $x\in C_{\left[w(x)\right]_{1}^{J}}$. 
\end{lem}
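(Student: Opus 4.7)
The plan is to exploit the special structure of the words $\mathbf{w}(J)$ and $\tilde{\mathbf{w}}(J)$, combined with the invariance of the cylinder partition under the perturbation maps $h_n$.

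First I would observe that, by direct inspection of the recursive definitions in Section~\ref{sec:Type III Anosov}, the first $J-3$ letters of both $\mathbf{w}(J)$ and $\tilde{\mathbf{w}}(J)$ belong to $\{2,3\}$, alternating as $3232\cdots$ or $2323\cdots$. Because the adjacency matrix $\mathbf{A}$ forces the deterministic transition $3\to 2$, one computes directly that $C_{[w(x)]_{1}^{J-3}}$ is an interval of the form $[1/\varphi-1/\varphi^{J-2},\,1/\varphi]$ (or its mirror in the $\tilde{\mathbf{w}}$ case). The uniform $\varphi$-expansion of $S$ then gives $m_{\TT}(C_{[w(x)]_{1}^{J-3}})\le\varphi^{-(J-3)}$, yielding the second inequality of the lemma.

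For the first inequality, I would iterate the invariance property from Remark~\ref{RK: An important feature of the perturbation}, namely $h_n(H_{n-1}(C_{[v]_{1}^{n}}))=H_{n-1}(C_{[v]_{1}^{n}})$ for every $v\in\Sigma_{\mathbf{A}}(n)$, at levels $J,J-1,J-2$ to obtain
\[
H_{J}(C_{[w(x)]_{1}^{J}})=H_{J-1}(C_{[w(x)]_{1}^{J}})\subset\cdots\subset H_{J-3}(C_{[w(x)]_{1}^{J-3}}).
\]
Then I would show $H_{J-3}(C_{[w(x)]_{1}^{J-3}})=C_{[w(x)]_{1}^{J-3}}$ by induction on $n\le J-3$: for $n\neq M_s$, the condition $w(x)_n\neq 1$ forces $h_n$ to act as the identity on $H_{n-1}(C_{[w(x)]_{1}^{n}})$, giving $H_n|_{C_{[w(x)]_{1}^{n}}}=\mathrm{id}$; for $n=M_s$, the distribution correction $h_{M_s}$ still preserves $C_{[w(x)]_{1}^{M_s}}$ as a set and, by Lemma~\ref{lem: bound on h_M_t derivative}, has derivative $e^{\pm 2^{-N_s}}$, so the inductive choice of parameters absorbs the induced perturbation of the endpoints. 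Combining this with $x\in C_{[w(x)]_{1}^{J}}\subset C_{[w(x)]_{1}^{J-3}}$ and $H_{J}(x)\in H_{J-3}(C_{[w(x)]_{1}^{J-3}})=C_{[w(x)]_{1}^{J-3}}$, both points lie in the same interval and the bound $|H_{J}(x)-x|\le m_{\TT}(C_{[w(x)]_{1}^{J-3}})$ follows.

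The main technical obstacle is the treatment of the distribution-correction times $M_s\le J-3$, since $h_{M_s}$ is not literally the identity on $C_{[w(x)]_{1}^{M_s}}$. The resolution uses the near-identity estimate $h_{M_s}'(x)=e^{\pm 2^{-N_s}}$ from Lemma~\ref{lem: bound on h_M_t derivative} together with the parameter hierarchy $N_s\gg M_{s-1}$ built into the construction of Section~\ref{sec:Type--Markov}, so that the cumulative movement of the endpoints of $C_{[w(x)]_{1}^{J-3}}$ is negligible compared with $\varphi^{-(J-3)}$.
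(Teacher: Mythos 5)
Your argument is essentially the paper's: the paper's proof consists precisely of the two observations that $w(x)_{l}\in\{2,3\}$ for all $l\leq J-3$ forces $H_{J-3}$ to act as the identity on $C_{[w(x)]_{1}^{J-3}}$, and that the endpoints of this cylinder are fixed by $h_{J-2},h_{J-1},h_{J}$, so that $x$ and $H_{J}(x)$ lie in the same interval of length at most $\p^{-(J-3)}$. Your decomposition into the set-invariance step $H_{J}\left(C_{[w(x)]_{1}^{J-3}}\right)=H_{J-3}\left(C_{[w(x)]_{1}^{J-3}}\right)$ followed by $H_{J-3}\left(C_{[w(x)]_{1}^{J-3}}\right)=C_{[w(x)]_{1}^{J-3}}$ is the same argument written out.

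The one place you genuinely add to (and slightly misstate) the paper is the treatment of the correction times $M_{s}\leq J-3$, which the paper passes over in silence. Your proposed mechanism -- $h_{M_{s}}'=e^{\pm2^{-N_{s}}}$ plus ``the inductive choice of parameters absorbs the perturbation'' -- is not by itself enough for the stated inequality: a displacement of order $2^{-N_{s}}\,m\left(C_{[w(x)]_{1}^{M_{s}}}\right)\approx2^{-N_{s}}\p^{-M_{s}}$ at a level $M_{s}$ with $s$ small is \emph{not} negligible compared with $\p^{-(J-3)}$ when $J$ is close to $N_{t}$, and no choice of later parameters can shrink it retroactively. Two correct ways to close this step: (a) note that since $H_{M_{s}-1}$ is already the identity on $C_{[w(x)]_{1}^{M_{s}}}$, one has $\alpha\left(s,w(x)\right)=1$, so $h_{M_{s}}$ is \emph{exactly} the identity on that cylinder outside the $\epsilon_{s+1}$-interpolation neighbourhood of its left endpoint, a neighbourhood which does not meet the endpoints of $C_{[w(x)]_{1}^{J-3}}$ (these nest towards the right end, whose limit $1/\p$, resp.\ $1$, is globally fixed); or (b) measure all displacements from that globally fixed right endpoint, in which case your $e^{\pm2^{-N_{s}}}$ bound does give $\left|H_{J}(x)-x\right|\leq c\, m_{\TT}\left(C_{[w(x)]_{1}^{J-3}}\right)$ with $c$ an absolute constant close to $1$, which is all that is ever used downstream. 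As written, though, the ``absorption'' sentence is the wrong quantitative mechanism.
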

\begin{proof}
By the form of ${\bf w}(J)\ \text{and }\tilde{{\bf w}}(J)$ one has
that for all $l\leq J-3$, $w(x)_{l}\in\{2,3\}$, hence 
\[
H_{J-3}(x)|_{C_{[w(x)]_{1}^{J-3}}}=x.
\]
Since $\underline{x}_{J-3}(w(x)),\ \bar{x}_{J-3}(w(x))$ are fixed
points of $h_{j-2},h_{j-1}$ and $h_{j}$ , $H_{J}\left(C_{[w(x)]_{1}^{J-3}}\right)=C_{[w(x)]_{1}^{J-3}}$,
the lemma follows.
\end{proof}
\begin{cor}
The limit 
\[
\lim_{n\to\infty}K_{n,y}(x)=:\mathfrak{h}_{y}(x)
\]
 exists uniformly in $\MM$ and is a continuous function and the function
$\mathcal{H}(x,y)=\left(\h_{y}(x),y\right)$ is a homeomorphism of
$\MM_{\sim}$. 
\end{cor}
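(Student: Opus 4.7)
The plan is to split $\MM$ into the region $U$ (where $K_{n,y}$ coincides with the one-dimensional $H_{n}$) and its complement (where the exponential decay $|H_{\mathbf{j}}(x)-x|\le \p^{-(\mathbf{j}-3)}$ from Lemma \ref{lem: P_n is exponentionally close to x in intersection points} starts the Cauchy argument). In both regimes I would import the uniform Cauchy argument underlying Proposition \ref{Cor: A consequence on the bound of eta and chi}.(2), and then observe that the limit is automatically a monotone, level-preserving map, which is forced to respect the identifications defining $\MM_{\sim}$.

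On $U$ the construction gives $K_{n,y}(x)=H_{n}(x)$ identically, so uniform convergence to $\h_{\ep}(x)$ is exactly Proposition \ref{Cor: A consequence on the bound of eta and chi}.(2). For $(x,y)\in\MM\setminus U$, set $\mathbf{j}=\mathbf{j}(x,y)$. The interpolation factor in Remark \ref{Rk: properties of q_n} satisfies $|3\p^{20}\mathbf{u}^{2}-2\p^{30}\mathbf{u}^{3}|\le 1$ on $[0,\p^{-10}]$, so Lemma \ref{lem: P_n is exponentionally close to x in intersection points} yields
\[
|K_{\mathbf{j},y}(x)-x|\le |H_{\mathbf{j}}(x)-x|\le \p^{-(\mathbf{j}-3)}.
\]
For $n>\mathbf{j}$, each $h_{k,y}$ leaves $K_{k-1,y}(C_{[w]_{1}^{k}})$ invariant, mirroring Remark \ref{RK: An important feature of the perturbation}, and its derivative obeys the same pointwise bounds (by $\lambda_{s}^{\pm 2}$ on the perturbed blocks and by $e^{\pm 2^{-N_{s}}}$ at the distribution-correction steps) that powered Proposition \ref{Cor: A consequence on the bound of eta and chi}.(1). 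Hence, for $x\in C_{[w]_{1}^{n}}$,
\[
|K_{n,y}(x)-K_{n-1,y}(x)|\le m_{\mathrm{Leb}}\bigl(K_{n-1,y}(C_{[w]_{1}^{n}})\bigr)\le e\,(1.6)^{-n}
\]
uniformly in $y$. Summing, $\{K_{n,y}\}$ is uniformly Cauchy on $\MM$, so the limit $\h_{y}(x):=\lim_{n\to\infty}K_{n,y}(x)$ exists, is jointly continuous in $(x,y)$, and agrees with $\h_{\ep}(x)$ on $U$.

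To upgrade this to the homeomorphism statement, observe that each $K_{n,y}$ is a strictly increasing homeomorphism of $\TT$ that fixes $\{0,1/\p,1\}$ and preserves every interval $J_{i}$ of the Markov partition of $S$; these features are inherited from $\psi_{t}(0)=0$, $\psi_{t}(1)=1$, the boundary conditions on the $G_{\alpha_{1},\alpha_{2}}$-interpolation, and the matching $\qq_{\mathbf{j}}(x,0)=x$, $\qq_{\mathbf{j}}(x,\p^{-10})=H_{\mathbf{j}}(x)$. Passing to the uniform limit, $\h_{y}$ is a homeomorphism of $\TT$ and $\mathcal{H}(x,y)=(\h_{y}(x),y)$ respects the vertical identifications of $\MM_{\sim}$. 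The horizontal identifications are preserved by the coupling design: on $\partial\MM$ one has $\mathbf{u}=0$ and therefore $K_{\mathbf{j},y}(x)=x$, while identified pairs in the piece $V_{\mathbf{j}}$ have $\tilde{f}$-orbits that merge after $\mathbf{j}+1$ steps, so the subsequent rescalings act identically on identified sub-cylinders, and the identifications pass to the uniform limit.

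The main obstacle I anticipate is verifying the exponential diameter bound $m_{\mathrm{Leb}}(K_{n-1,y}(C_{[w]_{1}^{n}}))\le e\,(1.6)^{-n}$ \emph{genuinely uniformly in $y$}: the $\qq_{\mathbf{j}}$-interpolation introduces a $y$-dependent distortion at stage $n=\mathbf{j}$, and one has to absorb it into a harmless multiplicative constant using the modified choice $\lambda_{t+1}^{2M_{t}}\le e^{2^{-N_{t}}}$ from Subsection \ref{sub:The-modified-induction} together with the uniform bound on $|3\p^{20}\mathbf{u}^{2}-2\p^{30}\mathbf{u}^{3}|$.
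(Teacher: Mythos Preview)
Your argument is essentially the paper's own proof. The paper does not split into $U$ and $U^{c}$ but argues directly that $\rr_{n}(\cdot,y)$ preserves each interval $K_{n-1,y}(C_{[w]_{1}^{n}})$ and that $\bigl|\partial\rr_{k}/\partial x\bigr|\le\lambda_{1}^{2}$, giving the same diameter bound $m_{\mathrm{Leb}}\bigl(K_{n-1,y}(C_{[w]_{1}^{n}})\bigr)\le(\lambda_{1}^{2}/\p)^{n}$ uniformly in $y$; your case split is harmless but unnecessary, and your invocation of Lemma~\ref{lem: P_n is exponentionally close to x in intersection points} for the step $n=\mathbf{j}$ is not used in the Cauchy estimate itself. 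For the boundary identifications the paper argues exactly as you do: on $\partial\MM$ one has $\mathbf{u}=0$ so $K_{\mathbf{j},y}=\mathrm{id}$, and identified points $(x,y)\sim(\hat{x},\hat{y})$ in $V_{\mathbf{j}}$ satisfy $x\in C_{[\mathbf{w}(\mathbf{j})w]}\Leftrightarrow\hat{x}\in C_{[\tilde{\mathbf{w}}(\mathbf{j})w]}$, so the rescalings for $n>\mathbf{j}$ act compatibly.

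Two remarks. First, the ``main obstacle'' you flag is exactly the point the paper glosses over: its assertion $\bigl|\partial\rr_{k}/\partial x\bigr|\le\lambda_{1}^{2}$ is not literally true at $k=\mathbf{j}$, where $\partial K_{\mathbf{j},y}/\partial x=(H_{\mathbf{j}}'-1)\mathscr{P}+1$; however on $C_{[\mathbf{w}(\mathbf{j})]}$ one has $H_{\mathbf{j}-3}=\mathrm{id}$ (Lemma~\ref{lem: P_n is exponentionally close to x in intersection points}), so $H_{\mathbf{j}}'$ contributes only a bounded factor independent of $\mathbf{j}$, and the exponential bound survives. Second, your sentence ``passing to the uniform limit, $\h_{y}$ is a homeomorphism of $\TT$'' needs one more word: uniform limits of circle homeomorphisms need not be injective. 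The point (implicit in the paper as well) is that $K_{n,y}$ maps each $C_{[w]_{1}^{m}}$ onto an interval of length bounded \emph{below} by $e^{-1}(\lambda_{1}^{-2}/\p)^{m}>0$, so the limit is strictly increasing. Also note that for $y\in\bigl[1/(\p+2),\p^{2}/(\p+2)\bigr]$ the domain of $\h_{y}$ is $[0,1/\p]$, not $\TT$.
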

\begin{proof}
The proof is similar to the proof of Lemma \ref{Cor: A consequence on the bound of eta and chi}.
First we claim that for every $n\in\NN$, 
\begin{equation}
\sup_{(x,y)\in\MM_{\sim}}\left|\rr_{n}(x,y)-x\right|\leq\left(1.5\right)^{-n}.\label{eq: r_n is uniformly close to x}
\end{equation}
This is true since for every $w\in\Sigma_{{\bf A}}$, 
\[
\rr_{n}\left(K_{n-1,y}\left(C_{[w]_{1}^{n}}\right),y\right)=K_{n-1,y}\left(C_{[w]_{1}^{n}}\right),
\]
and consequently
\[
\left|\rr_{n}(x,y)-x\right|\leq m_{{\rm \TT}}\left(K_{n-1,y}\left(C_{[w]_{1}^{n}}\right)\right)\leq\left(\frac{\lambda_{1}^{2}}{\p}\right)^{-n}.\ \ \ \boxtimes\eqref{eq: r_n is uniformly close to x}
\]
The last inequality follows since $\left|\frac{\partial\psi_{l}}{\partial x}\left(x\right)\right|\leq\lambda_{1}^{2}$
for every $l\in\NN$ and thus $\left|\frac{\partial\rr_{k}}{\partial x}\right|_{\infty}\leq\lambda_{1}^{2}.$
Proceeding as in Lemma \ref{Cor: A consequence on the bound of eta and chi},
it follows that for every $y$, $\left\{ K_{n,y}(x)\right\} _{n=1}^{\infty}$
is a Cauchy sequence in the uniform topology. Thus, $\mathfrak{h}_{y}(x)$
is a continuous function in $\MM$ as it is a uniform limit of continuous
functions. Notice that $\mathfrak{h}_{y}$ is a homeomorphism of the
circle for $y\in\left[-\p/\left(\p+2\right),1/(\p+2)\right]$ or of
$\left[0,1/\p\right]$ if $y\in\left[1/\left(\p+2\right),\p^{2}/\left(\p+2\right)\right]$. 

It remains to show that if $\left(x,y\right)\sim\left(\hat{x},\hat{y}\right)$
(for points on $\partial\MM$) then $\left(\h_{y}(x),y\right)\sim\left(\h_{\hat{y}}\left(\hat{x}\right),\hat{y}\right)$.
Let $\left(x,y\right),\left(\hat{x},\hat{y}\right)\in\partial\MM$
with $\left(x,y\right)\sim\left(\hat{x},\hat{y}\right)$. There exists
${\bf j}(x,y)\in\NN$ such that $\left(x,y\right),\left(\hat{x},\hat{y}\right)\in V_{{\bf j}}$.
Since $\left(x,y\right)\sim\left(\hat{x},\hat{y}\right)$, it follows
that for every $n>{\bf j}$ and a word $w\in\Sigma_{{\bf A}}(n)$,
\[
x\in C_{\left[{\bf w}({\bf j)}w\right]_{1}^{n+{\bf j}}}\ \Leftrightarrow\ \hat{x}\in C_{\left[\tilde{{\bf w}}\left({\bf j}\right)w\right]_{1}^{n+{\bf j}}}.
\]
Since for all $n\leq{\bf j}$, $\rr_{n}|_{V_{{\bf J}}}=Id_{\TT}$,
this property and the definition of $r_{n}(\cdot,\cdot)$ yields that
for all $n\in\NN$, 
\[
\left(K_{n,y}(x),y\right)\sim\left(K_{n,\hat{y}}\left(\hat{x}\right),\hat{y}\right).
\]
The Lemma follows by taking $n\to\infty$. 
\end{proof}
Denote the function of the first coordinate by $\mathfrak{z}_{n}(x,y)$.
Our goal is to prove that the limit 
\[
\mathfrak{z}(x,y):=\lim_{t\to\infty}\mathfrak{z}_{N_{t}}(x,y)
\]
exists for all $(x,y)$ and $\mathfrak{z}$ is a $C^{1}\left(\MM_{\sim}\right)$
function with 
\[
1.6\leq\frac{\partial\mathfrak{z}}{\partial x}(x,y)\leq1.7.
\]
The conclusion of hyperbolicity of $\mathfrak{Z}$ will follow from
a standard Lemma in the theory of Lyapunov exponents. 
\begin{lem}
\label{lem: the x-derivative of the two dimensional }If in addition
\[
1.6\leq\p\cdot\lambda_{1}^{6}\left(\prod_{t\in\NN}\lambda_{t}^{\pm2M_{t-1}}\right)\cdot\exp\left(\pm\sum_{t=1}^{\infty}2^{-N_{t}+4}\right)\leq1.7
\]
 then $\frac{\partial\mathfrak{z}}{\partial x}$ is a continuous function
in $\MM_{\sim}$ and 
\[
1.6\leq\frac{\partial\mathfrak{z}}{\partial x}(x,y)\leq1.7.
\]
\end{lem}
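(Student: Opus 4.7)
The plan is to adapt the proof of Lemma \ref{lem: Differentiablility of g} to the present $y$-parametrized setting. Abbreviating $y^\sharp := -y/\p$ on $R_1 \cup R_3$ and $y^\sharp := -y/\p + \p/(\p+2)$ on $R_2$, the chain rule gives, for each $t$ and each $(x,y)$ in the interior of some $R_i$,
\[
\frac{\partial \mathfrak{z}_{N_t}}{\partial x}(x,y) \;=\; \p \cdot \frac{\partial_x K_{N_t, y^\sharp}\bigl(S\zeta\bigr)}{\partial_x K_{N_t,y}(\zeta)}\bigg|_{\zeta = K_{N_t,y}^{-1}(x)}.
\]
The target estimate is a multiplicative Cauchy inequality of the form
\[
\frac{\partial \mathfrak{z}_{N_{t+1}}}{\partial x}(x,y) \;=\; \lambda_1^{\pm 6}\,\lambda_{t+1}^{\pm 2M_t}\,e^{\pm 2^{-N_t+4}}\,\frac{\partial \mathfrak{z}_{N_t}}{\partial x}(x,y),
\]
which, under the standing hypothesis, telescopes to a uniform limit taking values in $[1.6,1.7]$.

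On the region $U$ the map $K_{N,y}$ coincides with the one-dimensional $H_{\underline{\epsilon},N}$ and is independent of $y$, so Lemma \ref{lem: Differentiablility of g} applies verbatim and yields the estimate without the extra $\lambda_1^{\pm 6}$. For $(x,y) \notin U$, set $\mathbf{j} = \mathbf{j}(x,y)$. At all levels $n > \mathbf{j}$ the perturbations $\mathtt{r}_n(\cdot,y)$ are built by the same rescaling $\psi_t$ used in the one-dimensional construction, now applied to the intervals $K_{n-1,y}(C_{[w]_1^n})$; the only structural change is that the nesting of intervals at level $\mathbf{j}$ comes from the interpolation $\mathtt{q}_\mathbf{j}(\cdot,\mathbf{u}(x,y))$ rather than from $H_\mathbf{j}$. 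By item 1 of Remark \ref{Rk: properties of q_n}, $\partial_x \mathtt{q}_\mathbf{j}$ is a convex-like combination of $1$ and $\partial_x H_\mathbf{j}$, and each $\psi'_k$ lies in $[\lambda_1^{-2},\lambda_1^{2}]$. When one forms the ratio in the chain-rule formula, the cancellations underlying Lemma \ref{lem: Differentiablility of g} (driven by $\psi'_t(0)=\psi'_t(1)=1$ together with property~3 of $\psi_{\epsilon,\lambda}$) remove all but a uniformly bounded number of boundary-level mismatches between the forward and backward trajectories, and their joint contribution is absorbed into a factor $\lambda_1^{\pm 6}$.

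Continuity of $\partial_x \mathfrak{z}$ as a function on $\MM_\sim$ follows from two observations. First, on the interior of each $R_i$ the functions $\partial_x \mathfrak{z}_{N_t}$ are continuous by the same considerations that made $\mathfrak{Z}_{N_t}$ a diffeomorphism, and the design of $\mathtt{q}_\mathbf{j}$ (item 2 of Remark \ref{Rk: properties of q_n}, i.e.\ $\partial_\mathbf{u}\mathtt{q}_\mathbf{j}$ vanishing at $\mathbf{u}=0$ and $\mathbf{u}=\p^{-10}$) guarantees $C^0$ gluing across the transition into $U$. Second, the coupling-time identity $K_{\mathbf{j},y}|_{V_\mathbf{j}} = \mathrm{Id}$ makes $\partial_x \mathfrak{z}_{N_t}$ respect the identification $\sim$ on $\partial \MM$. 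Uniform convergence of the sequence then transfers continuity to the limit $\partial_x \mathfrak{z}$, and the two-sided bound in $[1.6,1.7]$ is immediate from the hypothesis.

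The main obstacle is the quantitative justification of the $\lambda_1^{\pm 6}$ factor: one must verify that the $\mathtt{q}_\mathbf{j}$-interpolation does not introduce additional $t$-dependent contributions to the Cauchy estimate. This reduces to a careful comparison of the levels near the coupling times $\mathbf{j}(x,y)$ and $\mathbf{j}(S(x),y^\sharp)$, where the cylinders containing $\zeta$ and $S\zeta$ may sit asymmetrically with respect to their respective coupling heights. Only a uniformly bounded (independent of $t$) number of such mismatch levels occurs, and each contributes a factor in $[\lambda_1^{-2},\lambda_1^{2}]$, which is why the fixed exponent $6$ suffices. Once this distortion bound is in hand, the remainder of the argument runs parallel to Section \ref{sec:Type III perturbations of the Golden Mean Shift}, and the Anosov/type ${\rm III}_1$ conclusions of Theorem \ref{thm: The proof of convergence of y derivative} follow from the absolute-continuity chain $m_\MM \circ \mathcal{H} \sim \Phi_*\mu$ combined with Theorem \ref{thm: lambdas are...}.
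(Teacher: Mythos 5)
Your overall strategy matches the paper's: reduce via the chain rule to the ratio $\bigl(\partial_{x}K_{N_{t},y^{\sharp}}(S\zeta)\bigr)\cdot\bigl(\partial_{x}K_{N_{t},y}(\zeta)\bigr)^{-1}$, reuse the bad-set analysis of Lemma \ref{lem: Differentiablility of g}, and attribute the extra $\lambda_{1}^{6}$ to finitely many mismatch levels near the coupling time. However, there are two genuine problems.

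First, the ``multiplicative Cauchy inequality'' you state, $\partial_{x}\mathfrak{z}_{N_{t+1}}=\lambda_{1}^{\pm6}\,\lambda_{t+1}^{\pm2M_{t}}\,e^{\pm2^{-N_{t}+4}}\,\partial_{x}\mathfrak{z}_{N_{t}}$, cannot be the right estimate: telescoping it over all $t$ produces the divergent factor $\prod_{t}\lambda_{1}^{\pm6}$ (with $\lambda_{1}>1$ fixed), so the hypothesis with a single $\lambda_{1}^{6}$ could not close the argument. The correct structure, which your later paragraph gestures at but contradicts here, is that the boundary distortion enters \emph{once} in the total ratio, and moreover as $\lambda_{t(\mathbf{J})}^{\pm6}$ where $t(\mathbf{J})$ is the induction stage containing the coupling time $\mathbf{j}(x,y)$; since $\lambda_{t(\mathbf{J})}\downarrow1$ as $\mathbf{j}(x,y)\to\infty$, this is also what makes the convergence uniform. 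Your formulation loses both the single occurrence and the $t$-dependence of the exponent's base.

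Second, the step you yourself flag as ``the main obstacle'' --- that only a uniformly bounded number of mismatch levels occurs between $\zeta$ and $S\zeta$ --- is exactly where the paper does the work, and it is not a soft statement. The paper splits into three cases ($U\cap\mathfrak{Z}^{-1}U$, $U\cap\mathfrak{Z}^{-1}U^{c}$, and $U^{c}$) and in the latter two uses the specific combinatorics of the boundary decomposition: in Case 2 the word $w$ satisfies $w_{l}\neq1$ for $2\leq l\leq\mathbf{J}-2$, collapsing $K_{n,y}|_{C_{[w]_{1}^{n}}}$ to $h_{n}\circ\cdots\circ h_{\mathbf{J}-2}\circ h_{1}$, and in Case 3 the identity $\tilde{f}(V_{j})=V_{j-1}$ (Claim \ref{Cl: how the boundary decomposition is moved by f}) forces the coupling times of $(x,y)$ and of its image to differ by exactly one. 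Without invoking this structure the claim of ``uniformly bounded mismatch'' is an assertion, not a proof, so the argument as written has a gap precisely at its quantitative core.
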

\begin{rem*}
The extra condition in this Lemma can easily be inserted into the
inductive construction of the sequence $\left\{ \lambda_{k},M_{k},N_{k},m_{k},n_{k},\epsilon_{k}\right\} _{k=1}^{\infty}$. 
\end{rem*}
\begin{proof}
Let $(x,y)\in\MM_{\sim}$. For convenience to the reader, we will
first show that $\frac{\partial\mathfrak{z}_{N_{t}}}{\partial x}(x,y)$
converges pointwise and $1.6\leq\frac{\partial\mathfrak{z}}{\partial x}(x,y)\leq1.7$
and then argue that the convergence is in fact uniform. 

Let $t\in\NN$ and $(x,y)\in\MM_{\sim}$ be fixed. There exists a
$w=w(x)\in\Sigma_{{\bf A}}$ such that for all $t\in\mathbb{N},$
$x\in K_{N_{t},y}\left(C_{[w]_{1}^{N_{t}}}\right)$. As in the proof
of Lemma \ref{lem: Differentiablility of g}, we write $z_{y}(x)$
to be the unique point in $C_{[w]_{1}^{N_{t}}}$ such that $x=K_{N_{t},y}\left(z_{y}(x)\right)$.
Recall that 
\begin{eqnarray*}
\mathfrak{z}_{N_{t}}(x,y) & = & \begin{cases}
K_{N_{t},-y/\p}\left(\p K_{N_{t},y}^{-1}(x)\right), & 0\leq x\leq1/\p\\
K_{N_{t},-y/\p+\p/\p+2}\left(\p K_{N_{t},y}^{-1}(x)-1\right), & 1/\p\leq x\leq1
\end{cases}\\
 & = & \begin{cases}
K_{N_{t},-y/\p}\left(Sz_{y}(x)\right), & 0\leq x\leq1/\p\\
K_{N_{t},-y/\p+\p/\p+2}\left(Sz_{y}(x)\right), & 1/\p\leq x\leq1
\end{cases}.
\end{eqnarray*}
By the chain rule, the Lemma will follow once we show that uniformly
in $\left(x,y\right)\in\MM_{\sim}$ with $0\leq x\leq1/\p$, 
\begin{eqnarray}
\lim_{t\to\infty}\left(\frac{\partial K_{N_{t},-y/\p}}{\partial x}\left(Sz_{y}(x)\right)\right)\cdot\left(\frac{\partial K_{N_{t},y}}{\partial x}\left(z_{y}(x)\right)\right)^{-1} & \in & \left(\frac{1.6}{\p},\frac{1.7}{\p}\right),\label{eq:needed limit for x-derivative}
\end{eqnarray}
and for every $\left(x,y\right)\in\MM_{\sim}$ with $1/\p\leq x\leq1$,
\[
\lim_{t\to\infty}\left(\frac{\partial K_{N_{t},-y/\p+\p/\p+2}}{\partial x}\left(Sz_{y}(x)\right)\right)\cdot\left(\frac{\partial K_{N_{t},y}}{\partial x}\left(z_{y}(x)\right)\right)\in\left(\frac{1.6}{\p},\frac{1.7}{\p}\right).
\]
We will separate the proof for three cases: We assume that $(x,y)\in R_{1}\cup R_{3}$,
equivalently $0\leq x\leq1/\p$, the proof when $(x,y)\in R_{2}$
is similar and just involves changing the appearance of $-y/\p$ by
$-y/\p+\p/(\p+2)$. 

\textbf{Case 1: }$(x,y)\in U\cap\mathfrak{Z}^{-1}U$. In this case
\[
\mathfrak{z}_{N_{t}}(x,y)=H_{N_{t}}\circ S\circ H_{N_{t}}^{-1}(x),
\]
and the conclusion is true by Lemma \ref{lem: Differentiablility of g}. 

\textbf{Case 2:} $(x,y)\in U\cap\mathfrak{Z}^{-1}U^{c}$. Firstly
since $(x,y)\in U$ then $K_{n,y}(x)=H_{n}(x)$. In addition, because
$\mathfrak{Z}(x,y)\notin U$ there exists $\mathbf{J\in\mathbb{N}}$
such that 
\[
\left[w_{2},...,w_{{\bf J}+1}\right]_{1}^{{\bf J}}=\left[{\bf w}({\bf J})\right]_{1}^{{\bf J}}\ \text{or }\left[\tilde{{\bf w}}({\bf J})\right]_{1}^{{\bf J}},
\]
and consequently $w_{n}\neq1$ for all $2\leq l<\mathbf{J}-2$. This
shows that $\left.K_{n,y}\right|_{C_{[w]_{1}^{n}}}=h_{n}\circ\cdots h_{{\bf J}-2}\circ h_{1}$
and writing $z_{y}(x)\in\TT$ for the point such that $K_{N_{t},y}\left(z_{y}(x)\right)=x$,
\[
\frac{\partial K_{N_{t},y}}{\partial x}\left(z_{y}(x)\right)=\frac{\partial h_{1}}{\partial x}\left(z_{y}(x)\right)\cdot\prod_{k={\bf J}-2}^{N_{t}}\frac{\partial h_{l}}{\partial x}\left(H_{l-1}\left(z_{y}(x)\right)\right).
\]
 By the definition of the construction
\[
\frac{\partial K_{N_{t},-y/\p}}{\partial x}\left(Sz_{y}(x)\right)=\prod_{l={\bf J}}^{N_{t}}\frac{\partial h_{l,-y/\p}}{\partial x}\left(K_{l-1,-y/\p}(Sz_{y}(x))\right),
\]
and, here $\lambda_{t\left({\bf J}\right)}=\lambda_{k}$ if $N_{k-1}<{\bf J}\leq N_{k}$
or $1$ otherwise, 
\begin{eqnarray*}
\left(\frac{\partial K_{N_{t},-y/\p}}{\partial x}\left(Sz_{y}(x)\right)\right)\cdot\left(\frac{\partial K_{N_{t},y}}{\partial x}\left(z_{y}(x)\right)\right)^{-1} & = & \lambda_{t\left({\bf J}\right)}^{\pm6}\left(\frac{\partial h_{1}}{\partial x}\left(z_{y}(x)\right)\right)^{-1}{\bf \cdot I},
\end{eqnarray*}
Where 
\[
{\bf I}:=\left(\prod_{l={\bf J}}^{N_{t}}\frac{\partial h_{l,-y/\p}}{\partial x}\left(K_{l-1,-y/\p}\left(Sz_{y}(x)\right)\right)\right)\cdot\left(\prod_{l={\bf J}}^{N_{t}}\frac{\partial h_{l,y}}{\partial x}\left(H_{l}\left(z_{y}(x)\right)\right)\right)^{-1}.
\]

As in the proof of Lemma \ref{lem: Differentiablility of g}, assuming
that $\ep\equiv0$, one has that for $l\geq{\bf J}+1$, $K_{l-1,-y/\p}\left(Sz_{y}(x)\right)$
is to the right of the point in $1/\p$ proportion in $K_{l-1,-y/\p}\left(C_{[w_{2},...,w_{l+1}]}\right)$
if and only if $K_{l-1,y}\left(z_{y}(x)\right)$ is to the right of
the point in $1/\p$ proportion in $K_{l,-y/\p}\left(C_{[w]_{1}^{l+1}}\right)$.
This means that in the case $\ep=0$, 
\[
\left(\frac{\partial h_{l-1,-y/\p}}{\partial x}\left(K_{l-2,-y/\p}(Sz_{y}(x))\right)\right)\cdot\left(\frac{\partial h_{l,y}}{\partial x}\left(K_{l-1,y}(z_{y}(x))\right)\right)^{-1}=1.
\]
By proceeding with the analysis of the the bad sets as in Lemma \ref{lem: Differentiablility of g}
one proves that 
\[
{\bf I}=\left(\prod_{k=t\left({\bf J}\right)}^{t}\lambda_{k}^{\pm2M_{k-1}}\right)\exp\left(\pm\sum_{k=t\left({\bf J}\right)}^{t}2^{-N_{k}+4}\right),
\]
and thus
\begin{eqnarray*}
\left(\frac{\partial K_{N_{t},-y/\p}}{\partial x}\left(Sz_{y}(x)\right)\right)\cdot\left(\frac{\partial K_{N_{t},y}}{\partial x}\left(z_{y}(x)\right)\right)^{-1} & = & \left[\left(\frac{\partial h_{1}}{\partial x}\left(z_{y}(x))\right)\right)^{-1}\left(\prod_{k=t\left({\bf J}\right)}^{t}\lambda_{k}^{\pm2M_{k-1}}\right)\right.\\
 &  & \left.\cdot\exp\left(\pm\sum_{k=t\left({\bf J}\right)}^{t}2^{-N_{k}+4}\right)\right].
\end{eqnarray*}
 This shows (\ref{eq:needed limit for x-derivative}). In fact, because
\[
\lim_{s\to\infty}\left(\prod_{k=s}^{\infty}\lambda_{k}^{\pm2M_{k-1}}\right)\exp\left(\pm\sum_{k=s}^{\infty}2^{-N_{k}+4}\right)=1,
\]
 the convergence is uniform as $t\to\infty$. 

\textbf{Case 3:} $(x,y)\in U^{c}$. In this case let ${\bf \tilde{J}}\in\NN$
be such that the closest point to $(x,y)$ on the Horizontal segments
of $\partial\MM$ is in $V_{\tilde{{\bf J}}}$. If $\tilde{{\bf J}}=1$
then $\left(Sz_{y}(x),-y/\p\right)\in U$. Otherwise $\left(Sz_{y}(x),-y/\p\right)$
is in $U^{c}$ and the closest point to it on the Horizontal segments
of $\partial\MM$ is in $V_{\tilde{{\bf J}}-1}$. Consequently 
\begin{eqnarray*}
\left(\frac{\partial K_{N_{t},-y/\p}}{\partial x}\left(Sz_{y}(x)\right)\right)\cdot\left(\frac{\partial K_{N_{t},y}}{\partial x}\left(z_{y}(x)\right)\right)^{-1} & = & \left(\prod_{l={\bf \tilde{J}}-1}^{N_{t}}\frac{\partial h_{l,-y/\p}}{\partial x}\left(K_{l-1,-y/\p}\left(Sz_{y}(x)\right)\right)\right)\\
 &  & \cdot\left(\prod_{l={\bf \tilde{J}}}^{N_{t}}\frac{\partial h_{l,y}}{\partial x}\left(K_{l-1,y}\left(z_{y}(x)\right)\right)\right)^{-1}.
\end{eqnarray*}
Similarly as in case 2, one has 
\[
\left(\frac{\partial K_{N_{t},-y/\p}}{\partial x}\left(Sz_{y}(x)\right)\right)\cdot\left(\frac{\partial K_{N_{t},y}}{\partial x}\left(z_{y}(x)\right)\right)^{-1}=\lambda_{t\left({\bf \tilde{J}}\right)}^{\pm2}\left(\prod_{k=t\left(\tilde{{\bf J}}\right)}^{t}\lambda_{k}^{\pm2M_{k-1}}\right)\exp\left(\pm\sum_{k=t\left(\tilde{{\bf J}}\right)}^{t}2^{-N_{k}+4}\right)
\]
and the convergence is uniform. 
\end{proof}

\subsubsection{Proving differentiability in the $y$-direction. }

Again we will prove differentiability in the $y$ direction for $(x,y)\in R_{1}\cup R_{3}$.
The idea of the proof here is as follows. If $(x,y)\in U$ then $K_{n,\tilde{y}}(x)=H_{\ep,n}(x)$
for all $\tilde{y}$ in a neighborhood of $(x,y)$, hence $\frac{\partial K_{n,y}}{\partial y}(\cdot)\equiv0$.
Otherwise, for $(x,y)\in\MM\backslash U$, $K_{{\bf j}(x,y)-2,\tilde{y}}(x)=x$
and the first (major) change between $K_{n,y}(x)$ and $K_{n,\tilde{y}}(x)$
appears at time $n={\bf j}(x,y)$. We will show that for our construction
the $y$ derivative of $K_{n,y}(x)$ can be bounded above by a (bounded)
constant times $\frac{\partial K_{{\bf j}(x,y),y}(x)}{\partial y}$,
the uniform convergence of $\partial\mathfrak{z}_{n}/\partial y$
will follow from the chain rule and simple arithmetic. 

The following notation will be used in this subsection. Usually we
will consider $x\in[0,1/\p${]} and work constantly with a fixed $w\in\Sigma_{{\bf A}}$
such that $x\in C_{[w]_{1}^{n}}$ for all $n\in\NN$. If that is the
case we will write $\left[\underline{x}_{n},\bar{x}_{n}\right)$ to
denote $C_{[w]_{1}^{n}}$. 

For $-\frac{\p}{\p+2}\leq y\leq\frac{\p^{2}}{\p+2}$ and $n\geq{\bf N}(y)$,
let ${\rm BS}(n,w,y)\subset C_{[w]_{1}^{n}}$ to be the bad set as
in the proof of Lemma \ref{lem: Differentiablility of g} with $h_{n}'\circ H_{n}$
replaced by $\left(\frac{\partial r_{n,y}}{\partial x}\right)\circ K_{n-1,y}$. 

For an $w\in\Sigma_{{\bf A}}$ we denote by $w_{1}^{n}=w_{1}w_{2}\cdots w_{n}$
the finite word derived by $w$ up to time $n$. Given a finite word
$w_{1}^{n}$, $[w_{1}^{n}]$ denotes the $n$-periodic word defined
by $w_{1}^{n}$. Finally given two words $w$ and $\tilde{w}$ (in
which case $w$ is a finite word), the word $w\tilde{w}$ denotes
the concatenation of $w$ and $\tilde{w}$. 

Recall the definition of $K_{{\bf j}(x,y),y}(x)=\rr_{{\bf j}(x,y)}\left(x,y\right)$
which is defined by 
\[
\rr_{{\bf j}(x,y)}(x,y):=\left(H_{{\bf j}(x,y)}(x)-x\right)\mathscr{P}(x,y)+x,
\]
where 
\[
\mathscr{P}(x,y)=\left[3\p^{20}\left({\bf u}(x,y)\right)^{2}-2\p^{30}{\bf u}(x,y)^{3}\right].
\]
 In the following proof if ${\bf j}(x,y)={\bf J}$ we will need a
different definition of the bad set for $n={\bf J}$. Let 
\[
\mathbb{BS}\left({\bf J}\right):=\begin{cases}
{\rm BS}\left({\bf J}-2,{\bf w}\left({\bf J}\right)\right), & {\bf J}\ odd\\
{\rm BS}\left({\bf J}-1,{\bf w}\left({\bf J}\right)\right)\cup{\rm BS}\left({\bf J},{\bf w}\left({\bf J}\right)\right) & {\bf J}\ even
\end{cases}
\]
if $(x,y)\in R_{1}\cup R_{3}$ (For $(x,y)\in R_{2}$ change the odd
to even and even to odd). 
\begin{lem}
\label{lem: induction down when new endpoint is similar to old endpopint}Assume
that $M_{t}\leq{\bf j}\leq N_{t+1}$, and $x\in C_{[w]_{1}^{N_{t+1}}}$.
If $x\notin\mathbb{BS}\left({\bf J}\right)$, for every ${\bf j<}n\leq N_{t+1}$,
there exists $0\leq\beta_{n}(x)\leq\p$ such that for every $(x,y)$
with ${\bf j}\left(x,y\right)={\bf j}$, 
\[
K_{n,y}(x)=K_{n,y}\left(\underline{x}_{n}\right)+\beta_{n}(x)l_{n}\left(y,w\right),
\]
in addition $\beta_{N_{t+1}}(x)$ is continuous in $x$. 
\end{lem}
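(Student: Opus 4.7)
I will proceed by induction on $n$ from $n=\mathbf{j}+1$ up to $n=N_{t+1}$. The driving mechanism is that the $y$-dependent interval lengths $l_k(y,w)$ always appear as shared multiplicative factors in numerator and denominator, and hence telescope out of the ratios appearing in the iterative definition of $K_{n,y}$.

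For the base case $n=\mathbf{j}+1$, I exploit the fact that for $(x,y)\in R_1\cup R_3$ the distance $\mathbf{u}(x,y)$, and hence $\mathscr{P}(x,y)$, depend on $y$ alone; writing $\mathscr{P}(y)$ for the common value,
\begin{equation*}
K_{\mathbf{j},y}(x)\;=\;\mathscr{P}(y)\,H_{\mathbf{j}}(x)+(1-\mathscr{P}(y))\,x.
\end{equation*}
The hypothesis $x\notin\mathbb{BS}(\mathbf{J})$, combined with the arithmetic of $\mathbf{w}(\mathbf{J})$ (in which the symbol $1$ appears among the first $\mathbf{J}$ entries only at $\mathbf{J}-2$ for $\mathbf{J}$ odd, or at $\mathbf{J}-1,\mathbf{J}$ for $\mathbf{J}$ even) and the inductive choice of each $\epsilon_{s+1}$ which places $N_{s+1}$-cylinders away from the $\epsilon_{s+1}$-interpolation windows of the distributional corrections $h_{M_s}$, forces $H_{\mathbf{J}}$ to be affine on $C_{[w]_1^{\mathbf{J}+1}}$. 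Hence $K_{\mathbf{j},y}$ is affine in $x$ on this cylinder with $y$-dependent slope and intercept, and $\beta_{\mathbf{j}+1}(x)$ equals the proportional coordinate $\zeta(x):=(x-\underline{x}_{\mathbf{j}+1}(w))/(\bar{x}_{\mathbf{j}+1}(w)-\underline{x}_{\mathbf{j}+1}(w))$ when $w_{\mathbf{j}+1}\neq 1$, or $\psi_{t+1}(\zeta(x))$ when $w_{\mathbf{j}+1}=1$; both are $y$-independent.

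For the inductive step, observe that the range $\mathbf{j}<n\le N_{t+1}$ excludes every distributional-correction index $M_s$, so $\mathtt{r}_{n,y}$ is either the identity (when $w_n\neq 1$) or the rescaling of $\psi_{t+1}$ to $K_{n-1,y}\!\left(C_{[w]_1^n}\right)$ (when $w_n=1$). Writing $\Psi=\psi_{t+1}$ or $\Psi=\mathrm{id}$, in both cases
\begin{equation*}
K_{n,y}(x)\;=\;K_{n-1,y}(\underline{x}_n(w))+l_n(y,w)\,\Psi\!\left(\frac{K_{n-1,y}(x)-K_{n-1,y}(\underline{x}_n(w))}{l_n(y,w)}\right).
\end{equation*}
Applying the inductive hypothesis to $x,\underline{x}_n(w),\bar{x}_n(w)\in C_{[w]_1^{n-1}}$ and using $l_n(y,w)=\bigl(\beta_{n-1}(\bar{x}_n(w))-\beta_{n-1}(\underline{x}_n(w))\bigr)l_{n-1}(y,w)$, the factor $l_{n-1}(y,w)$ cancels inside the argument of $\Psi$, producing
\begin{equation*}
\beta_n(x)\;=\;\Psi\!\left(\frac{\beta_{n-1}(x)-\beta_{n-1}(\underline{x}_n(w))}{\beta_{n-1}(\bar{x}_n(w))-\beta_{n-1}(\underline{x}_n(w))}\right),
\end{equation*}
which is manifestly independent of $y$. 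The bound $\beta_n(x)\in[0,1]\subset[0,\varphi]$ is immediate from the monotonicity of $K_{n,y}$ restricted to $C_{[w]_1^n}$, and continuity of $\beta_{N_{t+1}}$ on each top-level cylinder $C_{[w]_1^{N_{t+1}}}$ follows from the continuity of $\psi_{t+1}$ and the iterated rational-function form of $\beta_n$.

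The principal obstacle is the base case, specifically the affinity of $H_{\mathbf{J}}$ on $C_{[w]_1^{\mathbf{J}+1}}$: although the $\psi_{t+1}$-perturbation at stage $\mathbf{J}-2$ (or stages $\mathbf{J}-1,\mathbf{J}$) is affine on the plateau by the assumption $x\notin\mathbb{BS}(\mathbf{J})$, $H_{\mathbf{J}}$ also absorbs every earlier distributional correction $h_{M_s}$ with $M_s<\mathbf{J}$, each nonaffine inside its $\epsilon_{s+1}$-window. One must verify that $C_{[w]_1^{\mathbf{J}+1}}$ lies entirely within the constant-derivative plateau of every such $h_{M_s}$, which uses the careful placement of the $\epsilon_{s+1}$ from the modified induction of Subsection~\ref{sub:The-modified-induction}.
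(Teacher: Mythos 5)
Your proposal is correct and follows essentially the same route as the paper: induction on $n$, with the base case resting on the affine form of $H_{\mathbf{J}}$ on $C_{[w]_1^{\mathbf{J}+1}}$ off $\mathbb{BS}(\mathbf{J})$ (so that the $y$-dependence enters only through the common factor $\left(b-1\right)\mathscr{P}+1$ multiplying both $K_{\mathbf{j},y}(x)-K_{\mathbf{j},y}(\underline{x}_{\mathbf{j}+1}(w))$ and $\mathfrak{l}_{\mathbf{j}+1}(y,w)$), and the inductive step via cancellation of $\mathfrak{l}_{n}(y,w)$ inside the argument of the rescaled $\psi_{t+1}$, yielding $\beta_{n+1}=\psi_{t+1}\bigl(\frac{\beta_{n}(x)-\beta_{n}(\underline{x}_{n+1}(w))}{\beta_{n}(\bar{x}_{n+1}(w))-\beta_{n}(\underline{x}_{n+1}(w))}\bigr)$. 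The obstacle you flag at the end (earlier distributional corrections $h_{M_s}$ being affine on the relevant cylinder) is also present, and handled only implicitly, in the paper's own argument.
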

\begin{proof}
Let $(x,y)\in\left(\MM\backslash U\right)\cup\left(R_{1}\cup R_{3}\right)$
so that ${\bf j}(x,y)={\bf j}$ (the case $(x,y)\in R_{2}$ is similar).
The proof is by induction on $n$. Since $x\notin\mathbb{BS}\left({\bf J}\right)$,
$\underline{x}_{{\bf J}+1}\in\left\{ \underline{x}_{{\bf J}},\underline{x}_{{\bf J}}+\p^{-1}\left(\bar{x}_{{\bf J}}-\underline{x}_{{\bf J}}\right)\right\} $
and 
\[
{\bf w}\left({\bf J}\right)=\begin{cases}
32\cdots32132, & {\bf J}\ \text{odd}\\
32\cdots3211, & {\bf J}\ \text{even}
\end{cases},
\]
it follows that if ${\bf J}$ is even then by property (3) of $\psi_{t}$,
\begin{eqnarray*}
H_{{\bf J}}\left(x\right)-H_{{\bf J}}\left(\underline{x}_{{\bf J}+1}(w)\right) & = & \begin{cases}
\left(\p\psi_{t+1}(1/\p)\right)^{2}\left(x-\underline{x}_{{\bf {\bf J}}+1}\right), & w_{{\bf {\bf J}}+1}=1\\
\left(\p\psi_{t+1}(1/\p)\right)\left(\p^{2}\left(1-\psi_{t+1}\left(1/\p\right)\right)\right)\left(x-\underline{x}_{{\bf J}+1}\right), & w_{{\bf N+1}}=3
\end{cases}\\
 & = & \frac{P_{{\bf J}-1}\left(w_{{\bf J}-1},w_{{\bf J}}\right)}{{\rm Q}\left(w_{{\bf J}},w_{{\bf J}+1}\right)}\frac{P_{{\bf J}}\left(w_{{\bf J}},w_{{\bf J}+1}\right)}{{\rm Q}\left(w_{{\bf J}},w_{{\bf J}+1}\right)}\left(x-\underline{x}_{{\bf J}+1}\right)\\
 & := & b\left(x-\underline{x}_{{\bf J+1}}\right).
\end{eqnarray*}
and if ${\bf J}$ is odd then 
\begin{eqnarray*}
H_{{\bf J}}\left(x\right)-H_{{\bf J}}\left(\underline{x}_{{\bf J}+1}\right) & = & \p^{2}\left(1-\psi_{t+1}\left(1/\p\right)\right)\left(x-\underline{x}_{{\bf J}+1}\right)\\
 & = & \frac{P_{{\bf J}-2}\left(w_{{\bf J}-2},w_{{\bf J}-1}\right)}{{\rm Q}\left(w_{{\bf J}-2},w_{{\bf J}-1}\right)}\left(x-\underline{x}_{{\bf J}+1}\right)\\
 & := & b\left(x-\underline{x}_{{\bf J+1}}\right).
\end{eqnarray*}
It then follows that 
\begin{eqnarray*}
K_{{\bf J},y}(x)-K_{{\bf J},y}\left(\underline{x}_{{\bf J}+1}\right): & = & \rr_{{\bf J}}\left(x,y\right)-\rr_{{\bf J}}\left(\underline{x}_{{\bf J}+1},y\right)\\
 & = & \left(x-\underline{x}_{{\bf J}+1}\right)\left[\left(b-1\right)\mathscr{P}(x,y)+1\right],
\end{eqnarray*}
and 
\begin{eqnarray*}
\mathfrak{l}_{{\bf j+1}}(y,w) & := & K_{{\bf j},y}\left(\bar{x}_{{\bf J}+1}\right)-K_{{\bf j},y}\left(\underline{x}_{{\bf j+1}}\right)\\
 & = & \left(\bar{x}_{{\bf j}+1}-\underline{x}_{{\bf J}+1}\right)\left[\left(b-1\right)\mathscr{P}(x,y)+1\right].
\end{eqnarray*}
This implies that 
\begin{equation}
\frac{K_{{\bf J},y}\left(x\right)-K_{{\bf J},y}\left(\underline{x}_{{\bf J}+1}\right)}{\mathfrak{l}_{{\bf J}+1}(y,w)}=\frac{x-\underline{x}_{{\bf J}+1}}{\bar{x}_{{\bf J}+1}-\underline{x}_{{\bf J+1}}}.\label{eq: equality of the bad set}
\end{equation}
Therefore 
\[
K_{{\bf J}+1,y}(x)=K_{{\bf J+1},y}\left(\underline{x}_{{\bf J}+1}\right)+\underset{:=\beta_{{\bf j}}(x)}{\underbrace{\psi_{t+1}\left(\frac{x-\underline{x}_{{\bf j+1}}}{\bar{x}_{{\bf J}+1}-\underline{x}_{{\bf j+1}}}\right)}}\mathfrak{l}_{{\bf J}+1}\left(y,w\right)
\]
and the base of induction is proved. 

For the inductive step notice that if the conclusion of the Lemma
is true for $n\in\NN$, then 
\[
\frac{K_{n,y}(x)-K_{n,y}\left(\underline{x}_{n+1}\right)}{\mathfrak{l}_{n+1}(y,w)}=\frac{\beta_{n}(x)-\beta_{n}\left(\underline{x}_{n+1}\right)}{\beta_{n}\left(\bar{x}_{n+1}\right)-\beta_{n}\left(\underline{x}_{n+1}\right)}
\]
does not depend on $y.$ The conclusion then follows for $n+1$ with
\[
\beta_{n+1}(x):=\psi_{t+1}\left(\frac{\beta_{n}(x)-\beta_{n}\left(\underline{x}_{n+1}\right)}{\beta_{n}\left(\bar{x}_{n+1}\right)-\beta_{n}\left(\underline{x}_{n+1}\right)}\right)
\]
and the continuity of $\beta_{n+1}$ follows from the continuity of
$\beta_{n}$ and $\psi_{t+1}$. 
\end{proof}
The last lemma shows the importance of knowing how $\frac{\partial\mathfrak{l}_{n}}{\partial y}$
decays when ${\bf N}(y)<n\leq N_{t+1}$. We will now show that it
is exponential in $n$. 
\begin{lem}
\label{lem: derivatives of scaling functions}Let $M_{t}\leq{\bf j}<n\leq N_{t+1}$,
a $w\in\Sigma_{{\bf A}}$ with $w_{1}^{{\bf j}}={\bf w(j)}$ and $-\frac{\varphi}{\varphi+2}\leq y\leq\frac{\varphi^{2}}{\varphi+2}$,
then 
\[
\left|\frac{\partial\mathfrak{l}_{n}}{\partial y}(y,w)\right|\leq(1.6)^{{\bf j}-n}\left|\frac{\partial l_{{\bf j}}}{\partial y}(y,w)\right|
\]
and 
\[
\left|\frac{\partial\mathfrak{l}_{{\bf j}+1}}{\partial y}(y,w)\right|\lesssim(1.6)^{-{\bf j}}.
\]
\end{lem}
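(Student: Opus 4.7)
\emph{Proof proposal.} The strategy is to split the statement into two independent pieces. First, I would establish a $y$-invariant factorization
\[
\mathfrak{l}_n(y,w) \;=\; \Pi_n(w)\cdot\mathfrak{l}_{\mathbf{j}(x,y)+1}(y,w), \qquad \mathbf{j}(x,y)<n\leq N_{t+1},
\]
in which the factor $\Pi_n(w)$ depends only on the symbolic word $w$ and not on $y$, together with the bound $|\Pi_n(w)|\lesssim(1.6)^{\mathbf{j}-n}$. Second, I would prove the standalone estimate $|\partial_y\mathfrak{l}_{\mathbf{j}+1}|\lesssim(1.6)^{-\mathbf{j}}$. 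Granted these, the first inequality of the lemma follows by differentiating the factorization, and the second is a direct consequence of the standalone estimate.

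For the factorization I would induct on $n$, the key input being the 2D analog of Lemma \ref{lem: bound on h_M_t derivative}.(ii): for each $k>\mathbf{j}(x,y)$, the map $K_{k-1,y}$ sends the $1/\varphi$-proportion point of $C_{[w]_1^k}$ to the $1/\varphi$-proportion point of $K_{k-1,y}(C_{[w]_1^k})$. Granted this, the ratio $\mathfrak{l}_{k+1}(y,w)/\mathfrak{l}_k(y,w)$ can be computed case by case on the Markov symbol $w_k$: when $w_k=1$, the restriction of $h_{k,y}$ is a rescaling of $\psi_{t(k)}$, and the ratio equals either $\psi_{t(k)}(1/\varphi)$ or $1-\psi_{t(k)}(1/\varphi)$; when $w_k\in\{2,3\}$, the restriction is the identity and the ratio equals the bare Markov proportion, one of $1/\varphi$, $1/\varphi^2$, or $1$. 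In every case the ratio is a $y$-independent constant, which gives the factorization with $\Pi_n(w)$ equal to the product of these ratios. The bound $|\Pi_n(w)|\lesssim(1.6)^{\mathbf{j}-n}$ comes from the uniform expansion of $S$: in the Markov structure, cylinders of depth $n$ have length at most $C\varphi^{-n}$, so the telescoping product $\Pi_n(w)$ contracts at rate $\varphi^{-1}$ per effective step, which beats $(1.6)^{-1}$ since $\varphi>1.6$; the extra multiplicative distortion coming from the $\lambda_{t(k)}^{\pm 1}$ and from the distribution-correction factors $h_{M_s,y}$ is absorbed by the inductive conditions $\lambda_t^{2M_{t-1}}\leq e^{2^{-N_t}}$ and Lemma \ref{lem: bound on h_M_t derivative}.(i).

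For the standalone estimate, differentiating the defining formula $K_{\mathbf{j},y}(x)=(H_\mathbf{j}(x)-x)\mathscr{P}(x,y)+x$ in $y$ yields $\partial_y K_{\mathbf{j},y}(x)=(H_\mathbf{j}(x)-x)\,\partial_y\mathscr{P}(x,y)$. Since $\mathscr{P}=3\varphi^{20}u^2-2\varphi^{30}u^3$ with $u=\mathbf{u}(x,y)\leq\varphi^{-10}$ and $|\partial u/\partial y|\leq 1$, an elementary computation gives $|\partial_y\mathscr{P}(x,y)|\leq 6\varphi^{10}$. Combined with Lemma \ref{lem: P_n is exponentionally close to x in intersection points}, which supplies $|H_\mathbf{j}(x)-x|\leq\varphi^{3-\mathbf{j}}$ on $C_{[\mathbf{w}(\mathbf{j})]_1^\mathbf{j}}$, this produces $|\partial_y K_{\mathbf{j},y}(x)|\lesssim\varphi^{13-\mathbf{j}}$. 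Since $\mathfrak{l}_{\mathbf{j}+1}(y,w)=K_{\mathbf{j},y}(\bar x_{\mathbf{j}+1})-K_{\mathbf{j},y}(\underline{x}_{\mathbf{j}+1})$, we conclude $|\partial_y\mathfrak{l}_{\mathbf{j}+1}(y,w)|\lesssim\varphi^{-\mathbf{j}}\leq(1.6)^{-\mathbf{j}}$.

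The main obstacle I anticipate is verifying the 2D proportion-preservation property, because the boundary interpolation polynomial $\mathscr{P}$ in the definition of $K_{\mathbf{j},y}$ destroys the strict piecewise-affine/rescaling structure that underlies the proof of Lemma \ref{lem: bound on h_M_t derivative}.(ii) in the 1D setting. My plan is to restrict attention to $C_{[\mathbf{w}(\mathbf{j})]_1^\mathbf{j}}$ with $x\notin\mathbb{BS}(\mathbf{J})$, so that Lemma \ref{lem: induction down when new endpoint is similar to old endpopint} applies and the endpoints of $C_{[w]_1^{\mathbf{j}+1}}$ already coincide with endpoints or $1/\varphi$-proportion points of $C_{[w]_1^\mathbf{j}}$; from level $\mathbf{j}+1$ onward, the maps $h_{k,y}$ have the same rescaling form as in the 1D construction of Section \ref{sec:Type III perturbations of the Golden Mean Shift}, so the 1D proof of proportion preservation transfers verbatim. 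On the complement (the bad-set branch), the smallness of $H_\mathbf{j}(x)-x$ guaranteed by Lemma \ref{lem: P_n is exponentionally close to x in intersection points} absorbs the resulting error, preserving the bound up to a constant.
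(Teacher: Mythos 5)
Your proposal follows essentially the same route as the paper: the paper likewise derives the $y$-independent factorization $\mathfrak{l}_{n}(y,w)=\mathfrak{l}_{\mathbf{j}+1}(y,w)\prod_{k}P_{k}\left(w_{k},w_{k+1}\right)$ from the fact that the relevant endpoints avoid $\mathbb{BS}\left(\mathbf{j}\right)$ (via Lemma \ref{lem: induction down when new endpoint is similar to old endpopint}), bounds that product by $(1.6)^{\mathbf{j}-n}$, and gets the base estimate by differentiating $\mathscr{P}$ in $y$ at the endpoints of $C_{[w]_{1}^{\mathbf{j}+1}}$ together with Lemma \ref{lem: P_n is exponentionally close to x in intersection points}. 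Both the decomposition and the key inputs coincide with yours, so the proposal is correct and not a genuinely different argument.
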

\begin{proof}
We assume $\left(\underline{x}_{N_{t+1}},y\right)\notin\partial U$,
the proof for the case $\left(\underline{x}_{N_{t+1}},y\right)\in\partial U$
is similar. In this case for small $|h|$, $\left(\underline{x}_{N_{t+1}},y+h\right)\in U^{c}$. 

Since $\bar{x}_{N_{t+1}}(w)$ is not in the bad set $\mathbb{BS}\left({\bf j}(x,y)\right)$,
it follows from (\ref{eq: equality of the bad set}) that for small
$|h|$, 
\begin{eqnarray*}
\frac{m_{{\rm \TT}}\left(K_{{\bf j},y+h}\left(C_{[w]_{1}^{N_{t+1}}}\right)\right)}{m_{\TT}\left(K_{{\bf j},y+h}\left(C_{[w]_{1}^{{\bf j}+1}}\right)\right)}: & = & \frac{K_{{\bf j},y+h}\left(\bar{x}_{N_{t+1}}\right)-K_{{\bf j},y+h}\left(\underline{x}_{N_{t+1}}\right)}{K_{{\bf j},y+h}\left(\bar{x}_{{\bf j}+1}\right)-K_{{\bf j},y+h}\left(\underline{x}_{{\bf j}+1}\right)}\\
 & = & \frac{\bar{x}_{N_{t+1}}-\underline{x}_{N_{t+1}}}{\bar{x}_{{\bf j}+1}-\underline{x}_{{\bf j}+1}}\\
 & = & \frac{m_{{\rm \TT}}\left(C_{[w]_{1}^{N_{t}+1}}\right)}{m_{\TT}\left(C_{[w]_{1}^{{\bf j}+1}}\right)}.
\end{eqnarray*}
It then follows by definition of $h_{n,y}$ for $n>{\bf j}$ that
for $|h|$ small, 
\[
\mathfrak{l}_{N_{t+1}}(y+h,w)=l_{{\bf j}+1}(y+h,w)\prod_{k={\bf j}+1}^{N_{t+1}}P_{k}\left(w_{k},w_{k+1}\right),
\]
hence 
\[
\frac{\mathfrak{l}_{N_{t+1}}(y+h,w)}{\mathfrak{l}_{N_{t+1}}(y,w)}=\frac{\mathfrak{l}_{{\bf j}+1}(y+h,w)}{\mathfrak{l}_{{\bf j}+1}(y,w)}.
\]
This yields that 
\[
\mathfrak{l}_{N_{t+1}}(y+h,w)-\mathfrak{l}_{N_{t+1}}(y,w)=\frac{\mathfrak{l}_{N_{t+1}}(y,w)}{\mathfrak{l}_{{\bf j}(x,y)+1}(y,w)}\left[\mathfrak{l}_{{\bf j}(x,y)+1}(y+h,w)-\mathfrak{l}_{{\bf j}(x,y)+1}(y,w)\right],
\]
dividing by $h$ and taking limit $h\to0$ we get 
\[
\left|\frac{\partial\mathfrak{l}_{N_{t+1}}(y,w)}{\partial y}\right|=\frac{\mathfrak{l}_{N_{t+1}}(y,w)}{\mathfrak{l}_{{\bf j}+1}(y,w)}\left|\frac{\partial\mathfrak{l}_{{\bf j}+1}(y,w)}{\partial y}\right|\leq(1.6)^{{\bf j}-N_{t+1}}\left|\frac{\partial\mathfrak{l}_{{\bf j}+1}(y,w)}{\partial y}\right|.
\]
The last inequality follows from, for all $N_{t}\leq{\bf j}<n\leq N_{t+1}$,
\[
\prod_{k={\bf j}}^{n-1}P_{k}\left(w_{k},w_{k+1}\right)\leq\left(\frac{\lambda_{t+1}\p}{1+\lambda_{t+1}\p}\right)^{{\bf j}-n}\leq\left(\frac{\lambda_{t+1}}{\p}\right)^{{\bf j}-n}<(1.6)^{{\bf j}-n}.
\]

For the proof of the second part notice that for $x\in\left\{ \underline{x}_{{\bf j}},\bar{x}_{{\bf j}}\right\} $,
\begin{eqnarray}
\left|\frac{\partial K{}_{{\bf j},y}}{\partial y}(x)\right| & = & \left|\frac{\partial\rr{}_{{\bf j}}}{\partial y}(x,y)\right|\label{eq: decay of r_j}\\
 & \leq & \left|\frac{\mathtt{\partial}\mathscr{P}}{\partial y}(x,y)\right|\left|H_{\ep,{\bf j}}(x)-x\right|\nonumber \\
 & \leq & \frac{3}{2}\p^{10}\left|m\left(C_{[w]_{1}^{{\bf j}-2}}\right)\right|\leq\frac{3}{2}\p^{12}\p^{-{\bf j}}\leq\frac{1}{2}(1.62)^{-{\bf j}},\nonumber 
\end{eqnarray}
for all large ${\bf j}$. Thus (recall $\mathfrak{l}_{{\bf j}+1}(y,w)=\rr{}_{{\bf j}(x,y)}\left(\bar{x}_{{\bf j}+1}\right)-\rr{}_{{\bf j}}\left(\underline{x}_{{\bf j}+1}\right)$)
\begin{eqnarray*}
\left|\frac{\partial\mathfrak{l}_{{\bf j}+1}}{\partial y}(y,w)\right| & \leq & (1.6)^{-{\bf j}},\ \ \text{for all large}\ {\bf j.}
\end{eqnarray*}
\end{proof}
Lemma \ref{lem: induction down when new endpoint is similar to old endpopint}
shows that if $x\in C_{[{\bf w}\left({\bf j}\right)]_{1}^{n}}$ is
not in $x\in\mathbb{BS}\left({\bf j}\right)$ then the $y$- derivative
of $K_{N_{t+1},y}(x)$ (here $t$ is the number such that $N_{t}<{\bf j}<N_{t+1}$)
is controlled by the derivative on a finite collection of points plus
the evolution of the lengths of the intervals. We would like to point
out that there is actually no bad set if $N_{t}<{\bf j}\leq M_{t}$
because then $K_{{\bf j},y}|_{C_{[{\bf w}\left({\bf j}\right)]_{1}^{{\bf j}}}}=id_{\TT}$.
This idea will be reiterated with a slight modification for the derivatives
$\partial K_{n,y}/\partial y$ for $M_{s}<n<N_{s}$ where $M_{s}>{\bf j}(x,y)$. 

For points in the bad set we will apply a correction point procedure
which we call the $x$-delta method. Assume that $x\in\mathbb{BS}\left({\bf j}\right)$.
For $\Delta$-small (so that $(x,y\pm\Delta)\in U^{c})$) there exists
a unique ${\bf x}\left(\Delta\right)$ such that 
\begin{equation}
\frac{K_{{\bf j}(x,y),y+\Delta}({\bf x}\left(\Delta\right))-K_{{\bf j}(x,y),y+\Delta}\left(\underline{x}_{{\bf j}(x,y)+1}\right)}{\mathfrak{l}_{{\bf j}(x,y)+1}(y+\Delta,w)}=\frac{K_{{\bf j}(x,y),y}(x)-K_{{\bf j}(x,y),y}\left(\underline{x}_{{\bf j}(x,y)+1}\right)}{\mathfrak{l}_{{\bf j}(x,y)+1}(y,w)}.\label{eq: the x-delta equality}
\end{equation}
We will use Lemma \ref{lem: derivatives of scaling functions} to
obtain a first order approximation for ${\bf x}\left(\Delta\right)$
when $\Delta$ is small. 

In the next Lemma, if $M_{t}<{\bf j}+1<N_{t+1}$ $\beta_{{\bf j}+1}(x):=\psi_{t+1}\left(\frac{K_{{\bf j},y}(x)-K_{{\bf j},y}\left(\underline{x}_{{\bf j}+1}\right)}{\mathfrak{l}_{{\bf j}+1}(y,w)}\right)$
and for $M_{t}<{\bf j}+1<n\leq N_{t+1}$ 
\[
\beta_{n+1}(x):=\psi_{t+1}\left(\frac{\beta_{n}(x)-\beta_{n}\left(\underline{x}_{n+1}\right)}{\beta_{n}\left(\bar{x}_{n+1}\right)-\beta_{n}\left(\underline{x}_{n+1}\right)}\right).
\]

\begin{lem}
\label{lem:first use of the x-delta method}Assume that $M_{t}\leq{\bf j}\leq N_{t+1}$,
and $x\in C_{[w]_{1}^{N_{t+1}}}$ with $w_{1}^{{\bf j}}={\bf w}\left({\bf j}\right)$.
The following holds:

(i) For every $y$ so that ${\bf j}\left(x,y\right)={\bf j}$ and
$\Delta$ so that $(x,y\pm\Delta)\in\MM\backslash U$, 
\[
K_{N_{t+1},y+\Delta}\left({\bf x}\left(\Delta\right)\right)=K_{N_{t+1},y+\Delta}\left(\underline{x}_{N_{t+1}}\right)+\beta_{N_{t+1}}(x)l_{N_{t+1}}\left(y+\Delta,w\right),
\]

(ii) $\left|{\bf x}\left(\Delta\right)-x\right|\leq4(1.6)^{-N_{t+1}}\Delta+o\left(\Delta\right)$
as $\Delta\to0$.
\end{lem}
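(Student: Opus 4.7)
The plan is to prove (i) by transferring the inductive structure of Lemma \ref{lem: induction down when new endpoint is similar to old endpopint} across the $x$-delta equation, and then to read off (ii) by implicit differentiation of the resulting identity at level $N_{t+1}$, combined with the exponential decay furnished by Lemma \ref{lem: derivatives of scaling functions}.

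For (i), the equation \eqref{eq: the x-delta equality} is, by its very form, the statement that $K_{\mathbf{j}, y+\Delta}(\mathbf{x}(\Delta))$ occupies the same relative position within $K_{\mathbf{j}, y+\Delta}(C_{[w]_{1}^{\mathbf{j}+1}})$ as $K_{\mathbf{j}, y}(x)$ occupies within $K_{\mathbf{j}, y}(C_{[w]_{1}^{\mathbf{j}+1}})$. I would take this common ratio as the input to the recursion
\[
\beta_{n+1}(\cdot) = \psi_{\cdot}\Bigl(\frac{\beta_n(\cdot) - \beta_n(\underline{x}_{n+1}(w))}{\beta_n(\bar x_{n+1}(w)) - \beta_n(\underline{x}_{n+1}(w))}\Bigr)
\]
used in the proof of Lemma \ref{lem: induction down when new endpoint is similar to old endpopint}, and conclude that the outputs starting from $(\mathbf{x}(\Delta), y+\Delta)$ and from $(x, y)$ agree for every $\mathbf{j}+1 \leq n \leq N_{t+1}$. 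The one point requiring checking is that the endpoint values $\beta_n(\underline{x}_{n+1}(w)), \beta_n(\bar x_{n+1}(w))$ appearing in the recursion are themselves $y$-independent, which follows because $\underline{x}_{n+1}(w), \bar x_{n+1}(w) \notin \mathbb{BS}(\mathbf{j})$ for every $n > \mathbf{j}$, so Lemma \ref{lem: induction down when new endpoint is similar to old endpopint} applies verbatim to those points. Taking $n = N_{t+1}$ then delivers the formula in (i).

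For (ii), I would specialise (i) at $\Delta = 0$ to produce the companion identity $K_{N_{t+1}, y}(x) = K_{N_{t+1}, y}(\underline{x}_{N_{t+1}}(w)) + \beta_{N_{t+1}}(x)\,\mathfrak{l}_{N_{t+1}}(y, w)$ (valid since $\mathbf{x}(0) = x$ by injectivity of $K_{\mathbf{j}, y}$), subtract it from the identity in (i), and Taylor-expand in $\Delta$ around $0$. This yields
\[
\partial_x K_{N_{t+1}, y}(x)\, \mathbf{x}'(0) = \beta_{N_{t+1}}(x)\,\partial_y \mathfrak{l}_{N_{t+1}}(y, w) + \partial_y K_{N_{t+1}, y}(\underline{x}_{N_{t+1}}(w)) - \partial_y K_{N_{t+1}, y}(x).
\]
The denominator $|\partial_x K_{N_{t+1}, y}(x)|$ is uniformly bounded below by a constant close to $1$, being a finite product of factors $\psi_k'$ each in $[\lambda_k^{-2}, \lambda_k^{2}]$. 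On the numerator side, $|\beta_{N_{t+1}}(x)| \leq 1$; combining the two inequalities in Lemma \ref{lem: derivatives of scaling functions} gives $|\partial_y \mathfrak{l}_{N_{t+1}}(y, w)| \lesssim (1.6)^{-N_{t+1}}$; and a parallel induction on the level yields the analogous pointwise bound $|\partial_y K_{N_{t+1}, y}(z)| \lesssim (1.6)^{-N_{t+1}}$ for $z \in C_{[w]_{1}^{\mathbf{j}+1}}$, seeded by $|\partial_y \rr_{\mathbf{j}}(z, y)| = |H_{\mathbf{j}}(z) - z|\,|\partial_y \mathscr{P}(z, y)| \lesssim \varphi^{10-\mathbf{j}}$ via Lemma \ref{lem: P_n is exponentionally close to x in intersection points} and propagated by the contraction $P_k(w_k, w_{k+1}) \leq \lambda_{t+1}/\varphi < 1/1.6$ at each subsequent level $k > \mathbf{j}$.

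The hardest step is this last auxiliary bound on $|\partial_y K_{N_{t+1}, y}(z)|$: Lemma \ref{lem: derivatives of scaling functions} only controls length differences $\partial_y \mathfrak{l}_n$ and not pointwise $y$-derivatives, so its proof has to be reworked. The delicate point is to verify that outside $\mathbb{BS}(\mathbf{j})$ each $h_{k, y}$ with $k > \mathbf{j}$ introduces no new $y$-dependence of its own but merely transports the seed error from level $\mathbf{j}$; only then does the product of contractions $\prod_{k=\mathbf{j}+1}^{N_{t+1}} P_k(w_k, w_{k+1}) \lesssim (1.6)^{\mathbf{j} - N_{t+1}}$ combine with the $\varphi^{-\mathbf{j}}$ seed to deliver the sharp $(1.6)^{-N_{t+1}}$ rate asserted in the lemma.
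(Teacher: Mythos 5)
Part (i) of your proposal is essentially the paper's argument: equation \eqref{eq: the x-delta equality} is taken as the seed of the recursion from Lemma \ref{lem: induction down when new endpoint is similar to old endpopint}, and your check that the cylinder endpoints lie outside $\mathbb{BS}(\mathbf{j})$, so that their $\beta$-values are $y$-independent, is the right point to verify.

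Part (ii) has a genuine gap: the auxiliary pointwise bound $\left|\partial_{y}K_{N_{t+1},y}(z)\right|\lesssim(1.6)^{-N_{t+1}}$ is false, and the mechanism you propose for it confuses the propagation of interval lengths with the propagation of point positions. The lengths $\mathfrak{l}_{n}(y,w)$ do get multiplied by $P_{k}\left(w_{k},w_{k+1}\right)$ at each level (this is Lemma \ref{lem: derivatives of scaling functions}), but the position $K_{n,y}\left(\underline{x}_{n}(w)\right)$ is obtained from $K_{n-1,y}\left(\underline{x}_{n-1}(w)\right)$ by \emph{adding} a fraction of $\mathfrak{l}_{n}(y,w)$, so its $y$-derivative accumulates as a sum $\sum_{n\geq\mathbf{j}}(1.6)^{-n}\approx(1.6)^{-\mathbf{j}}$ over the seed rather than as a product of contractions. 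Indeed Corollary \ref{cor: The decay after the first break } (which in the paper is \emph{deduced from} this lemma, so invoking any reworked version of it here also risks circularity) gives exactly $\left|\partial_{y}K_{N_{t+1},y}\left(\underline{x}_{N_{t+1}}(w)\right)\right|\leq4(1.6)^{-\mathbf{j}(x,y)}$ via \eqref{eq: derivative at special points}, which for $\mathbf{j}\approx M_{t}\ll N_{t+1}$ is vastly larger than $(1.6)^{-N_{t+1}}$. With the correct pointwise bound, estimating the three terms of your identity separately only yields $\left|\mathbf{x}'(0)\right|\lesssim(1.6)^{-\mathbf{j}}$; to reach $(1.6)^{-N_{t+1}}$ you would need the derivatives at $x$ and at $\underline{x}_{N_{t+1}}(w)$ to cancel to that precision, and your identity, unwound, reads $\partial_{x}K_{N_{t+1},y}(x)\,\mathbf{x}'(0)=-\mathfrak{l}_{N_{t+1}}(y,w)\,\partial_{y}\tilde{\beta}_{y}(x)$ with $\tilde{\beta}_{y}(x)$ the ($y$-dependent, since $x\in\mathbb{BS}(\mathbf{j})$) relative position of $x$ in its cylinder -- a tautology that does not produce the cancellation. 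The paper instead stays at level $\mathbf{j}$, where $K_{\mathbf{j},y}(z)=\left(H_{\mathbf{j}}(z)-z\right)\mathscr{P}(z,y)+z$ is explicit: rewriting \eqref{eq: the x-delta equality} relative to $\underline{x}_{N_{t+1}}(w)$ and Taylor-expanding $\mathscr{P}$ in $y$, every $y$-dependent term carries a factor bounded by $\left|x-\underline{x}_{N_{t+1}}(w)\right|$ or $\left|\mathbf{x}(\Delta)-\underline{x}_{N_{t+1}}(w)\right|\leq\varphi^{-N_{t+1}}$, so the rate $(1.6)^{-N_{t+1}}$ comes from the length of the cylinder $C_{[w]_{1}^{N_{t+1}}}$, not from a cascade of contractions applied to a pointwise $y$-derivative. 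Your argument needs to be replaced by (or reduced to) an estimate of this kind.
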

\begin{proof}
(i) This is the same as the proof of Lemma \ref{lem: induction down when new endpoint is similar to old endpopint}
by using (\ref{eq: the x-delta equality}) as the starting point. 

(ii) If $x\notin\mathbb{BS}\left({\bf j}\right)\cap C_{[w]_{1}^{N_{t}}}$
then by Lemma \ref{lem: induction down when new endpoint is similar to old endpopint},
${\bf x}\left(\Delta\right)=x$. Since $\underline{x}_{N_{t+1}}\notin\mathbb{BS}\left({\bf j}\right)$,
\begin{eqnarray*}
\frac{K_{{\bf j},y+\Delta}\left(\underline{x}_{N_{t+1}}\right)-K_{{\bf j},y+\Delta}\left(\underline{x}_{{\bf j}(x,y)+1}\right)}{\mathfrak{l}_{{\bf j}+1}(y+\Delta,w)} & = & \frac{\underline{x}_{N_{t+1}}-\underline{x}_{{\bf j}(x,y)+1}}{\overline{x}_{{\bf j}+1}-\underline{x}_{{\bf j}+1}}\\
 & = & \frac{K_{{\bf j},y}\left(\underline{x}_{N_{t+1}}\right)-K_{{\bf j},y}\left(\underline{x}_{{\bf j}+1}\right)}{\mathfrak{l}_{{\bf j}+1}(y,w)}.
\end{eqnarray*}
Therefore by adding and subtracting $K_{{\bf j},y+\Delta}\left(\underline{x}_{N_{t+1}}\right)/\mathfrak{l}_{{\bf j}+1}(y+\Delta,w)$
on the right hand side and $K_{{\bf j},y}\left(\underline{x}_{N_{t+1}}\right)/\mathfrak{l}_{{\bf j}+1}(y,w)$
in the left hand side of equation \ref{eq: the x-delta equality},
if follows that equation (\ref{eq: the x-delta equality}) is equivalent
to 
\begin{equation}
K_{{\bf j},y+\Delta}({\bf x}\left(\Delta\right))-K_{{\bf j},y+\Delta}\left(\underline{x}_{N_{t+1}}\right)=\frac{\mathfrak{l}_{{\bf j}+1}(y+\Delta,w)}{\mathfrak{l}_{{\bf j}+1}(y,w)}\left(K_{{\bf j},y}(x)-K_{{\bf j},y}\left(\underline{x}_{N_{t+1}}\right)\right).\label{eq: modified xbeta equality}
\end{equation}
For the ease of notation we will write $\underline{X}:=\underline{x}_{N_{t+1}}$
and $H_{{\bf j}}(z):=H_{\ep.{\bf j}}\left(z\right)$. Since by Lemma
\ref{lem: derivatives of scaling functions}, 
\[
\mathfrak{l}_{{\bf j}+1}(y+\Delta,w)=\mathfrak{l}_{{\bf j}+1}(y,w)\pm(1.6)^{-{\bf J}}\Delta+o(\Delta)
\]
we have
\[
\frac{\mathfrak{l}_{{\bf j}+1}(y+\Delta,w)}{\mathfrak{l}_{{\bf j}+1}(y,w)}\left(K_{{\bf j},y}(x)-K_{{\bf j},y}\left(\underline{x}\right)\right)=\left[1\pm\frac{(1.6)^{-{\bf j}}\Delta}{\mathfrak{l}_{{\bf j}+1}(y,w)}\right]\left(K_{{\bf j},y}(x)-K_{{\bf j},y}\left(\underline{X}\right)\right)+o\left(\Delta\right).
\]
In addition for all $\left|\Delta\right|$ small , 
\[
K_{{\bf j},y+\Delta}({\bf x}\left(\Delta\right))-K_{{\bf j},y+\Delta}\left(\underline{X}\right)=\left(H_{{\bf j}}({\bf x}(\Delta))-H_{{\bf {\bf j}}}\left(\underline{X}\right)-\left({\bf x}(\Delta)-\underline{X}\right)\right)\mathscr{P}(x,y+\Delta)+\left({\bf x}(\Delta)-\underline{X}\right)
\]
By Taylor expansion 
\[
\mathscr{P}(x,y+\Delta)=\mathscr{P}(x,y)+\Delta\frac{\partial\mathscr{P}}{\partial y}(x,y)+o\left(\Delta\right).
\]
Using this one can show that (\ref{eq: modified xbeta equality})
yields,
\[
\left({\bf x}\left(\Delta\right)-x\right)+\left(H_{{\bf j}}({\bf x}(\Delta))-H_{{\bf j}}\left(x\right)-\left({\bf x}(\Delta)-x\right)\right)\mathscr{P}(x,y)=\Delta\left({\bf I}+{\bf II}\right)+o\left(\Delta\right)
\]
where 
\[
\left|{\bf I}\right|:=\left|\frac{(1.6)^{-{\bf j}}}{\mathfrak{l}_{{\bf j}+1}(y,w)}\left(K_{{\bf j},y}(x)-K_{{\bf j},y}\left(\underline{X}\right)\right)\right|\leq\left(1.6\right)^{-{\bf j}}\p^{-N_{t+1}+{\bf j}}
\]
and 
\begin{eqnarray*}
{\bf \left|II\right|}: & = & \left|\left(H_{{\bf j}}({\bf x}(\Delta))-H_{{\bf j}}\left(\underline{X}\right)-\left({\bf x}(\Delta)-\underline{X}\right)\right)\frac{\partial\mathscr{P}}{\partial y}(x,y)\right|\\
 & \leq & \left|\lambda_{t+1}^{4}-1\right|\max_{(x,y)\in U^{c}}\left|\frac{\partial\mathscr{P}}{\partial y}(x,y)\right|\left|{\bf x}(\Delta)-\underline{X}\right|\\
 & \leq & \left|\lambda_{t+1}^{4}-1\right|\left(\frac{3}{2}\p^{10}\right)\p^{-N_{t+1}}\\
 & \leq & \p^{-N_{t+1}}.
\end{eqnarray*}
For both inequalities we used the fact that 
\[
\max\left\{ \left|{\bf x}\left(\Delta\right)-\underline{x}_{N_{t+1}}\right|,\left|x-\underline{x}_{N_{t+1}}\right|\right\} \leq\p^{-N_{t+1}}.
\]
Since 
\[
\left|\left(H_{{\bf j}}({\bf x}(\Delta))-H_{{\bf j}}\left(x\right)-\left({\bf x}(\Delta)-x\right)\right)\mathscr{P}(x,y)\right|\leq\left|\lambda_{t+1}^{4}-1\right|\left|{\bf x}\left(\Delta\right)-x\right|
\]
we get by the triangle inequality that 
\begin{eqnarray*}
\left|{\bf x}\left(\Delta\right)-x\right|\left(1-\left|\lambda_{t+1}^{4}-1\right|\right) & \leq & \left|{\bf x}\left(\Delta\right)-x+\left(H_{{\bf j}}({\bf x}(\Delta))-H_{{\bf j}}\left(x\right)-\left({\bf x}(\Delta)-x\right)\right)\mathscr{P}(x,y)\right|\\
 & \leq & \Delta\left(\left|{\bf I}\right|+\left|{\bf II}\right|\right)+o\left(\Delta\right)\\
 & \leq & 2\Delta(1.6)^{-N_{t+1}}+o\left(\Delta\right).
\end{eqnarray*}
As $1-\left|\lambda_{t+1}-1\right|\geq\frac{1}{2}$ the conclusion
of part (ii) follows. 
\end{proof}
From now on we work under the assumption that $(1.62)/\lambda_{1}^{2}\geq1.6$.
As $\lambda_{1}$ can be made arbitrarily small this is compatible
with the inductive procedure. 
\begin{cor}
\label{cor: The decay after the first break } For every $(x,y)\in R_{1}\cup R_{3}$,
if $N_{t}<{\bf j}(x,y)\leq N_{t+1}$ then 
\[
(i)\ \ \ \ \left|\frac{\partial K_{N_{t+1},y}(x)}{\partial y}\right|\leq6(1.6)^{-{\bf j}(x,y)}.
\]
In addition if $(x,y)\in\partial U^{c}\cup\partial\MM$ then 
\[
\frac{\partial K_{N_{t+1},y}(x)}{\partial y}=0.
\]

(ii) Assume that $\left\{ N_{k},M_{k-1},\epsilon_{k},\lambda_{k}\right\} _{k=1}^{s}$
are chosen, there exists a choice of $M_{s},\lambda_{s+1},N_{s+1}\ \text{and }\epsilon_{s+1}$
(compatible with the inductive procedure) such that 
\[
\left|\frac{\partial\mathfrak{l}_{M_{t+1}}}{\partial y}(y,w)\right|\le3(1.6)^{-N_{t+1}}.
\]
\end{cor}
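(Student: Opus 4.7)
Both parts of the corollary are proved by combining three ingredients: the explicit formula for $K_{\mathbf{j},y}$ at the base level $\mathbf{j}=\mathbf{j}(x,y)$, the structural decomposition of Lemma \ref{lem: induction down when new endpoint is similar to old endpopint} (extended to bad-set points via the $x$-delta substitution of Lemma \ref{lem:first use of the x-delta method}), and the geometric decay of the cylinder lengths provided by Lemma \ref{lem: derivatives of scaling functions}. The base estimate is immediate: since $K_{\mathbf{j},y}(x)=(H_{\mathbf{j}}(x)-x)\mathscr{P}(x,y)+x$ and $\mathscr{P}$ depends on $y$ only through $\mathbf{u}(x,y)$, Lemma \ref{lem: P_n is exponentionally close to x in intersection points} combined with property 3 of Remark \ref{Rk: properties of q_n} gives $|\partial K_{\mathbf{j},y}(x)/\partial y|\leq \tfrac{3}{2}\p^{13}\p^{-\mathbf{j}}\leq(1.6)^{-\mathbf{j}}$ as soon as $N_{1}$ is chosen large in the inductive construction. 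If moreover $(x,y)\in\partial U^{c}\cup\partial\MM$, property 2 of Remark \ref{Rk: properties of q_n} forces $\partial\mathscr{P}/\partial y=0$ there, so the base derivative vanishes, and the chain rule propagates this vanishing to all higher $n$.

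For (i), I would propagate the base estimate up to level $N_{t+1}$. Applying Lemma \ref{lem: induction down when new endpoint is similar to old endpopint} (after an $x$-delta substitution when $x\in\mathbb{BS}(\mathbf{j})$) yields
\[
\frac{\partial K_{N_{t+1},y}(x)}{\partial y}=\frac{\partial K_{N_{t+1},y}\bigl(\underline{x}_{N_{t+1}}(w)\bigr)}{\partial y}+\beta_{N_{t+1}}(x)\frac{\partial \mathfrak{l}_{N_{t+1}}(y,w)}{\partial y}+\mathcal{E}(x,y),
\]
where the $\mathfrak{l}$-term is $O((1.6)^{-N_{t+1}})$ by Lemma \ref{lem: derivatives of scaling functions}, and the error $\mathcal{E}$ comes from the bound $|\mathbf{x}(\Delta)-x|\lesssim(1.6)^{-N_{t+1}}\Delta$ of Lemma \ref{lem:first use of the x-delta method}(ii) multiplied by the Lipschitz constant of $K_{N_{t+1},y}$; it is also $O((1.6)^{-N_{t+1}})$. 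To handle the first term I would iterate: the identity $K_{n,y}(\underline{x}_{n}(w))=K_{n-1,y}(\underline{x}_{n}(w))$ (left endpoints of cylinders are fixed by each $h_{n,y}$) reduces the endpoint problem to a coarser level, and each reduction contributes another $\beta\cdot\partial\mathfrak{l}_{n_{i}}/\partial y$ of order $(1.6)^{-n_{i}}$ for a decreasing sequence $n_{i}$ terminating at $\mathbf{j}+1$. These telescoping contributions form a geometric series summing to $O((1.6)^{-\mathbf{j}})$, yielding the claimed bound.

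Part (ii) is then essentially a corollary of this machinery. Because $h_{n,y}=\mathrm{id}$ for $N_{t+1}<n<M_{t+1}$ (the $\mathbf{Q}$-block has no perturbation), $K_{M_{t+1}-1,y}=K_{N_{t+1},y}$, so
\[
\mathfrak{l}_{M_{t+1}}(y,w)=K_{N_{t+1},y}\bigl(\overline{x}_{M_{t+1}}(w)\bigr)-K_{N_{t+1},y}\bigl(\underline{x}_{M_{t+1}}(w)\bigr).
\]
Both endpoints lie inside the single cylinder $C_{[w]_{1}^{N_{t+1}}}$, so they share the same reference point $\underline{x}_{N_{t+1}}(w_{1}^{N_{t+1}})$ and the same $\mathfrak{l}_{N_{t+1}}$; applying Lemma \ref{lem: induction down when new endpoint is similar to old endpopint} (with the $x$-delta correction where needed) to each endpoint and subtracting gives
\[
\mathfrak{l}_{M_{t+1}}(y,w)=\bigl[\beta_{N_{t+1}}(\overline{x}_{M_{t+1}}(w))-\beta_{N_{t+1}}(\underline{x}_{M_{t+1}}(w))\bigr]\mathfrak{l}_{N_{t+1}}(y,w_{1}^{N_{t+1}})+\tilde{\mathcal{E}}(y),
\]
where the $\beta$-bracket is $y$-independent and bounded in modulus by $\p$, and the error satisfies $|\tilde{\mathcal{E}}'(y)|=O((1.6)^{-N_{t+1}})$. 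Differentiating in $y$ and applying Lemma \ref{lem: derivatives of scaling functions} to the surviving $\mathfrak{l}_{N_{t+1}}$-factor yields $|\partial\mathfrak{l}_{M_{t+1}}/\partial y|\leq \p(1.6)^{-N_{t+1}}+O((1.6)^{-N_{t+1}})<3(1.6)^{-N_{t+1}}$, provided $M_{t},\lambda_{t+1},N_{t+1},\epsilon_{t+1}$ are chosen to keep the error constants under control (which only tightens the earlier inductive conditions of Lemma \ref{lem: bound on h_M_t derivative} and Lemma \ref{Lem:  the pertubation of the beta shift}). For $y$ with both endpoints in $U$, $K_{N_{t+1},y}$ is $y$-independent and the derivative vanishes outright.

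The principal obstacle is organizing the recursion in (i): one must carefully enumerate the sequence of coarser endpoints through which $\underline{x}_{N_{t+1}}(w)$ is reduced, verify that the telescoping sum of $\beta\cdot\partial\mathfrak{l}_{n_{i}}/\partial y$ terms is genuinely a geometric series in $(1.6)^{-n_{i}}$ rather than a merely $O(N_{t+1}-\mathbf{j})$-term sum, and ensure the $x$-delta error from each application of Lemma \ref{lem:first use of the x-delta method} remains subordinate to the $(1.6)^{-\mathbf{j}}$ decay throughout the iteration.
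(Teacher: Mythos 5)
Your proposal is correct and follows essentially the same route as the paper: the same decomposition of $\partial K_{N_{t+1},y}(x)/\partial y$ into the endpoint derivative, the $\beta_{N_{t+1}}(x)\,\partial\mathfrak{l}_{N_{t+1}}/\partial y$ term, and the $x$-delta error, with the endpoint term controlled by telescoping over the left endpoints $\underline{x}_{n}(w)$ and summing the geometric series of $\partial\mathfrak{l}_{n}/\partial y$ contributions from Lemma \ref{lem: derivatives of scaling functions}, and part (ii) obtained by subtracting the two endpoint expansions so that only the $\beta$-bracket times $\partial\mathfrak{l}_{N_{t+1}}/\partial y$ plus the $x$-delta errors survive. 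The only quibble is that your bound of $\p$ on the $\beta$-bracket would overshoot the constant $3$; since $\beta_{N_{t+1}}$ takes values in $[0,1]$ the bracket is in fact at most $1$ (and is $\ll1$ in the paper), which closes the arithmetic.
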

\begin{proof}
(i) First we claim that for all $w\in\Sigma_{{\bf A}}$ such that
$[w]_{1}^{{\bf j}(x,y)}={\bf w}\left({\bf j}(x,y)\right)$, 
\begin{equation}
\frac{\partial K_{N_{t+1},y}\left(\underline{x}_{N_{t+1}}\right)}{\partial y}=\pm4(1.6)^{-{\bf j}(x,y)}.\label{eq: derivative at special points}
\end{equation}
This is true because of the following argument. For each ${\bf j}(x,y)<n\leq N_{t+1}$,
either $\underline{x}_{n-1}=\underline{x}_{n}$ and then 
\[
K_{n,y}\left(\underline{x}_{n}\right)=K_{n-1,y}\left(\underline{x}_{n-1}\right)
\]
or $\underline{x}_{n}=\underline{x}_{n-1}+\p^{-1}\left(\bar{x}_{n-1}-\underline{x}_{n-1}\right)$
and then 
\[
K_{n,y}\left(\underline{x}_{n}\right)=K_{n-1,y}\left(\underline{x}_{n}\right)=K_{n-1,y}\left(\underline{x}_{n-1}\right)+\frac{\lambda_{t+1}\p}{1+\lambda_{t+1}\p}\mathfrak{l}_{n}(y,w).
\]
This equality remains true in a neighborhood of $y$. Therefore for
all ${\bf j}(x,y)<n\leq N_{t+1}$, 
\begin{eqnarray*}
\left|\frac{\partial K_{n,y}\left(\underline{x}_{n}(w)\right)}{\partial y}\right| & \leq & \left|\frac{\partial K_{n-1,y}\left(\underline{x}_{n-1}(w)\right)}{\partial y}\right|+\frac{2}{3}\max_{[w]_{1}^{N_{t+1}}\subset[{\bf w}\left({\bf j}(x,y)\right)]}\left|\frac{\partial\mathfrak{l}_{n}}{\partial y}(y,w)\right|\\
 & \overset{\text{Lem.}\ \ref{lem: derivatives of scaling functions}}{\leq} & \left|\frac{\partial K_{n-1,y}\left(\underline{x}_{n-1}(w)\right)}{\partial y}\right|+\frac{2}{3}(1.6)^{-n}
\end{eqnarray*}
and so 
\begin{eqnarray*}
\left|\frac{\partial K_{N_{t+1},y}\left(\underline{x}_{N_{t+1}}(w)\right)}{\partial y}\right| & \leq & \left|\frac{\partial K_{{\bf j}(x,y),y}\left(\underline{x}_{{\bf j}(x,y)}\right)}{\partial y}\right|+\frac{2}{3}\sum_{n={\bf j}(x,y)+1}^{N_{t+1}}(1.6)^{-n}\\
 & \overset{\eqref{eq: decay of r_j}}{\leq} & 4(1.6)^{-{\bf j}(x,y)}.\ \ \ \Square_{\eqref{eq: derivative at special points}}
\end{eqnarray*}

Now for a general $0\leq x\leq1/\p$, 
\begin{eqnarray*}
\frac{\partial K_{N_{t+1},y}(x)}{\partial y} & = & \lim_{\Delta\to0}\frac{K_{N_{t+1},y+\Delta}(x)-K_{N_{t+1},y}(x)}{\Delta}\\
 & = & \lim_{\Delta\to0}\frac{K_{N_{t+1},y+\Delta}\left({\bf x}\left(\Delta\right)\right)-K_{N_{t+1},y}(x)}{\Delta}+\lim_{\Delta\to0}\frac{K_{N_{t+1},y+\Delta}\left({\bf x}\left(\Delta\right)\right)-K_{N_{t+1},y+\Delta}(x)}{\Delta}.
\end{eqnarray*}
As
\[
\left|\frac{\partial K_{N_{t+1},y+\Delta}(x)}{\partial x}\right|\leq\lambda_{t+1}^{2\left(N_{t+1}-{\bf j}(x,y)\right)},
\]
it follows that 
\begin{eqnarray}
\lim_{\Delta\to0}\left|\frac{K_{N_{t+1},y+\Delta}\left({\bf x}\left(\Delta\right)\right)-K_{N_{t+1},y+\Delta}(x)}{\Delta}\right| & \leq & \lambda_{t+1}^{2N_{t+1}}\lim_{\Delta\to0}\left|\frac{{\bf x}\left(\Delta\right)-x}{\Delta}\right|\label{eq: xdelta -x derivative relation}\\
 & \overset{_{{\rm Lem\ \ref{lem:first use of the x-delta method}}}}{\leq} & \left(\frac{1.62}{\lambda_{1}^{2}}\right)^{-N_{t+1}}.\nonumber 
\end{eqnarray}
By Lemma \ref{lem:first use of the x-delta method}.(i) if $x\in C_{[w]_{1}^{N_{t+1}}}$
then,
\begin{eqnarray*}
\lim_{\Delta\to0}\left|\frac{K_{N_{t+1},y+\Delta}\left({\bf x}\left(\Delta\right)\right)-K_{N_{t+1},y}(x)}{\Delta}\right| & \leq & \lim_{\Delta\to0}\left|\frac{K_{N_{t+1},y+\Delta}\left(\underline{x}_{N_{t+1}}(w)\right)-K_{N_{t+1},y}\left(\underline{x}_{N_{t+1}}(w)\right)}{\Delta}\right|\\
 &  & +\beta_{n}(x)\lim_{\Delta\to0}\left|\frac{\mathfrak{l}_{N_{t+1}}(y+\Delta,w)-\mathfrak{l}_{N_{t+1}}(y,w)}{\Delta}\right|\\
 & \leq & \left|\frac{\partial K_{N_{t+1},y}\left(\underline{x}_{N_{t+1}}\right)}{\partial y}\right|+\left|\frac{\partial\mathfrak{l}_{N_{t+1}}}{\partial y}(y,w)\right|\\
 & \leq & 4(1.6)^{-{\bf j}(x,y)}+\left(1.6\right)^{-N_{t+1}}.
\end{eqnarray*}
and the conclusion follows. 

The second of part (i) in the Corollary is true since if $(x,y)\in\partial U^{c}\cup\partial\MM$,
then $\frac{\partial\mathscr{P}}{\partial y}(x,y)=0$. Therefore $\frac{\partial\mathfrak{l}_{{\bf j}(x,y)+1}}{\partial y}(y,w)=0$
and ${\bf x}\left(\Delta\right)=x+o(\Delta)$.

(ii) Let $w\in\Sigma_{{\bf A}}\left(M_{t+1}\right)$. As for all $y$,
$\left\{ K_{N_{t+1},y}\left(\underline{x}_{M_{t+1}}\right),K_{N_{t+1},y}\left(\bar{x}_{M_{t+1}}\right):\ w\in\Sigma_{{\bf A}}\left(M_{t+1}\right)\right\} $
are fixed points for $h_{M_{t+1},y}$ it follows that for all $\Delta$,
\begin{eqnarray*}
\mathfrak{l}_{M_{t+1}}(y+\Delta,w) & = & K_{M_{t+1}-1,y+\Delta}\left(\bar{x}_{M_{t+1}}\right)-K_{M_{t+1}-1,y+\Delta}\left(\underline{x}_{M_{t+1}}\right)\\
 & = & K_{N_{t+1},y+\Delta}\left(\bar{x}_{M_{t+1}}\right)-K_{N_{t+1},y+\Delta}\left(\underline{x}_{M_{t+1}}\right).
\end{eqnarray*}
The last line follows from $h_{n,y+\Delta}=id_{\TT}$ for $N_{t+1}<n<M_{t+1}$.
Writing $\underline{{\bf x}}(\Delta)$ (respectively $\bar{{\bf x}}\left(\Delta\right)$)
for the x-delta point of $\underline{x}_{M_{t+1}}(w)$ (respectively
$\bar{x}_{M_{t+1}}(w)$). By Lemma \ref{lem:first use of the x-delta method},
for $\left|\Delta\right|$ small, 
\[
K_{N_{t+1},y+\Delta}\left(\underline{{\bf x}}\left(\Delta\right)\right)=K_{N_{t+1},y+\Delta}\left(\underline{x}_{N_{t+1}}\right)+\beta_{N_{t+1}}\left(\underline{x}_{M_{t+1}}\right)\mathfrak{l}_{N_{t+1}}(y+\Delta,w),
\]
and 
\[
K_{N_{t+1},y+\Delta}\left(\bar{{\bf x}}\left(\Delta\right)\right)=K_{N_{t+1},y+\Delta}\left(\underline{x}_{N_{t+1}}\right)+\beta_{N_{t+1}}\left(\bar{x}_{M_{t+1}}\right)\mathfrak{l}_{N_{t+1}}(y+\Delta,w).
\]
It then follows that for $|\Delta|$ small, 
\[
\mathfrak{l}_{M_{t+1}}(y+\Delta,w)=\left\{ \beta_{N_{t+1}}\left(\bar{x}_{M_{t+1}}\right)-\beta_{N_{t+1}}\left(\underline{x}_{M_{t+1}}\right)\right\} \mathfrak{l}_{N_{t+1}}(y+\Delta,w)+\overline{I}_{\Delta}+\underline{I}_{\Delta}.
\]
where by (\ref{eq: xdelta -x derivative relation}), 
\[
\left|\overline{I}_{\Delta}\right|:=\left|K_{N_{t+1},y+\Delta}\left(\bar{{\bf x}}\left(\Delta\right)\right)-K_{N_{t+1},y+\Delta}\left(\bar{x}_{M_{t+1}}\right)\right|\leq\left(1.1\right)\Delta(1.59)^{-N_{t+1}}
\]
and 
\[
\left|\underline{I}_{\Delta}\right|:=\left|K_{N_{t+1},y+\Delta}\left(\underline{{\bf x}}\left(\Delta\right)\right)-K_{N_{t+1},y+\Delta}\left(\underline{x}_{M_{t+1}}\right)\right|\leq\left(1.1\right)\Delta(1.59)^{-N_{t+1}}.
\]
It then follows that 
\begin{eqnarray*}
\left|\frac{\partial\mathfrak{l}_{M_{t+1}}}{\partial y}(y,w)\right| & \leq & \underset{\ll1}{\underbrace{\left\{ \beta_{N_{t+1}}\left(\bar{x}_{M_{t+1}}\right)-\beta_{N_{t+1}}\left(\underline{x}_{M_{t+1}}\right)\right\} }}\left|\frac{\partial\mathfrak{l}_{N_{t+1}}}{\partial y}(y,w)\right|+\lim_{\Delta\to0}\frac{\left|\overline{I}_{\Delta}\right|+\left|\underline{I}_{\Delta}\right|}{\Delta}\\
 & \lesssim & 3(1.6)^{-N_{t+1}}\ \text{as}\ M_{t+1}\to\infty.
\end{eqnarray*}
 
\end{proof}
So far we have managed to show to control $\frac{\partial K_{N_{t+1},y}(x)}{\partial y}$
by a constant times the derivative at level ${\bf j}(x,y)$ where
$t=t(y)=\min\left\{ t\in\NN:N_{t+1}\geq{\bf j}(x,y)\right\} $. The
next step is for $s>t(y)$, to obtain a relation between $\frac{\partial K_{s+1,y}(x)}{\partial y}$
and $\frac{\partial K_{N_{s,y}}}{\partial y}$.
\begin{defn}
For $M_{s}<n\leq N_{s+1}$, $x\in C_{[w]_{1}^{N_{s+1}}}$ and $\left|\Delta\right|$
small we define ${\bf x}_{s}\left(\Delta\right)$ to be the unique
point such that 
\[
\frac{K_{M_{s},y+\Delta}\left({\bf x}_{s}\left(\Delta\right)\right)-K_{M_{s},y+\Delta}\left(\underline{x}_{M_{s}+1}\right)}{\mathfrak{l}_{M_{s}+1}\left(y+\Delta,w\right)}=\frac{K_{M_{s},y}\left(x\right)-K_{M_{s},y}\left(\underline{x}_{M_{s}+1}\right)}{\mathfrak{l}_{M_{s}+1}\left(y,w\right)}.
\]
Setting similarly to before for $0\leq x\leq1/\p$ and $y$ such that
${\bf j}(x,y)<M_{s}$, 
\[
\beta_{M_{s}+1}(x):=\psi_{s+1}\left(\frac{K_{M_{s},y}\left(x\right)-K_{M_{s},y}\left(\underline{x}_{M_{s}+1}\right)}{\mathfrak{l}_{M_{s}+1}\left(y,w\right)}\right)
\]
and for $M_{s}+1<n\leq N_{s+1}$, 
\[
\beta_{n}(x):=\psi_{s+1}\left(\frac{\beta_{n-1}(x)-\beta_{n-1}\left(\underline{x}_{n}\right)}{\beta_{n-1}\left(\bar{x}_{n}\right)-\beta_{n-1}\left(\underline{x}_{n}\right)}\right).
\]
\end{defn}
\begin{lem}
\label{lem: derivatives, scaling function long after the breaking point}For
all $(x,y)\in R_{1}\cup R_{3}$ with ${\bf j}(x,y)<N_{s}$ the following
holds: 

1. for every $|\Delta|$ small, 
\[
K_{N_{s+1},y+\Delta}\left({\bf x}_{s}\left(\Delta\right)\right)=K_{N_{s+1},y+\Delta}\left(\underline{x}_{N_{s+1}}\right)+\beta_{N_{s+1}}(x)l_{N_{s+1}}\left(y+\Delta,w\right).
\]

2. (i) For every $M_{s}<n\leq N_{s+1}$, 
\[
\left|\frac{\partial\mathfrak{l}_{n}}{\partial y}(y,w)\right|\leq(1.6)^{M_{s}-n}\left|\frac{\partial\mathfrak{l}_{M_{s}}}{\partial y}(y,w)\right|.
\]

(ii) $\left|\frac{\partial\mathfrak{l}_{M_{s}}}{\partial y}(y,w)\right|\le(1.6)^{-n_{s}}.$ 

(iii) Assume that $\left\{ N_{k},M_{k-1},\epsilon_{k},\lambda_{k}\right\} _{k=1}^{s}$
are chosen, there exists a choice of $M_{s},\lambda_{s+1},N_{s+1}\ \text{and }\epsilon_{s+1}$
(compatible with the inductive procedure) such that 
\[
\left|{\bf x}_{s}\left(\Delta\right)-x\right|\leq\p^{-N_{s+1}}\Delta+o\left(\Delta\right)
\]
 as $\Delta\to0$.
\end{lem}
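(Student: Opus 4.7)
The proof structure will follow the same template as Lemmas \ref{lem: derivatives of scaling functions} and \ref{lem:first use of the x-delta method}, but shifted so that the role of ${\bf j}(x,y)$ is played by $M_s$. The key technical insight is that once we pass the distortion-correction time $M_s$, the subsequent maps $h_{n,y}$ for $M_s<n\le N_{s+1}$ depend on $y$ only through the endpoints $K_{M_s,y}(\underline{x}_n(w))$ and $K_{M_s,y}(\bar{x}_n(w))$, which move rigidly under the scaling. This rigidity is the analog of the identity~\eqref{eq: equality of the bad set} at the new break point $M_s$, and it is enforced in the base case by the very definition of ${\bf x}_s(\Delta)$.

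For Part~1, I will argue by induction on $n\in(M_s,N_{s+1}]$. The base case $n=M_s+1$ follows directly from the defining relation for ${\bf x}_s(\Delta)$: once the ratio $(K_{M_s,y+\Delta}({\bf x}_s(\Delta))-K_{M_s,y+\Delta}(\underline{x}_{M_s+1}(w)))/\mathfrak{l}_{M_s+1}(y+\Delta,w)$ is locked to its $\Delta=0$ value, applying $\psi_{s+1}$ produces $\beta_{M_s+1}(x)$, and the induction from $n$ to $n+1$ is verbatim the argument given in the proof of Lemma~\ref{lem: induction down when new endpoint is similar to old endpopint}. Part~2(i) is obtained in the same manner as Lemma~\ref{lem: derivatives of scaling functions}: for any small $|\Delta|$ the quotient $\mathfrak{l}_n(y+\Delta,w)/\mathfrak{l}_{M_s+1}(y+\Delta,w)$ equals the deterministic product $\prod_{k=M_s+1}^{n-1}P_k(w_k,w_{k+1})$, so differentiating and using the uniform bound $P_k(w_k,w_{k+1})\le \lambda_{s+1}/\varphi<1/1.6$ yields the claimed exponential decay.

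Part~2(ii) is the heart of the argument and needs the choice of parameters described in the inductive construction. The plan is to combine Part~2(i) applied to the preceding block $(M_{s-1},N_s]$ with a careful analysis of the distortion-correction map $h_{M_s,y}$, mimicking the proof of Corollary~\ref{cor: The decay after the first break }(ii). Since $h_{k,y}=\mathrm{id}$ for $N_s<k<M_s$, the length $\mathfrak{l}_{M_s}$ is determined by $K_{N_s,y}$ and by the $x$-delta displacements of the endpoints of $C_{[w]_1^{M_s}}$; Part~2(i) gives that $|\partial_y\mathfrak{l}_{N_s}|\le (1.6)^{M_{s-1}-N_s}|\partial_y\mathfrak{l}_{M_{s-1}}|\le(1.6)^{-n_s}$ (the last bound being enforceable by taking $n_s$ sufficiently large relative to previous parameters), and then the same x-delta estimate as in \eqref{eq: xdelta -x derivative relation} controls the remaining endpoint displacements by a term of order $(1.6)^{-N_s}$, which is negligible compared to $(1.6)^{-n_s}$. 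Part~2(iii) then follows by solving the implicit equation defining ${\bf x}_s(\Delta)$ exactly as in Lemma~\ref{lem:first use of the x-delta method}(ii): substituting the Taylor expansion of $\mathfrak{l}_{M_s+1}(y+\Delta,w)$ obtained from 2(i)--2(ii) into the defining identity for ${\bf x}_s(\Delta)$ and bounding the Lipschitz constant of $K_{M_s,y+\Delta}$ on $C_{[w]_1^{N_{s+1}}}$ by $\lambda_{s+1}^{2(N_{s+1}-M_s)}\le \varphi^{N_{s+1}-M_s}$ (for $\lambda_{s+1}$ sufficiently close to $1$), yields the desired $\varphi^{-N_{s+1}}\Delta$ bound.

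The main obstacle, as in the earlier x-delta arguments, is the bookkeeping required for Part~2(ii): one must verify that enlarging $M_s$, shrinking $\lambda_{s+1}$, and shrinking $\epsilon_{s+1}$ to secure the bound $|\partial_y\mathfrak{l}_{M_s}|\le (1.6)^{-n_s}$ is consistent with \emph{all} of the constraints already imposed in Sections~\ref{sec:Type--Markov} and \ref{sec:Type III perturbations of the Golden Mean Shift} and in the preceding Corollary~\ref{cor: The decay after the first break }(ii). This is possible because each of those constraints only forces $M_s$, $N_{s+1}$ to be \emph{larger}, $\lambda_{s+1}$ to be \emph{closer} to $1$, and $\epsilon_{s+1}$ to be \emph{smaller}, which is precisely the direction required here; so the choices can be nested within the inductive construction. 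Apart from that, the proof is entirely parallel to the arguments already given and introduces no new analytic ideas.
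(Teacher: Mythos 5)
Your handling of part 1, part 2(i) and part 2(ii) matches the paper's proof: part 1 is the same induction as in Lemma \ref{lem: induction down when new endpoint is similar to old endpopint}; 2(i) rests, exactly as in the paper, on the fact that the distribution--correction property (equation \eqref{eq: the important equality of H_M_t} transported to $K_{M_s,y}$) makes the ratio of $\mathfrak{l}_n(y,w)$ to the length at the break level independent of $y$, so the derivative scales by a deterministic factor bounded by $(1.6)^{M_s-n}$; and 2(ii) is proved by induction on $s$ precisely as you describe, with Corollary \ref{cor: The decay after the first break }(ii) as the base case, 2(i) applied to the block $(M_{s-1},N_s]$, and the $x$-delta endpoint displacements contributing only $O\left((1.6)^{-N_{s}}\right)$.

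The gap is in 2(iii). You propose to solve the implicit equation for ${\bf x}_s(\Delta)$ ``exactly as in Lemma \ref{lem:first use of the x-delta method}(ii)'', but that computation relied on the explicit formula $K_{{\bf j},y}(x)=\left(H_{{\bf j}}(x)-x\right)\mathscr{P}(x,y)+x$, which gives direct access to the $y$-variation of the map through $\partial\mathscr{P}/\partial y$. At level $M_s$ the relevant map is $h_{M_s,y}\circ K_{N_s,y}$, where $h_{M_s,y}$ is the distribution--correction function built from $G_{\alpha(s,w^{-},y),\alpha(s,w,y)}$; its $y$-dependence is implicit, and it is not controlled by a Lipschitz bound on $K_{M_s,y+\Delta}$ in the $x$-variable alone (your bound $\lambda_{s+1}^{2(N_{s+1}-M_s)}\leq\p^{N_{s+1}-M_s}$ is in any case $\geq1$, so it cannot produce the decaying factor $\p^{-N_{s+1}}$). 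The paper's proof introduces the normalized map $\mathscr{K}_{s,y}(x)=\left(K_{M_s,y}(x)-K_{M_s,y}\left(\underline{x}_{M_s}(w)\right)\right)/\mathfrak{l}_{M_s}(y,w)$, observes that $\mathscr{K}_{s,y+\Delta}=\mathscr{K}_{s,y}$ \emph{exactly} outside the $\epsilon_{s+1}$-proportion interpolation zone, and inside that zone bounds the discrepancy by $\epsilon_{s+1}\left|\Upsilon(s,w,y+\Delta)-\Upsilon(s,w,y)\right|m\left(C_{[w]_1^{M_s}}\right)$ with $\Upsilon=\alpha\left(s,w^{-},y\right)/\alpha(s,w,y)$; the variation of $\Upsilon$ is then controlled by 2(ii), and it is the smallness of $\epsilon_{s+1}$ at \emph{this} step (not in 2(ii), where you place it) that yields $\mathscr{K}_{s,y+\Delta}(x)=\mathscr{K}_{s,y}(x)\pm\frac{1}{2}\Delta\p^{-N_{s+1}}+o(\Delta)$, after which the lower bound $\partial\mathscr{K}_{s,y}/\partial x\geq1/2$ inverts the relation. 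This analysis of the $y$-variation of the correction map is the genuinely new content of 2(iii) and is absent from your outline.
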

\begin{proof}
This is done by induction on $s$. The base of induction is the first
$s\in\NN$ such that $N_{s}>{\bf j}(x,y)$. 

1. This is similar to the proof of Lemma \ref{lem: induction down when new endpoint is similar to old endpopint}
and Lemma \ref{lem:first use of the x-delta method}.(i). 

2. (i) Let $w\in\Sigma_{{\bf A}}$. The starting point is that by
the definition of $h_{M_{t},y}$ (as a distribution correcting function),
equation \ref{eq: the important equality of H_M_t} holds for $K_{M_{t},y}$.
Therefore for all $w\in\Sigma_{{\bf A}}$ and $h>0$ small,
\[
m_{{\rm \TT}}\left(K_{M_{s},y+h}\left(C_{[w]_{1}^{N_{s+1}}}\right)\right)=\mathfrak{l}_{M_{s}}(y+h,w)\frac{m_{\TT}\left(C_{[w]_{1}^{N_{s+1}}}\right)}{m_{\TT}\left(C_{[w]_{1}^{M_{s}}}\right)}.
\]
 The rest is similar to the proof of the first part of Lemma \ref{lem: derivatives of scaling functions}
with ${\bf j}(x,y)$ replaced by $M_{s}$. 

2.(ii) Since for all $y$, $\left\{ K_{N_{s},y}\left(\underline{x}_{M_{s}}(w)\right),K_{N_{s},y}\left(\bar{x}_{M_{s}}(w)\right):\ w\in\Sigma_{{\bf A}}\left(M_{s}\right)\right\} $
are fixed points for $h_{M_{s},y}$ it follows that (here $\underline{x}_{M_{s}}=\underline{x}_{M_{s}}(w)$)
\[
\mathfrak{l}_{M_{s}}(y+\Delta,w)=K_{N_{s},y+\Delta}\left(\bar{x}_{M_{s}}\right)-K_{N_{s},y+\Delta}\left(\underline{x}_{M_{s}}\right).
\]
The base of induction is Corollary \ref{cor: The decay after the first break }.(ii). 

The proof of the inductive step is the same as the proof of the base
of induction where we use the induction hypothesis that
\[
\left|\frac{\partial\mathfrak{l}_{N_{s}}}{\partial y}(y,w)\right|\le(1.6)^{M_{s-1}-N_{s}}\left|\frac{\partial\mathfrak{l}_{M_{s-1}}}{\partial y}(y,w)\right|\leq(1.6)^{-N_{s-1}-n_{s}},
\]
and 
\[
\left|{\bf x}_{s}\left(\Delta\right)-x\right|\leq\p^{-N_{s+1}}\Delta+o\left(\Delta\right).
\]
Therefore, 
\[
\left|\overline{I}_{\Delta}(s)\right|:=\left|K_{N_{t+1},y+\Delta}\left(\bar{{\bf x}}_{s}\left(\Delta\right)\right)-K_{N_{t+1},y+\Delta}\left(\bar{x}_{M_{s}}\right)\right|\leq\Delta(1.6)^{-N_{s+1}}
\]
and
\[
\left|\underline{I}_{\Delta}\right|:=\left|K_{N_{t+1},y+\Delta}\left(\underline{{\bf x}}_{s}\left(\Delta\right)\right)-K_{N_{t+1},y+\Delta}\left(\underline{x}_{M_{s}}\right)\right|\leq\Delta(1.6)^{-N_{s+1}}.
\]
It then follows that 
\begin{eqnarray*}
\left|\frac{\partial\mathfrak{l}_{M_{s}}}{\partial y}(y,w)\right| & \leq & \underset{\ll1}{\underbrace{\left\{ \beta_{N_{s}}\left(\bar{x}_{M_{s}}\right)-\beta_{N_{s}}\left(\underline{x}_{M_{s}}\right)\right\} }}\left|\frac{\partial\mathfrak{l}_{N_{s}}}{\partial y}(y,w)\right|+\lim_{\Delta\to0}\frac{\left|\overline{I}_{\Delta}(s)\right|+\left|\underline{I}_{\Delta}(s)\right|}{\Delta}\\
 & \lesssim & (1.6)^{-N_{s-1}}(1.6)^{-n_{s}}+2(1.6)^{-N_{s+1}}\\
 & \leq & (1.6)^{-n_{s}}.
\end{eqnarray*}
(iii) We first recall the definition of $K_{M_{s},y}$. Define $\alpha:\NN\times\Sigma_{{\bf A}}\times\left[-\p/\left(\p+2\right),\p^{2}/\left(\p+2\right)\right]$
by 
\[
\alpha(s,w,y):=\frac{\int_{C_{[w]_{1}^{M_{s}}}}\frac{\partial K_{M_{s}-1,y}}{\partial x}(x)dx}{m_{\TT}\left(C_{[w]_{1}^{M_{s}}}\right)}=\frac{\mathfrak{l}_{M_{s}}\left(y,w\right)}{m_{\TT}\left(C_{[w]_{1}^{M_{s}}}\right)}.
\]
It follows that for $s\in\NN$ such that $N_{s}>{\bf j}(x,y)$, the
definition of $K_{M_{s},y}$ restricted to $C_{[w]_{1}^{M_{s}}}$
is the function defined by $K_{M_{s},y}\left(\underline{x}_{M_{s}}\right)=K_{N_{s},y}\left(\underline{x}_{M_{s}}\right)$
and $x$-derivative
\[
\frac{\partial K_{M_{s},y}}{\partial x}(x):=\begin{cases}
\frac{\partial G_{\alpha\left(s,w^{-},y\right),\alpha\left(s,w,y\right)}}{\partial x}\left(\frac{x-\underline{x}_{M_{s}}}{\epsilon_{t+1}m_{\TT}\left(C_{[w]_{1}^{M_{s}}}\right)}\right) & 0\leq x-\underline{x}_{M_{s}}\leq\epsilon_{s+1}m_{\TT}\left(C_{[w]_{1}^{M_{s}}}\right)\\
\alpha(s,w,y) & x-\underline{x}_{M_{s}}\geq\epsilon_{s+1}m_{\TT}\left(C_{[w]_{1}^{M_{s}}}\right)
\end{cases},
\]
where $w^{-}$ is the predecessor of $w$ in $\Sigma_{{\bf A}}\left(M_{t}\right)$
and $G_{\alpha_{1},\alpha_{2}}:[0,1]\to[0,1]$ is the function defined
by (\ref{eq: Definition of G_alpha_beta}). 

Therefore the function 
\[
\mathscr{K}_{t,y}(x):=\frac{K_{M_{s},y}(x)-K_{M_{s},y}\left(\underline{x}_{M_{s}}\right)}{\mathfrak{l}_{M_{s}}\left(y,w\right)}=\left(m_{\TT}\left(C_{[w]_{1}^{M_{s}}}\right)\right)^{-1}\left[\frac{K_{M_{s},y}(x)-K_{M_{s},y}\left(\underline{x}_{M_{s}}\right)}{\mathfrak{\alpha}\left(s,y,w\right)}\right]
\]
satisfies $\mathscr{K}_{t,y}\left(\underline{x}_{M_{t}}(w)\right)=0$
and 
\[
\frac{\partial\mathscr{K}_{t,y}}{\partial x}(x):=m_{\TT}\left(C_{[w]_{1}^{M_{s}}}\right)^{-1}\cdot\begin{cases}
\frac{\partial G_{\Upsilon(s,w,y),1}}{\partial x}\left(\frac{x-\underline{x}_{M_{s}}}{\epsilon_{s+1}m_{\TT}\left(C_{[w]_{1}^{M_{s}}}\right)}\right), & 0\leq x-\underline{x}_{M_{s}}\leq\epsilon_{s+1}m_{\TT}\left(C_{[w]_{1}^{M_{s}}}\right)\\
1 & x-\underline{x}_{M_{s}}\geq\epsilon_{s+1}m_{\TT}\left(C_{[w]_{1}^{M_{s}}}\right)
\end{cases},
\]
where $\Upsilon(s,w,y)=\frac{\alpha\left(s,w^{-},y\right)}{\alpha\left(s,w,y\right)}.$
The function $\mathscr{K}_{s,y}$ is important since the definition
of ${\bf x}_{s}(\Delta)$ is as the unique point so that 
\[
\mathscr{K}_{s,y+\Delta}\left({\bf x}_{t}(\Delta)\right)=\mathscr{K}_{t,y}\left(x\right).
\]
Since for all $0<a$, $\int_{0}^{1}G'_{a,1}(x)dx=1$, it follows that
if $x-\underline{x}_{M_{s}}\geq\epsilon_{s+1}m_{\TT}\left(C_{[w]_{1}^{M_{s}}}\right)$
then writing $\hat{x}=\underline{x}_{M_{s}}+\epsilon m_{\TT}\left(C_{[w]_{1}^{M_{s}}}\right)$

\begin{align*}
\mathscr{K}_{s,y+\Delta}(x) & =\int_{\underline{x}_{M_{s}}}^{x}\frac{\partial\mathscr{K}_{s,y+\Delta}}{\partial x}(x)\\
 & =\left(m_{\TT}\left(C_{[w]_{1}^{M_{s}}}\right)\right)^{-1}\left(\int_{\underline{x}_{M_{s}}}^{\hat{x}}\frac{\partial G_{\Upsilon(s,w,y+\Delta),1}}{\partial x}\left(\frac{x-\underline{x}_{M_{s}}}{\epsilon_{s+1}m_{\TT}\left(C_{[w]_{1}^{M_{s}}}\right)}\right)dx+(x-\hat{x})\right)\\
 & =\frac{x-\underline{x}_{M_{s}}}{m_{\TT}\left(C_{[w]_{1}^{M_{s}}}\right)}=\mathscr{K}_{s,y}(x)\ \ \ \ \text{by a similar reasonning. }
\end{align*}
If $x-\underline{x}_{M_{s}}\leq\epsilon_{s+1}m\left(C_{[w]_{1}^{M_{s}}}\right)$
then using the fact that for all $0\leq x\leq1$, $\left|G_{a,1}(x)-G_{b,1}(x)\right|\leq x\left|b-a\right|$,
\begin{eqnarray*}
\mathscr{K}_{s,y+\Delta}(x) & = & \mathscr{K}_{s,y}(x)\pm\left|\Upsilon(s,w,y+\Delta)-\Upsilon(s,w,y)\right|\cdot\left(x-\underline{x}_{M_{s}}(w)\right)\\
 & = & \mathscr{K}_{s,y}(x)\pm\epsilon_{s+1}\left|\Upsilon(s,w,y+\Delta)-\Upsilon(s,w,y)\right|.
\end{eqnarray*}
\begin{eqnarray*}
\left|\Upsilon(s,w,y+\Delta)-\Upsilon(s,w,y)\right| & = & \frac{m\left(C_{[w]_{1}^{M_{s}}}\right)}{m\left(C_{\left[w^{-}\right]_{1}^{M_{s}}}\right)}\left|\frac{\mathfrak{l}_{M_{s}}\left(y+\Delta,w^{-}\right)}{\mathfrak{l}_{M_{s}}\left(y+\Delta,w\right)}-\frac{\mathfrak{l}_{M_{s}}\left(y,w^{-}\right)}{\mathfrak{l}_{M_{s}}\left(y,w\right)}\right|\\
 & \leq & \Delta\frac{\p\left(\left|\frac{\partial\mathfrak{l}_{M_{s}}}{\partial y}(y,w)\right|+\left|\frac{\partial\mathfrak{l}_{M_{s}}}{\partial y}\left(y,w^{-}\right)\right|\right)}{\min\left(\mathfrak{l}_{M_{s}}(y,w),\mathfrak{l}_{M_{s}}\left(y,w^{-}\right)\right)}+o(\Delta).
\end{eqnarray*}
Thus if $\epsilon_{s+1}$ is sufficiently small (this choice depends
on $\{N_{t},M_{t},\lambda_{t}:t\leq t$ \} and $N_{s+1}$) then 
\[
\mathscr{K}_{s,y+\Delta}(x)=\mathscr{K}_{s,y}(x)\pm\frac{1}{2}\Delta\p^{-N_{s+1}-M_{s}}+o(\Delta),
\]
and for all $\Delta$ sufficiently small 
\begin{eqnarray*}
\mathscr{K}_{s,y}\left(x\right) & = & \mathscr{K}_{s,y+\Delta}\left({\bf x}_{s}(\Delta)\right)\\
 & = & \mathscr{K}_{s,y}\left({\bf x}_{s}(\Delta)\right)\pm\frac{1}{2}\Delta\p^{-N_{s+1}}+o(\Delta).
\end{eqnarray*}
Since 
\[
\frac{\partial\mathscr{K}_{s,y}}{\partial x}(x)\geq\left(2m_{\TT}\left(C_{[w]_{1}^{M_{t}}}\right)\right)^{-1}\geq\frac{\varphi^{-M_{s}}}{2},
\]
it follows that 
\begin{eqnarray*}
\left|x-{\bf x}_{s}(\Delta)\right| & \leq & 2\varphi^{M_{s}}\left|\mathscr{K}_{s,y}\left(x\right)-\mathscr{K}_{s,y}\left({\bf x}_{t}(\Delta)\right)\right|\\
 & \leq & \Delta\p^{-N_{s+1}}+o(\Delta)
\end{eqnarray*}
as required. 
\end{proof}
The next corollary is the final ingredient for the proof of Theorem
\ref{thm: The proof of convergence of y derivative}. 
\begin{cor}
\label{cor: convergence of y drivative of K_ny} There exists a choice
of $\left\{ \lambda_{t},n_{t},N_{t},m_{t},M_{t},\epsilon_{t}\right\} _{t\in\NN}$
such that:

(i) For all $(x,y)\in\MM_{\sim}$ and for all $t\in\NN$ such that
${\bf j}(x,y)<N_{t}$, 
\[
\left|\frac{\partial K_{N_{t+1},y}(x)}{\partial y}\right|\leq\left|\frac{\partial K_{N_{t},y}\left(\underline{x}_{M_{t}}\left(w\right)\right)}{\partial y}\right|+2(1.6)^{-n_{t}}.
\]

(ii) $\frac{\partial K_{N_{t+1},y}(x)}{\partial y}$ converges uniformly
in $R_{1}\cup R_{3}$ as $t\to\infty$. For every $(x,y)\in\partial\left(R_{1}\cup R_{3}\right)$
or $(x,y)\in U$, 
\[
\lim_{t\to\infty}\left|\frac{\partial K_{N_{t+1},y}(x)}{\partial y}\right|=0
\]
\end{cor}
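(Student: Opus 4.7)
The proof of (i) combines the $x$-delta method of Lemma~\ref{lem: derivatives, scaling function long after the breaking point} with an endpoint telescope as in Corollary~\ref{cor: The decay after the first break }. Fix $(x,y)\in\MM_\sim$ with ${\bf j}(x,y)<N_t$ and $w\in\Sigma_{\bf A}$ with $x\in C_{[w]_1^{N_{t+1}}}$. Writing
\[
\frac{K_{N_{t+1},y+\Delta}(x)-K_{N_{t+1},y}(x)}{\Delta} = \underbrace{\frac{K_{N_{t+1},y+\Delta}(x)-K_{N_{t+1},y+\Delta}({\bf x}_t(\Delta))}{\Delta}}_{(\mathrm{A})} + \underbrace{\frac{K_{N_{t+1},y+\Delta}({\bf x}_t(\Delta))-K_{N_{t+1},y}(x)}{\Delta}}_{(\mathrm{B})},
\]
the term $(\mathrm{A})$ vanishes in the limit because $|{\bf x}_t(\Delta)-x|\le\varphi^{-N_{t+1}}\Delta+o(\Delta)$ by Lemma~\ref{lem: derivatives, scaling function long after the breaking point}.2.(iii) while $|\partial_x K_{N_{t+1},y+\Delta}|\le\lambda_{t+1}^{2(N_{t+1}-M_t)}$ is tamed by the condition $\lambda_{t+1}^{2M_t}\le e^{2^{-N_t}}$ from Subsection~\ref{sub:The-modified-induction}. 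The term $(\mathrm{B})$ is unpacked via Lemma~\ref{lem: derivatives, scaling function long after the breaking point}.1, which lets us decouple the increment into a base-point piece $\partial_y K_{N_{t+1},y}(\underline{x}_{N_{t+1}}(w))$ plus a scaling piece $\beta_{N_{t+1}}(x)\,\partial_y\mathfrak{l}_{N_{t+1}}(y,w)$; the latter is bounded by Lemma~\ref{lem: derivatives, scaling function long after the breaking point}.2.(i)--(ii) by $(1.6)^{-n_{t+1}-n_t}$, which is negligible.

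It then remains to relate $\partial_y K_{N_{t+1},y}(\underline{x}_{N_{t+1}}(w))$ to $\partial_y K_{N_t,y}(\underline{x}_{M_t}(w))$. Since $h_n=\mathrm{id}$ on $(N_t,M_t)$ and $h_{M_t,y}$ fixes the $M_t$-endpoints, $K_{M_t,y}(\underline{x}_{M_t}(w))=K_{N_t,y}(\underline{x}_{M_t}(w))$. For each $M_t<n\le N_{t+1}$ the left endpoint $\underline{x}_n(w)$ either equals $\underline{x}_{n-1}(w)$ or is obtained by adding a bounded multiple of $\mathfrak{l}_n(y,w)$, exactly as in the derivation of \eqref{eq: derivative at special points}; summing the resulting corrections and applying Lemma~\ref{lem: derivatives, scaling function long after the breaking point}.2 gives an error bounded by $\tfrac{2}{3}\sum_{n=M_t+1}^{N_{t+1}}(1.6)^{M_t-n}|\partial_y\mathfrak{l}_{M_t}(y,w)|\le 2(1.6)^{-n_t}$, which yields~(i).

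For (ii), I require in the modified induction of Subsection~\ref{sub:The-modified-induction} that $\sum_t (1.6)^{-n_t}<\infty$. Iterating (i) then forces $\{\partial_y K_{N_t,y}(x)\}$ to be uniformly Cauchy on $\MM_\sim$, so the limit exists and is continuous. The boundary cases are immediate: on $U$ the perturbations $\rr_n(\cdot,y)$ are independent of $y$, so $\partial_y K_{n,y}\equiv 0$; on $\partial\MM$ one has ${\bf u}(x,y)=0$ and $\partial_u\mathscr{P}(x,0)=0$ by property~(2) of Remark~\ref{Rk: properties of q_n}, which kills the break-step derivative $\partial_y\rr_{{\bf j}(x,y)}$, and the inductive propagation of Lemma~\ref{lem: induction down when new endpoint is similar to old endpopint} then drives $\partial_y K_{N_t,y}(x)\to 0$.

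The main obstacle, which the bookkeeping of Lemmas~\ref{lem: induction down when new endpoint is similar to old endpopint}--\ref{lem: derivatives, scaling function long after the breaking point} was engineered to defeat, is that the recursion compares the derivative at $x$ with the derivative at a \emph{shifted} base-point $\underline{x}_{M_t}(w)$; one must simultaneously absorb the $x$-delta displacement, the $\partial_y\mathfrak{l}_{N_{t+1}}$-contribution, and the endpoint telescope. Each summand is made geometrically small in $n_t$ by the joint choice of $\lambda_{t+1}$, $\epsilon_{t+1}$, and $n_t$ stipulated in Subsection~\ref{sub:The-modified-induction}, so that the series telescopes and (ii) follows.
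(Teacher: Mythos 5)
Your proposal follows essentially the same route as the paper: the same $x$-delta decomposition of the difference quotient (the displacement term controlled by Lemma \ref{lem: derivatives, scaling function long after the breaking point}.2.(iii) and the $\partial_x$-bound, the structural term unpacked via Lemma \ref{lem: derivatives, scaling function long after the breaking point}.1 into a base-point derivative plus a $\partial_y\mathfrak{l}$ contribution), the same endpoint telescope back to $\underline{x}_{M_t}(w)$, and the same summability-of-$(1.6)^{-n_t}$ argument for the uniform Cauchy property in (ii). The only quibble is that the smallness of the displacement term comes from $\varphi/\lambda_1^2>1.6$ (so that $\lambda_{t+1}^{2(N_{t+1}-M_t)}\varphi^{-N_{t+1}}\leq(1.6)^{-N_t}$) rather than from the condition $\lambda_{t+1}^{2M_t}\leq e^{2^{-N_t}}$ you cite, but this does not affect the argument.
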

\begin{proof}
We assume that $\left\{ \lambda_{t},n_{t},N_{t},m_{t},M_{t},\epsilon_{t}\right\} $
are chosen so that Lemma \ref{lem: derivatives, scaling function long after the breaking point}
holds.

(i) Similarly as in the proof of Corollary \ref{cor: The decay after the first break }.(i)
one can use the facts that 
\[
\left|\frac{\partial K_{N_{t+1},y}\left(\underline{x}_{N_{t+1}}\left(w\right)\right)}{\partial y}\right|\leq\left|\frac{\partial K_{M_{t},y}\left(\underline{x}_{M_{t}}\left(w\right)\right)}{\partial y}\right|+\sum_{n=M_{t}+1}^{N_{t+1}}\left|\frac{\partial\mathfrak{l}_{n}}{\partial y}(y,w)\right|,
\]
and 
\[
K_{M_{t},y}\left(\underline{x}_{M_{t}}\left(w\right)\right)=K_{N_{t},y}\left(\underline{x}_{M_{t}}\left(w\right)\right)
\]
 to show that 
\begin{eqnarray*}
\left|\frac{\partial K_{N_{t+1},y}\left(\underline{x}_{N_{t+1}}\right)}{\partial y}\right| & \overset{{\rm Lem.}\ref{lem: derivatives, scaling function long after the breaking point}.2.i}{\leq} & \left|\frac{\partial K_{N_{t},y}\left(\underline{x}_{M_{t}}\right)}{\partial y}\right|+\sum_{n=M_{t}+1}^{N_{t+1}}(1.6)^{M_{t}-n}\left|\frac{\partial\mathfrak{l}_{M_{t}}}{\partial y}(y,w)\right|\\
 & \overset{{\rm Lem.}\ref{lem: derivatives, scaling function long after the breaking point}.2.ii}{\leq} & \left|\frac{\partial K_{N_{t},y}\left(\underline{x}_{M_{t}}\right)}{\partial y}\right|+(1.6)^{-n_{t}}\sum_{n=M_{t}+1}^{N_{t}}(1.6)^{M_{t}-n}\\
 & \leq & \left|\frac{\partial K_{N_{t},y}\left(\underline{x}_{M_{t}}\right)}{\partial y}\right|+\frac{5}{3}(1.6)^{-n_{t}}.
\end{eqnarray*}
Therefore by Lemma \ref{lem: derivatives, scaling function long after the breaking point},
\begin{align*}
\lim_{\Delta\to0}\left|\frac{K_{N_{t+1},y+\Delta}\left({\bf x}_{t}\left(\Delta\right)\right)-K_{N_{t+1},y}\left(x\right)}{\Delta}\right| & \leq\left(\left|\frac{\partial K_{N_{t},y}\left(\underline{x}_{M_{t}}\right)}{\partial y}\right|+\frac{5}{3}(1.6)^{-n_{t}}\right)+\beta_{n}(x)\frac{\partial l_{N_{t+1}}}{\partial y}(y,w)\\
 & \leq\left(\left|\frac{\partial K_{N_{t},y}\left(\underline{x}_{M_{t}}\right)}{\partial y}\right|+\frac{5}{3}(1.6)^{-n_{t}}\right)+\beta_{n}(x)\left(1.6\right)^{-n_{t+1}}
\end{align*}
and by Lemma \ref{lem: derivatives, scaling function long after the breaking point}.2.(iii),
\begin{align*}
\left|\frac{K_{N_{t+1},y+\Delta}\left({\bf x}_{t}\left(\Delta\right)\right)-K_{N_{t+1},y+\Delta}\left(x\right)}{\Delta}\right| & \leq\left(\sup_{\left(x,y\right)\in\MM_{\sim}}\left|\frac{\partial K_{N_{t+1},y+\Delta}\left(x\right)}{\partial x}\right|\right)\left(\frac{\left|{\bf x}_{t}\left(\Delta\right)-x\right|}{\Delta}\right)\\
 & \leq\lambda_{1}^{2N_{t+1}}\p^{-N_{t+1}}+o(\Delta)\leq(1.6)^{-N_{t}}+o(\Delta).
\end{align*}
A combination of the previous two inequalities and $n_{t}=o(N_{t})$,
$N_{t}=o\left(n_{t+1}\right)$ shows that 
\[
\left|\frac{\partial K_{N_{t+1},y}\left(x)\right)}{\partial y}\right|\leq\left|\frac{\partial K_{N_{t},y}\left(\underline{x}_{M_{t}}\left(w\right)\right)}{\partial y}\right|+2(1.6)^{-n_{t}}.
\]

(ii) Let $t\in\NN$, and $(x,y)\in R_{1}\cup R_{3}$ By applying part
(i) of this corollary repeatedly one has that with $t(x,y)=\min\left\{ t:\ N_{t}>{\bf j}(x,y)\right\} $.
For $t>t(x,y)+1$ and $w$ such that $x\in C_{[w]_{1}^{N_{t+1}}}$,
by the first part of the corollary 
\[
\left|\frac{\partial K_{N_{t},y}\left(\underline{x}_{M_{t}}\right)}{\partial y}\right|\leq\left|\frac{\partial K_{N_{t-1},y}\left(\underline{x}_{M_{t-1}}\right)}{\partial y}\right|+2\left(1.6\right)^{-n_{t}}
\]
For $t=t(x,y)$ by Corollary \ref{cor: The decay after the first break }
\[
\left|\frac{\partial K_{N_{t(x,y)},y}\left(\underline{x}_{M_{t(x,y)}}\right)}{\partial y}\right|\leq(1.6)^{-{\bf j}(x,y)}.
\]
A combination of these two observations shows that for $t>t(x,y)$
\begin{eqnarray*}
\left|\frac{\partial K_{N_{t},y}(x)}{\partial y}\right| & \leq & \left|\frac{\partial K_{N_{t(x,y)},y}\left(\underline{x}_{M_{t(x,y)}}\right)}{\partial y}\right|+2\sum_{k=t(x,y)}^{t}(1.6)^{-n_{k}}.\\
 & \overset{}{\leq} & (1.6)^{-{\bf j}(x,y)}+2(1.6)^{-n_{t(x,y)}}.
\end{eqnarray*}
This is enough to show that $\left\{ \frac{\partial K_{N_{s+1},y}(x)}{\partial y}\right\} _{s=1}^{\infty}$
is a Cauchy sequence in the uniform topology. Indeed if $s,t>t(x,y)$,
then
\[
\left|\frac{\partial K_{N_{t},y}(x)}{\partial y}-\frac{\partial K_{N_{s},y}(x)}{\partial y}\right|\leq\sum_{k=t}^{s}(1.6)^{-n_{k}}\leq2(1.6)^{-n_{t}}.
\]
If $N_{t}<{\bf j}(x,y)-3\leq N_{s+1}$ then $K_{N_{t+1},y}(x)=x$
in a neighborhood of $y$ and hence 
\begin{eqnarray*}
\left|\frac{\partial K_{N_{t+1},y}(x)}{\partial y}-\frac{\partial K_{N_{s+1},y}(x)}{\partial y}\right| & = & \left|\frac{\partial K_{N_{s+1},y}(x)}{\partial y}\right|\\
 & \leq & (1.6)^{-{\bf j}(x,y)}+2(1.6)^{-n_{t}}\\
 & \leq & 3(1.6)^{-n_{t}}
\end{eqnarray*}
 We leave the bound on the easier cases $t=t(x,y)-1<s$, $s,t<t(x,y)-1$
to the reader. 
\end{proof}
\begin{rem}
The latter corollary shows that $\lim_{t\to\infty}\frac{\partial K_{N_{t},y}}{\partial y}$
is uniformly continuous in $x$ as a uniform limit of continuous functions.
As a consequence, since for all $x\in[0,1/\varphi]$, the sequence
$K_{N_{t},y}^{-1}(x)$ converges uniformly to $\mathfrak{h}_{y}^{-1}(x)$
and $\mathfrak{h}_{y}^{-1}$ is a homeomorphism of $[0,1/\varphi]$
then for all $x\in[0,1/\varphi]$, 
\[
\frac{\partial K_{N_{t},y}\left(K_{N_{t},y}^{-1}(x)\right)}{\partial y}\xrightarrow[t\to\infty]{}\lim_{t\to\infty}\frac{\partial K_{N_{t},y}\left(\mathfrak{h}_{y}^{-1}(x)\right)}{\partial y}
\]
and the convergence is uniform in $t\in\NN$. 
\end{rem}
\begin{proof}[Proof of Theorem \ref{thm: The proof of convergence of y derivative}] 

Lemma \ref{lem: the x-derivative of the two dimensional } shows that
$\frac{\partial\mathfrak{z}}{\partial x}=\lim_{t\to\infty}\frac{\partial\mathfrak{z}_{N_{t}}}{\partial x}(x,y)$
exists and is a continuous function of $\MM_{\sim}$. It remains to
show that $\frac{\partial\mathfrak{z}}{\partial y}=\lim_{t\to\infty}\frac{\partial\mathfrak{z}_{N_{t}}}{\partial y}(x,y)$
exists and is a continuous function of $\MM_{\sim}$. To this end,
write
\[
B_{N}(x,y)=\left(\begin{array}{cc}
\frac{\partial K_{N,y}(x)}{\partial x} & \frac{\partial K_{N,y}(x)}{\partial y}\\
0 & 1
\end{array}\right)
\]
for the differential of the map $(x,y)\mapsto\left(K_{N,y}(x),y\right)$.
By the chain rule 
\[
D_{\mathfrak{Z}_{N_{t}}}(x,y)=B_{N_{t}}\left(SK_{N_{t},y}^{-1}(x),-y/\p)\right)\left(\begin{array}{cc}
\p & 0\\
0 & -1/\p
\end{array}\right)B_{N_{t}}^{-1}\left(K_{N_{t},y}^{-1}(x),y\right).
\]
This yields that 
\begin{eqnarray*}
\frac{\partial\mathfrak{z}_{N_{t}}}{\partial y}(x,y) & = & \p\frac{\partial K_{N_{t},-y/\p}(SK_{N_{t},y}^{-1}(x))}{\partial x}\cdot\left(-\left(\frac{\partial K_{N_{t},-y/\p}\left(K_{N_{t},y}^{-1}(x)\right)}{\partial x}\right)^{-1}\left(\frac{\partial K_{N_{t},y}\left(K_{N_{t},y}^{-1}(x)\right)}{\partial y}\right)\right)\\
 &  & -\frac{1}{\p}\frac{\partial K_{N_{t},-y/\p}\left(SK_{N_{t},y}^{-1}(x)\right)}{\partial y}\\
 & = & -\frac{\partial\mathfrak{z}_{N_{t}}}{\partial x}(x,y)\cdot\frac{\partial K_{N_{t},y}\left(K_{N_{t},y}^{-1}(x)\right)}{\partial y}-\frac{1}{\p}\frac{\partial K_{N_{t},-y/\p}\left(SK_{N_{t},y}^{-1}(x)\right)}{\partial y}.
\end{eqnarray*}
Since all the terms on the right hand side converge uniformly as $t\to\infty$,
the Theorem is proved. 

\end{proof}

\subsection{Proof of the Anosov property for $\mathfrak{Z}$. }

So far we have shown that $\mathfrak{Z}_{N_{t}}$ converges uniformly
to $\mathfrak{Z}$ and estimated the derivatives. We are going to
use the following well known Lemma, it's proof can be found in \cite{Viana}.
A function $\mathcal{A}:\MM_{\sim}\times\ZZ\to SL(2,\RR)$ is \textit{linear
cocycle} over a homeomorphism $\mathfrak{f:}\MM_{\sim}\to M_{\sim}$
if for any $m.n\in\ZZ$ and $x\in\TT^{2}$, 
\[
\mathcal{A}_{m+n}(x)=\mathcal{A}_{m}\circ\mathfrak{f}^{n}(x)\mathcal{A}_{n}(x).
\]
We say that the cocycle is Hyperbolic if there are $\sigma>1$ and
$C>0$ so that for every $x\in\MM_{\sim}$ there exists transverse
lines $E_{x}^{s}$ and $E_{x}^{u}$ in $\RR^{2}$ such that 

1. $\mathscr{A}(x)E_{x}^{s}=E_{\mathfrak{f}(x)}^{s}$ and $\mathscr{A}(x)E_{x}^{u}=E_{\mathfrak{f}(x)}^{u}$. 

2. $\left|\mathcal{A}_{n}(x)v^{s}\right|\leq C\sigma^{n}\left|v^{s}\right|$
and $\left|\mathcal{A}_{-n}(x)v^{u}\right|\leq C\sigma^{n}\left|v^{u}\right|$
for every $v^{s}\in E_{x}^{s}$, $v^{u}\in E_{x}^{u}$ and $n\geq1$. 
\begin{prop*}
\cite[Prop. 2.1 ]{Viana}Let $A:\MM_{\sim}\times\ZZ\to SL(2,\RR)$
be a linear cocycle over a homeomorphism $\mathfrak{f:}M_{\sim}\to M_{\sim}$.
If there exists $v\in\RR^{2}$, constants $c>0$ and $\sigma>1$ such
that $\left|A_{n}(x)v\right|\geq c\sigma^{n}$ then $\mathcal{A}$
is hyperbolic. The transverse lines $E_{x}^{s},E_{x,}^{u}$ in $\RR^{2}$
satisfy that for any $\sigma_{0}<\sigma$, there exists $C>0$ so
that for any $v^{s}\in E_{x}^{s}$, $v^{u}\in E_{x}^{u}$ and $n\geq1$,
\[
\left|\mathcal{A}_{n}(x)v^{s}\right|\leq C\sigma_{0}^{n}\left|v^{s}\right|\text{ and }\left|\mathcal{A}_{-n}(x)v^{u}\right|\leq C\sigma_{0}^{n}\left|v^{u}\right|.
\]
\end{prop*}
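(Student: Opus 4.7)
The plan is to construct the invariant subbundles $E^u$ and $E^s$ explicitly. The essential tool is the area-preservation identity: for $M \in SL(2,\RR)$ and unit vectors $u, w \in \RR^2$,
\[
|Mu|\,|Mw|\,\sin\angle(Mu, Mw) = \sin\angle(u, w),
\]
so stretching two directions squeezes the angle between the images at the combined stretching rate. I will assume the cocycle is continuous (standard in this context), hence $K := \sup_x \|A_1(x)\| < \infty$ by compactness of $\MM_\sim$.

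To construct $E^u_x$ I use backward iteration. Define $\phi_n(x) := A_n(\mathfrak f^{-n}x) v$, a vector of norm $\geq c\sigma^n$ at $x$; the cocycle relation rewrites $\phi_{n+1}(x) = A_n(\mathfrak f^{-n}x)\,A_1(\mathfrak f^{-n-1}x)v$, exhibiting $\phi_n(x)$ and $\phi_{n+1}(x)$ as images of $v$ and $A_1(\mathfrak f^{-n-1}x)v$ under the common matrix $M_n := A_n(\mathfrak f^{-n}x)$. The area identity then gives $\sin\angle(\phi_n(x), \phi_{n+1}(x)) \leq K|v|^2/(c^2 \sigma^{2n+1})$, which is summable, so $[\phi_n(x)] \to E^u_x$ in $\mathbb{RP}^1$ uniformly in $x$ and exponentially fast. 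Invariance $A(x)E^u_x = E^u_{\mathfrak f x}$ follows from $\phi_{n+1}(\mathfrak f x) = A(x)\phi_n(x)$. Choosing a continuous unit vector $u_x \in E^u_x$, one has $A_n(x)u_x = \lambda_n(x) u_{\mathfrak f^n x}$ with $|\lambda_n(x)| \geq c'\sigma^n$ (a minor deterioration of $c$ absorbing the Cauchy tail), and therefore $|A_{-n}(x) v^u| \leq (c')^{-1}\sigma^{-n}|v^u|$ for $v^u \in E^u_x$.

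For $E^s_x$, pick a continuous unit normal $n_x \perp u_x$ and expand $A_k(x)u_x = \lambda_k u_{\mathfrak f^k x}$, $A_k(x)n_x = \mu_k u_{\mathfrak f^k x} + \nu_k n_{\mathfrak f^k x}$. Area preservation forces $\nu_k = 1/\lambda_k$, and the recursion $A_{k+1} = A_1(\mathfrak f^k x) A_k$ yields
\[
\rho_{k+1}(x) - \rho_k(x) = \frac{\mu_1(\mathfrak f^k x)}{\lambda_k(x) \lambda_{k+1}(x)}, \qquad \rho_k := \mu_k/\lambda_k,
\]
which is $O(\sigma^{-2k})$, so $\rho_k(x) \to \rho(x)$ exists. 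Set $E^s_x := \mathrm{span}(-\rho(x) u_x + n_x)$, transverse to $E^u_x$. For $w := -\rho(x) u_x + n_x$ one computes
\[
A_k(x) w = \lambda_k(x)(\rho_k(x) - \rho(x)) u_{\mathfrak f^k x} + \nu_k(x) n_{\mathfrak f^k x}.
\]
The main obstacle is controlling $\lambda_k(\rho - \rho_k)$: a priori $|\lambda_k|$ has no useful upper bound, so multiplying by $|\rho - \rho_k| = O(\sigma^{-2k})$ need not give contraction. The way around is to compute $\lambda_k(\rho - \rho_k)$ from the tail series directly: setting $\mathcal L_{k,j} := \lambda_{j-k}(\mathfrak f^k x) \geq c'\sigma^{j-k}$ (Step 1 applied at the shifted base $\mathfrak f^k x$), one has $\lambda_k/(\lambda_j \lambda_{j+1}) = 1/(\lambda_k \mathcal L_{k,j} \mathcal L_{k,j+1})$, and summing yields $|\lambda_k(\rho - \rho_k)| \leq C_0/\sigma^k$ without ever needing an upper bound on $|\lambda_k|$. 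Combined with $|\nu_k| \leq 1/(c'\sigma^k)$ this gives $|A_k(x)w| \leq C\sigma^{-k}|w|$, which is the required stable contraction. Invariance of $E^s$ is read off from the $\rho_k$-recursion, continuity of $E^u$ and $E^s$ in $x$ follows from uniform convergence of the defining limits, and any $\sigma_0 < \sigma$ in the proposition's conclusion is accommodated by enlarging $C$.
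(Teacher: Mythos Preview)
The paper does not prove this proposition; it simply cites Viana's book. So there is no in-house proof to compare against, only the standard argument in the reference.

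Your strategy---build $E^u$ as a limit of projective directions, then triangularise to find $E^s$---is sound in outline, and the area identity is indeed the right tool. The construction of $E^u_x$ as $\lim_m [\phi_m(x)]$ with $\phi_m(x)=A_m(\mathfrak f^{-m}x)v$ is correct: your angle estimate makes $[\phi_m]$ Cauchy in $\mathbb{RP}^1$, and invariance follows as you say. The $E^s$ construction via the recursion for $\rho_k$ and the tail-sum trick for $\lambda_k(\rho-\rho_k)$ is also correct \emph{given} the growth estimate $|\lambda_k(x)|\geq c'\sigma^k$ on $E^u$.

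The gap is precisely that growth estimate. You assert $|\lambda_n(x)|\geq c'\sigma^n$ as ``a minor deterioration of $c$ absorbing the Cauchy tail,'' but this does not follow from what you have shown. Since $A_n(x)$ is a fixed linear map and $\hat\phi_m(x)\to u_x$, one has $|\lambda_n(x)|=\lim_{m\to\infty}|A_n(x)\hat\phi_m(x)|=\lim_{m\to\infty}|\phi_{n+m}(\mathfrak f^nx)|/|\phi_m(x)|$. To conclude this limit is $\geq c'\sigma^n$ you would need an \emph{upper} bound $|\phi_m(x)|\lesssim \sigma^m$, and nothing in the hypothesis provides one: the only a priori upper bound is $|\phi_m(x)|\leq K^m|v|$, which is useless when $K>\sigma$. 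Your Cauchy-tail estimate controls \emph{angles}, not norms. Since your entire $E^s$ argument (convergence of $\rho_k$, the bound on $\lambda_k(\rho-\rho_k)$, and the contraction $|\nu_k|=1/|\lambda_k|$) rests on this estimate, the gap propagates through the rest of the proof.

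Viana's argument avoids this by working with the singular-value decomposition rather than the specific vector $v$. From $|A_n(x)v|\geq c\sigma^n$ one has $\|A_n(x)\|\geq c\sigma^n$, hence the most-contracted direction $e_n^-(x)$ satisfies $|A_n(x)e_n^-(x)|=\|A_n(x)\|^{-1}\leq c^{-1}\sigma^{-n}$. One shows $[e_n^-(x)]$ is Cauchy in $\mathbb{RP}^1$ (again via the area identity) and defines $E^s_x$ as its limit; the contraction estimate on $E^s$ is then immediate by construction, with no circularity. The unstable bundle is obtained by running the same argument for the inverse cocycle, using $\|A_{-n}(x)\|=\|A_n(\mathfrak f^{-n}x)\|\geq c\sigma^n$. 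Transversality of $E^s$ and $E^u$ follows from a short angle computation. If you want to keep your triangularisation approach for $E^s$, the clean fix is to construct $E^u$ this way first (as $E^s$ for the inverse cocycle), which gives the needed bound $|\lambda_n|\geq c'\sigma^n$ honestly.
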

\begin{proof}[Proof that $\mathfrak{Z}$ is Anosov] 

Define $\mathcal{A}:\MM_{\sim}\times\mathbb{Z}\to SL\left(2,\RR\right)$
by
\[
\mathcal{A}(x)=\frac{1}{\det\left(D_{\mathfrak{Z}}(x,y)\right)}D_{\mathfrak{Z}}(x,y).
\]
Since $D_{\mathfrak{Z}}$ is of the form 
\[
\left(\begin{array}{cc}
\frac{\partial\mathfrak{z}}{\partial x}(x,y) & *\\
0 & -1/\p
\end{array}\right)
\]
and 
\[
1.6\leq\frac{\partial\mathfrak{z}}{\partial y}(x,y)\leq1.7
\]
one has that for all $(x,y)\in M_{\sim,}$ 
\[
\frac{1.6}{\p}\leq\left|\det\left(D_{\mathfrak{Z}}(x,y)\right)\right|\leq\frac{1.7}{\p}.
\]
and 
\[
\left|D_{\mathfrak{Z}^{n}}(x,y)\left(\begin{array}{c}
1\\
0
\end{array}\right)\right|=\left|\left(\begin{array}{c}
\prod_{k=0}^{n-1}\frac{\partial\mathfrak{z}}{\partial x}\circ\mathfrak{Z}^{k}(x,y)\\
0
\end{array}\right)\right|\geq(1.6)^{n}\left|\left(\begin{array}{c}
1\\
0
\end{array}\right)\right|.
\]
 It then follows that with $v=(1,0)^{tr}$ 
\[
\left|\mathcal{A}_{n}(x,y)v\right|=\left(\det\left(D_{\mathfrak{Z}^{n}}(x,y)\right)\right)^{-1}\left|D_{\mathfrak{Z}^{n}}(x,y)v\right|\geq\left(\frac{\varphi\cdot1.6}{1.7}\right)^{n}|v|\geq(1.52)^{n}|v|
\]
there exists transverse lines $E_{x}^{s}$ and $E_{x}^{u}$ in $\RR^{2}\simeq T_{x}M_{\sim}$
and $C>0$ so that for any $v^{u}\in E_{x}^{u}$, 
\[
\left|\mathcal{A}_{n}(x)v^{u}\right|\geq(1.5)^{n}\left|v^{u}\right|.
\]
It follows that 
\begin{eqnarray*}
\left|D_{\mathfrak{Z}^{n}}(x,y)v^{u}\right| & = & \left|\mathcal{A}_{n}(x)v^{u}\right|\prod_{k=0}^{n-1}\left|\det\left(D_{\mathfrak{Z}}\left(\mathfrak{Z}^{k}(x,y)\right)\right)\right|\\
 & \geq & C(1.5)^{n}\left(\frac{1.6}{\p}\right)^{n}\left|v^{u}\right|\\
 & \geq & C(1.48)^{n}\left|v^{u}\right|.
\end{eqnarray*}
Similarly one has for every $v^{s}\in E_{x}^{s}$, 
\begin{eqnarray*}
\left|D_{\mathfrak{Z}^{n}}(x,y)v^{s}\right| & \leq & C(1.5)^{-n}\left(\frac{1.7}{\p}\right)^{n}\left|v^{s}\right|\\
 & \leq & C\left(0.7\right)^{n}
\end{eqnarray*}
and so $\mathfrak{Z:}\MM_{\sim}\to\MM_{\sim}$ is Anosov. 

\end{proof}

\subsection{Proof of the type ${\rm III}_{1}$ property for $\mathcal{\mathfrak{Z}}$. }

For $-\frac{\p}{\p+2}<y<\frac{1}{\p+2}$ let, 
\[
\mathfrak{K}_{y}(x):=\lim_{n\to\infty}K_{n,y}(x):\TT\to\TT
\]
and for $\frac{1}{\p+2}\leq y\leq\frac{\p^{2}}{\p+2}$, 
\[
\mathfrak{K}_{y}(x):=\lim_{n\to\infty}K_{n,y}(x):[0,1/\p]\to[0,1/\p].
\]
In both cases it is an orientation preserving homeomorphism.

We will show that the measures $m_{{\rm Leb}(\TT)}\circ\mathfrak{K}_{y}$
and $m_{{\rm Leb(\left[0,1/\p\right])}}\circ\mathfrak{K}_{y}$ are
equivalent measures to $\mu^{+}$, the measure on $\TT$ arising from
$\left\{ \lambda_{k},m_{k}.M_{k},n_{k}.N_{k}\right\} $ in the previous
section.

In addition the Radon Nykodym derivatives 
\[
\frac{d\eta_{y}}{d\mu^{+}}(x):\mathbb{M}\to[0,\infty)
\]
defined by 
\[
\frac{d\eta_{y}}{d\mu^{+}}(x):=\frac{dm_{\TT}\circ\mathfrak{K}_{y}}{d\mu^{+}}(x),
\]
is a $\left(\MM_{\sim},\BB\left(\MM_{\sim}\right),\mu\right)$ measurable
function. This means that the measure $\eta$ on $\MM_{\sim}$ defined
by 
\begin{eqnarray*}
\int_{M_{\sim}}u(x,y)d\eta & = & \int_{\MM_{\sim}}u(x,y)d\eta_{y}(x)dy\\
 & = & \int_{\MM_{\sim}}u(x,y)\frac{d\eta_{y}}{d\mu^{+}}(x)d\mu^{+}(x)dy
\end{eqnarray*}
is equivalent to $\mu={\bf m}_{\mathbb{M}}\circ\mathfrak{H}_{\underline{0}}=\mu^{+}\otimes dy$. 

Since $\left(\mathbb{M_{\sim}},\BB_{\MM},\mu,\tilde{f}\right)$ is
a type ${\rm III}_{1}$ transformation and $\mu\sim\eta$, $\left(\mathbb{M},\BB_{\MM},\eta,\tilde{f}\right)$
is a type ${\rm III}_{1}$ transformations. Thus $\left(\MM,\BB_{\MM},{\rm m}_{\MM},\mathfrak{Z}\right)$
is a type ${\rm III}_{1}$ transformation since $\pi(x,y):=\left(\mathfrak{K}_{y}(x),y\right):\left(\MM,\BB_{\MM},{\rm m}_{\MM},\mathfrak{Z}\right)\to\left(\MM,\BB_{\MM},\eta,\tilde{f}\right)$
is an isomorphism. Therefore what is left to prove is the following. 
\begin{lem}
(i) For all $-\frac{\p}{\p+2}<y<\frac{1}{\p+2}$, $\left(\mathfrak{K}_{y}^{-1}\right)_{*}m_{\TT}$
is an equivalent measure to $\mu^{+}$ (the measure on $\TT$ arising
from $\left\{ \lambda_{k},m_{k}.M_{k},n_{k}.N_{k},\epsilon_{k}\right\} $
in the previous section).

(ii) For all $\frac{1}{\p+2}<y<\frac{\p^{2}}{\p+2}$, $\left(\mathfrak{K}_{y}\right)_{*}\left.m_{\TT}\right|_{[0,1/\p]}$
is an equivalent measure to $\mu^{+}|_{[0,1/\p]}$.

(iii) The Radon Nykodym derivatives $\frac{d\eta_{y}}{d\mu^{+}}(x):\mathbb{M}\to[0,\infty)$
are measurable in $\left(\MM_{\sim},\BB_{\MM_{\sim}},\mu\right)$. 
\end{lem}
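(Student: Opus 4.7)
The plan is to adapt the local-absolute-continuity/martingale argument of Lemma \ref{Lem:  the pertubation of the beta shift} to the $y$-fibered homeomorphisms $\mathfrak{K}_y$. Fix $y\in(-\p/(\p+2),1/(\p+2))$ (the case $y\in(1/(\p+2),\p^{2}/(\p+2))$ is identical with $\TT$ replaced by $[0,1/\p]$), and let $\cF_{t}$ be the $\sigma$-algebra on $\TT$ generated by the cylinders $\{C_{[w]_{1}^{N_{t}}}:w\in\Sigma_{{\bf A}}(N_{t})\}$. Recalling that $H_{\underline{0},N_{t}}$ is the $\ep=\underline{0}$ version of $H_{\ep,N_{t}}$ so that $m_{{\rm Leb}}\circ H_{\underline{0},N_{t}}=\mu^{+}|_{\cF_{t}}$, I would work with the nonnegative martingale
\[
z_{t,y}(x):=\frac{d\left(m_{{\rm Leb}}\circ\mathfrak{K}_{y}\right)_{t}}{d(\mu^{+})_{t}}(x)=\frac{\partial K_{N_{t},y}/\partial x(x)}{H_{\underline{0},N_{t}}^{\prime}(x)},
\]
and aim to prove that $\{z_{t,y}\}_{t\geq1}$ is uniformly integrable with respect to $\mu^{+}$. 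By the Shiryaev criterion recalled in Section \ref{sub:Non-singular-Markov shifts}, this yields both (i) and (ii).

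The next step is a direct comparison with the one-dimensional density $z_{t}$ from the proof of Lemma \ref{Lem:  the pertubation of the beta shift}. On $U$ we have $K_{n,y}(x)=H_{\ep,n}(x)$, so there $z_{t,y}\equiv z_{t}$ and the claim follows verbatim from Lemma \ref{Lem:  the pertubation of the beta shift}. For $(x,y)\in U^{c}$ the only discrepancy with the one-dimensional construction occurs at levels $n\leq{\bf j}(x,y)$, where $\rr_{{\bf j}(x,y)}(\cdot,y)$ is the interpolation $\mathtt{q}_{{\bf j}(x,y)}$ between the identity (on $\partial\MM$) and $H_{{\bf j}(x,y)}$ (on $U$). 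From Lemma \ref{lem: P_n is exponentionally close to x in intersection points} and Remark \ref{Rk: properties of q_n} the contribution of these initial levels to $z_{t,y}(x)$ is a multiplicative factor bounded by $\lambda_{1}^{2{\bf j}(x,y)}$, independently of $t$. After the coupling time the functions $h_{l,y}$ are the same rescalings of $\psi_{\epsilon_{l},\lambda_{l}}$ used in the one-dimensional construction, so the telescoping product in the spirit of \eqref{eq: z_t is almost as for Markov chains} yields
\[
z_{t,y}(x)=c(t,y,x)\prod_{k:M_{k-1}\geq{\bf j}(x,y)}\prod_{l=M_{k-1}+1}^{N_{k}}\frac{\rho_{\delta(\ep,k),l,y}(x)}{\rho_{\delta(\ep,k-1),l,y}(x)},
\]
with $|c(t,y,x)|\leq\lambda_{1}^{2{\bf j}(x,y)}$ uniformly in $t$ and with the $y$-fibered ratios satisfying the same product bounds as in Lemma \ref{Lem:  the pertubation of the beta shift} thanks to Proposition \ref{Cor: A consequence on the bound of eta and chi} and Lemma \ref{lem: bound on h_M_t derivative}.

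Given this structure, the Egorov sets $A_{k}$ built in the proof of Lemma \ref{Lem:  the pertubation of the beta shift} continue to control the tails: on $A_{k}^{c}$ the measure is summable while on each $A_{k}$ the factor $\prod_{l=M_{k-1}+1}^{N_{k}}\rho_{\delta(\ep,k),l,y}/\rho_{\delta(\ep,k-1),l,y}$ tends to $1$ as $\epsilon_{k}\to0$, uniformly in $y$ (this uniformity rests on the joint continuity of $(x,y)\mapsto K_{n,y}(x)$ and the uniform convergence of $\psi_{\epsilon_{k},\lambda_{k}}^{\prime}$ to $\psi_{0,\lambda_{k}}^{\prime}$ off the interpolation intervals). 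The same sequence $\underline{\varepsilon}$ produced in Lemma \ref{Lem:  the pertubation of the beta shift}, shrunk further at each level to absorb the bounded coupling factor $c(t,y,x)$, then gives uniform integrability of $\{z_{t,y}\}$ and hence (i) and (ii). For (iii), by inspection of the construction (in particular of $\mathtt{q}_{{\bf j}}$ and of the rescaled $\psi$'s) the map $(x,y)\mapsto K_{n,y}(x)$ is jointly continuous for each $n$, so $(x,y)\mapsto z_{t,y}(x)$ is jointly Borel; joint measurability of $d\eta_{y}/d\mu^{+}$ then follows by passing to the $t\to\infty$ limit on the full measure set where the martingale converges.

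The main obstacle is to preserve the Markov-like product structure of the densities in the presence of the $y$-dependent coupling index ${\bf j}(x,y)$; this is exactly why $\mathtt{q}_{{\bf j}}$ was designed so that its multiplicative contribution is dominated by the uniform exponential factor in ${\bf j}$ of Lemma \ref{lem: P_n is exponentionally close to x in intersection points}, and why the perturbations $h_{l,y}$ after the coupling time were chosen to be identical rescalings of $\psi_{\epsilon_{l},\lambda_{l}}$ to those used in Section \ref{sec:Type III perturbations of the Golden Mean Shift}, so that the Egorov construction of Lemma \ref{Lem:  the pertubation of the beta shift} transfers verbatim with the same $\underline{\varepsilon}$.
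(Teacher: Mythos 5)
Your proposal is correct and follows essentially the same route as the paper: the paper likewise applies Shiryaev's local absolute continuity criterion with the filtration generated by $\left\{ C_{[w]_{1}^{N_{t}}}\right\}$, forms the martingale $Z_{t,y}=\frac{\partial K_{N_{t},y}/\partial x}{H_{\underline{0},N_{t}}'}$, and reduces uniform integrability to the argument of Lemma \ref{Lem:  the pertubation of the beta shift}, proving (iii) by noting the limit of (a.e.) continuous functions is measurable. Your write-up is in fact more explicit than the paper's about why the coupling levels $n\leq\mathbf{j}(x,y)$ and the interpolation $\mathtt{q}_{\mathbf{j}}$ do not disturb the transfer, which the paper leaves implicit.
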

\begin{proof}
Fix $-\frac{\p}{\p+2}<y<\frac{1}{\p+2}$. The proof is the same as
in Lemma \ref{Lem:  the pertubation of the beta shift} by using the
theory of local absolute continuity of Shiryaev with $\mathcal{F}_{t}:=\left\{ C_{[w]_{1}^{N_{t}}}:\ w\in\Sigma_{{\bf A}}\right\} $.
By the construction 
\[
\left(m_{\TT}\circ\mathfrak{K}_{y}\right)_{t}:=\left.m_{\TT}\circ\mathfrak{K}_{y}\right|_{\mathcal{F}_{t}}=m_{\TT}\circ K_{N_{t},y},
\]
and 
\[
\left(\mu^{+}\right)=m_{\TT}\circ H_{\underline{0},N_{t}}.
\]
Therefore, 
\[
Z_{t,y}(x):=\frac{d\left(m_{\TT}\circ\mathfrak{K}_{y}\right)_{t}}{d\left(\mu^{+}\right)_{t}}(x)=\frac{\left(\frac{\partial K_{N_{t},y}}{\partial x}\right)}{\left(\frac{\partial H_{\underbar{0},N_{t}}}{\partial x}\right)}(x).
\]
The rest of the proof that $Z_{t,y}(x)$ is uniformly integrable and
hence converges a.s. as $t\to\infty$ is the same as in Lemma \ref{Lem:  the pertubation of the beta shift}.
This proves (i) and $(ii)$.

To see $(iii)$, notice that the function 
\[
\left(x,y\right)\mapsto\frac{d\eta_{y}}{d\mu^{+}}(x)=\lim_{t\to\infty}Z_{t,y}(x)
\]
 is almost surely a limit of continuous functions, hence measurable. 
\end{proof}

\specialsection{Appendix: Proof of Theorem \ref{thm: lambdas are...}}

Assume that $\left\{ \lambda_{k},n_{k},N_{k},m_{k},M_{k}\right\} _{k\geq1}$
and $M_{0}$ are chosen via the inductive construction, $\left\{ \pi_{k},P_{k}\right\} _{k\in\ZZ}$
are defined by (\ref{eq: def of P's}) and $\mu=\M{\pi_{k},P_{k}:\ k\in\ZZ}.$Again
$T$ denotes the shift on $\Sigma_{{\bf A}}$. The proof of non-singularity,
the $K$-property of the shift with respect to $\mu$ and that 
\[
T'(x)=\frac{d\mu\circ T}{d\mu}(x)=\prod_{k=1}^{\infty}\frac{P_{k-1}\left(x_{k},x_{k+1}\right)}{P_{k}\left(x_{k},x_{k+1}\right)}
\]
appears in \cite[Thm. 6]{Kosloff TMS}. In order to show the other
properties of the Markov Shift, we will need a more concrete expression
of the Radon Nykodym derivatives. The measure $\mu$, or more concretely
it's transition matrices, differs from the stationary $\left\{ \pi_{\mathbf{Q}},\mathbf{Q}\right\} $
measure only when one moves inside state $1$ in the segments $\left[M_{j},N_{j+1}\right)$.
Denote by 
\[
L_{j}(x):=\#\left\{ k\in\left[M_{j-1},N_{j}\right):\ x_{k}=1\right\} 
\]
and 
\[
V_{j}(x)=\#\left\{ k\in\left[M_{j-1},N_{j}\right):\ x_{k}=x_{k+1}=1\right\} .
\]

\begin{lem}
\label{lem: R_N derivatives}For every $\epsilon>0$, there exists
$t_{0}\in\NN$ s.t for every $t>t_{0}$, $N_{t}\leq n<m_{t}$ and
$x\in\{1,2,3\}^{\ZZ}$, 
\[
\left(T^{n}\right)'(x)=(1\pm\epsilon)\prod_{k=1}^{t}\left[\left(\frac{1+\varphi}{1+\varphi\lambda_{k}}\right)^{L_{k}\circ T^{n}(x)-L_{k}(x)}\cdot\lambda_{k}^{V_{k}\circ T^{n}(x)-V_{k}(x)}\right].
\]
 
\end{lem}
\begin{proof}
Let $\epsilon>0$, $t\in\NN$ and $N_{t}\leq n<m_{t}$. Canceling
out all the $k's$ such that $P_{k-n}=P_{k}$ one can see that 
\begin{eqnarray*}
\left(T^{n}\right)'(x) & = & I_{t}\cdot\tilde{I_{t}}
\end{eqnarray*}
 where (notice in the definition of $I_{t}$ that $n>N_{t}$) 
\[
I_{t}=\prod_{u=1}^{t}\left[\left(\prod_{k=M_{u-1}}^{N_{u}}\frac{P_{k-n}\left(x_{k},x_{k+1}\right)}{P_{k}\left(x_{k},x_{k+1}\right)}\right)\cdot\left(\prod_{k=M_{u-1}+n}^{N_{u}+n-1}\frac{P_{k-n}\left(x_{k},x_{k+1}\right)}{P_{k}\left(x_{k},x_{k+1}\right)}\right)\right]
\]
and (here notice that $M_{t}>N_{t}+m_{t}>N_{t}+n)$ 
\[
\tilde{I}_{t}=\prod_{u=t+1}^{\infty}\left[\left(\prod_{k=M_{u-1}}^{M_{u-1}+n-1}\frac{P_{k-n}\left(x_{k},x_{k+1}\right)}{P_{k}\left(x_{k},x_{k+1}\right)}\right)\cdot\left(\prod_{k=N_{u}}^{N_{u}+n-1}\frac{P_{k-n}\left(x_{k},x_{k+1}\right)}{P_{k}\left(x_{k},x_{k+1}\right)}\right)\right].
\]
We will analyze the two terms separately. Since for every $M_{u-1}\leq k<M_{u-1}+n$,
$P_{k}=\mathbf{Q_{u}}$ and $P_{k-n}=\mathbf{Q}$, 
\[
\frac{P_{k-n}\left(x_{k},x_{k+1}\right)}{P_{k}\left(x_{k},x_{k+1}\right)}\leq\frac{{\rm \mathbf{Q}_{1,3}}}{{\rm \left(\mathbf{Q_{u}}\right)}_{1,3}}=\frac{1+\varphi\lambda_{u}}{1+\varphi}\leq\lambda_{u}.
\]
Similarly for $N_{u}\leq k<N_{u}+n$, $P^{(k)}=\mathbf{Q}$ and $P_{k-n}={\rm \mathbf{Q_{u}}}$
. Therefore 
\[
\frac{P_{k-n}\left(x_{k},x_{k+1}\right)}{P_{k}\left(x_{k},x_{k+1}\right)}\leq\frac{{\rm \left(\mathbf{Q_{u}}\right)_{1,1}}}{\mathbf{Q}_{1,1}}\leq\lambda_{u}.
\]
and 
\[
\lambda_{u}^{-2n}\leq\left(\prod_{k=M_{u-1}}^{M_{u-1}+n-1}\frac{P_{k-n}\left(x_{k},x_{k+1}\right)}{P_{k}\left(x_{k},x_{k+1}\right)}\right)\cdot\left(\prod_{k=N_{u}}^{N_{u}+n-1}\frac{P_{k-n}\left(x_{k},x_{k+1}\right)}{P_{k}\left(x_{k},x_{k+1}\right)}\right)\leq\lambda_{u}^{2n},
\]
here the lower bound is achieved by a similar analysis. This gives
\begin{align*}
\tilde{I}_{t} & =\prod_{u=t+1}^{\infty}\left[\lambda_{u}^{\pm2n}\right]=\prod_{u=t+1}^{\infty}\left[\lambda_{u}^{\pm2m_{u-1}}\right]\quad\text{(since\ \ensuremath{\forall u>t},\ \ensuremath{n<m_{t}<m_{u})}}\\
 & \overset{\eqref{eq:condition on Lambda}}{=}e^{\pm\sum_{n=t+1}^{\infty}\frac{1}{2^{n}}}\xrightarrow[t\to\infty]{}1.
\end{align*}
Consequently there exists $t_{0}\in\NN$ so that for all $x\in\Sigma_{{\bf A}}$,
$t>t_{0}$ and $N_{t}\leq n\leq m_{t}$,
\[
\left(T^{n}\right)'(x)=(1\pm\epsilon)I_{t}.
\]
By noticing that for $k\in\bigcup_{j=1}^{t}\left(\left[M_{j-1},N_{j}\right)\cup\left[M_{j-1}+n,N_{j}+n\right)\right),$
\[
P_{k-n}\left(x_{k},x_{k+1}\right)\neq P_{k}\left(x_{k},x_{k+1}\right)
\]
if and only if $x_{k}=1$ one can check that 
\[
I_{t}=\prod_{k=1}^{t}\left[\left(\frac{1+\varphi}{1+\varphi\lambda_{k}}\right)^{L_{k}\circ T^{n}(x)-L_{k}(x)}\lambda_{k}^{V_{k}\circ T^{n}(x)-V_{k}(x)}\right].
\]
\end{proof}
\begin{cor}
The shift $\left(\{1,2,3\}^{\ZZ},\mu,T\right)$ is conservative and
ergodic.
\end{cor}
\begin{proof}
Since the shift is a $K$-automorphism it is enough to prove conservativity. 

For every $j\in\NN$, $0\leq L_{k}(x),D_{k}(x)\leq n_{k}$. Whence
\begin{eqnarray*}
\left(\frac{1+\varphi}{1+\varphi\lambda_{k}}\right)^{L_{k}\circ T^{n}(x)-L_{k}(x)}\lambda_{k}^{V_{k}\circ T^{n}(x)-V_{k}(x)} & \geq & \left(\frac{1+\varphi}{1+\varphi\lambda_{k}}\right)^{L_{k}\circ T^{n}(x)}\lambda_{k}^{-V_{k}(x)}\\
 & \geq & \lambda_{k}^{-2n_{k}}\geq\lambda_{1}^{-2n_{k}},
\end{eqnarray*}
and for every $t\in\NN$, 
\[
\prod_{k=1}^{t}\left[\left(\frac{1+\varphi}{1+\varphi\lambda_{k}}\right)^{L_{k}\circ T^{n}(x)-L_{k}(x)}\lambda_{k}^{V_{k}\circ T^{n}(x)-V_{k}(x)}\right]\geq\lambda_{1}^{-2\sum_{k=1}^{t}n_{k}}\geq\lambda_{1}^{-2N_{t}}.
\]
By Lemma \ref{lem: R_N derivatives} there exists $t_{0}\in\NN$ such
that for all $t\geq t_{0}$, $N_{t}\leq n\leq m_{t}$ and $x\in\Sigma_{{\bf A}}$,
$\RN n(x)\geq\frac{\lambda_{1}^{-2N_{t}}}{2}$. Therefore for all
$x\in\Sigma_{{\bf A}}$,
\[
\sum_{n=1}^{\infty}\RN n(x)\geq\sum_{t=1}^{\infty}\sum_{n=N_{t}}^{m_{t}}\RN n(x)\geq\sum_{t=t_{0}}^{\infty}\frac{1}{2}\left(m_{t}-N_{t}\right)\lambda_{1}^{-2N_{t}}\overset{\eqref{m_l is large enough for shift conservative}}{=}\infty.
\]
By Hopfs criteria the shift is conservative. 
\end{proof}

\subsubsection{Proof of the type ${\rm III}_{1}$ property}

In order to prove that the ratio set is $[0,\infty)$ we are going
to use the following principle: since $R(T)$ is a multiplicative
subset it is enough to show that there exists $y_{n}\in R(T)\backslash\{1\}$
with $y_{n}\to1$ as $n\to\infty$. 
\begin{thm}
\label{thm: lambdas are in R(T)}Let $\mu$ be the Markov measure
constructed in Subsection \ref{subsec:The-construction.}. For every
$n\in\NN$, $\lambda_{n}\cdot\frac{1+\varphi}{1+\varphi\lambda_{n}}\in R(T)$
and therefore the shift is type ${\rm III}_{1}$.
\end{thm}
Fix $n\in\NN$. The first stage in proving that $\lambda_{n}\cdot\frac{1+\varphi}{1+\varphi\lambda_{n}}\in R(T)$
is to show that the ratio set condition is satisfied for all cylinders
with a positive proportion of the measure of the cylinder set. Then
for a general $A\in\BB_{+}$, we use the density of cylinder sets
in $\mathcal{B}$. 

Given $t\in\NN$, denote by $\mathcal{C}(t)$ the collection of all
$[c]_{0}^{N_{t}}$ cylinder sets such that
\begin{equation}
L_{t}(c)=\sum_{k=M_{t-1}}^{N_{t}-1}{\bf 1}_{\left[c_{k}=1\right]}\in\left(\frac{n_{t}}{4},\frac{n_{t}}{2}\right)\text{ and}\ \sum_{k=M_{t-1}}^{N_{t}-1}{\bf 1}_{\left[c_{k}=2,c_{k+1}=3\right]}\geq\frac{n_{t}}{15}.\label{eq: Free bits in C}
\end{equation}
Since 
\[
\mu\left([c]_{M_{t-1}}^{N_{t}}\right)=\nu_{\pi_{M_{t-1}},\mathbf{Q_{t}}}\left([c]_{0}^{n_{t}}\right),
\]
it follows from (\ref{Ergodic theorem cond 1}) and (\ref{eq:Ergodic theorem cond 2})
that for all $t$ large enough, 
\[
\mu\left(\bigcup_{C\in\mathcal{C}(t)}C\right)\geq1-\frac{1}{2t}.
\]
 In order to shorten the notation, given $M,j\in\NN$, $B\in\BB$
and $\epsilon>0$, let 
\[
\mathfrak{RSC}\left(M,B,j,\epsilon\right):=B\cap T^{-M}B\cap\left[\RN M=\lambda_{j}\cdot\frac{1+\varphi}{1+\varphi\lambda_{j}}\cdot\left(1\pm\epsilon\right)\right],
\]
and for $M\in\NN$, 
\[
\Sigma_{{\bf A}}\left(M\right):=\{1,2,3\}^{M}\cap\Sigma_{{\bf A}}.
\]
 
\begin{lem}
\label{Main step for EVC. }For every $[b]_{-n}^{n}$ cylinder set,
$\epsilon>0$ and $j\in\NN$, there exists a $t_{0}\in\NN$ so that
for all $t>t_{0}$ the following holds: 

For every $C=[c]_{0}^{N_{t}-1}\in\mathcal{C}(t)$ there exists $d=d(b,C)\in\Sigma_{{\bf A}}\left(N_{t}+n\right)$
such that for every $\NN\ni l\leq m_{t}/k_{t}$, 
\begin{equation}
C\cap[d]_{lk_{t}-n}^{lk_{t}+N_{t}-1}\subset T^{-lk_{t}}[b]_{-n}^{n}\cap\left[\RN{lk_{t}}=\lambda_{j}\cdot\frac{1+\varphi}{1+\varphi\lambda_{j}}\cdot\left(1\pm\epsilon\right)\right].\label{eq: cylinder for EVC}
\end{equation}
 
\end{lem}
Recall that $k_{t}>N_{t}$ is defined as a $\left(1\pm3^{-3N_{t}}\right)$
mixing time for ${\bf Q}$. 
\begin{proof}
Let $[b]_{-n}^{n},\epsilon>0$ and $j\in\NN$ be given. By Lemma \ref{lem: R_N derivatives}
there exists $\tau$ such that for every $t\geq\tau$ and $1\leq l\leq m_{t}/k_{t}$
(here $lk_{t}\in\left[N_{t},m_{t}\right)$),
\[
\RN{lk_{t}}(x)=(1\pm\epsilon)\prod_{k=1}^{t}\left[\left(\frac{1+\varphi}{1+\varphi\lambda_{k}}\right)^{L_{k}\circ T^{lk_{t}}(x)-L_{k}(x)}\cdot\lambda_{k}^{V_{k}\circ T^{lk_{t}}(x)-V_{k}(x)}\right].
\]
Choose $t_{0}$ to be any integer which satisfies $t_{0}>\max\left(\tau,j\right)$
and $M_{t_{0}}>n$. 

Let $t>t_{0}$ and choose a cylinder set $[c]_{0}^{N_{t}}\in\mathcal{C}(t)$
which intersects $[b]_{-n}^{n}.$ That is $c_{i}=b_{i}$ for $i\in[0,n]$.
We need now to choose $d\in\Sigma_{{\bf A}}\left(N_{t}+n\right)$
which satisfies (\ref{eq: cylinder for EVC}). Notice that for $x\in[d]_{lk_{t}-n}^{lk_{t}+N_{t}}\cap[c]_{0}^{N_{t}},$
\begin{eqnarray*}
 &  & \prod_{k=1}^{t}\left[\left(\frac{1+\varphi}{1+\varphi\lambda_{k}}\right)^{L_{k}\circ T^{lk_{t}}(x)-L_{k}(x)}\cdot\lambda_{k}^{V_{k}\circ T^{lk_{t}}(x)-V_{k}(x)}\right]\\
 & = & \prod_{k=1}^{t}\left[\left(\frac{1+\varphi}{1+\varphi\lambda_{k}}\right)^{L_{k}(d)-L_{k}(c)}\cdot\lambda_{k}^{V_{k}(d)-V_{k}(c)}\right],
\end{eqnarray*}
in this representation we look at $[d]_{-n}^{N_{t}}$. For all $k\in[0,M_{t-1}]$,
let 
\[
d_{k}=c_{k}
\]
and for all $k\in[-n,0)$, 
\[
d_{k}=b_{k}.
\]
Notice that this means that for $k\in[-n,n]$, $d_{k}=b_{k}$ and
thus 
\[
[d]_{lk_{t}-n}^{lk_{t}+N_{t}}\subset T^{-lk_{t}}[b]_{-n}^{n}.
\]
 Let $p(j,t)\leq\frac{n_{t}}{20}$ be the integer (condition (\ref{eq: n_t is very large w.r.t lattic condition}))
such that 
\[
\left(\lambda_{t}\cdot\frac{1+\varphi}{1+\varphi\lambda_{t}}\right)^{p(j,t)}=\lambda_{j}\cdot\frac{1+\varphi}{1+\varphi\lambda_{j}}.
\]
Set $d_{k}=1$ for all $k\in\left[M_{t-1},M_{t-1}+V_{t}(c)+p(j,t)\right]$
and then continue repeatedly with the sequence $"321"$ $L_{t}(c)-V_{t}(c)$
times. Since $c$ satisfies (\ref{eq: Free bits in C}), this construction
is well defined (e.g. we have not reached yet $k=N_{t}-1$). Continue
with sequences of $32$ till $k=N_{t}-1$. 

Thus we have defined $d$ in such a way so that 
\[
L_{t}(d)-L_{t}(c)=p(j,t)
\]
and 
\[
V_{t}(d)-V_{t}(c)=p(j,t).
\]
In addition for all $0\leq k<t$, $L_{k}(d)=L_{k}(c)\ {\rm and}\ V_{k}(c)=V_{k}(d)$.
Thus for all $x\in[d]_{lk_{t}-n}^{lk_{t}+N_{t}}\cap[c]_{0}^{N_{t}},$
\begin{eqnarray*}
\RN{lk_{t}}(x) & = & \left(1\pm\epsilon\right)\prod_{k=1}^{t}\left[\left(\frac{1+\varphi}{1+\varphi\lambda_{k}}\right)^{L_{k}(d)-L_{k}(c)}\cdot\lambda_{k}^{V_{k}(d)-V_{k}(c)}\right]\\
 & = & \left(1\pm\epsilon\right)\left(\lambda_{t}\cdot\frac{1+\varphi}{1+\varphi\lambda_{t}}\right)^{p(j,t)}\\
 & = & \left(1\pm\epsilon\right)\left(\lambda_{j}\frac{1+\varphi}{1+\varphi\lambda_{j}}\right).
\end{eqnarray*}
This proves the lemma. 
\end{proof}
In the course of the proof one sees that the event 
\[
\left([b]_{-n}^{n}\cap[c]_{0}^{N_{t}}\right)\cap\left(T^{-lk_{t}}[b]_{-n}^{n}\cap\left\{ \RN{lk_{t}}=\lambda_{j}\cdot\frac{1+\varphi}{1+\varphi\lambda_{j}}\cdot\left(1\pm\epsilon\right)\right\} \right)^{c}
\]
is $\F{}\left(lk_{t}-n,lk_{t}+N_{t}\right)$ measurable and does not
depend on $\cF\left(lk_{t}+N_{t},lk_{t}+2N_{t}\right)$. 
\begin{rem}
Given $[c]_{0}^{N_{t}}\in\mathcal{C}_{t}$ we have defined $d=d(c)\in\Sigma_{{\bf A}}\left(N_{t}+n\right)$.
The definition of $d$ is not necessarily one to one. This is because
if $\left[\tilde{c}\right]_{0}^{M_{t-1}}=[c]_{0}^{M_{t-1}}$, $V_{t}(c)=V_{t}\left(\tilde{c}\right)$
and $L_{t}(c)=L_{t}\left(\tilde{c}\right)$ then $d(c)=d\left(\tilde{c}\right)$.
In order to make it one to one we will use 
\[
[d(c),c]_{lk_{t}-n}^{lk_{t}+2N_{t}}
\]
instead of $[d(c)]_{lk_{t}}^{lk_{t}+N_{t}}$where by $\left[a,b\right]_{l}^{l+{\rm length}(a)+{\rm length}(b)}$
we mean the concatenation of $a$ and $b$. This can be thought of
as putting a Marker on $d(c)$. In order that the concatenation will
be in $\Sigma_{A}$ we need that 
\[
{\bf Q}\left(d(c)_{N_{t}-1},c_{0}\right)>0.
\]
This can be done by possibly changing the last two coordinates of
$d(c)$. This will change the value of $\RN{lk_{t}}$ by at most a
factor of $\lambda_{t}^{\pm4}$ , which is close enough to one. We
will denote by ${\bf d}(c):=(\tilde{d}(c),c)$. We still have 
\[
[\mathbf{d}(c)]_{lk_{t}-n}^{lk_{t}+2N_{t}}\subset T^{-lk_{t}}[b]_{-n}^{n}\cap\left[\RN{lk_{t}}=\lambda_{j}\cdot\frac{1+\varphi}{1+\varphi\lambda_{j}}\cdot\left(1\pm\epsilon\right)\right],
\]
but now the map $c\mapsto\mathbf{d}(c)$ is one to one. 
\end{rem}
In the proof of the next lemma we will make use of the fact that for
every cylinder set $\left([a]_{m}^{l}\right)^{c}$ is $\cF(m,l)$
measurable. 
\begin{lem}
\label{EVC condition}For every $[b]_{-n}^{n}$ cylinder set, $\epsilon>0$
and $j\in\NN$ there exists $t_{0}\in\NN$ such that for all $t>t_{0}$,
\[
\mu\left(\bigcup_{l=1}^{m_{t}/4k_{t}}\mathfrak{RSC}\left(4lk_{t},[b]_{-n}^{n},j,\epsilon\right)\right)\geq0.8\mu\left([b]_{-n}^{n}\right).
\]
\end{lem}
\begin{proof}
Let $[b]_{-n}^{n}$ be a cylinder set and $t_{0}$ be as in Lemma
\ref{Main step for EVC. }. For all $t\geq t_{0}$, $[c]_{0}^{N_{t}}\in\mathcal{C}(t)$
which intersects $[b]_{-n}^{n}$ and $1\leq l\leq m_{t}/4k_{t}$,
\[
\left([c]_{0}^{N_{t}}\cap[b]_{-n}^{n}\right)\cap\left(\mathfrak{RSC}\left(4lk_{t},[b]_{-n}^{n},j,\epsilon\right)\right)^{c}\subset[c]_{0}^{N_{T}}\cap[b]_{-n}^{n}\cap\left([\mathbf{d}(c)]_{4lk_{t}-n}^{4lk_{t}+N_{t}}\right)^{c}
\]
As ${\bf Q}_{1,3}=\min\left\{ {\bf Q}_{i,j}:\ 1\leq i,j\leq3\right\} $,
\begin{eqnarray*}
\nu_{\pi_{{\bf Q}},{\bf Q}}\left([d(c)]_{4lk_{t}-n}^{4lk_{t}+N_{t}}\right) & \geq & \pi_{{\bf Q}}\left(3\right)\mathbf{Q}_{1,3}^{2N_{t}+n-1}\gtrsim\frac{1}{3^{3N_{t}}}.
\end{eqnarray*}
Therefore, one has by repeated applications of (\ref{eq: Mixing time})
(mixing time condition), 
\begin{eqnarray*}
 &  & \mu\left(\left([b]_{-n}^{n}\cap[c]_{0}^{N_{t}}\right)\cap\left(\bigcup_{l=1}^{m_{t}/4k_{t}}\mathfrak{RSC}\left(lk_{t},[b]_{-n}^{n},j,\epsilon\right)\right)^{c}\right)\\
 & \leq & \mu\left(\left([b]_{-n}^{n}\cap[c]_{0}^{N_{t}}\right)\cap\left\{ \bigcap_{l=1}^{m_{t}/4k_{t}}\left([d_{c}]_{4lk_{t}-n}^{4lk_{t}+N_{t}}\right)^{c}\right\} \right)\\
 & \leq & \mu\left(\left([b]_{-n}^{n}\cap[c]_{0}^{N_{t}}\right)\right)\prod_{1=1}^{m_{t}/4k_{t}}\left[\left(1+3^{-3N_{t}}\right)\left(1-\nu_{\pi_{{\bf Q}},{\bf Q}}\left([d(c)]_{4lk_{t}}^{4lk_{t}+N_{t}}\right)\right)\right]\\
 & \leq & \mu\left(\left([b]_{-n}^{n}\cap[c]_{0}^{N_{t}}\right)\right)\left[\left(1+3^{-3N_{t}}\right)\left(1-3^{-3N_{t}}\right)\right]^{m_{t}/4k_{t}}\\
 & \overset{\eqref{m_l large so that the sum of bad events is small}}{\leq} & \frac{1}{t}\mu\left(\left([b]_{-n}^{n}\cap[c]_{0}^{N_{t}}\right)\right).
\end{eqnarray*}
Notice that in the application of the mixing time condition we used
that 
\[
\left(4\left(l+1\right)k_{t}-n\right)-\left(4lk_{t}+N_{t}\right)>(4l+3)k_{t}-\left(4l+1\right)k_{t}=2k_{t}.
\]
If $t$ is large enough then 
\[
\mu\left(\Sigma_{{\bf A}}\backslash\bigcup_{C\in\mathcal{C}(t)}C\right)<0.1\mu\left([b]_{-n}^{n}\right),
\]
and for all $[c]_{0}^{N_{t}}=C\in\mathcal{C}(t)$, 
\begin{eqnarray*}
\mu\left([b]_{-n}^{n}\cap[c]_{0}^{N_{t}}\cap\left(\bigcup_{l=1}^{m_{t}/4k_{t}}\mathfrak{RSC}\left(lk_{t},[b]_{-n}^{n},j,\epsilon\right)\right)\right) & > & \left(1-\frac{1}{t}\right)\mu\left([b]_{-n}^{n}\cap[c]_{0}^{N_{t}}\right)\\
 & \geq & 0.9\mu\left([b]_{-n}^{n}\cap[c]_{0}^{N_{t}}\right).
\end{eqnarray*}
The Lemma follows from 
\begin{eqnarray*}
 &  & \mu\left(\bigcup_{l=1}^{m_{t}/4k_{t}}\mathfrak{RSC}\left(lk_{t},[b]_{-n}^{n},j,\epsilon\right)\right)\\
 & \geq & \mu\left(\uplus_{[c]_{0}^{N_{t}}\in\mathcal{C}(t)}[b]_{-n}^{n}\cap[c]_{0}^{N_{t}}\cap\bigcup_{l=1}^{m_{t}/4k_{t}}\mathfrak{RSC}\left(lk_{t},[b]_{-n}^{n},j,\epsilon\right)\right)\\
 & \geq & 0.9\sum_{[c]_{0}^{N_{T}}\in\mathcal{C}(t)}\mu\left([b]_{-n}^{n}\cap[c]_{0}^{N_{t}}\right)\\
 & \geq & 0.8\mu\left(\left[b\right]_{-n}^{n}\right)
\end{eqnarray*}
\end{proof}
\begin{proof}[Proof of Theorem \ref{thm: lambdas are in R(T)}] 

This is a standard approximation technique. Let $j\in\NN$, $A\in\BB$,
$\mu(A)>0$ and $\epsilon>0$. Since the ratio set condition on the
derivative is monotone with respect to $\epsilon$ and

\[
1<\lambda_{j}\cdot\frac{1+\varphi}{1+\varphi\lambda_{j}}<2,
\]
 we can assume that 
\begin{equation}
1\leq\lambda_{j}\cdot\frac{1+\varphi}{1+\varphi\lambda_{j}}(1\pm\epsilon)\leq2.\label{eq:epsilon is small enough}
\end{equation}
 Since $\cF(-n,n)\uparrow\BB$ as $n\to\infty$, there exists a cylinder
set $\mathfrak{b}=[b]_{-n}^{n}$ such that 
\[
\mu\left(A\cap\mathfrak{b}\right)>0.99\mu\left(\mathfrak{b}\right).
\]
 By Lemma \ref{EVC condition} there exists $t\in\NN$ for which 
\[
\mu\left(\mathfrak{b\cap}\left\{ \bigcup_{l=1}^{m_{t}/4k_{t}}T^{-4lk_{t}}\mathfrak{b}\cap\left[\RN{4lk_{t}}=\lambda_{j}\cdot\frac{1+\varphi}{1+\varphi\lambda_{j}}\cdot\left(1\pm\epsilon\right)\right]\right\} \right)>0.8\mu(\mathfrak{b}).
\]
Denote by 
\[
B=\mathfrak{b\cap}\left\{ \bigcup_{l=1}^{m_{t}/4k_{t}}T^{-4lk_{t}}\mathfrak{b}\cap\left[\RN{4lk_{t}}=\lambda_{j}\cdot\frac{1+\varphi}{1+\varphi\lambda_{j}}\cdot\left(1\pm\epsilon\right)\right]\right\} .
\]
We can assume that for $x\in B$, there exists $C(x)=[c]_{0}^{N_{t}}\in\mathcal{C}_{t}$
so that $x\in C(x)$. Then by the proof of Lemma \ref{Main step for EVC. }
there exists $d(C(x))\in\Sigma_{{\bf A}}\left(2N_{t}+n\right)$ such
that if $x\in[d(C(x))]_{4lk_{t}-n}^{4lk_{t}+2N_{t}}$, then 
\begin{equation}
\RN{4lk_{t}}(x)=\lambda_{j}\cdot\frac{1+\varphi}{1+\varphi\lambda_{j}}\cdot\left(1\pm\epsilon\right)\ \text{and}\ x\in T^{-4lk_{t}}\mathfrak{b}.\label{eq: RN on D(C(x))}
\end{equation}
 Define $\phi:B\to\NN$ 
\[
\phi(x):=\inf\left\{ l\leq m_{t}/4k_{t}:\ [x]_{4lk_{t}-n}^{4lk_{t}+2N_{t}}=[{\bf d}(C(x))]_{4lk_{t}-n}^{4lk_{t}+2N_{t}}\right\} 
\]
and $S=T^{\phi}:B\to S(B)\subset\mathfrak{b}$. We claim that $S$
is one to one. Indeed, since the map $[c]_{0}^{N_{t}}\mapsto{\bf d}(c)$
is one to one, for every $x,y\in B$ such that $C(x)\neq C(y)$,
\[
[Sy]_{-n}^{2N_{t}}=[{\bf d}(C(y))]_{-n}^{2N_{t}}\neq[{\bf d}(C(x))]_{-n}^{2N_{t}}=[Sx]_{-n}^{2N_{t}},
\]
consequently $Sx\neq Sy$. In addition, by the definition of $\phi$,
if $x\neq y$ and $C(x)=C(y)$ then $Sx\neq Sy$. 

It follows from (\ref{eq: RN on D(C(x))}) and (\ref{eq:epsilon is small enough}),
that for all $x\in B$, 
\[
S'(x):=\frac{d\mu\circ S}{d\mu}(x)=\lambda_{j}\cdot\frac{1+\varphi}{1+\varphi\lambda_{j}}\cdot\left(1\pm\epsilon\right)\in\left[1,2\right].
\]
Therefore $\frac{d\mu\circ S^{-1}}{d\mu}(y)\geq\frac{1}{2}$ for all
$y\in S(B)$. A calculation shows that 
\begin{eqnarray*}
\mu(S(B)\cap A) & > & \mu(S(B))-\mu(\mathfrak{b}\backslash A)\\
 & > & \mu(B)-\mu(\mathfrak{b}\backslash A)\\
 & = & 0.79\mu\left(\mathfrak{b}\right),
\end{eqnarray*}
and 
\[
\mu\left(S^{-1}\left(S(B)\cap A\right)\right)>\frac{\mu\left(S(B)\cap A\right)}{2}>0.39\mu(\mathfrak{b}).
\]
So 
\begin{eqnarray*}
 &  & \sum_{l=1}^{m_{t}/4lk_{t}}\mu\left(A\cap\left\{ T^{-4lk_{t}}A\cap\left[\RN{4lk_{t}}=\lambda_{j}\cdot\frac{1+\varphi}{1+\varphi\lambda_{j}}\cdot\left(1\pm\epsilon\right)\right]\right\} \cap\left[\phi=4lk_{t}\right]\right)\\
 & \geq & \mu\left(\left(B\cap A\right)\cap S^{-1}\left(S(B)\cap A\right)\right)\ \ \ \ \ \left\{ \text{Notice that }B,\ S(B)\subset\mathfrak{b}\right\} \\
 & \geq & \mu\left(B\cap A\right)-\mu\left(\mathfrak{b}\backslash S^{-1}\left(S(B)\cap A\right)\right)\\
 & \geq & 0.18\mu\left(\mathfrak{b}\right),
\end{eqnarray*}
and thus there exists $l\in\NN$ such that 
\[
\mu\left(A\cap T^{-4lk_{t}}A\cap\left[\RN{4lk_{t}}=\lambda_{j}\cdot\frac{1+\varphi}{1+\varphi\lambda_{j}}\cdot\left(1\pm\epsilon\right)\right]\right)>0.
\]
This proves the Theorem. 

\end{proof}

\subsection*{Acknowledgement: Section \ref{sec:Type--Markov} is part of the Author's
PhD thesis in Tel Aviv University done under the supervision of Jon
Aaronson. I would like to thank many people among them Jon Aaronson,
Omri Sarig and Ian Melbourne for many helpful discussions and their
continuous support. This research was supported in part by the European
Advanced Grant StochExtHomog (ERC AdG 320977). }


\begin{thebibliography}{Aar1}
\bibitem[Aar1]{Aar}J. Aaronson, An introduction to infinite ergodic
theory, Amer. Math. Soc., Providence, R.I., 1997. 

\bibitem[ALV]{ALV}J. Aaronson, M. Lema\'{n}czyk, D.Volný. A cut salad
of cocycles. Fund. Math. 157 (1998), no. 2-3, 99\textendash 119

\bibitem[Adl]{Adler} R.L. Adler. Symbolic dynamics and Markov partitions.
Bull. Amer. Math. Soc. (N.S.) 35 (1998), no. 1, 1\textendash 56. 

\bibitem[AW]{Adler Weiss} R.L. Adler, B. Weiss. Similarity of automorphisms
of the torus. Memoirs of the American Mathematical Society, No. 98
American Mathematical Society, Providence, R.I. 1970 ii+43 pp.

\bibitem[Arn]{Arn}L. K. Arnold, On $\sigma$-finite invariant measures,
Zeit. Wahr. verw. Geb. 9 (1968), 85\textendash 97. 

\bibitem[AB1]{Avila Bochi generic expanding (interval maps)} A. Avila,
Artur, J. Bochi. Generic expanding maps without absolutely continuous
invariant \textgreek{sv}-finite measure. Math. Res. Lett. 14 (2007),
no. 5, 721\textendash 730.

\bibitem[AB2]{Avilla Bochi generic C^1 ...} A. Avila, Artur, J. Bochi.
A generic C1 map has no absolutely continuous invariant probability
measure. Nonlinearity 19 (2006), no. 11, 2717\textendash 2725.

\bibitem[BCW]{Bonatti Crovisier Wilkinson} C. Bonatti, S. Crovisier
and A. Wilkinson. The $C^{1}$-generic diffeomorphism has trivial
centralizer. Publ. Math. IHES, 109 (2009) 185\textendash 244. 

\bibitem[B]{Bowen Rufus} R. Bowen. Equilibrium states and the ergodic
theory of Anosov diffeomorphisms. Second revised edition. With a preface
by David Ruelle. Edited by Jean-René Chazottes. Lecture Notes in Mathematics,
470. Springer-Verlag, Berlin, 2008.

\bibitem[BH]{Bruin Hawkins} Bruin, Henk; Hawkins, Jane Examples of
expanding C1 maps having no \textgreek{sv}-finite invariant measure
equivalent to Lebesgue. Israel J. Math. 108 (1998), 83\textendash 107.

\bibitem[CQ]{Campbell Quas} J. Campbell, James, A. Quas. A generic
$C^{1}$ expanding map has a singular S-R-B measure. Comm. Math. Phys.
221 (2001), no. 2, 335\textendash 349. 

\bibitem[GO]{Gurevic Oseledec}Gurevi\v{c}, B. M, Oseledec, V. I.
Gibbs distributions, and the dissipativity of C-diffeomorphisms. Dokl.
Akad. Nauk SSSR 209 (1973), 1021\textendash 1023.

\bibitem[Ha]{Halmos} P. Halmos. Invariant measures. Ann. of Math.
(2) 48, (1947). 735\textendash 754. 

\bibitem[Ham]{Ham}T. Hamachi, On a Bernoulli shift with non-identical
factor measures, Ergod. Th. \& Dynam. Sys. 1 (1981), pp 273\textendash 283,
MR 83i:28025, Zbl 597.28022. 

\bibitem[KBL]{Kabanos-Lipzer-Sh} J. Kabanov, R. ¦. Lipcer , A.N.
Shiryaev. On the issue of the absolute continuity and singularity
of probability Measures. (Russian) Mat. Sb. (NS) 104 (146) (1977),
no. 2 (10), 227-247, 335.

\bibitem[Kos]{Kos1} Z.Kosloff. On a type III1 Bernoulli shift. Ergodic
Theory Dynam. Systems 31 (2011), no. 6, 1727\textendash 1743.

\bibitem[Kos2]{Kosloff TMS} Z. Kosloff. On manifolds admitting type
${\rm III}_{1}$ Anosov diffeomorphism. Accepted to Journal of Modern
Dynamics.

\bibitem[Kre]{Krengel} U. Krengel, Transformations without finite
invariant measure have finite strong generators. 1970 Contributions
to Ergodic Theory and Probability (Proc. Conf., Ohio State Univ.,
Columbus, Ohio, 1970) pp. 133\textendash 157 Springer.

\bibitem[Kri]{Kri} W. Krieger, On non-singular transformations of
a measure space. I, II. Z. Wahrscheinlichkeitstheorie und Verw. Gebiete
11 (1969), 83-97

\bibitem[Krz]{Krz} K. Krzyzewski. On expanding mappings. Bull. Acad.
Polon. Sci. S\textasciiacute er. Sci. Math. Astronom. Phys. 19, 23\textendash 24
(1971)

\bibitem[LM]{Le Page Mandrelkar } R. LePage, V. Mandrekar. On likelihood
ratios of measures given by Markov chains. Proc. Amer. Math. Soc.
52 (1975), 377\textendash 380.

\bibitem[LPW]{Levin Peres Wilmer Mixing Times} D.A. Levin, Y. Peres,
E.L. Wilmer. Markov chains and mixing times. With a chapter by James
G. Propp and David B. Wilson. American Mathematical Society, Providence,
RI, 2009.

\bibitem[LS]{Livsic Sinai}A.N. Liv$\check{{\rm s}}$ic, J.G. Sinai.
Invariant measures that are compatible with smoothness for transitive
C-systems. (Russian) Dokl. Akad. Nauk SSSR 207 (1972), 1039\textendash 1041.

\bibitem[Qu]{Quas }A. Quas. Most expanding maps have no absolutely
continuous invariant measure. Studia Math. 134 (1999), no. 1, 69\textendash 78.

\bibitem[RY]{Robinson Young} C. Robinson, L.S. Young, Nonabsolutely
continuous foliations for an Anosov diffeomorphism. Invent. Math.
61 (1980), no. 2, 159\textendash 176. 

\bibitem[Si]{Si} J.G. Sinai, Markov partitions and $U$-diffeomorphisms.
(Russian) Funkcional. Anal. i Prilo¸en 2 1968 no. 1, 64\textendash 89.

\bibitem[Shi]{Shi} Shiryayev, A. N. Probability, second edition.
Translated from the Russian by R. P. Boas. Graduate Texts in Mathematics,
95. Springer-Verlag, New York, 1984.

\bibitem[ST]{S-T} C. E. Silva and P. Thieullen, A skew product entropy
for nonsingular transformations, J. Lon. Math. Soc. (2) 52 (1995),
497\textendash 516.

\bibitem[Vi]{Viana}M. Viana, Lectures on Lyapunov exponents, Cambridge
University Press, 2014. 

\bibitem[VYa]{VYa}M. Viana, J. Yang, Continuity of Lyapunov exponents
in the $C^{0}$ topology. Available at arXiv:1612.09361
\end{thebibliography}
\end{document}